\theoremstyle{definition}
\newtheorem{theorem}{Theorem}[section]
\newtheorem{proposition}[theorem]{Proposition}
\newtheorem{corollary}[theorem]{Corollary}
\newtheorem{lemma}[theorem]{Lemma}
\newtheorem{definition}[theorem]{Definition}
\newtheorem{remark}[theorem]{Remark}
\numberwithin{equation}{section} %(節, 番号)
\def\rnum#1{\expandafter{\romannumeral #1}} 
\def\Rnum#1{\uppercase\expandafter{\romannumeral #1}}
\def\part{\@startsection {part}{1}{\z@}{
 -3.5ex plus -1ex minus -.2ex}{2.3 ex plus .2ex}{\centering\large\bf}}
\def\newtitle{Global dynamics below the ground state for the quadratic Schr\"odinger system in 5d}
\def\newauthor{Masaru Hamano}
\def\maketitle{
\begin{center}
{\Large\bf \newtitle \par}
\vspace{5mm}
{\Large \newauthor \par}
\vspace{4mm}
\end{center}
}
\begin{document}

\maketitle

Abstract. In this paper we consider the nonlinear Schr\"odinger system (NLS) with quadratic interaction in five dimensions. We determine the global behavior of the solutions to the system with data below the ground state. Our proof of the scattering result is based on an argument by Kenig--Merle \cite{011}. In particular, the new part of this paper is to deal with asymmetric interaction. A blowing-up or growing-up result is proved by combining the argument by Du--Wu--Zhang in \cite{003} and a variational characterization of minimizers. Moreover, we show a blowing-up result if the data has finite variance or is radial.

\tableofcontents 

\section{Introduction}
\subsection{Background}
 
%Introduction to NLS
　\\
We consider the quadratic Schr\"odinger system in five space dimensions:
\begin{equation}
\notag \text{(NLS) }
\begin{cases}
\hspace{-0.4cm}&\displaystyle{i\partial_tu+\Delta u=-2v\bar{u}\qquad(x,t)\in\mathbb{R}^5\times\mathbb{R},}\\
\hspace{-0.4cm}&\displaystyle{i\partial_tv+\frac{1}{2}\Delta v=-u^2\qquad(x,t)\in\mathbb{R}^5\times\mathbb{R},}\\
\hspace{-0.4cm}&\displaystyle{u(x,0)=u_0(x),\ v(x,0)=v_0(x)\qquad x\in\mathbb{R}^5,}
\end{cases}
\end{equation}
where $i=\sqrt{-1}$, $u,v:\mathbb{R}^5\times\mathbb{R}\longrightarrow\mathbb{C}$ are unknown functions, $u_0,v_0:\mathbb{R}^5\longrightarrow\mathbb{C}$ are given functions, $\Delta=\sum_{j=1}^5\frac{\partial^2}{\partial x_j^2}$, and $\overline{u}$ is complex conjugate of $u$.\\

　(NLS) has a physical background, which is deduced from the equation describing the Raman process. This process is a nonlinear instability phenomenon (see \cite{018} for more detail). Furthermore, (NLS) is also derived from a non-relativistic limit of the nonlinear Klein-Gordon system (see \cite{007}). Asymptotic behavior of solutions for this NLKG system was studied in Sunagawa--Kawahara \cite{023} and Hayashi--Naumkin \cite{022}. For (NLS), a well-posedness result in $L^2$ or $H^1$, a blowing-up result with finite variance, and existence of a ground state were studied in Hayashi--Ozawa--Tanaka \cite{007}.\\

　Our aim in the present paper is to determine long time behavior of solutions to the problem (NLS). There are various kinds of solutions depending on the choice of the data, for example, scattering solution, blow-up solution, standing wave solution and so on. Here we are especially interested in investigating this problem under the assumption that the value of the action of the initial data is less than that of the ground state. The ground state is one of the solutions to the following nonlinear elliptic system
\begin{equation}
\notag \text{(gNLS) }
\begin{cases}
\hspace{-0.4cm}&\displaystyle{-\Delta\phi_\omega+\omega\phi_\omega=2\psi_\omega\phi_\omega,}\\
\hspace{-0.4cm}&\displaystyle{-\frac{1}{2}\Delta\psi_\omega+2\omega\psi_\omega=\phi_\omega^2,}
\end{cases}
\end{equation}
where $\omega>0$. It was proved in \cite{007} that this system has a non-trivial solution, which is called the ground state. The ground state attains the infimum of $\mu_\omega^{20,8}$ (see Definition \ref{functional}) and is non-negative (real-valued) and radial. We denote by $(\phi_\omega,\psi_\omega)$ this ground state solution. If we set $(u,v)=(e^{i\omega t}\phi_\omega,e^{2i\omega t}\psi_\omega)$, then $(u,v)$ is the solution to (NLS), and is called the standing wave solution. We remark that the standing wave solution neither scatters nor blows-up. The stability of standing wave solutions was studied in \cite{018}.\\

　In this paper, we give the necessary and sufficient conditions on the data, which clarifies the scattering and blowing-up behavior of solutions to (NLS). Our proof of the scattering is based on the argument by Kenig--Merle in \cite{011}. In the past ten years, global behavior of solutions below the ground state for the single focusing case 
\[
i\partial_tu+\Delta u+|u|^\alpha u=0\ \ \ (x,t)\in\mathbb{R}^N\times\mathbb{R},
\]
where $\alpha>0$, was studied by several authors. Kenig--Merle \cite{011} treated the case with $\alpha=\frac{4}{N-2}$, $N=3,4,5$, under radial symmetry, Holmer--Roudenko \cite{008} treated the case with $\alpha=2$, $N=3$ under radial symmetry, Duyckaerts--Holmer--Roudenko \cite{004} treated the case with $\alpha=2$, $N=3$ without radial symmetry, Fang--Xie--Cazenave \cite{005} showed scattering, and Akahori--Nawa \cite{017} showed scattering and blowing-up in the mass supercritical and energy subcritical case. In the system with symmetric interaction
\begin{equation}
\notag \text{(sNLS) }
\begin{cases}
\hspace{-0.4cm}&\displaystyle{i\partial_tu+\Delta u+(|u|^2+|v|^2)u=0\ \ \ (x,t)\in\mathbb{R}^3\times\mathbb{R},}\\
\hspace{-0.4cm}&\displaystyle{i\partial_tv+\Delta v+(|u|^2+|v|^2)v=0\ \ \ \,(x,t)\in\mathbb{R}^3\times\mathbb{R},}
\end{cases}
\end{equation}
Xu \cite{014} and Farah--Pastor \cite{006} showed the scattering result below the ground state. In our system, Hayashi--Li--Ozawa \cite{019} proved a small data scattering by using the end point Strichartz estimate in $\dot{H}^\frac{1}{2}$ setting. However, it is difficult to remove the smallness condition on the data to prove the scattering. To overcome this difficulty, we use Kenig--Merle type argument (Linear profile decomposition, Long time perturbation theory, Compactness, Rigidity).  Xu \cite{014} and Farah--Pastor \cite{006} also applied the similar method to (sNLS). Unlike theirs, our system has assymmetric interaction, so that we have to deal with the interaction carefully. Moreover, we are considering five space dimensions, so we use different type of exponents for Strichartz estimate (see also \cite{005}). Our proof of the blowing-up or growing-up result is based on the argument by Du--Wu--Zhang \cite{003} and variational characterization (Lemma \ref{estimates for K}). We remark that it is still open whether growing-up occurs or not. We note that Hayashi--Ozawa--Tanaka \cite{007} proved blowing-up result if the data has finite variance and negative energy. In this paper, we extend their result to include certain positive energy initial data by using the ground state (see Theorem \ref{Main theorem 1} for more detail). Our blowing-up result with finite variance or radial symmetry is based on the argument by Xu \cite{014}.

\subsection{Definition and main result}
　\\
In order to state the main result, we introduce some notations and basic facts. Let $(T_\ast,T^\ast)$ be the maximal lifespan of the solution $(u,v)$ to (NLS) (see Theorem \ref{H1 local}).

\begin{definition}[Scattering, Blowing-up, Growing-up]
　\\[-0.6cm]
\begin{itemize}
\item(Scattering)\\
We say that the solution $(u,v)$ to (NLS) scatters in positive time (resp. negative time) if $T^\ast=\infty$ (resp. $T_\ast=-\infty$) and there exists $(\phi_+,\psi_+)\in H^1\!\times\!H^1$ (resp. $(\phi_-,\psi_-)\in H^1\!\times\!H^1$) such that
\[
\lim_{t\rightarrow+\infty}\|(u(t),v(t))-(e^{it\Delta}\phi_+,e^{\frac{1}{2}it\Delta}\psi_+)\|_{H^1\times H^1}=0
\]
\vspace{-0.3cm}
\[
\left(\text{resp}.\ \ \lim_{t\rightarrow-\infty}\|(u(t),v(t))-(e^{it\Delta}\phi_-,e^{\frac{1}{2}it\Delta}\psi_-)\|_{H^1\times H^1}=0\right).\ \ \ \ \ \ \ 
\]
\item(Blowing-up)\\
We say that $(u,v)$ blows up in positive time (resp. negative time) if $T^\ast<\infty$ (resp. $T_\ast>-\infty$). Moreover, Theorem \ref{H1 local} and Theorem \ref{Conservation law} give
\[
\lim_{\substack{t\nearrow T^\ast\\(t\searrow T_\ast)}}\|(u(t),v(t))\|_{\dot{H}^1\times \dot{H}^1}=\infty.
\]
\item(Growing-up)\\
We say that the solution $(u,v)$ grows up in positive time (resp. negative time) if $T^\ast=\infty$ (resp. $T_\ast=-\infty$) and there exists a sequence $\{t_n\}$ with $t_n\rightarrow\infty$ (resp. $-\infty$) as $n\rightarrow\infty$ such that
\[
\lim_{n\rightarrow\infty}\|(u(t_n),v(t_n))\|_{\dot{H}^1\times \dot{H}^1}=\infty.
\]
\end{itemize}
\end{definition}

%functional

\begin{definition}\label{functional}
We define the following functionals and quantities for $(u,v)\in H^1\!\times\!H^1$, $\omega>0$ and $\alpha,\,\beta\in\mathbb{R}$.
\begin{align*}
&M(u,v)=\|u\|_{L^2}^2+2\|v\|_{L^2}^2\,,\ \ \ \ \ \ \ \ \ \ \ \ \ \ \ \ \ \,E(u,v)=\|\nabla u\|_{L^2}^2+\frac{1}{2}\|\nabla v\|_{L^2}^2-2\text{Re}(v,u^2),\\
&K(u,v)=\|\nabla u\|_{L^2}^2+\frac{1}{2}\|\nabla v\|_{L^2}^2\,,\ \ \ \ \ \ \ \ \ \ \ \ P(u,v)=\text{Re}(v,u^2)_{L^2},\\
&K_\omega(u,v)=K(u,v)+\omega M(u,v)\,,\ \ \ \ \ \ \ \ \ \ \ L_\omega(u,v)=\frac{\omega}{2}M(u,v)+\frac{1}{10}K(u,v),\\
&I_\omega (u,v)=\frac{\omega}{2}M(u,v)+\frac{1}{2}E(u,v)\,,\ \ \ \ \ \ \ \ \ \,K_\omega^{\alpha,\beta}(u,v)=\left.\partial_\lambda I_\omega(e^{\alpha\lambda} u(e^{\beta\lambda}\cdot),e^{\alpha\lambda} v(e^{\beta\lambda}\cdot))\right|_{\lambda=0},\\
&C_\omega=\{(u,v)\in H^1\times H^1\setminus \{(0,0)\}:K_\omega^{20,8}(u,v)=0\},\\
&\mu_\omega^{20,8}=\inf\{I_\omega(u,v):(u,v)\in C_\omega\}.
\end{align*}
\end{definition}

%definition

We state our main result.

\begin{theorem}[Scattering versus blowing-up dichotomy]\label{Main theorem 1}
Let $(u_0,v_0)\in H^1\!\times\!H^1$ and $(u,v)$ be the solution to (NLS) with initial data $(u_0,v_0)$. Moreover, we assume for $\omega>0$,
\[
I_\omega(u_0,v_0)<I_\omega(\phi_\omega,\psi_\omega).
\]
\begin{itemize}
\item[(1)]Let $K_\omega^{20,8}(u_0,v_0)\geq0$. Then $(u,v)$ scatters in positive time in $H^1\!\times\!H^1$.
\item[(2)]Let $K_\omega^{20,8}(u_0,v_0)<0$. Then $(u,v)$ blows up or grows up in positive time in $H^1\!\times\!H^1$. Moreover, if $(xu_0,xv_0)\in L^2\!\times \!L^2$ or $(u_0,v_0)$ is radial, then the solution $(u,v)$ blows up in positive time.
\end{itemize}
\end{theorem}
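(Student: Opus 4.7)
The plan is to treat the two parts separately but share a common variational backbone. The exponents $(20,8)$ were chosen so that the mass functional $M$ is invariant under the scaling $(u,v)\mapsto(e^{20\lambda}u(e^{8\lambda}\cdot),e^{20\lambda}v(e^{8\lambda}\cdot))$, hence $K_\omega^{20,8}$ reduces to a linear combination of $K$ and $P$ only (explicitly $8K(u,v)-20P(u,v)$). The first step, common to both parts, is a variational lemma showing that $\mu_\omega^{20,8}=I_\omega(\phi_\omega,\psi_\omega)$, that the two sets
\[
\mathcal{A}_\omega^{\pm}=\{(u,v)\in H^1\!\times\!H^1 : I_\omega(u,v)<I_\omega(\phi_\omega,\psi_\omega),\ \pm K_\omega^{20,8}(u,v)>0\}\cup\{(0,0)\text{ in the }+\text{ case}\}
\]
are invariant under the (NLS) flow by conservation of mass and energy, and that on $\mathcal{A}_\omega^{+}$ one has uniform bounds $K(u(t),v(t))\lesssim 1$ together with a coercive gap $K_\omega^{20,8}(u(t),v(t))\gtrsim\min\{I_\omega(\phi_\omega,\psi_\omega)-I_\omega(u_0,v_0),\,K(u(t),v(t))\}$, while on $\mathcal{A}_\omega^{-}$ one has $K_\omega^{20,8}(u(t),v(t))\leq-c_0<0$ for all $t$ in the lifespan.

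For the scattering statement (1) I would run the Kenig--Merle roadmap adapted to the asymmetric quadratic system. First, local theory and small-data scattering in a Strichartz norm $\|(u,v)\|_S$ at the $\dot H^{1/2}$-level (suited to the quadratic, $L^2$-supercritical, $\dot H^1$-subcritical nonlinearity in five dimensions), together with a long-time perturbation lemma that controls the asymmetric cross terms $v\bar u$ and $u^2$ separately. Then, assuming by contradiction that scattering fails below the threshold $I_\omega(\phi_\omega,\psi_\omega)$, define the critical action level $I_c$ and construct a minimal non-scattering solution through a linear profile decomposition in $H^1\!\times\!H^1$ applied to the two Schr\"odinger groups $e^{it\Delta}$ and $e^{\tfrac{i}{2}t\Delta}$ simultaneously; the presence of two distinct dispersions forces one to keep the $u$- and $v$-profiles with independent space-time translations, and the nonlinear profiles are decoupled via the perturbation lemma exactly as in \cite{005}. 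This yields a critical element whose trajectory is precompact in $H^1\!\times\!H^1$ modulo spatial translation $x(t)$. The rigidity step uses a localized virial on $M_R(t)=\int\chi_R(|x|^2)(|u|^2+2|v|^2)$ with $\chi_R$ a radial cutoff: differentiating twice, inserting the coercivity $K_\omega^{20,8}\geq\delta$, and using the compactness to absorb error terms, gives $M_R''(t)\geq\delta/2$ on a long interval, contradicting the a priori bound $|M_R'(t)|\lesssim R$. The main obstacle here is controlling the translation parameter $x(t)$ without Galilean symmetry for the coupled system; I would follow the Duyckaerts--Holmer--Roudenko strategy \cite{004} combined with conservation of the total momentum of $(u,v)$ to show $|x(t)|=o(t)$.

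For the blow-up/grow-up statement (2) the essential input is the uniform negativity $K_\omega^{20,8}(u(t),v(t))\leq -c_0<0$ from the variational step. If $(xu_0,xv_0)\in L^2\!\times\! L^2$, a direct computation with $V(t)=\int|x|^2(|u|^2+2|v|^2)\,dx$ gives $V''(t)=cK_\omega^{20,8}(u(t),v(t))\leq -cc_0$, so $V$ becomes negative in finite time, forcing $T^\ast<\infty$. For radial data I would use the Ogawa--Tsutsumi truncated virial: taking a weight $\varphi_R(|x|)\approx|x|^2$ for $|x|\leq R$, the same computation produces the identity $V_R''(t)\leq cK_\omega^{20,8}+\mathrm{error}(R)$, and the radial Sobolev decay handles the error as in Xu \cite{014}. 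In the general case without either assumption, I would follow the Du--Wu--Zhang scheme \cite{003}: assume for contradiction that the solution neither blows up nor grows up, so $\|(\nabla u(t),\nabla v(t))\|_{L^2\times L^2}$ stays bounded; then using a cut-off virial on a time-dependent radius $R(t)$ combined with the uniform coercive gap for $K_\omega^{20,8}$, one derives a lower bound $V_R''(t)\leq -c$ on an arbitrarily long interval while $|V_R'(t)|\lesssim R(t)\|(\nabla u,\nabla v)\|$ is controlled, a contradiction. The main obstacle in this part is the careful choice of the cutoff and the balance of the asymmetric interaction $v\bar u$ versus $u^2$ terms inside the virial remainder, which is where the variational lemma (Lemma \ref{estimates for K}) is invoked to close the estimate.
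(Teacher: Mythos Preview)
Your overall roadmap matches the paper closely: variational trapping (Lemma \ref{estimates for K}), the Kenig--Merle concentration-compactness scheme for (1), and the virial/Du--Wu--Zhang split for (2). Two specific points deserve correction.

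First, on the profile decomposition: you propose keeping the $u$- and $v$-profiles with \emph{independent} space-time translations. The paper (Theorem \ref{Linear profile decomposition}) instead extracts \emph{coupled} profiles sharing a single sequence $(t_n^j,x_n^j)$, by tracking where the sum $\|e^{it\Delta}\phi_n\|_{L^\infty L^{5/2}}+\|e^{\frac{1}{2}it\Delta}\psi_n\|_{L^\infty L^{5/2}}$ concentrates and taking weak limits of both components at those common shifts. Independent shifts would make the nonlinear-profile step problematic: a pair $(\tilde u^j,\tilde v^j)$ solving the coupled (NLS) cannot asymptotically match linear data whose two components are centered at mutually divergent locations, so the wave-operator construction (Lemma \ref{Existence of wave operators}) and the $I_\omega$-Pythagorean expansion (Corollary \ref{I decomposition}) would not go through as stated.

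Second, and more substantively: you assert the system lacks Galilean symmetry and plan to work around this for the control of $x(t)$. In fact the mass-resonance condition (Laplacian coefficients $1$ and $\tfrac12$ matched to the quadratic coupling $-2v\bar u$, $-u^2$) yields an exact Galilean invariance: if $(u,v)$ solves (NLS) then so does
\[
\bigl(e^{ix\cdot\xi_0-it|\xi_0|^2}u(x-2\xi_0 t,t),\ e^{2ix\cdot\xi_0-2it|\xi_0|^2}v(x-2\xi_0 t,t)\bigr).
\]
The paper uses this exactly as in Duyckaerts--Holmer--Roudenko to boost the critical element to zero total momentum (Proposition \ref{zero momentum}), after which $x(t)/t\to 0$ follows from the standard truncated center-of-mass argument (Lemma \ref{order of x(t)}). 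Without recognizing this invariance, your control of $x(t)$ is the one place where the proposal has a genuine gap rather than a stylistic difference; once you use the Galilean boost, the rigidity step closes just as you outlined.
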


　The same conclusion of Theorem \ref{Main theorem 1} holds for the negative time direction by taking the complex conjugate of the equation and replace $t$ by $-t$.

\subsection{Organization of the paper}
　\\
　The organization of this paper is as follows. In section 2, we give several properties of the ground state $(\phi_\omega,\psi_\omega)$ for (NLS), Strichartz estimates, Small data scattering result, and Long time perturbation theory. In section 3, we show that the solution exists time-globally under the assumption of Theorem \ref{Main theorem 1} (1) and show Theorem \ref{Main theorem 1} (2). In section 4 and section 5, we complete the proof of Theorem \ref{Main theorem 1} (1) by contradiction.\\

　For the convinience of the reader, we explain the strategy for the proof  of Theorem \ref{Main theorem 1} (1). We assume that the threshold for scattering $I_\omega^c$ (see Definition \ref{I omega c}) is strictly below $I_\omega(\phi_\omega,\psi_\omega)$. In section 5.1, we construct a solution $(u_c,v_c)$ (which is called a critical solution) that stands exactly at the boundary between scattering and non-scattering. The way of the construction of this solution is as follows. We take a sequence $\{(u_n,v_n)\}$ of the solutions to (NLS), which exists between $I_\omega^c$ and $I_\omega(\phi_\omega,\psi_\omega)$, and satisfies $\|(u_n,v_n)\|_{S(\dot{H}^\frac{1}{2})\times S(\dot{H}^\frac{1}{2})}=\infty$ and $I_\omega(u_n,v_n)\longrightarrow I_\omega^c$ (see Definition \ref{Strichartz norm} for the definition of $\|(\cdot,\cdot)\|_{S(\dot{H}^\frac{1}{2})\times S(\dot{H}^\frac{1}{2})}$). Since this sequence is bounded in $H^1\!\times\!H^1$, we can apply Theorem \ref{Linear profile decomposition} (Linear profile decomposition) to $(u_{n}(0),v_{n}(0))$. We take the non-linear profile, which approaches the linear profile by Lemma \ref{Existence of wave operators} (Existence of wave operators). Applying Theorem \ref{Long time perturbation} (Long time perturbation), we prove that there exists only one non-zero linear profile. Using this non-zero linear profile, we construct a critical solution. In section 5.2, we prove that the orbit of the critical solution $K=\{(u_c(\cdot-x(t),t),v_c(\cdot-x(t),t)):t\in[t,\infty)\}$ is precompact in $H^1\times H^1$. First, we define a equivalent relation $\sim$ on $H^1\times H^1$ and the quotient space $H^1\times H^1/{\sim}$, which is constructed by the whole equivalent class. Let $\pi:H^1\!\times\!H^1\longrightarrow H^1\!\times\!H^1/{\sim}$ be the natural projection. In Lemma \ref{Precompactness of the flow of the critical solution 2} and Lemma \ref{Precompactness of the flow of the critical solution 3}, we prove that it is sufficient to prove the orbit of the critical solution $K$ is precompact if $\pi(K)$ is precompact. In Proposition \ref{Precompactness of the flow of the critical solution}, we  show that the orbit of the critical solution $K$ is precompact in $H^1\times H^1$. In section 5.3, we prove that the assumption $(I_\omega^c<I_\omega(\phi_\omega,\psi_\omega))$ of a contradiction argument is fault. To prove this, we first show that the momentum of the critical solution is zero. Next, we prove Theorem \ref{Rigidity} (Rigidity). Because the critical solution satisfies the assumption of Theorem \ref{Rigidity} (Rigidity), we apply Theorem \ref{Rigidity}, then a contradiction occurs.

\section{Preliminaries}

\subsection{Results from \cite{007}}

%H^1 local solution space

\begin{definition}[Space of $H^1\!\times\!H^1$\,solution]\label{1}
\[
Y(I)\vcentcolon=(C\cap L^\infty)(I:H^1)\cap L^2(I:W^{1,\frac{10}{3}})\,,\ \ \|u\|_{Y(I)}\vcentcolon=\max\left\{\|u\|_{L^\infty H^1},\|u\|_{L^2W^{1,\frac{10}{3}}}\right\}.
\]
\end{definition}

%H^1 local solution

\begin{theorem}[Unique existence of $H^1\!\times\!H^1$\,time local solution\,\cite{007}]\label{H1 local}
For any $\rho>0$, there exists $T(\rho)>0$ such that for any $(u_0,v_0)\in H^1\times H^1$ with $\max\left\{\|u_0\|_{H^1},\|v_0\|_{H^1}\right\}\leq\rho$, (NLS) has the unique solution $(u,v)\in Y(I)\times Y(I)$ with $I=[-T(\rho),T(\rho)]$.
\end{theorem}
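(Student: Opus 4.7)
The plan is to solve (NLS) by applying the Banach contraction principle to the Duhamel integral equations
\begin{align*}
\Phi_1(u,v)(t) &= e^{it\Delta}u_0 + 2i\int_0^t e^{i(t-s)\Delta}(v\bar u)(s)\,ds,\\
\Phi_2(u,v)(t) &= e^{(i/2)t\Delta}v_0 + i\int_0^t e^{(i/2)(t-s)\Delta}u(s)^2\,ds,
\end{align*}
on a closed ball $B_R=\{(u,v)\in Y(I)\times Y(I):\max(\|u\|_{Y(I)},\|v\|_{Y(I)})\le R\}$ with $I=[-T,T]$, and $R=R(\rho)$, $T=T(\rho)$ to be chosen. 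The pair $(2,10/3)$ used in the definition of $Y(I)$ is $L^2$-admissible on $\mathbb{R}^5$ (since $2/2+5/(10/3)=5/2$), and both $e^{it\Delta}$ and $e^{(i/2)t\Delta}$ satisfy the same Strichartz estimates up to constants. Hence the homogeneous and inhomogeneous Strichartz bounds give, for any admissible pair $(q,r)$,
\[
\|\Phi_1(u,v)\|_{Y(I)}\le C\|u_0\|_{H^1}+C\|v\bar u\|_{L_I^{q'}W_x^{1,r'}},
\]
and analogously for $\Phi_2$ with nonlinearity $u^2$.

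For the nonlinear estimate I would use a different admissible pair, namely $(q,r)=(4,5/2)$ (check: $2/4+5/(5/2)=5/2$), whose dual is $(q',r')=(4/3,5/3)$. By the product rule, H\"older with $3/5=3/10+3/10$ in space, and the five-dimensional Sobolev embedding $H^1\hookrightarrow L^{10/3}$, one has
\[
\|v\bar u\|_{W_x^{1,5/3}}\le C\bigl(\|v\|_{L_x^{10/3}}\|u\|_{W_x^{1,10/3}}+\|u\|_{L_x^{10/3}}\|v\|_{W_x^{1,10/3}}\bigr).
\]
Splitting H\"older in time as $3/4=1/4+1/2$ and estimating $\|v\|_{L_t^4L_x^{10/3}}\le T^{1/4}\|v\|_{L_t^\infty L_x^{10/3}}\le CT^{1/4}\|v\|_{Y(I)}$ then yields
\[
\|v\bar u\|_{L_I^{4/3}W_x^{1,5/3}}\le CT^{1/4}\|v\|_{Y(I)}\|u\|_{Y(I)},
\]
and the same argument applied to $u^2$ produces $\|u^2\|_{L_I^{4/3}W_x^{1,5/3}}\le CT^{1/4}\|u\|_{Y(I)}^2$. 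The positive power $T^{1/4}$ reflects that (NLS) is $\dot H^{1/2}$-subcritical at the $H^1$ level.

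Taking $R=2C\rho$ and choosing $T=T(\rho)>0$ so small that $2CT^{1/4}R\le 1$, these bounds imply both that $\Phi=(\Phi_1,\Phi_2)$ maps $B_R$ into itself and, expanding $v_1\bar u_1-v_2\bar u_2=v_1(\bar u_1-\bar u_2)+(v_1-v_2)\bar u_2$ and $u_1^2-u_2^2=(u_1-u_2)(u_1+u_2)$ and applying the same estimate term by term, the contraction
\[
\|\Phi(u_1,v_1)-\Phi(u_2,v_2)\|_{Y(I)\times Y(I)}\le CT^{1/4}R\,\|(u_1-u_2,v_1-v_2)\|_{Y(I)\times Y(I)}.
\]
A fixed point then exists in $B_R$, and a Gronwall-type argument comparing any two $Y(I)\times Y(I)$ solutions on successively shorter subintervals upgrades uniqueness to all of $Y(I)\times Y(I)$. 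I expect the only real obstacle to be the choice of admissible exponents in the nonlinear estimate: the natural pair $(2,10/3)$ forces one factor of the product to sit in $L_t^\infty$ with no $T$-gain, so one must pass to a different admissible pair such as $(4,5/2)$ to create room to split H\"older in time and harvest the $T^{1/4}$.
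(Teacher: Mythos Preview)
The paper does not give its own proof of this theorem; it is stated with a citation to Hayashi--Ozawa--Tanaka \cite{007} and used as a black box. Your contraction-mapping argument via Strichartz estimates is the standard route and is essentially what one finds in that reference: the choice of the admissible dual pair $(4/3,5/3)$, the product estimate in $W^{1,5/3}$ via H\"older with $3/5=3/10+3/10$ and the embedding $H^1(\mathbb{R}^5)\hookrightarrow L^{10/3}$, and the time splitting $3/4=1/4+1/2$ to extract the $T^{1/4}$ gain are all correct and constitute exactly the expected proof.
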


%L^2 and L^2(|x|dx) local solution space

\begin{definition}[Space of weighted $L^2\!\times\!L^2$\,solution]
\[
X(I)\vcentcolon=(C\cap L^\infty)(I:L^2)\cap L^2(I:L^{\frac{10}{3}})\,,\ \ \|u\|_{X(I)}\vcentcolon=\max\left\{\|u\|_{L^\infty L^2},\|u\|_{L^2L^{\frac{10}{3}}}\right\},
\]
\[
Z(I)\vcentcolon=\left\{u\in Y(I):xu\in X(I)\right\}\,,\ \ \|u\|_{Z(I)}\vcentcolon=\max\left\{\|u\|_{Y(I)},\|xu\|_{X(I)}\right\}.
\]
\end{definition}

%L^2 and L^2(|x|dx) local solution

\begin{theorem}[Unique existence of weighted $L^2\!\times\!L^2$\,time local solution\,\cite{007}]\label{xu}
For any $\rho>0$, there exists $T(\rho)>0$ such that for any $(u_0,v_0)\in H^1\!\times\!H^1$ with $(xu_0,xv_0)\in L^2\!\times\! L^2$ and
\[
\max\left\{\|u_0\|_{H^1},\|v_0\|_{H^1},\|xu_0\|_{L^2},\|xv_0\|_{L^2}\right\}\leq \rho,
\]
(NLS) has the unique solution $(u,v)\in Z(I)\!\times\!Z(I)$ with $I=[-T(\rho),T(\rho)]$.
\end{theorem}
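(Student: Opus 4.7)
The plan is to run a standard contraction argument in the closed ball
\[
B_{R}(I)=\{(u,v)\in Z(I)\!\times\!Z(I):\|u\|_{Z(I)}+\|v\|_{Z(I)}\leq R\}
\]
for suitable $R$ and $I=[-T,T]$ with $T=T(\rho)$ small, applied to the Duhamel map associated with (NLS). The $Y(I)$-component of the norm is already handled by the $H^{1}$-theory of Theorem \ref{H1 local}, so the new content lies entirely in controlling the weighted pieces $\|xu\|_{X(I)}$ and $\|xv\|_{X(I)}$.

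The central algebraic tool is that although multiplication by $x$ does not commute with $e^{it\Delta}$ or $e^{\frac{i}{2}t\Delta}$, one has the identities
\[
xe^{it\Delta}f=e^{it\Delta}(xf-2it\nabla f),\qquad xe^{\frac{i}{2}t\Delta}g=e^{\frac{i}{2}t\Delta}(xg-it\nabla g),
\]
which express that $J_{1}(t)=x+2it\nabla$ and $J_{2}(t)=x+it\nabla$ commute with their respective linear Schr\"odinger operators. Multiplying the Duhamel formulas for $u$ and $v$ by $x$ and transporting the weight through the propagators gives, schematically,
\[
xu(t)=e^{it\Delta}(xu_{0}-2it\nabla u_{0})+2i\int_{0}^{t}e^{i(t-s)\Delta}\bigl[x(v\bar u)-2i(t-s)\nabla(v\bar u)\bigr](s)\,ds,
\]
and an analogous identity for $xv(t)$. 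The extra terms produced are either weighted products like $x(v\bar u)$ or gradients $\nabla(v\bar u)$, both amenable to Strichartz/H\"older.

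Next I would invoke the $5$d Strichartz estimates on the admissible pair $(\infty,2)$ and $(2,10/3)$, with dual pair $(1,2)$ and $(2,10/7)$, and use H\"older together with the Sobolev embedding $H^{1}(\mathbb{R}^{5})\hookrightarrow L^{p}$ for $p\leq 10/3$ to distribute factors between the $X$-piece (carrying the weight) and the $Y$-piece (carrying the Sobolev regularity). For instance, one writes
\[
\|x(v\bar u)\|_{L^{2}_{t}L^{10/7}_{x}}\lesssim\|xv\|_{L^{2}_{t}L^{10/3}_{x}}\|u\|_{L^{\infty}_{t}L^{5/2}_{x}}+\|v\|_{L^{2}_{t}L^{10/3}_{x}}\|xu\|_{L^{\infty}_{t}L^{5/2}_{x}},
\]
and estimates the gradient terms $\nabla(v\bar u)$, $\nabla(u^{2})$ using the $W^{1,10/3}$ control built into $Y(I)$. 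The explicit factors $|t|$ and $|t-s|$ produced by the commutators contribute positive powers of $T$, supplying the smallness needed to close the contraction and, together with a parallel difference estimate, giving the Lipschitz bound that yields uniqueness.

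The main obstacle I expect is organizational rather than analytical: one must carefully distribute every product arising from $x(v\bar u)$, $x(u^{2})$, $\nabla(v\bar u)$, $\nabla(u^{2})$ so that exactly one factor is measured in the weighted space $X(I)$ while the other is absorbed by the $H^{1}$/Strichartz space $Y(I)$, keeping the estimate linear in the weighted norms. Once this bookkeeping is in place, small-time existence and uniqueness on $[-T(\rho),T(\rho)]$ follow from the Banach fixed-point theorem, and the resulting solution automatically lies in $Z(I)\!\times\!Z(I)$.
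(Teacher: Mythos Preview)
The paper does not prove this theorem at all: it is quoted verbatim from \cite{007} in Section~2.1 (``Results from \cite{007}'') and no argument is supplied, so there is nothing in the present paper to compare your attempt against.

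That said, your outline is a sound route to the weighted local theory. One refinement worth knowing, and almost certainly the device used in \cite{007}, is that the mass--resonance coefficients $1,\tfrac12$ make the commutator bookkeeping collapse completely. With $J_{1}(t)=x+2it\nabla$ and $J_{2}(t)=x+it\nabla$ one checks the exact identities
\[
J_{1}(s)(v\bar u)=2\,(J_{2}(s)v)\,\bar u-v\,\overline{J_{1}(s)u},\qquad J_{2}(s)(u^{2})=u\,J_{1}(s)u,
\]
so that applying $J_{1},J_{2}$ to the two Duhamel formulas yields a closed bilinear system for $(J_{1}u,J_{2}v)$ with \emph{no} residual factors of $s$ or $t-s$. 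The contraction in $X(I)$ is then literally the $H^{1}$ argument of Theorem~\ref{H1 local} run a second time with $(u,v)$ already controlled in $Y(I)$. Your version, which keeps the raw terms $x(v\bar u)$ and $(t-s)\nabla(v\bar u)$, also closes for small $T$, but carries extra $T$-powers that the resonance identity removes.

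A small correction to your displayed estimate: $x(v\bar u)=(xv)\bar u$ is a single product, so only one H\"older splitting is needed; your second summand $\|v\|_{L^{2}_{t}L^{10/3}_{x}}\|xu\|_{L^{\infty}_{t}L^{5/2}_{x}}$ is both redundant and not directly controlled by the $X(I)$-norm, which only gives $L^{\infty}_{t}L^{2}_{x}$ and $L^{2}_{t}L^{10/3}_{x}$ for the weighted function.
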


%charge and energy conservation

\begin{theorem}[Conservation law\,\cite{007}]\label{Conservation law}
The solution $(u,v)$ to (NLS) satisfies the following conservation laws for all $t\in (T_\ast,T^\ast)$
\begin{align}
M(u(t),v(t))=M(u_0,v_0),\label{01}
\end{align}
\begin{align}
E(u(t),v(t))=E(u_0,v_0).\label{02}
\end{align}
\end{theorem}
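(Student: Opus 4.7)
The plan is to verify both identities by direct time differentiation using the Hamiltonian structure of (NLS), reducing to a computation on smooth solutions via an approximation argument.

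For the mass, I would test the first equation against $\bar u$ and the second against $\bar v$, take imaginary parts, and use the self-adjointness of the Laplacian (so that $\text{Im}\!\int \bar u\,\Delta u\,dx=0$ and similarly for $v$) to arrive at
\[
\tfrac{1}{2}\tfrac{d}{dt}\|u\|_{L^2}^2=-2\,\text{Im}\!\int v\bar u^2\,dx,\qquad \tfrac{1}{2}\tfrac{d}{dt}\|v\|_{L^2}^2=\text{Im}\!\int v\bar u^2\,dx.
\]
Adding the first to twice the second cancels the cubic coupling and yields \eqref{01}; the weight $2$ in front of $\|v\|_{L^2}^2$ in the definition of $M$ is precisely what is needed for this cancellation to occur.

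For the energy, I would instead pair the first equation with $\partial_t\bar u$ and the second with $\partial_t\bar v$, integrate by parts in space, and take real parts. The purely imaginary terms $i|\partial_t u|^2$ and $i|\partial_t v|^2$ drop out, the Laplacian contributions produce $\tfrac{d}{dt}\bigl(\|\nabla u\|_{L^2}^2+\tfrac{1}{2}\|\nabla v\|_{L^2}^2\bigr)$, and the nonlinear contributions reassemble, via the product rule identity
\[
\tfrac{d}{dt}\text{Re}(v,u^2)_{L^2}=\text{Re}(\partial_tv,u^2)_{L^2}+2\,\text{Re}(v\bar u,\partial_tu)_{L^2},
\]
into exactly $-2\tfrac{d}{dt}\text{Re}(v,u^2)_{L^2}$, which proves \eqref{02}. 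As for $M$, the specific coefficients appearing in $E$ are dictated by the symplectic structure of (NLS).

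The main obstacle is that these manipulations require $\partial_t u,\,\partial_t v,\,\Delta u,\,\Delta v \in L^2$ pointwise in time, whereas Theorem \ref{H1 local} only provides $(u,v)\in Y(I)\times Y(I)\subset L^\infty_t H^1_x$. I would overcome this by a standard density argument: approximate $(u_0,v_0)$ in $H^1\times H^1$ by smoother data $(u_0^n,v_0^n)\in H^2\times H^2$, apply Theorem \ref{H1 local} together with persistence of regularity to produce approximating $H^2$-solutions $(u^n,v^n)$ on a common subinterval, establish $M$ and $E$ conservation rigorously for each $n$ by the above calculations, and then pass to the limit using continuous dependence in $H^1\times H^1$ and the $H^1\times H^1$-continuity of the functionals $M$ and $E$. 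A routine bootstrap on compact subintervals then propagates the identities to the entire maximal lifespan $(T_\ast,T^\ast)$.
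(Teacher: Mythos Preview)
The paper does not supply its own proof of this statement; it is quoted directly from \cite{007} (Hayashi--Ozawa--Tanaka), so there is no in-paper argument to compare against. Your proposal is the standard derivation of mass and energy conservation for Hamiltonian Schr\"odinger systems and is correct: the cancellation you identify for $M$ (with the weight $2$ on $\|v\|_{L^2}^2$) and the pairing with $\partial_t\bar u,\partial_t\bar v$ for $E$ are exactly how the cited reference proceeds, and your regularization/density step is the appropriate way to justify the formal computation at the $H^1$ level.
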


%virial identity

\begin{theorem}[Virial identity\,\cite{007}]\label{virial identity}
Let $(u_0,v_0)\in H^1\!\times\!H^1$ satisfy $(xu_0,xv_0)\in L^2\!\times\!L^2$ and let $(u,v)\in(Z(I)\!\times\!Z(I))\cap(Y(I)\!\times\!Y(I))$ be the corresponding local solution given by Theorem \ref{H1 local} and Theorem \ref{xu}.\ Then
\begin{align}
\frac{d^2}{dt^2}(\|xu(t)\|_{L^2}^2+2\|xv(t)\|_{L^2})=10E(u_0,v_0)-2\left(\|\nabla u(t)\|_{L^2}^2+\frac{1}{2}\|\nabla v(t)\|_{L^2}^2\right).\label{37}
\end{align}
\end{theorem}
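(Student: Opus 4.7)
The plan is to establish the identity by direct computation, first formally assuming enough smoothness and decay, and then justifying the formal manipulations via a density/approximation argument using Theorem \ref{H1 local} and Theorem \ref{xu}. Write $V(t)=\|xu(t)\|_{L^2}^2+2\|xv(t)\|_{L^2}^2$ and set $G_\varphi(t)=\int x\cdot\mathrm{Im}(\bar\varphi\nabla\varphi)\,dx$ and $P(u,v)=\mathrm{Re}\int v\bar u^2\,dx$, so that $K=\|\nabla u\|_{L^2}^2+\tfrac{1}{2}\|\nabla v\|_{L^2}^2$ and $E=K-2P$. The identity to prove then rearranges to $\ddot V=8K-20P$, which is what the two-step differentiation must produce.

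For the first derivative, I use $\partial_t u=i\Delta u+2iv\bar u$ and $\partial_t v=\tfrac{i}{2}\Delta v+iu^2$ to get the continuity-type formulas $\partial_t|u|^2=-2\,\mathrm{div}\,\mathrm{Im}(\bar u\nabla u)-4\,\mathrm{Im}(v\bar u^2)$ and $\partial_t|v|^2=-\mathrm{div}\,\mathrm{Im}(\bar v\nabla v)+2\,\mathrm{Im}(v\bar u^2)$. Multiplying by $|x|^2$ and integrating by parts (using $\mathrm{Im}\int|x|^2\bar\varphi\Delta\varphi\,dx=-2G_\varphi$) yields
\[
\tfrac{d}{dt}\|xu\|_{L^2}^2=4G_u-4\!\int|x|^2\mathrm{Im}(v\bar u^2)\,dx,\quad \tfrac{d}{dt}\|xv\|_{L^2}^2=2G_v+2\!\int|x|^2\mathrm{Im}(v\bar u^2)\,dx,
\]
and the interaction terms cancel exactly in the combination $\dot V=4G_u+4G_v$. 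This cancellation is precisely why the weighted mass $M=\|u\|^2+2\|v\|^2$ (rather than $\|u\|^2+\|v\|^2$) is the correct conserved quantity.

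For the second derivative, I differentiate $G_u$ and $G_v$ again using the equations. The ``linear'' part of $\dot G_\varphi$ is controlled by the identity $\Delta(x\cdot\nabla\varphi)=2\Delta\varphi+x\cdot\nabla\Delta\varphi$, which after integration by parts contributes $2\|\nabla u\|_{L^2}^2$ to $\dot G_u$ and $\|\nabla v\|_{L^2}^2$ to $\dot G_v$ (the factor $\tfrac12$ from the coefficient of $\Delta v$). The ``nonlinear'' parts are $2\,\mathrm{Re}\!\int\bar u^2(x\cdot\nabla v)\,dx$ in $\dot G_u$ and $-\mathrm{Re}\!\int\bar u^2(x\cdot\nabla v)\,dx+2\,\mathrm{Re}\!\int\bar v u(x\cdot\nabla u)\,dx$ in $\dot G_v$. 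The key step is one more integration by parts, using $\mathrm{div}\,x=5$ and $\nabla(\bar u^2)=2\bar u\nabla\bar u$:
\[
\int\bar u^2(x\cdot\nabla v)\,dx=-5\!\int v\bar u^2\,dx-2\!\int v\bar u(x\cdot\nabla\bar u)\,dx,
\]
whose real part, combined with $2\,\mathrm{Re}\!\int\bar v u(x\cdot\nabla u)\,dx$ (the conjugate of $2\,\mathrm{Re}\!\int v\bar u(x\cdot\nabla\bar u)\,dx$), yields $-5P$. Assembling everything, $\ddot V=4(\dot G_u+\dot G_v)=8\|\nabla u\|_{L^2}^2+4\|\nabla v\|_{L^2}^2-20P=8K-20P=10E-2K$, and conservation of energy (Theorem \ref{Conservation law}) lets me replace $E$ by $E(u_0,v_0)$.

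The main obstacle is not conceptual but algebraic: tracking the signs and coefficients through the two differentiations so that the $|x|^2\,\mathrm{Im}(v\bar u^2)$ terms cancel at the first stage and the three bilinear terms $\mathrm{Re}\!\int\bar u^2(x\cdot\nabla v)$, $\mathrm{Re}\!\int\bar v u(x\cdot\nabla u)$, $\mathrm{Re}\!\int v\bar u^2$ reduce to a single multiple of $P$ at the second stage. The rigorous justification for $(u_0,v_0)\in H^1\!\times\!H^1$ with $(xu_0,xv_0)\in L^2\!\times\!L^2$ proceeds by approximating the data by Schwartz functions, applying Theorem \ref{xu} to obtain corresponding smooth solutions in $Z(I)\!\times\!Z(I)$ for which all manipulations are justified classically, and passing to the limit using the continuous dependence and the $Z$-norm bound $\|xu\|_{X(I)}$, $\|xv\|_{X(I)}<\infty$ on the lifespan.
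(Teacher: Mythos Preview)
Your proof is correct and follows the standard direct computation. The paper does not supply its own proof of this theorem (it is quoted from \cite{007}), but your argument is essentially the same as the paper's proof of the localized virial identity (Lemma~\ref{Localized virial identity}): specializing that lemma to $\chi(x)=|x|^2$ gives $\chi_{jk}=2\delta_{jk}$, $\Delta\chi=10$, $\Delta^2\chi=0$, and formula \eqref{10} collapses exactly to $8K(u,v)-20P(u,v)=10E-2K$, which is your computation organized around a general weight rather than the Morawetz quantity $G_\varphi$.
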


\begin{remark}
Due to the coefficients $1$, $\frac{1}{2}$ of the Laplacian for (NLS), we can prove that Theorem \ref{virial identity} holds. This condition is called mass resonance condition.
\end{remark}

\subsection{Linear estimates}
　\\
%Introduction to NLSI
To construct the solution, we convert (NLS) into the following integral system (NLSI) by Duhamel's principle.
\begin{equation}
\notag \text{(NLSI) }
\begin{cases}
\hspace{-0.4cm}&\displaystyle{u(t)=e^{it\Delta}u_0+2i\int_0^te^{i(t-s)\Delta}(v\overline{u})(s)ds,}\\[0.3cm]
\hspace{-0.4cm}&\displaystyle{v(t)=e^{\frac{1}{2}it\Delta}v_0+i\int_0^te^{\frac{1}{2}i(t-s)\Delta}(u^2)(s)ds,}
\end{cases}
\end{equation}
where $e^{it\Delta}f(x)=(e^{-4\pi^2it|\xi|^2}\widehat{f}\,)^\vee(x)$ and $e^{\frac{1}{2}it\Delta}f(x)=(e^{-2\pi^2it|\xi|^2}\widehat{f}\,)^\vee(x)$.

%decay estimate

\begin{theorem}\label{Linear estimate}\label{Linear estimate}
If $t\neq0,\,\frac{1}{p}+\frac{1}{p'}=1$ and $p'\in[1,2]$, then it follows that $e^{it\Delta}:L^{p'}\longrightarrow L^p$ is continuous and
\[
\|e^{it\Delta}f\|_{L^p}\leq c|t|^{-\frac{5}{2}\left(\frac{1}{p'}-\frac{1}{p}\right)}\|f\|_{L^{p'}},
\]
where $c$ is independent of $t$ and $f$.
\end{theorem}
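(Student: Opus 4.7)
The statement is the standard $L^{p'}\!\to L^{p}$ dispersive estimate for the Schr\"odinger group in dimension five, so the plan is the classical two-endpoints-plus-interpolation argument.

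The first step is to establish the two endpoint estimates. For $p'=p=2$, I would simply invoke Plancherel's theorem: since the Fourier multiplier $e^{-4\pi^{2}it|\xi|^{2}}$ has modulus one, $e^{it\Delta}$ is an isometry on $L^{2}$, giving the $t^{0}$ bound. For $p'=1$, $p=\infty$, the plan is to compute the convolution kernel of $e^{it\Delta}$ explicitly. With the Fourier convention used in the paper,
\[
e^{it\Delta}f(x)=\int_{\mathbb{R}^{5}}e^{2\pi i x\cdot\xi}e^{-4\pi^{2}it|\xi|^{2}}\widehat{f}(\xi)\,d\xi,
\]
so by completing the square in $\xi$ and using the $5$-dimensional complex Gaussian integral, one obtains a kernel of the form $K_{t}(x)=c\,t^{-5/2}e^{i|x|^{2}/(4t)}$ for $t\neq 0$, in particular $\|K_{t}\|_{L^{\infty}}\lesssim |t|^{-5/2}$. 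Young's inequality then yields $\|e^{it\Delta}f\|_{L^{\infty}}\le c|t|^{-5/2}\|f\|_{L^{1}}$.

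The second step is to interpolate. Fix $t\neq 0$ and regard $T_{t}:=e^{it\Delta}$ as a linear operator. From the two endpoints I have $\|T_{t}\|_{L^{1}\to L^{\infty}}\le c|t|^{-5/2}$ and $\|T_{t}\|_{L^{2}\to L^{2}}\le 1$. For any $p'\in[1,2]$ write $\frac{1}{p'}=\frac{1-\theta}{1}+\frac{\theta}{2}$ with $\theta\in[0,1]$, so that the dual exponent $p$ satisfies $\frac{1}{p}=\frac{1-\theta}{\infty}+\frac{\theta}{2}$. The Riesz--Thorin interpolation theorem then gives
\[
\|T_{t}\|_{L^{p'}\to L^{p}}\le \bigl(c|t|^{-5/2}\bigr)^{1-\theta}\cdot 1^{\theta}=c^{1-\theta}|t|^{-\frac{5}{2}(1-\theta)}.
\]
Since $1-\theta=\frac{1}{p'}-\frac{1}{p}$, this is exactly the claimed bound, with a constant independent of $t$ and $f$. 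Continuity of $T_{t}:L^{p'}\to L^{p}$ is a consequence of the same inequality.

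I do not expect a genuine obstacle here. The only item requiring some care is the explicit evaluation of the oscillatory Gaussian integral defining $K_{t}$; one has to justify it as a tempered-distribution limit (e.g.\ by replacing $it$ with $it+\varepsilon$ and sending $\varepsilon\downarrow 0$) rather than an absolutely convergent integral, but this is standard and the resulting kernel identity is well known. Once the $L^{1}\to L^{\infty}$ bound and $L^{2}$ isometry are in hand, the rest is an immediate application of Riesz--Thorin.
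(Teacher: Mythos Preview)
Your argument is correct and is exactly the classical proof of this dispersive estimate: Plancherel for the $L^2\to L^2$ endpoint, the explicit Gaussian kernel for the $L^1\to L^\infty$ endpoint, and Riesz--Thorin interpolation to fill in the intermediate exponents. The only caveat you already flagged---justifying the oscillatory Gaussian integral via an $\varepsilon$-regularization---is handled correctly.

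The paper itself does not prove this theorem; it is simply stated in Section~2.2 as a standard linear estimate, with no argument given. So there is nothing to compare against beyond noting that what you wrote is the textbook derivation (see, e.g., Cazenave~\cite{002} or Linares--Ponce~\cite{013}, both of which are in the paper's bibliography).
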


%\Strichartz norm

\begin{definition}\label{Strichartz norm}
We say that $(q,r)$ is $\dot{H}^s$ admissible (in 5d) if 
\[
\frac{2}{q}+\frac{5}{r}=\frac{5}{2}-s.
\]
Strichartz norm is defined as
\[
\|u\|_{S(L^2)}=\sup_{\substack{(q,r):L^2\text{admissible}\\2\leq q\leq\infty,2\leq r\leq\frac{10}{3}}}\|u\|_{L_t^qL_x^r}\,,\ \ \ \|u\|_{S(\dot{H}^\frac{1}{2})}=\sup_{\substack{(q,r):\dot{H}^\frac{1}{2}\text{admissible}\\4^+\leq q\leq\infty,\frac{5}{2}\leq r\leq\frac{10}{3}^-}}\|u\|_{L_t^qL_x^r}.
\]
Dual Strichartz norm is defined as
\[
\|u\|_{S'(L^2)}=\inf_{\substack{(q,r):L^2\text{admissible}\\2\leq q\leq\infty,2\leq r\leq\frac{10}{3}}}\|u\|_{L_t^{q'}L_x^{r'}}\,,\ \ \ \|u\|_{S'(\dot{H}^{-\frac{1}{2}})}=\inf_{\substack{(q,r):\dot{H}^{-\frac{1}{2}}\text{admissible}\\\frac{4}{3}^+\leq q\leq4,2\leq r\leq\frac{10}{3}^-}}\|u\|_{L_t^{q'}L_x^{r'}},
\]
where $(q',r')$ is the H\"older dual to $(q,r)$ and $4^+$ is an arbitrarily preselected and fixed number larger than $4$, similarly for $\frac{10}{3}^-$ and $\frac{4}{3}^+$.
\end{definition}

\begin{remark}
Dual Strichartz norm is not norm precisely. (Dual Strichartz norm may not satisfy triangle inequality.)
\end{remark}

%Strichartz estimates

\begin{theorem}[Strichartz estimates \cite{012}]\label{Strichartz estimates}
We have the following:
\[
\|e^{it\Delta}f\|_{S(L^2)}\leq c\|f\|_{L^2},
\]
\[
\left\|\int_0^te^{i(t-s)\Delta}F(\cdot,s)ds\right\|_{S(L^2)}\leq c\|f\|_{S'(L^2)}.
\]
If $(q,r)$ is $\dot{H}^\frac{1}{2}$ admissible and satisfies $\displaystyle2\leq q\leq\infty,\ \frac{5}{2}\leq r\leq5$, then
\[
\|e^{it\Delta}f\|_{L_t^qL_x^r}\leq c\|f\|_{\dot{H}^\frac{1}{2}},
\]
\[
\left\|\int_0^te^{i(t-s)\Delta}F(\cdot,s)ds\right\|_{L_t^qL_x^r}\leq c\|D^\frac{1}{2}F\|_{S'(L^2)},
\]
and
\[
\left\|\int_0^te^{i(t-s)\Delta}F(\cdot,s)ds\right\|_{S(\dot{H}^\frac{1}{2})}\leq c\|F\|_{S'(\dot{H}^{-\frac{1}{2}})},
\]
where $c$ is independent of $f$ or $F$.
\end{theorem}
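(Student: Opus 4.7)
The plan is to derive all five inequalities from the dispersive bound of Theorem \ref{Linear estimate} and the unitarity of $e^{it\Delta}$ on $L^2$, by combining the abstract Keel--Tao framework with Sobolev embedding to pass between $L^2$- and $\dot H^{1/2}$-admissible pairs. This is the standard route used in \cite{012}.

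First I would handle the two $L^2$-level bounds. Because $e^{it\Delta}$ is unitary on $L^2$ and Theorem \ref{Linear estimate} gives $\|e^{it\Delta}f\|_{L^\infty}\lesssim |t|^{-5/2}\|f\|_{L^1}$, the abstract Keel--Tao theorem applies with decay exponent $5/2>1$ and the full admissible region $2/q+5/r=5/2$, $2\le q\le\infty$, $2\le r\le 10/3$ (including the sharp endpoint $(q,r)=(2,10/3)$). Taking the supremum over admissible pairs yields $\|e^{it\Delta}f\|_{S(L^2)}\lesssim\|f\|_{L^2}$, and combining the homogeneous estimate with its dual and then with the $TT^*$ argument gives the retarded bound in $S(L^2)$ with $S'(L^2)$ on the right.

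Next I would upgrade to the $\dot H^{1/2}$ estimates by Sobolev embedding. Given a $\dot H^{1/2}$-admissible pair $(q,r)$ with $2/q+5/r=2$, define $\tilde r$ by $1/\tilde r=1/r+1/10$; a direct check shows $2/q+5/\tilde r=5/2$, so $(q,\tilde r)$ is $L^2$-admissible, and the hypothesis $2\le q\le\infty$, $5/2\le r\le 5$ translates into $\tilde r\in[2,10/3]$, which lies inside the range already handled. Since $D^{1/2}$ commutes with $e^{it\Delta}$ and the Sobolev embedding $\dot W^{1/2,\tilde r}\hookrightarrow L^r$ holds in dimension $5$,
\[
\|e^{it\Delta}f\|_{L^q_tL^r_x}\lesssim \|D^{1/2}e^{it\Delta}f\|_{L^q_tL^{\tilde r}_x}=\|e^{it\Delta}D^{1/2}f\|_{L^q_tL^{\tilde r}_x}\lesssim\|D^{1/2}f\|_{L^2}=\|f\|_{\dot H^{1/2}},
\]
proving the third estimate. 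The fourth estimate is obtained by applying the same embedding to the Duhamel term and then using the $L^2$-retarded bound on $D^{1/2}F$. For the final bound in $S(\dot H^{1/2})$--$S'(\dot H^{-1/2})$, I would apply the Sobolev-embedding trick on both sides simultaneously: on the output side as above, and on the input side via the dual embedding $L^{r'}\hookrightarrow \dot W^{-1/2,\tilde r'}$, which converts an $\dot H^{-1/2}$-admissible pair in the range prescribed by Definition \ref{Strichartz norm} into an $L^2$-admissible one, so that the $L^2$ retarded estimate closes the loop.

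The main obstacle is purely bookkeeping at the endpoints: Definition \ref{Strichartz norm} excludes the sharp endpoints via the $4^+$, $\tfrac{10}{3}^-$, $\tfrac{4}{3}^+$ notation precisely because the Sobolev-transfer argument needs the resulting $L^2$-admissible exponents to sit strictly inside the Keel--Tao admissible region (in particular strictly below $10/3$ for $\tilde r$). With the strict inequalities baked into Definition \ref{Strichartz norm}, the reduction goes through without touching the $(2,10/3)$ endpoint in the transferred inequalities, and the whole theorem reduces cleanly to the classical $L^2$-Strichartz estimates applied to $D^{1/2}f$ and $D^{1/2}F$.
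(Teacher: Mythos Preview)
The paper does not prove Theorem~\ref{Strichartz estimates}; it is simply quoted from Koh~\cite{012}. So there is no ``paper's own proof'' to compare against, and your sketch for the first four inequalities (Keel--Tao for the $L^2$ level, then Sobolev embedding $\dot W^{1/2,\tilde r}\hookrightarrow L^r$ with $1/\tilde r=1/r+1/10$ to reach $\dot H^{1/2}$-admissible outputs) is the standard and correct route.

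The gap is in your treatment of the fifth estimate. Your ``dual embedding on the input side'' does not land where you claim. If $(q_1,r_1)$ is $\dot H^{-1/2}$-admissible, so $2/q_1+5/r_1=3$, and you use $L^{r_1'}\hookrightarrow \dot W^{-1/2,\tilde r_1'}$ (the dual of Sobolev/HLS), then $1/\tilde r_1'=1/r_1'-1/10$, i.e.\ $1/\tilde r_1=1/r_1+1/10$, and hence $2/q_1+5/\tilde r_1=3+\tfrac12=\tfrac72$. That is \emph{not} $L^2$-admissible; you have moved further away, not closer. Equivalently: passing from $S(\dot H^{1/2})$ output and $S'(\dot H^{-1/2})$ input to $L^2$-admissible pairs on both sides requires a full derivative of slack, while a single Sobolev embedding supplies only half a derivative. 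After you apply Sobolev on the output (which is fine) you are left needing $\|D^{1/2}F\|_{L^{q_2'}L^{r_2'}}\lesssim\|F\|_{L^{q_1'}L^{r_1'}}$ for some $L^2$-admissible $(q_2,r_2)$, and no such bound holds---you cannot gain half a derivative by changing Lebesgue exponents.

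What is actually needed for the fifth inequality is the \emph{improved} (exotic) inhomogeneous Strichartz estimate, valid for non-$L^2$-admissible pairs on both sides under the Foschi--Vilela--Koh acceptability conditions; this is precisely the content of \cite{012} and the reason it is cited. In practice one checks that any $\dot H^{1/2}$-admissible $(q,r)$ in the range of Definition~\ref{Strichartz norm} paired with any $\dot H^{-1/2}$-admissible $(\tilde q,\tilde r)$ in the dual range satisfies the scaling relation $\tfrac1q+\tfrac1{\tilde q}=\tfrac52\bigl(1-\tfrac1r-\tfrac1{\tilde r}\bigr)$ together with the remaining acceptability constraints in \cite{012}; the strict endpoint exclusions $4^+,\ \tfrac{10}{3}^-,\ \tfrac{4}{3}^+$ in Definition~\ref{Strichartz norm} are there exactly so that those constraints are met. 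Your reduction idea is right in spirit for the first four lines, but for the last line you must invoke \cite{012} directly rather than the $L^2$ Keel--Tao theorem.
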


We extend our notation $S(\dot{H}^\frac{1}{2})$ as follows: If a time interval is not specified (, that is, if we just write $S(\dot{H}^\frac{1}{2})$), then the $t$-norm is evaluated over $(-\infty,+\infty)$. To indicate a restriction to a time subinterval $I\subset(-\infty,+\infty)$, we will write $S(\dot{H}^\frac{1}{2};I)$. Other Strichartz norm, dual Strichartz norm is also described similarly. Even if time is restricted, Theorem \ref{Strichartz estimates} still holds.

\subsection{Variational characterization}

%ground state

\begin{proposition}(\cite{007})\label{2}
For $\omega>0$, there exists a nontrivial real valued solution $(\phi_\omega,\psi_\omega)\in H^1\!\times\!H^1$ of the elliptic system
\begin{equation}
\notag \text{(gNLS) }
\begin{cases}
\hspace{-0.4cm}&\displaystyle{-\Delta\phi_\omega+\omega\phi_\omega=2\psi_\omega\phi_\omega,}\\
\hspace{-0.4cm}&\displaystyle{-\frac{1}{2}\Delta\psi_\omega+2\omega\psi_\omega=\phi_\omega^2.}
\end{cases}
\end{equation}
It is well known that $(u,v)=(e^{i\omega t}\phi_\omega,e^{i2\omega t}\psi_\omega)$ is a global solution to (NLS) if $(\phi_\omega,\psi_\omega)$ is the solution of (gNLS). We can see this type solutions does not decay as $t\longrightarrow\infty$.
\end{proposition}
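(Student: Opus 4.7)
The plan is to obtain $(\phi_\omega,\psi_\omega)$ as a minimizer of the action $I_\omega$ on the constraint set $C_\omega$ from Definition \ref{functional}; a Lagrange multiplier computation will identify the Euler--Lagrange system of this constrained problem with (gNLS). First I would compute how $M,K,P$ transform under $(u,v)\mapsto(e^{20\lambda}u(e^{8\lambda}\cdot),e^{20\lambda}v(e^{8\lambda}\cdot))$ in $\mathbb{R}^5$: a change of variables yields $M\mapsto M$ (since $2\cdot 20-5\cdot 8=0$), $K\mapsto e^{16\lambda}K$ and $P\mapsto e^{20\lambda}P$. Differentiating $I_\omega=\tfrac{\omega}{2}M+\tfrac{1}{2}K-P$ at $\lambda=0$ gives $K_\omega^{20,8}=8K-20P$ and the identity
\[
I_\omega-\tfrac{1}{20}K_\omega^{20,8}=\tfrac{\omega}{2}M+\tfrac{1}{10}K=L_\omega,
\]
so on $C_\omega$ we have $I_\omega=L_\omega$, which is coercive on $H^1\times H^1$. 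Using H\"older, the Sobolev embedding $\dot H^{5/6}(\mathbb{R}^5)\hookrightarrow L^3(\mathbb{R}^5)$ and interpolation one obtains $|P(u,v)|\lesssim M(u,v)^{1/4}K(u,v)^{5/4}$; combined with $K=\tfrac{5}{2}P$ on $C_\omega$ this forces $M(u,v)K(u,v)\geq c>0$, and AM--GM then yields $\mu_\omega^{20,8}\geq c'>0$.

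Next, take a minimizing sequence $(u_n,v_n)\in C_\omega$ with $I_\omega(u_n,v_n)\to\mu_\omega^{20,8}$; coercivity bounds it in $H^1\times H^1$. Passing to $(|u_n|,|v_n|)$ via the diamagnetic inequality and then to their Schwarz rearrangements yields nonnegative, real, radially decreasing pairs $(u_n^\ast,v_n^\ast)$ for which $M$ is preserved, $K$ decreases (P\'olya--Szeg\H{o}) and $|P|$ can only increase (Riesz rearrangement). A single rescaling with $\lambda\leq 0$ in the two-parameter family above brings the pair back onto $C_\omega$ without raising $I_\omega$. By Strauss' compact embedding $H^1_{\mathrm{rad}}(\mathbb{R}^5)\hookrightarrow L^3(\mathbb{R}^5)$ I extract a subsequence converging weakly in $H^1\times H^1$ and strongly in $L^3\times L^3$ to some $(\phi_\omega,\psi_\omega)$. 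Strong convergence yields $P(u_n^\ast,v_n^\ast)\to P(\phi_\omega,\psi_\omega)$; with weak lower semicontinuity of $M,K$ this gives $K_\omega^{20,8}(\phi_\omega,\psi_\omega)\leq 0$ and $L_\omega(\phi_\omega,\psi_\omega)\leq\mu_\omega^{20,8}$. The a priori bound $MK\geq c$ rules out $(\phi_\omega,\psi_\omega)\equiv(0,0)$, and strict negativity of $K_\omega^{20,8}$ at the limit would, after a further rescaling, produce an element of $C_\omega$ with action below $\mu_\omega^{20,8}$, a contradiction. Hence $(\phi_\omega,\psi_\omega)\in C_\omega$ attains the infimum.

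Finally, the Lagrange multiplier rule at this constrained minimum yields $I_\omega'(\phi_\omega,\psi_\omega)=\eta\,(K_\omega^{20,8})'(\phi_\omega,\psi_\omega)$ for some $\eta\in\mathbb{R}$. Pairing against the generator of the rescaling and using $K_\omega^{20,8}(\phi_\omega,\psi_\omega)=0$ gives
\[
0=K_\omega^{20,8}(\phi_\omega,\psi_\omega)=\eta\bigl(128\,K(\phi_\omega,\psi_\omega)-400\,P(\phi_\omega,\psi_\omega)\bigr)=-32\,\eta\,K(\phi_\omega,\psi_\omega),
\]
where the last equality uses $K=\tfrac{5}{2}P$ on $C_\omega$; since $K(\phi_\omega,\psi_\omega)>0$, necessarily $\eta=0$. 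Reading off $I_\omega'(\phi_\omega,\psi_\omega)=0$ componentwise recovers exactly (gNLS), and by construction $(\phi_\omega,\psi_\omega)$ is real, nonnegative and radial. The main obstacle I anticipate is the compactness step: ensuring that the weak $H^1$ limit is both nontrivial and still satisfies $K_\omega^{20,8}=0$. Working in the radial class via Schwarz symmetrization so that Strauss' compact embedding into $L^3(\mathbb{R}^5)$ applies is the cleanest route, sidestepping the usual concentration-compactness dichotomy.
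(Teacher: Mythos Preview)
Your approach is essentially the same as the one the paper carries out in Proposition~\ref{minimizer existence} (the stated Proposition~\ref{2} is merely cited from \cite{007} without proof): Schwarz symmetrization, the rescaling $(u,v)\mapsto(e^{20\lambda}u(e^{8\lambda}\cdot),e^{20\lambda}v(e^{8\lambda}\cdot))$ to return to $C_\omega$, Strauss' compact embedding $H^1_{\mathrm{rad}}\hookrightarrow L^3(\mathbb{R}^5)$ to pass to the limit in $P$, and the identification of the Euler--Lagrange system with (gNLS).

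There is one genuine difference worth noting. The paper, having obtained the minimizer $(\phi_0,\psi_0)\in C_\omega$, simply asserts that it is an unconstrained critical point of $I_\omega$ and differentiates $s\mapsto I_\omega(\phi_0+su,\psi_0+sv)$ directly; strictly speaking this skips the Lagrange multiplier. Your argument is cleaner here: you introduce the multiplier $\eta$, pair the Euler--Lagrange relation against the generator of the scaling, and use $K=\tfrac{5}{2}P$ on $C_\omega$ to compute $128K-400P=-32K\neq 0$, forcing $\eta=0$. This is the correct way to justify that the constrained minimizer is in fact a free critical point of $I_\omega$. Your nontriviality argument via the a~priori bound $M(u,v)K(u,v)\geq c$ on $C_\omega$ is a slight repackaging of the paper's contradiction argument (if the limit vanished, $P\to 0$ by $L^3$ convergence, hence $K\to 0$ along the sequence via the constraint, contradicting boundedness of $M$ together with $MK\geq c$); both are valid.
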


\begin{definition}
Real-valued functions $(\phi,\psi)\in H^1\!\times\!H^1$ is called a solution of (gNLS) if
\begin{equation}
\notag \text{(gNLSI) }
\begin{cases}
\hspace{-0.4cm}&\displaystyle{\int_{\mathbb{R}^5}\nabla \phi\cdot\nabla udx+\omega\int_{\mathbb{R}^5}\phi udx=2\int_{\mathbb{R}^5}\phi\psi udx,}\\[0.3cm]
\hspace{-0.4cm}&\displaystyle{\frac{1}{2}\int_{\mathbb{R}^5}\nabla \psi\cdot\nabla vdx+2\omega\int_{\mathbb{R}^5}\psi vdx=\int_{\mathbb{R}^5}\phi^2 vdx}
\end{cases}
\end{equation}
for any $u,v\in C_0^\infty(\mathbb{R}^5)$.
\end{definition}

%proposition of ground state

\begin{proposition}\label{proposition of ground state}
Let $\omega>0$. Then the solution $(\phi_\omega,\psi_\omega)$ to (gNLS) satisfies
\[
2K(\phi_\omega,\psi_\omega)=5P(\phi_\omega,\psi_\omega)\,,\ 2\omega M(\phi_\omega,\psi_\omega)=P(\phi_\omega,\psi_\omega).
\]
\end{proposition}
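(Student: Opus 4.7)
The plan is to extract two linear relations between $K(\phi_\omega,\psi_\omega)$, $\omega M(\phi_\omega,\psi_\omega)$, and $P(\phi_\omega,\psi_\omega)$, and then solve the resulting $2\times 2$ linear system. First, I would test the weak form (gNLSI) against $(u,v)=(\phi_\omega,\psi_\omega)$ itself (justified by a standard density argument from $C_0^\infty$ to $H^1$), which gives
\[
\|\nabla\phi_\omega\|_{L^2}^2+\omega\|\phi_\omega\|_{L^2}^2=2\int_{\mathbb{R}^5}\phi_\omega^2\psi_\omega\,dx,\qquad \tfrac{1}{2}\|\nabla\psi_\omega\|_{L^2}^2+2\omega\|\psi_\omega\|_{L^2}^2=\int_{\mathbb{R}^5}\phi_\omega^2\psi_\omega\,dx.
\]
Summing these yields the Nehari-type identity $K(\phi_\omega,\psi_\omega)+\omega M(\phi_\omega,\psi_\omega)=3P(\phi_\omega,\psi_\omega)$.

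Next, I would derive a Pohozaev-type identity by rescaling. Since $(\phi_\omega,\psi_\omega)$ is an unconstrained critical point of the action $I_\omega=\frac{1}{2}K+\frac{\omega}{2}M-P$, evaluating $I_\omega$ on the rescaled pair $(\phi_\omega(\lambda\,\cdot),\psi_\omega(\lambda\,\cdot))$ and using the $\mathbb{R}^5$ scalings $\int|\nabla f(\lambda\,\cdot)|^2=\lambda^{-3}\int|\nabla f|^2$, $\int|f(\lambda\,\cdot)|^2=\lambda^{-5}\int|f|^2$, and $\int\psi(\lambda\,\cdot)\phi(\lambda\,\cdot)^2=\lambda^{-5}\int\psi\phi^2$, the stationarity condition $\partial_\lambda I_\omega(\phi_\omega(\lambda\,\cdot),\psi_\omega(\lambda\,\cdot))|_{\lambda=1}=0$ reduces to
\[
-\tfrac{3}{2}K(\phi_\omega,\psi_\omega)-\tfrac{5\omega}{2}M(\phi_\omega,\psi_\omega)+5P(\phi_\omega,\psi_\omega)=0,
\]
i.e.\ $3K(\phi_\omega,\psi_\omega)+5\omega M(\phi_\omega,\psi_\omega)=10P(\phi_\omega,\psi_\omega)$. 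Alternatively one could multiply the first equation of (gNLS) by $x\cdot\nabla\phi_\omega$ and the second by $x\cdot\nabla\psi_\omega$, integrate by parts in $\mathbb{R}^5$, and sum, to obtain the same identity.

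Solving the $2\times 2$ linear system $K+\omega M=3P$ and $3K+5\omega M=10P$ immediately gives $2\omega M(\phi_\omega,\psi_\omega)=P(\phi_\omega,\psi_\omega)$ and hence $2K(\phi_\omega,\psi_\omega)=5P(\phi_\omega,\psi_\omega)$, which are the two claimed identities. The only point requiring care is justifying that the rescaled pair (or equivalently the Pohozaev multipliers $x\cdot\nabla\phi_\omega$, $x\cdot\nabla\psi_\omega$) is admissible as a test function in the weak formulation; this needs enough regularity and spatial decay of the ground state, which follows from standard elliptic bootstrap on the subcritical system together with exponential decay of bound states, as collected in \cite{007}. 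Apart from this routine justification, the proof is pure linear algebra on the two scaling identities above.
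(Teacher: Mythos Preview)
Your proposal is correct and follows essentially the same approach as the paper: both derive the Nehari identity $K+\omega M=3P$ by testing (gNLS) against $(\phi_\omega,\psi_\omega)$, then obtain the Pohozaev identity $3K+5\omega M=10P$ and solve the resulting $2\times 2$ system. The only cosmetic difference is that the paper carries out the Pohozaev step via the explicit $x\cdot\nabla\phi_\omega$, $x\cdot\nabla\psi_\omega$ multipliers and lengthy integration by parts (which you mention as your alternative), whereas your primary derivation packages it as stationarity of $I_\omega$ along the dilation curve; both require the same regularity/decay justification, which the paper also acknowledges (``Actual proof requires a regularization procedure'').
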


\begin{proof}
For simplicity, we give a formal calculation for the proof. Actual proof requires a regularization procedure.\\
Multiplying both side of $-\Delta\phi_\omega+\omega\phi_\omega=2\psi_\omega\phi_\omega$ by $\phi_\omega$ and integrating, we have
\[
-\int_{\mathbb{R}^5}\phi_\omega\Delta\phi_\omega dx+\omega\int_{\mathbb{R}^5}\phi_\omega^2dx=2\int_{\mathbb{R}^5}\psi_\omega\phi_\omega^2dx.
\]
Multiplying both side of $\displaystyle-\frac{1}{2}\Delta\psi_\omega+2\omega\psi_\omega=\phi_\omega^2$ by $\psi_\omega$ and integrating, we have
\[
-\frac{1}{2}\int_{\mathbb{R}^5}\psi_\omega\Delta\psi_\omega dx+2\omega\int_{\mathbb{R}^5}\psi_\omega^2dx=\int_{\mathbb{R}^5}\psi_\omega\phi_\omega^2dx.
\]
Using integration by parts,
\[
\|\nabla\phi_\omega\|_{L^2}^2+\omega\|\phi_\omega\|_{L^2}^2=2\text{Re}(\psi_\omega,\phi_\omega^2)_{L^2}\,,\ \ \frac{1}{2}\|\nabla\psi_\omega\|_{L^2}^2+2\omega\|\psi_\omega\|_{L^2}^2=\text{Re}(\psi_\omega,\phi_\omega^2)_{L^2}.
\]
Thus,
\begin{align}
\omega M(\phi_\omega,\psi_\omega)+K(\phi_\omega,\psi_\omega)=3P(\phi_\omega,\psi_\omega).\label{03}
\end{align}
Multiplying both side of $-\Delta\phi_\omega+\omega\phi_\omega=2\psi_\omega\phi_\omega$ by $x\cdot\nabla\phi_\omega$ and integrating, we have
\begin{align}
-\int_{\mathbb{R}^5}\Delta\phi_\omega x\cdot\nabla\phi_\omega dx+\omega\int_{\mathbb{R}^5}\phi_\omega x\cdot\nabla\phi_\omega dx=2\int_{\mathbb{R}^5}\psi_\omega\phi_\omega x\cdot\nabla\phi_\omega dx.\label{47}
\end{align}
Since
\begin{align*}
\int_{\mathbb{R}^5}\Delta\phi_\omega x\cdot\nabla\phi_\omega dx&=\sum_{k=1}^5\sum_{j=1}^5\int_{\mathbb{R}^5}\partial_k^2\phi_\omega x_j\partial_j\phi_\omega dx\\
&=\sum_{k=1}^5\sum_{j=1}^5\int_{\mathbb{R}^4}\left(\left[\partial_k\phi_\omega x_j\partial_j\phi_\omega\right]_{x_k=-\infty}^{x_k=\infty}-\int_\mathbb{R}\partial_k\phi_\omega\partial_k(x_j\partial_j\phi_\omega)dx_k\right)d\overline{x_k}\\
&=-\sum_{k=1}^5\int_{\mathbb{R}^5}(\partial_k\phi_\omega)^2dx-\sum_{k=1}^5\sum_{j=1}^5\int_{\mathbb{R}^5}\partial_k\phi_\omega x_j\partial_{kj}\phi _\omega dx\\
&=-\|\nabla\phi_\omega\|_{L^2}^2-\sum_{k=1}^5\sum_{j=1}^5\int_{\mathbb{R}^4}\left(\left[x_j(\partial_k\phi_\omega)^2\right]_{x_j=-\infty}^{x_j=\infty}-\int_\mathbb{R}\partial_j(x_j\partial_k\phi_\omega)\partial_k\phi_\omega dx_j\right)d\overline{x_j}\\
&=-\|\nabla\phi_\omega\|_{L^2}^2+\sum_{k=1}^5\sum_{j=1}^5\int_{\mathbb{R}^5}(\partial_k\phi_\omega+x_j\partial_{jk}\phi_\omega)\partial_k\phi_\omega dx\\
&=4\|\nabla\phi_\omega\|_{L^2}^2+\sum_{k=1}^5\sum_{j=1}^5\int_{\mathbb{R}^4}\left(\left[x_j\partial_j\phi_\omega\partial_k\phi_\omega\right]_{x_k=-\infty}^{x_k=\infty}-\int_\mathbb{R}\partial_j\phi_\omega\partial_k(x_j\partial_k\phi_\omega)dx_k\right)d\overline{x_k}\\
&=4\|\nabla\phi_\omega\|_{L^2}^2-\sum_{k=1}^5\int_{\mathbb{R}^5}(\partial_k\phi_\omega)^2dx-\sum_{k=1}^5\sum_{j=1}^5\int_{\mathbb{R}^5}\partial_k^2\phi_\omega x_j\partial_j\phi_\omega dx\\
&=3\|\nabla\phi_\omega\|_{L^2}^2-\int_{\mathbb{R}^5}\Delta\phi_\omega x\cdot\nabla\phi_\omega dx,
\end{align*}
it follows that
\begin{align}
\int_{\mathbb{R}^5}\Delta\phi_\omega x\cdot\nabla\phi_\omega dx=\frac{3}{2}\|\nabla\phi_\omega\|_{L^2}^2.\label{48}
\end{align}
Since
\begin{align*}
\int_{\mathbb{R}^5}\phi_\omega x\cdot\nabla\phi_\omega dx&=\sum_{j=1}^{5}\int_{\mathbb{R}^5}\phi_\omega x_j\partial_j\phi_\omega dx\\
&=\sum_{j=1}^{5}\int_{\mathbb{R}^4}\left(\left[x_j\phi_\omega^2\right]_{x_j=-\infty}^{x_j=\infty}-\int_\mathbb{R}(\phi_\omega+x_j\partial_j\phi_\omega)\phi_\omega dx_j\right)d\overline{x_j}\\
&=-\sum_{j=1}^{5}\int_{\mathbb{R}^5}(\phi_\omega^2+\phi_\omega x_j\partial_j\phi_\omega)dx\\
&=-5\|\phi_\omega\|_{L^2}^2-\int_{\mathbb{R}^5}\phi_\omega x\cdot\nabla\phi _\omega dx,
\end{align*}
it follows that
\begin{align}
\int_{\mathbb{R}^5}\phi_\omega x\cdot\nabla\phi_\omega dx=-\frac{5}{2}\|\phi_\omega\|_{L^2}^2.\label{49}
\end{align}
Since
\begin{align*}
\int_{\mathbb{R}^5}\psi_\omega\phi_\omega x\cdot\nabla\phi_\omega dx&=\sum_{j=1}^5\int_{\mathbb{R}^5}\psi_\omega\phi_\omega x_j\partial_j\phi_\omega dx\\
&=\sum_{j=1}^5\int_{\mathbb{R}^4}\left(\left[x_j\psi_\omega\phi_\omega^2\right]_{x_j=-\infty}^{x_j=\infty}-\int_\mathbb{R}\phi_\omega\partial_j(x_j\psi_\omega\phi_\omega)dx_j\right)d\overline{x_j}\\
&=-\sum_{j=1}^5\int_{\mathbb{R}^5}(\psi_\omega\phi_\omega^2+\phi_\omega^2x_j\partial_j\psi_\omega+\psi_\omega\phi_\omega x_j\partial_j\phi_\omega)dx\\
&=-5\int_{\mathbb{R}^5}\psi_\omega\phi_\omega^2dx-\int_{\mathbb{R}^5}\phi_\omega^2x\cdot\nabla\psi_\omega dx-\int_{\mathbb{R}^5}\psi_\omega\phi_\omega x\cdot\nabla\phi_\omega dx,
\end{align*}
it follows that
\begin{align}
\int_{\mathbb{R}^5}\psi_\omega\phi_\omega x\cdot\nabla\phi_\omega dx=-\frac{5}{2}\int_{\mathbb{R}^5}\psi_\omega\phi_\omega^2dx-\frac{1}{2}\int_{\mathbb{R}^5}\phi_\omega^2x\cdot\nabla\psi_\omega dx.\label{50}
\end{align}
Combining \eqref{47},\,\eqref{48},\,\eqref{49} and \eqref{50}, it follows that
\begin{align}
-\frac{3}{2}\|\nabla\phi_\omega\|_{L^2}^2-\frac{5}{2}\omega\|\phi_\omega\|_{L^2}^2=-5\text{Re}(\psi_\omega,\phi_\omega^2)_{L^2}-\int_{\mathbb{R}^5}\phi_\omega^2x\cdot\nabla\psi_\omega dx.\label{04}
\end{align}
Multiplying both side of $\displaystyle-\frac{1}{2}\Delta\psi_\omega+2\omega\psi_\omega=\phi_\omega^2$ by $x\cdot\nabla\psi_\omega$ and integrating
\[
-\frac{1}{2}\int_{\mathbb{R}^5}\Delta\psi_\omega x\cdot\nabla\psi_\omega dx+2\omega\int_{\mathbb{R}^5}\psi_\omega x\cdot\nabla\psi_\omega dx=\int_{\mathbb{R}^5}\phi_\omega^2x\cdot\nabla\psi_\omega dx.
\]
This formula combined with \eqref{48},\,\eqref{49} gives
\begin{align}
-\frac{3}{4}\|\nabla\psi_\omega\|_{L^2}^2-5\omega\|\psi_\omega\|_{L^2}^2=\int_{\mathbb{R}^5}\phi_\omega^2x\cdot\nabla\psi_\omega dx.\label{51}
\end{align}
Combining \eqref{04} and \eqref{51}, we have
\[
\frac{3}{2}\|\nabla\phi_\omega\|_{L^2}^2+\frac{5}{2}\omega\|\phi_\omega\|_{L^2}^2+\frac{3}{4}\|\nabla\psi_\omega\|_{L^2}^2+5\omega\|\psi_\omega\|_{L^2}^2=5\text{Re}(\psi_\omega,\phi_\omega^2)_{L^2}.
\]
Therefore,
\[
\frac{5}{2}\omega M(\phi_\omega,\psi_\omega)+\frac{3}{2}K(\phi_\omega,\psi_\omega)=5P(\phi_\omega,\psi_\omega).
\]
This formula combined with \eqref{03} gives
\[
2K(\phi_\omega,\psi_\omega)=5P(\phi_\omega,\psi_\omega)\,,\ 2\omega M(\phi_\omega,\psi_\omega)=P(\phi_\omega,\psi_\omega).
\]
\end{proof}

\begin{proposition}\label{K and virial}
Let $(u_0,v_0)\in H^1\!\times\!H^1$ and $(xu_0,xv_0)\in L^2\!\times\!L^2$. Let $(u,v)$ be the solution to (NLS) with initial data $(u_0,v_0)$ on $(T_\ast,T^\ast)$. Then, we have
\[
K_\omega^{20,8}(u,v)=\frac{d^2}{dt^2}(\|xu(t)\|_{L^2}^2+2\|xv(t)\|_{L^2})\ \  \left(=8K(u,v)-20P(u,v)\frac{}{}\right)
\]
for any $t\in(T_\ast,T^\ast)$.
\end{proposition}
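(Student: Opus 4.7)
The plan is to prove the identity by first computing $K_\omega^{20,8}(u,v)$ directly from its defining scaling derivative, obtaining $8K(u,v)-20P(u,v)$, and then rewriting the right-hand side of the virial identity (Theorem \ref{virial identity}) as the same expression using energy conservation.

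For the first step, I set $u_\lambda(x) := e^{20\lambda}u(e^{8\lambda}x)$ and $v_\lambda(x) := e^{20\lambda}v(e^{8\lambda}x)$. A single change of variables gives $\|u_\lambda\|_{L^2}^2 = \|u\|_{L^2}^2$ and the same for $v_\lambda$ (the exponents $(20,8)$ are chosen precisely to make the mass scaling-invariant), $\|\nabla u_\lambda\|_{L^2}^2 = e^{16\lambda}\|\nabla u\|_{L^2}^2$, and $P(u_\lambda,v_\lambda) = e^{20\lambda}P(u,v)$ (the prefactor $e^{60\lambda}$ in the integrand is partially absorbed by the Jacobian $e^{-40\lambda}$). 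Using $I_\omega(u,v) = \tfrac{\omega}{2}M(u,v) + \tfrac{1}{2}\bigl(K(u,v) - 2P(u,v)\bigr)$, differentiating in $\lambda$, and evaluating at $\lambda=0$ yields
\[
K_\omega^{20,8}(u,v) = \tfrac{1}{2}\bigl(16 K(u,v) - 40 P(u,v)\bigr) = 8K(u,v) - 20P(u,v).
\]
Note that the $\omega$-dependence drops out because the mass term is invariant under this scaling.

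For the second step, Theorem \ref{virial identity} gives
\[
\frac{d^2}{dt^2}\!\left(\|xu(t)\|_{L^2}^2 + 2\|xv(t)\|_{L^2}^2\right) = 10 E(u_0,v_0) - 2 K(u(t),v(t)).
\]
By the conservation of energy (Theorem \ref{Conservation law}), $E(u_0,v_0) = E(u(t),v(t)) = K(u(t),v(t)) - 2P(u(t),v(t))$, so the right-hand side equals $10(K-2P) - 2K = 8K - 20P$, which coincides with $K_\omega^{20,8}(u(t),v(t))$ by the first step. This is the claim.

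No serious obstacle is expected: the proof is a bookkeeping identity that works because the scaling exponents $(20,8)$ are mass-invariant and because the ratio of the potential-to-kinetic scaling rates ($20$ versus $16$) combines with the virial coefficients ($10$ versus $2$) to produce the same combination $8K-20P$ on both sides. The only care needed is keeping track of Jacobians and chain rules in the scaling computation, which is entirely routine.
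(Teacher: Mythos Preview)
Your proof is correct and follows essentially the same approach as the paper: both compute the scaling derivative via change of variables to get $K_\omega^{20,8}=8K-20P$, and both rewrite the virial identity $10E-2K$ as $8K-20P$ using energy conservation. The only cosmetic difference is that the paper first derives the general formula for $K_\omega^{\alpha,\beta}$ and then specializes to $(\alpha,\beta)=(20,8)$, whereas you work directly with $(20,8)$ from the start.
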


\begin{proof}
By the change of variable,
\[
\left\|e^{\alpha\lambda} u(e^{\beta\lambda}\cdot)\right\|_{L^2}^2=\int_{\mathbb{R}^5}|e^{\alpha\lambda} u(e^{\beta\lambda} x)|^2dx=e^{2\alpha\lambda}\int_{\mathbb{R}^5}|u(y)|^2\frac{dy}{e^{5\beta\lambda}}=e^{(2\alpha-5\beta)\lambda}\|u\|_{L^2}^2,
\]
\[
\left\|\nabla e^{\alpha\lambda} u(e^{\beta\lambda}\cdot)\right\|_{L^2}^2=\int_{\mathbb{R}^5}|\nabla e^{\alpha\lambda} u(e^{\beta\lambda} x)|^2dx=e^{2\alpha\lambda}\int_{\mathbb{R}^5}|e^{\beta\lambda} (\nabla u)(y)|^2\frac{dy}{e^{5\beta\lambda}}=e^{(2\alpha-3\beta)\lambda}\|\nabla u\|_{L^2}^2,
\]
\[
\text{Re}\left(e^{\alpha\lambda} v(e^{\beta\lambda}\cdot),(e^{\alpha\lambda} u(e^{\beta\lambda}\cdot))^2\right)_{L^2}=\int_{\mathbb{R}^5}e^{\alpha\lambda} v(e^{\beta\lambda} x)\cdot\overline{e^{\alpha\lambda} u(e^{\beta\lambda} x)}^2dx=e^{(3\alpha-5\beta)\lambda}\text{Re}(v,u^2)_{L^2}.
\]
Thus,
\begin{align*}
K_\omega^{\alpha,\beta}(u,v)&=\partial_\lambda I_\omega(e^{\alpha\lambda} u(e^{\beta\lambda}\cdot),e^{\alpha\lambda} v(e^{\beta\lambda}\cdot))\arrowvert_{\lambda=0}\\
&=\partial_\lambda\left(\frac{\omega}{2}\|e^{\alpha\lambda} u(e^{\beta\lambda}\cdot)\|_{L^2}^2+\omega\|e^{\alpha\lambda} v(e^{\beta\lambda}\cdot)\|_{L^2}^2+\frac{1}{2}\|\nabla e^{\alpha\lambda} u(e^{\beta\lambda}\cdot)\|_{L^2}^2\right.\\
&\ \ \ \ \ \ \ \ \ \ \ \ \ \ \ \ \ \ \ \ \ \ \ \ \ \ \ \ \ \ \ \left.+\frac{1}{4}\|\nabla e^{\alpha\lambda} v(e^{\beta\lambda}\cdot)\|_{L^2}^2-\text{Re}(e^{\alpha\lambda} v(e^{\beta\lambda}\cdot),(e^{\alpha\lambda} u(e^{\beta\lambda}\cdot))^2)_{L^2}\right|_{\lambda=0}\\
&=\frac{\omega}{2}(2\alpha-5\beta)\|u\|_{L^2}^2+\omega(2\alpha-5\beta)\|v\|_{L^2}^2+\frac{1}{2}(2\alpha-3\beta)\|\nabla u\|_{L^2}^2\\
&\ \ \ \ \ \ \ \ \ \ \ \ \ \ \ \ \ \ \ \ \ \ \ \ \ \ \ \ \ \ \ +\frac{1}{4}(2\alpha-3\beta)\|\nabla v\|_{L^2}^2-(3\alpha-5\beta)\text{Re}(v,u^2)_{L^2}.
\end{align*}
By Theorem \ref{virial identity},
\begin{align*}
\frac{d^2}{dt^2}(\|xu(t)\|_{L^2}^2&+2\|xv(t)\|_{L^2})=10E(u_0,v_0)-2\left(\|\nabla u(t)\|_{L^2}^2+\frac{1}{2}\|\nabla v(t)\|_{L^2}^2\right)\\
&=10\left(\|\nabla u(t)\|_{L^2}^2+\frac{1}{2}\|\nabla v(t)\|_{L^2}^2-2\text{Re}(v,u^2)_{L^2}\right)-2\left(\|\nabla u(t)\|_{L^2}^2+\frac{1}{2}\|\nabla v(t)\|_{L^2}^2\right)\\
&=8\|\nabla u(t)\|_{L^2}^2+4\|\nabla v(t)\|_{L^2}^2-20\text{Re}(v,u^2)_{L^2}.
\end{align*}
We set $\alpha=20,\ \beta=8$, which means $2\alpha-5\beta=0,\ 2\alpha-3\beta=16,\ 3\alpha-5\beta=20$ and
\[
K_\omega^{\alpha,\beta}(u,v)=\frac{d^2}{dt^2}(\|xu(t)\|_{L^2}^2+2\|xv(t)\|_{L^2}).
\]
\end{proof}

\begin{lemma}\label{lemma of inferior}
We have
\[
\mu_\omega^{20,8}=\inf_{(u,v)\in H^1\times H^1\setminus\{(0,0)\}}\left\{L_\omega(u,v):K_\omega^{20,8}(u,v)\leq0\right\}.
\]
\end{lemma}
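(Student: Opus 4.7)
The plan is to exploit the simple linear relation between $I_\omega$, $L_\omega$ and $K_\omega^{20,8}$, and then run a scaling argument. Using the computation in the proof of Proposition \ref{K and virial} with $\alpha=20$, $\beta=8$ (so that $2\alpha-5\beta=0$, $2\alpha-3\beta=16$, $3\alpha-5\beta=20$), one has
\[
K_\omega^{20,8}(u,v)=8K(u,v)-20P(u,v),
\]
and since $I_\omega(u,v)=\tfrac{\omega}{2}M(u,v)+\tfrac{1}{2}K(u,v)-P(u,v)$, a direct substitution yields the identity
\[
I_\omega(u,v)=L_\omega(u,v)+\tfrac{1}{20}K_\omega^{20,8}(u,v).
\]
In particular $I_\omega=L_\omega$ on the constraint set $C_\omega=\{K_\omega^{20,8}=0\}$, so $\mu_\omega^{20,8}=\inf\{L_\omega(u,v):(u,v)\in C_\omega\}$. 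Because $C_\omega$ is contained in the larger set $\{K_\omega^{20,8}\le 0\}\setminus\{(0,0)\}$, the inequality $\inf\{L_\omega:K_\omega^{20,8}\le 0\}\le\mu_\omega^{20,8}$ is immediate.

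For the reverse inequality I would fix any nontrivial $(u,v)$ with $K_\omega^{20,8}(u,v)\le 0$ and deform it along the scaling $(u^\lambda,v^\lambda)=(e^{20\lambda}u(e^{8\lambda}\cdot),e^{20\lambda}v(e^{8\lambda}\cdot))$ to land on the Nehari manifold $C_\omega$. The scaling law established in the proof of Proposition \ref{K and virial} gives
\[
M(u^\lambda,v^\lambda)=M(u,v),\qquad K(u^\lambda,v^\lambda)=e^{16\lambda}K(u,v),\qquad P(u^\lambda,v^\lambda)=e^{20\lambda}P(u,v),
\]
so $K_\omega^{20,8}(u^\lambda,v^\lambda)=e^{16\lambda}\bigl(8K(u,v)-20e^{4\lambda}P(u,v)\bigr)$. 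The assumption $K_\omega^{20,8}(u,v)\le 0$ together with non-triviality forces $K(u,v)>0$ and hence $P(u,v)\ge\tfrac{2}{5}K(u,v)>0$, so the equation $K_\omega^{20,8}(u^\lambda,v^\lambda)=0$ is solved by a unique
\[
\lambda_0=\tfrac{1}{4}\log\!\tfrac{2K(u,v)}{5P(u,v)}\le 0.
\]
By construction $(u^{\lambda_0},v^{\lambda_0})\in C_\omega$, and since $L_\omega(u^\lambda,v^\lambda)=\tfrac{\omega}{2}M(u,v)+\tfrac{1}{10}e^{16\lambda}K(u,v)$ is monotone increasing in $\lambda$, we obtain
\[
\mu_\omega^{20,8}\le I_\omega(u^{\lambda_0},v^{\lambda_0})=L_\omega(u^{\lambda_0},v^{\lambda_0})\le L_\omega(u,v).
\]
Taking the infimum over all admissible $(u,v)$ gives the missing inequality and completes the proof.

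The only delicate point is the sign $\lambda_0\le 0$, which is precisely what makes the monotonicity of $L_\omega$ along the scaling work in our favor. This sign is not automatic; it rests on the observation that the hypothesis $K_\omega^{20,8}\le 0$ rules out both $u\equiv 0$ and $v\equiv 0$ (each of these would force $P=0$ and hence $K_\omega^{20,8}=8K\ge 0$, so triviality by $H^1$-membership), which in turn guarantees $P(u,v)>0$ and therefore allows the explicit formula for $\lambda_0$ above. Once this sign analysis is pinned down, the rest of the argument is a one-line monotonicity comparison.
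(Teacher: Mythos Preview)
Your proof is correct and follows essentially the same approach as the paper's: both use the scaling $(u^\lambda,v^\lambda)=(e^{20\lambda}u(e^{8\lambda}\cdot),e^{20\lambda}v(e^{8\lambda}\cdot))$ to project any $(u,v)$ with $K_\omega^{20,8}(u,v)\le 0$ onto $C_\omega$ at some $\lambda_0\le 0$, and then exploit that $L_\omega$ is monotone in $\lambda$ along this scaling. Your presentation is slightly cleaner in that you isolate the identity $I_\omega=L_\omega+\tfrac{1}{20}K_\omega^{20,8}$ at the outset, which makes the equality $I_\omega=L_\omega$ on $C_\omega$ and the monotonicity comparison transparent; the paper arrives at the same inequalities by direct computation of $I_\omega(u^{\lambda_0},v^{\lambda_0})$, and treats the cases $K_\omega^{20,8}<0$ and $K_\omega^{20,8}=0$ separately rather than uniformly via $\lambda_0\le 0$. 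One minor remark: your justification that $K(u,v)>0$ via ruling out $u\equiv 0$ or $v\equiv 0$ is a bit roundabout---it suffices to note that any nonzero $H^1(\mathbb{R}^5)$ pair has $K>0$, whence $K_\omega^{20,8}\le 0$ forces $P\ge\tfrac{2}{5}K>0$ directly.
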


\begin{proof}
We take for any $(u,v)\in H^1\!\times\!H^1\setminus\{(0,0)\}$ with $K_\omega^{20,8}(u,v)<0$.
\begin{align*}
K_\omega^{20,8}(e^{20\lambda}u(e^{8\lambda}\cdot),e^{20\lambda}v(e^{8\lambda}\cdot))&=8\|\nabla e^{20\lambda}u(e^{8\lambda}\cdot)\|_{L^2}^2+4\|\nabla e^{20\lambda}v(e^{8\lambda}\cdot)\|_{L^2}^2-20P(e^{20\lambda}u(e^{8\lambda}\cdot),e^{20\lambda}v(e^{8\lambda}\cdot))\\
&=8e^{16\lambda}\|\nabla u\|_{L^2}^2+4e^{16\lambda}\|\nabla v\|_{L^2}^2-20e^{20\lambda}P(u,v).
\end{align*}
From $K_\omega^{20,8}(u,v)<0$, we have $P(u,v)>0$. Hence, there exists $\lambda_0<0$ such that
\begin{align}
K_\omega^{20,8}(e^{20\lambda_0}u(e^{8\lambda_0}\cdot),e^{20\lambda_0}v(e^{8\lambda_0}\cdot))=0.\label{05}
\end{align}
For such $\lambda_0<0$, we have
\begin{align}
&I_\omega(e^{20\lambda_0}u(e^{8\lambda_0}\cdot),e^{20\lambda_0}v(e^{8\lambda_0}\cdot)) \notag \\
&~~~=\frac{\omega}{2}M(e^{20\lambda_0}u(e^{8\lambda_0}\cdot),e^{20\lambda_0}v(e^{8\lambda_0}\cdot))+\frac{1}{2}K(e^{20\lambda_0}u(e^{8\lambda_0}\cdot),e^{20\lambda_0}v(e^{8\lambda_0}\cdot))-P(e^{20\lambda_0}u(e^{8\lambda_0}\cdot),e^{20\lambda_0}v(e^{8\lambda_0}\cdot)) \notag \\
&~~~=\frac{\omega}{2}M(e^{20\lambda_0}u(e^{8\lambda_0}\cdot),e^{20\lambda_0}v(e^{8\lambda_0}\cdot))+\frac{1}{10}K(e^{20\lambda_0}u(e^{8\lambda_0}\cdot),e^{20\lambda_0}v(e^{8\lambda_0}\cdot)) \notag \\
&~~~=\frac{\omega}{2}M(u,v)+\frac{1}{10}e^{16\lambda_0}K(u,v) \notag \\
&~~~\leq L_\omega(u,v)\label{52}
\end{align}
from Proposition \ref{proposition of ground state}. Also, for any $(u,v)\in H^1\!\times\!H^1\setminus\{(0,0)\}$ with $K_\omega^{20,8}(u,v)=0$, we have $I_\omega(u,v)=L_\omega(u,v)$. Thus,
\begin{align*}
\mu_\omega^{20,8}&=\inf_{(u,v)\in H^1\times H^1\setminus\{(0,0)\}}\left\{I_\omega(u,v):K_\omega^{20,8}(u,v)=0\right\}\\
&\leq\inf_{(u,v)\in H^1\times H^1\setminus\{(0,0)\}}\left\{L_\omega(u,v):K_\omega^{20,8}(u,v)\leq0\right\}.
\end{align*}
On the other hand, it follows that
\[
\mu_\omega^{20,8}=\inf_{(u,v)\in H^1\!\times\!H^1\setminus\{(0,0)\}}\{L_\omega(u,v):K_\omega^{20,8}(u,v)=0\},
\]
and so
\[
\mu_\omega^{20,8}\geq\inf_{(u,v)\in H^1\times H^1\setminus\{(0,0)\}}\left\{L_\omega(u,v):K_\omega^{20,8}(u,v)\leq0\right\}.
\]
\end{proof}

%minimizer existence

\begin{proposition}\label{minimizer existence}
Let $\omega>0$. Then we have the followings
\begin{itemize}
\item[(1)]$\mu_\omega^{20,8}\geq0$,
\item[(2)]there exists a pair of nonnegative radial functions $(\phi_0,\psi_0)\in C_\omega$ such that $I_\omega(\phi_0,\psi_0)=\mu_\omega^{20,8}$. Moreover, $(\phi_0,\psi_0)$ solves (gNLS).
\end{itemize}
\end{proposition}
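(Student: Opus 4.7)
The plan is to exploit the reformulation from Lemma \ref{lemma of inferior}: $\mu_\omega^{20,8}$ equals the infimum of the manifestly nonnegative functional $L_\omega$ over $\{(u,v)\in H^1\!\times\!H^1\setminus\{(0,0)\}:K_\omega^{20,8}(u,v)\leq 0\}$, so part (1) is immediate from $L_\omega\geq 0$. To rule out $\mu_\omega^{20,8}=0$ and power the existence argument, I would combine the identity $K_\omega^{20,8}=8K-20P$ with the Gagliardo--Nirenberg inequality $\|w\|_{L^3}\lesssim\|\nabla w\|_{L^2}^{5/6}\|w\|_{L^2}^{1/6}$ on $\mathbb{R}^5$ to obtain $|P(u,v)|\leq\|u\|_{L^3}^2\|v\|_{L^3}\lesssim K(u,v)^{5/4}M(u,v)^{1/4}$. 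On the constraint set one has $K\leq\tfrac{5}{2}P$, so $KM\gtrsim 1$ uniformly; this keeps $L_\omega$ bounded below away from zero.

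For (2), take a minimizing sequence $\{(u_n,v_n)\}$ for $L_\omega$ under the constraint and symmetrize: replacing the pair by $(|u_n|^\ast,|v_n|^\ast)$ preserves $M$, does not increase $K$ (by Polya--Szego together with Kato's inequality for the complex absolute value), and does not decrease $P$ (Riesz rearrangement applied to $\int|u|^2|v|\,dx$). The symmetrized pair still lies in $\{K_\omega^{20,8}\leq 0\}$ and has no larger $L_\omega$, so I may assume $(u_n,v_n)$ is nonnegative and radially symmetric-decreasing. Boundedness in $H^1\!\times\!H^1$ yields a weak limit $(\phi_0,\psi_0)$, and Strauss's compact embedding $H^1_{\text{rad}}(\mathbb{R}^5)\hookrightarrow L^3$ upgrades this to strong $L^3$ convergence, so $P(u_n,v_n)\to P(\phi_0,\psi_0)$. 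The uniform lower bound on $P$ from the previous paragraph prevents $(\phi_0,\psi_0)=(0,0)$; weak lower semicontinuity of $M$ and $K$ gives $K_\omega^{20,8}(\phi_0,\psi_0)\leq 0$ and $L_\omega(\phi_0,\psi_0)\leq\mu_\omega^{20,8}$, hence equality.

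It remains to upgrade this to $(\phi_0,\psi_0)\in C_\omega$ solving (gNLS). If $K_\omega^{20,8}(\phi_0,\psi_0)<0$, the scaling from the proof of Lemma \ref{lemma of inferior}, namely $(u,v)\mapsto(e^{20\lambda}u(e^{8\lambda}\cdot),e^{20\lambda}v(e^{8\lambda}\cdot))$ under which $M\mapsto M$, $K\mapsto e^{16\lambda}K$ and $P\mapsto e^{20\lambda}P$, furnishes $\lambda_0<0$ for which the rescaled pair lies in $C_\omega$ with strictly smaller $L_\omega$, contradicting minimality. Hence $(\phi_0,\psi_0)\in C_\omega$ and $I_\omega(\phi_0,\psi_0)=\mu_\omega^{20,8}$. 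The Lagrange multiplier theorem produces $\eta\in\mathbb{R}$ with $I_\omega'(\phi_0,\psi_0)=\eta\,(K_\omega^{20,8})'(\phi_0,\psi_0)$; pairing both sides with the generator $\partial_\lambda|_{\lambda=0}$ of the same scaling yields $0=K_\omega^{20,8}(\phi_0,\psi_0)=\eta(128K-400P)$, and since $K=\tfrac{5}{2}P$ at the minimizer the parenthesized quantity equals $-80P<0$, forcing $\eta=0$. Thus $I_\omega'(\phi_0,\psi_0)=0$, which is (gNLS).

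The main technical obstacle is the compactness step: weak $H^1\!\times\!H^1$ convergence does not pass to the cubic $P(u,v)=\text{Re}\,(v,u^2)_{L^2}$, so reducing to radial symmetric-decreasing competitors via rearrangement is essential, and this reduction is precisely what forces the use of the inequality form of the constraint from Lemma \ref{lemma of inferior} rather than the equality $K_\omega^{20,8}=0$.
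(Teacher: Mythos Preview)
Your argument is correct and follows essentially the same architecture as the paper: both rely on Lemma~\ref{lemma of inferior} to recast $\mu_\omega^{20,8}$ as an infimum of $L_\omega$ over $\{K_\omega^{20,8}\le 0\}$, reduce to nonnegative radial competitors via rearrangement, pass to a weak limit using Strauss' compact embedding $H^1_{\mathrm{rad}}\hookrightarrow L^3$ to recover $P$, and use the $(20,8)$ scaling to land on $C_\omega$. The only organizational difference is that the paper rescales the minimizing sequence onto $C_\omega$ \emph{before} extracting the weak limit, whereas you extract first and rescale the limit; both orders work.

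Where your proposal genuinely diverges is the Euler--Lagrange step. The paper simply asserts that a minimizer of $I_\omega$ on $C_\omega$ is an unconstrained critical point and differentiates directly; you instead invoke Lagrange multipliers, obtain $I_\omega'=\eta\,(K_\omega^{20,8})'$, and eliminate $\eta$ by pairing with the scaling generator (using $128K-400P=-80P<0$ at the minimizer). Your route is the standard ``natural constraint'' argument and is more transparent about why no multiplier survives; it also implicitly confirms the constraint is regular at $(\phi_0,\psi_0)$, since the same computation shows $(K_\omega^{20,8})'(\phi_0,\psi_0)\neq 0$. Both arrive at (gNLS), but your version makes the logic explicit.
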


\begin{proof}
(1)\ \ We apply Lemma \ref{lemma of inferior} for $\mu_\omega^{20,8}$ and take for any $(\phi,\psi)\in C_\omega$. From Proposition \ref{K and virial} with $K_\omega^{20,8}(\phi,\psi)=0$, we have $20P(\phi,\psi)=8K(\phi,\psi)$. Thus,
\[
I_\omega(\phi,\psi)=\frac{1}{2}K_\omega(\phi,\psi)-P(\phi,\psi)=\frac{1}{2}K_\omega(\phi,\psi)-\frac{2}{5}K(\phi,\psi)\geq\frac{1}{10}K_\omega(\phi,\psi)>0.
\]
Therefore, $\mu_\omega^{20,8}\geq0$.\\
(2)\ \ We take $\{(\phi_j,\psi_j)\}\subset\{(u,v)\in H^1\!\times\!H^1\setminus\{(0,0)\}:K_\omega^{20,8}(u,v)\leq0\}$ with $L_\omega(\phi_j,\psi_j)\searrow\mu_\omega^{20,8}$.\\
We consider the symmetric-decreasing rearrangement $(\phi_j^\ast,\psi_j^\ast)$ of $(\phi_j,\psi_j)$. Then, $0\geq K_\omega^{20,8}(\phi_j,\psi_j)$\\
$\geq K_\omega^{20,8}(\phi_j^\ast,\psi_j^\ast)$, $L_\omega(\phi_j,\psi_j)\geq L_\omega(\phi_j^\ast,\psi_j^\ast)$ by $M(\phi_j,\psi_j)=M(\phi_j^\ast,\psi_j^\ast)$, $K(\phi_j,\psi_j)\geq K(\phi_j^\ast,\psi_j^\ast)$, $P(\phi_j,\psi_j)\leq P(\phi_j^\ast,\psi_j^\ast)$ (see\,\cite{007}, p674). Thus, we may assume that $(\phi_j,\psi_j)$ is non-negative and radially symmetric function in $H^1\!\times\!H^1$. By \eqref{05}, for any $j\in\mathbb{N}$, there exists $\lambda_j\leq0$ such that
\[
K_\omega^{20,8}(e^{20\lambda_j}\phi_j(e^{8\lambda_j}\cdot),e^{20\lambda_j}\psi_j(e^{8\lambda_j}\cdot))=0.
\]
We set $(\widetilde{\phi}_j,\widetilde{\psi}_j)=(e^{20\lambda_j}\phi_j(e^{8\lambda_j}\cdot),e^{20\lambda_j}\psi_j(e^{8\lambda_j}\cdot))$. Also, $L_\omega(\phi_j,\psi_j)\geq L_\omega(\widetilde{\phi}_j,\widetilde{\psi}_j)\searrow\mu_\omega^{20,8}$ by \eqref{52}. Since $L_\omega(\widetilde{\phi}_j,\widetilde{\psi}_j)\leq L_\omega(\phi_1,\psi_1)$, 
$\{(\widetilde{\phi}_j,\widetilde{\psi}_j)\}$ is bounded in $H^1\!\times\!H^1$. There exists a subsequence still denoted by $\{(\widetilde{\phi}_j,\widetilde{\psi}_j)\}$ and $(\widetilde{\phi},\widetilde{\psi})\in H_{\text{rad}}^1\times H_{\text{rad}}^1\ (H_{\text{rad}}^1=\{u\in H^1:u\!:\!\text{radial}\})$ such that
\[
\widetilde{\phi}_j\rightharpoonup\widetilde{\phi}\ ,\ \ \widetilde{\psi}_j\rightharpoonup\widetilde{\psi}\ \ \text{as}\ \ j\rightarrow\infty\ \ \text{in}\ \ H^1,
\]
where $\rightharpoonup$ denotes weak convergence. By Strauss' compactness embedding $H_{\text{rad}}^1\subset L^3$,
\[
\widetilde{\phi}_j\rightarrow\widetilde{\phi}\ ,\ \ \widetilde{\psi}_j\rightarrow\widetilde{\psi}\ \ \text{as}\ \ j\rightarrow\infty\ \ \text{in}\ \ L^3.
\]
Thus,
\begin{align*}
\left|P(\widetilde{\phi},\widetilde{\psi})-P(\widetilde{\phi}_j,\widetilde{\psi}_j)\right|&=\left|\int_{\mathbb{R}^5}\widetilde{\psi}\widetilde{\phi}^2dx-\int_{\mathbb{R}^5}\widetilde{\psi}_j\widetilde{\phi}_j^2dx\right|\\
&\leq\left|\int_{\mathbb{R}^5}\widetilde{\psi}(\widetilde{\phi}^2-\widetilde{\phi}_j^2)dx\right|+\left|\int_{\mathbb{R}^5}(\widetilde{\psi}-\widetilde{\psi}_j)\widetilde{\phi}_j^2dx\right|\\
&\leq\|\widetilde{\psi}\|_{L^3}\|\widetilde{\phi}+\widetilde{\phi}_j\|_{L^3}\|\widetilde{\phi}-\widetilde{\phi}_j\|_{L^3}+\|\widetilde{\psi}-\widetilde{\psi}_j\|_{L^3}\|\widetilde{\phi}_j\|_{L^3}^2\\
&\longrightarrow\,0\ \ \text{as}\ \ j\rightarrow\infty.
\end{align*}
Therefore, $P(\widetilde{\phi},\widetilde{\psi})=\displaystyle\lim_{j\rightarrow\infty}P(\widetilde{\phi}_j,\widetilde{\psi}_j)$, which combined with a property of weak convergence, gives $\displaystyle K_\omega^{20,8}(\widetilde{\phi},\widetilde{\psi})\leq\lim_{j\rightarrow\infty}K_\omega^{20,8}(\widetilde{\phi}_j,\widetilde{\psi}_j)=0$. Here, we prove $(\widetilde{\phi},\widetilde{\psi})\neq(0,0)$ by contradiction. We assume  that $(\widetilde{\phi},\widetilde{\psi})=(0,0)$.
\[
8K(\widetilde{\phi}_j,\widetilde{\psi}_j)=20P(\widetilde{\phi}_j,\widetilde{\psi}_j)\longrightarrow20P(\widetilde{\phi},\widetilde{\psi})=0\ \ \text{as}\ \ n\rightarrow\infty,
\]
and we apply Gagliardo-Nirenberg's inequality\,:$\,\|u\|_{L^3}\leq c\|\nabla u\|_{L^2}^\frac{5}{6}\|u\|_{L^2}^\frac{1}{6}$, then
\[
P(\widetilde{\phi}_j,\widetilde{\psi_j})\leq \|\widetilde{\phi}_j\|_{L^3}^2\|\widetilde{\psi}_j\|_{L^3}\leq c\|\nabla\widetilde{\phi}_j\|_{L^2}^\frac{5}{3}\|\widetilde{\phi}_j\|_{L^2}^\frac{1}{3}\|\nabla\widetilde{\psi}_j\|_{L^2}^\frac{5}{6}\|\widetilde{\psi}_j\|_{L^2}^\frac{1}{6}\leq c\|\widetilde{\phi}_j\|_{L^2}^\frac{1}{3}\|\widetilde{\psi}_j\|_{L^2}^\frac{1}{6}K(\widetilde{\phi}_j,\widetilde{\psi}_j)^\frac{5}{4}.
\]
Since $\|\widetilde{\phi}_j\|_{L^2}\,,\,\|\widetilde{\psi}_j\|_{L^2}$ are bounded with respect to $j$, we have $P(\widetilde{\phi}_j,\widetilde{\psi_j})\leq cK(\widetilde{\phi}_j,\widetilde{\psi}_j)^\frac{5}{4}$. Thus, for sufficiently large $j\in\mathbb{N}$, it follows that
\[
K_\omega^{20,8}(\widetilde{\phi}_j,\widetilde{\psi}_j)=8K(\widetilde{\phi}_j,\widetilde{\psi}_j)-20P(\widetilde{\phi}_j,\widetilde{\psi}_j)\geq(8-cK(\widetilde{\phi}_j,\widetilde{\psi}_j)^\frac{1}{4})K(\widetilde{\phi}_j,\widetilde{\psi}_j)>0.
\]
This is contradiction. Therefore, since $(\widetilde{\phi}_j,\widetilde{\psi}_j)\neq(0,0)$, $\mu_\omega^{20,8}\leq\displaystyle L_\omega(\widetilde{\phi},\widetilde{\psi})\leq\lim_{j\rightarrow\infty}L_\omega(\widetilde{\phi}_j,\widetilde{\psi}_j)=\mu_\omega^{20,8}$, i.e. $L_\omega(\phi_j,\psi_j)=\mu_\omega^{20,8}$. Thus, there exists $\lambda_0\leq0$ such that $K_\omega^{20,8}(e^{20\lambda_0}\widetilde{\phi}(e^{8\lambda_0}\cdot),e^{20\lambda_0}\widetilde{\psi}(e^{8\lambda_0}\cdot))=0$  and $I_\omega(e^{20\lambda_0}\widetilde{\phi}(e^{8\lambda_0}\cdot),e^{20\lambda_0}\widetilde{\psi}(e^{8\lambda_0}\cdot))=L_\omega(e^{20\lambda_0}\widetilde{\phi}(e^{8\lambda_0}\cdot),e^{20\lambda_0}\widetilde{\psi}(e^{8\lambda_0}\cdot))=\mu_\omega^{20,8}$. We set $(\phi_0,\psi_0)=(e^{20\lambda_0}\widetilde{\phi}(e^{8\lambda_0}\cdot),e^{20\lambda_0}\widetilde{\psi}(e^{8\lambda_0}\cdot))$. From here, we will prove that this $(\phi_0,\psi_0)$ is nonnegative radial solution to (gNLS). Because we proved that $(\phi_0,\psi_0)$ is nonnegative and radial, it remain to prove that $(\phi_0,\psi_0)$ is a solution to (gNLS). Since $(\phi_0,\psi_0)$ is a minimizing of $I_\omega$, it is a critical point, i.e. for any $(u,v)\in H^1\!\times\!H^1$
\begin{align*}
\displaystyle\left.\frac{d}{ds}I_\omega(\phi_0+su,\psi_0+sv)\right|_{s=0}=0.
\end{align*}
Since
\begin{align}
\left.\frac{d}{ds}\|\phi_0+su\|_{L^2}^2\right|_{s=0}&=\left.\frac{d}{ds}\int_{\mathbb{R}^5}|\phi_0+su|^2dx\right|_{s=0}=\left.\int_{\mathbb{R}^5}2(\phi_0+su)\cdot udx\right|_{s=0}=2\int_{\mathbb{R}^5}\phi_0udx,\label{84}\\
\left.\frac{d}{ds}P(\phi_0+su,\psi_0+sv)\right|_{s=0}&=\left.\frac{d}{ds}\int_{\mathbb{R}^5}(\psi_0+sv)(\phi_0+su)^2dx\right|_{s=0} \notag \\
&\hspace{-1cm}=\left.\int_{\mathbb{R}^5}v(\phi_0+su)^2+(\psi_0+sv)\cdot2(\phi_0+su)udx\right|_{s=0}=\int_{\mathbb{R}^5}(\phi_0^2v+2\phi_0\psi_0u)dx,\label{85}
\end{align}
it follows that
\begin{align*}
&\left.\frac{d}{ds}I_\omega(\phi_0+su,\psi_0+sv)\right|_{s=0}\\
&\ \ \ \ \ \ \ \ \ \ \ \ \ \ \ \ \ =\frac{d}{ds}\left(\frac{\omega}{2}\|\phi_0+su\|_{L^2}^2+\omega\|\psi_0+sv\|_{L^2}^2+\frac{1}{2}\|\nabla(\phi_0+su)\|_{L^2}^2\right.\\
&\ \ \ \ \ \ \ \ \ \ \ \ \ \ \ \ \ \ \ \ \ \ \ \ \ \ \ \ \ \ \ \ \ \ \ \ \ \ \ \ \ \ \ \ \ \ \ \ \ \ \ \ \ \ \ \ \left.\left.+\frac{1}{4}\|\nabla(\psi_0+sv)\|_{L^2}^2-P(\phi_0+su,\psi_0+sv)\right)\right|_{s=0}\\
&\ \ \ \ \ \ \ \ \ \ \ \ \ \ \ \ \ =\omega\int_{\mathbb{R}^5}\phi_0udx+2\omega\int_{\mathbb{R}^5}\psi_0vdx+\int_{\mathbb{R}^5}\nabla\phi_0\cdot\nabla udx\\
&\ \ \ \ \ \ \ \ \ \ \ \ \ \ \ \ \ \ \ \ \ \ \ \ \ \ \ \ \ \ \ \ \ \ \ \ \ \ \ \ \ \ \ \ \ \ \ \ \ \ \ \ \ \ \ \ +\frac{1}{2}\int_{\mathbb{R}^5}\nabla\psi_0\cdot\nabla vdx-\int_{\mathbb{R}^5}(\phi_0^2v+2\phi_0\psi_0u)dx=0.
\end{align*}
Because $(u,v)\in H^1\!\times\!H^1$ is arbitrary, it follows that
\begin{equation}
\notag
\begin{cases}
\hspace{-0.4cm}&\displaystyle{\omega\int_{\mathbb{R}^5}\phi_0udx+\int_{\mathbb{R}^5}\nabla\phi_0\cdot\nabla udx=2\int_{\mathbb{R}^5}\phi_0\psi_0udx,}\\[0.3cm]
\hspace{-0.4cm}&\displaystyle{2\omega\int_{\mathbb{R}^5}\psi_0vdx+\frac{1}{2}\int_{\mathbb{R}^5}\nabla\psi_0\cdot\nabla vdx=\int_{\mathbb{R}^5}\phi_0^2vdx,}
\end{cases}
\end{equation}
i.e. $(\phi_0,\psi_0)\in H^1\!\times\!H^1$ is a solution to (gNLSI) and hence, the one is a solution to (gNLS).
\end{proof}

\begin{remark}
Combining Proposition \ref{proposition of ground state} and Proposition \ref{minimizer existence} (2), we have $\mu_\omega^{20,8}>0$. Indeed, $\mu_\omega^{20,8}=I_\omega(\phi_\omega,\psi_\omega)=\omega M(\phi_\omega,\psi_\omega)>0$.
\end{remark}

\begin{remark}
From now on, we denote the functions, which attain the infimum of $\mu_\omega^{20,8}$ and solve (gNLS) by $(\phi_\omega,\psi_\omega)$.
\end{remark}

\subsection{Small data scattering}

%small data

\begin{theorem}[Small data global existence]\label{Small data globally existence}
There exists $\delta_{sd}>0$ such that if
\[
\|(e^{it\Delta}u_0,e^{\frac{1}{2}it\Delta}v_0)\|_{S(\dot{H}^\frac{1}{2})\times S(\dot{H}^\frac{1}{2})}\leq\delta_{sd},
\]
then there exists the unique global solution $(u(t),v(t))\in\dot{H}^\frac{1}{2}\!\times\!\dot{H}^\frac{1}{2}$ to (NLS), which satisfies
\[
\|(u,v)\|_{S(\dot{H}^\frac{1}{2})\times S(\dot{H}^\frac{1}{2})}\leq4\|(e^{it\Delta}u_0,e^{\frac{1}{2}it\Delta}v_0)\|_{S(\dot{H}^\frac{1}{2})\times S(\dot{H}^\frac{1}{2})}.
\]
\end{theorem}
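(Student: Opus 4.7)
The plan is to apply the Banach fixed point theorem to the Duhamel map
\begin{align*}
\Phi_1(u,v)(t) &= e^{it\Delta}u_0 + 2i\int_0^t e^{i(t-s)\Delta}(v\bar{u})(s)\,ds, \\
\Phi_2(u,v)(t) &= e^{\frac{1}{2}it\Delta}v_0 + i\int_0^t e^{\frac{1}{2}i(t-s)\Delta}(u^2)(s)\,ds
\end{align*}
on the closed ball
\[
B := \bigl\{(u,v) : \|(u,v)\|_{S(\dot{H}^{1/2})\times S(\dot{H}^{1/2})} \leq 4\delta \bigr\}, \qquad \delta := \|(e^{it\Delta}u_0, e^{\frac{1}{2}it\Delta}v_0)\|_{S(\dot{H}^{1/2})\times S(\dot{H}^{1/2})},
\]
equipped with the metric from $S(\dot{H}^{1/2}) \times S(\dot{H}^{1/2})$. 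The threshold $\delta_{sd}$ will be read off from the smallness required to make this map a contraction on $B$.

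First I would apply the inhomogeneous Strichartz estimate of Theorem \ref{Strichartz estimates} to obtain
\[
\|\Phi_1(u,v)\|_{S(\dot{H}^{1/2})} \leq \delta + c\|v\bar{u}\|_{S'(\dot{H}^{-1/2})}, \qquad \|\Phi_2(u,v)\|_{S(\dot{H}^{1/2})} \leq \delta + c\|u^2\|_{S'(\dot{H}^{-1/2})}.
\]
The heart of the argument is to bound the quadratic interactions in $S'(\dot{H}^{-1/2})$ by products of $S(\dot{H}^{1/2})$ norms using H\"older in space-time. Since $S'(\dot{H}^{-1/2})$ is an infimum, it suffices to exhibit one $\dot{H}^{-1/2}$-admissible pair $(q,r)$ together with two $\dot{H}^{1/2}$-admissible pairs $(q_1,r_1),(q_2,r_2)$, all in the ranges of Definition \ref{Strichartz norm}, with $\tfrac{1}{q'} = \tfrac{1}{q_1} + \tfrac{1}{q_2}$ and $\tfrac{1}{r'} = \tfrac{1}{r_1} + \tfrac{1}{r_2}$; H\"older then yields
\[
\|v\bar{u}\|_{L_t^{q'}L_x^{r'}} \leq \|u\|_{L_t^{q_1}L_x^{r_1}}\|v\|_{L_t^{q_2}L_x^{r_2}} \leq \|u\|_{S(\dot{H}^{1/2})}\|v\|_{S(\dot{H}^{1/2})},
\]
and identically for $u^2$. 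A concrete symmetric choice is $(q_1,r_1) = (q_2,r_2) = (6,3)$ (indeed $\tfrac{2}{6} + \tfrac{5}{3} = 2$ and $(6,3) \in [4^+, \infty] \times [5/2, (10/3)^-]$) paired with $(q,r) = (3/2, 3)$ (indeed $\tfrac{2}{3/2} + \tfrac{5}{3} = 3$ and $(3/2,3) \in [(4/3)^+, 4] \times [2, (10/3)^-]$).

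Combining these two steps, for $(u,v) \in B$ one gets $\|\Phi(u,v)\|_{S(\dot{H}^{1/2})\times S(\dot{H}^{1/2})} \leq \delta + C(4\delta)^2$, which is $\leq 4\delta$ as soon as $16C\delta \leq 3$, so $\Phi(B) \subset B$ for a suitable $\delta_{sd}$. For the contraction I would factor the bilinear differences $v_1\bar{u}_1 - v_2\bar{u}_2 = (v_1-v_2)\bar{u}_1 + v_2(\bar{u}_1-\bar{u}_2)$ and $u_1^2 - u_2^2 = (u_1+u_2)(u_1-u_2)$ and repeat exactly the same H\"older/Strichartz bound; this yields a Lipschitz constant $O(\delta)$, giving a strict contraction once $\delta \leq \delta_{sd}$. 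The unique fixed point is then the desired global solution, and its membership in $B$ supplies the claimed bound $\|(u,v)\|_{S(\dot{H}^{1/2})\times S(\dot{H}^{1/2})} \leq 4\delta$.

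The argument is a standard Kato-type contraction, so the main obstacle is really the exponent bookkeeping in the H\"older step: one must check that at least one admissible triple falls inside the rather narrow ranges prescribed by Definition \ref{Strichartz norm}, which deliberately exclude the scale-critical endpoints. The explicit triple $(6,3), (6,3), (3/2, 3)$ above serves this purpose in five dimensions; once it is in place, global existence, the quantitative estimate, and uniqueness (by applying the same nonlinear bound to a difference of two solutions) all follow.
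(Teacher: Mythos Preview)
Your proposal is correct and follows essentially the same approach as the paper's own proof: a Banach fixed-point argument on the ball of radius $4\delta$ in $S(\dot{H}^{1/2})\times S(\dot{H}^{1/2})$, with the bilinear estimate handled by exactly the same admissible triple $(6,3),(6,3),(3/2,3)$ via H\"older and the inhomogeneous Strichartz estimate. The only cosmetic difference is that the paper tracks the two components separately before combining, arriving at the explicit threshold $\delta_{sd}\leq 1/(32c)$, whereas you present the combined estimate directly.
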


%proof of small data

\begin{proof}
We define a set
\[
E=\left\{(u,v)\in\dot{H}^\frac{1}{2}\times\dot{H}^\frac{1}{2}:\|(u,v)\|_{S(\dot{H}^\frac{1}{2})\times S(\dot{H}^\frac{1}{2})}\leq4\|(e^{it\Delta}u_0,e^{\frac{1}{2}it\Delta}v_0)\|_{S(\dot{H}^\frac{1}{2})\times S(\dot{H}^\frac{1}{2})}\right\}
\]
and a distance $d((u_1,v_1),(u_2,v_2))$ on E
\[
d((u_1,v_1),(u_2,v_2))=\|(u_1,v_1)-(u_2,v_2)\|_{S(\dot{H}^\frac{1}{2})\times S(\dot{H}^\frac{1}{2})}.
\]
Also, we define a map
\begin{align*}
\Phi_{u_0}(u,v)(t)&=e^{it\Delta}u_0+2i\int_0^te^{i(t-s)\Delta}(v\bar{u})(s)ds,\\
\Phi_{v_0}(u,v)(t)&=e^{\frac{1}{2}it\Delta}v_0+i\int_0^te^{\frac{1}{2}i(t-s)\Delta}(u^2)(s)ds
\end{align*}
for $(u,v)\in E$. Since $\left(\frac{3}{2},3\right)$ is a $\dot{H}^{-\frac{1}{2}}$\,admissible and $(6,3)$ is a $\dot{H}^\frac{1}{2}$\,admissible pair,
\begin{align*}
\|\Phi_{u_0}(u,v)\|_{S(\dot{H}^\frac{1}{2})}&\leq\|e^{it\Delta}u_0\|_{S(\dot{H}^\frac{1}{2})}+2\left\|\int_0^te^{i(t-s)\Delta}(v\bar{u})(s)ds\right\|_{S(\dot{H}^\frac{1}{2})}\\
&\leq\|e^{it\Delta}u_0\|_{S(\dot{H}^\frac{1}{2})}+2c\|vu\|_{S'(\dot{H}^{-\frac{1}{2}})}\\
&\leq\|e^{it\Delta}u_0\|_{S(\dot{H}^\frac{1}{2})}+2c\|uv\|_{L^3L^\frac{3}{2}}\\
&\leq\|e^{it\Delta}u_0\|_{S(\dot{H}^\frac{1}{2})}+2c\|u\|_{L^6L^3}\|v\|_{L^6L^3}\\
&\leq\|e^{it\Delta}u_0\|_{S(\dot{H}^\frac{1}{2})}+2c\|u\|_{S(\dot{H}^\frac{1}{2})}\|v\|_{S(\dot{H}^\frac{1}{2})}\\
&\leq\|e^{it\Delta}u_0\|_{S(\dot{H}^\frac{1}{2})}+32c\|(e^{it\Delta}u_0,e^{\frac{1}{2}it\Delta}v_0)\|_{S(\dot{H}^\frac{1}{2})\times S(\dot{H}^\frac{1}{2})}^2\\
&\leq(1+32c\delta_{sd})\|(e^{it\Delta}u_0,e^{\frac{1}{2}it\Delta}v_0)\|_{S(\dot{H}^\frac{1}{2})\times S(\dot{H}^\frac{1}{2})}.
\end{align*}
Similarly,
\[
\|\Phi_{v_0}(u,v)\|_{S(\dot{H}^\frac{1}{2})}\leq(1+32c\delta_{sd})\|(e^{it\Delta}u_0,e^{\frac{1}{2}it\Delta}v_0)\|_{S(\dot{H}^\frac{1}{2})\times S(\dot{H}^\frac{1}{2})}.
\]
Combining these inequalities,
\[
\|(\Phi_{u_0}(u,v),\Phi_{v_0}(u,v))\|_{S(\dot{H}^\frac{1}{2})\times S(\dot{H}^\frac{1}{2})}\leq2(1+32c\delta_{sd})\|(e^{it\Delta}u_0,e^{\frac{1}{2}it\Delta}v_0)\|_{S(\dot{H}^\frac{1}{2})\times S(\dot{H}^\frac{1}{2})}.
\]
Thus, if $\displaystyle\delta_{sd}\leq\frac{1}{32c}$, then
\[
\|(\Phi_{u_0}(u,v),\Phi_{v_0}(u,v))\|_{S(\dot{H}^\frac{1}{2})\times S(\dot{H}^\frac{1}{2})}\leq4\|(e^{it\Delta}u_0,e^{\frac{1}{2}it\Delta}v_0)\|_{S(\dot{H}^\frac{1}{2})\times S(\dot{H}^\frac{1}{2})}.
\]
Also, for $(u_1,v_1),\,(u_2,v_2)\in E$
\begin{align*}
&\|\Phi_{u_0}(u_1,v_1)-\Phi_{u_0}(u_2,v_2)\|_{S(\dot{H}^\frac{1}{2})}=2\left\|\int_0^te^{i(t-s)\Delta}(v_1\overline{u_1}-v_2\overline{u_2})(s)ds\right\|_{S(\dot{H}^\frac{1}{2})}\\
&\hspace{4cm}\leq2c\|v_1\overline{u_1}-v_2\overline{u_2}\|_{S'(\dot{H}^{-\frac{1}{2}})}\\
&\hspace{4cm}\leq2c\|v_1\overline{u_1}-v_2\overline{u_2}\|_{L^3L^\frac{3}{2}}\\
&\hspace{4cm}\leq2c\left(\|v_1(\overline{u_1}-\overline{u_2})\|_{L^3L^\frac{3}{2}}+\|(v_1-v_2)\overline{u_2}\|_{L^3L^\frac{3}{2}}\right)\\
&\hspace{4cm}\leq2c\left(\|v_1\|_{L^6L^3}\|u_1-u_2\|_{L^6L^3}+\|u_2\|_{L^6L^3}\|v_1-v_2\|_{L^6L^3}\right)\\
&\hspace{4cm}\leq2c\left(\|v_1\|_{S(\dot{H}^\frac{1}{2})}\|u_1-u_2\|_{S(\dot{H}^\frac{1}{2})}+\|u_2\|_{S(\dot{H}^\frac{1}{2})}\|v_1-v_2\|_{S(\dot{H}^\frac{1}{2})}\right)\\
&\hspace{4cm}\leq8c\|(e^{it\Delta}u_0,e^{\frac{1}{2}it\Delta}v_0)\|_{S(\dot{H}^\frac{1}{2})\times S(\dot{H}^\frac{1}{2})}\|(u_1,v_1)-(u_2,v_2)\|_{S(\dot{H}^\frac{1}{2})\times S(\dot{H}^\frac{1}{2})}\\
&\hspace{4cm}\leq8c\delta_{sd}\|(u_1,v_1)-(u_2,v_2)\|_{S(\dot{H}^{\frac{1}{2}})\times S(\dot{H}^{\frac{1}{2}})}.
\end{align*}
Similarly,
\[
\|\Phi_{v_0}(u_1,v_1)-\Phi_{v_0}(u_2,v_2)\|_{S(\dot{H}^\frac{1}{2})}\leq16c\delta_{sd}\|(u_1,v_1)-(u_2,v_2)\|_{S(\dot{H}^\frac{1}{2})\times S(\dot{H}^\frac{1}{2})}.
\]
Combining these inequalities,
\begin{align*}
&\|(\Phi_{u_0}(u_1,v_1),\Phi_{v_0}(u_1,v_1))-(\Phi_{u_0}(u_2,v_2),\Phi_{v_0}(u_2,v_2))\|_{S(\dot{H}^\frac{1}{2})\times S(\dot{H}^\frac{1}{2})}\\
&\ \ \ \ \ \ \ \ \ \ \ \ \ \ \ \ \ \ \ \ \ \ \ \ \ \ \ \ \ \ \ \ \ \ \ \ \ \ \ \ \ \ \ \ \ \ \ \ \ \ \ \ \ \leq24c\delta_{sd}\|(u_1,v_1)-(u_2,v_2)\|_{S(\dot{H}^\frac{1}{2})\times S(\dot{H}^\frac{1}{2})}\\
&\ \ \ \ \ \ \ \ \ \ \ \ \ \ \ \ \ \ \ \ \ \ \ \ \ \ \ \ \ \ \ \ \ \ \ \ \ \ \ \ \ \ \ \ \ \ \ \ \ \ \ \ \ \leq\frac{3}{4}\|(u_1,v_1)-(u_2,v_2)\|_{S(\dot{H}^\frac{1}{2})\times S(\dot{H}^\frac{1}{2})}.
\end{align*}
Therefore, the unique solution $(u,v)\in\dot{H}^\frac{1}{2}\!\times\!\dot{H}^\frac{1}{2}$ to (NLS) exists time-globally, and
\[
\|(u,v)\|_{S(\dot{H}^\frac{1}{2})\times S(\dot{H}^\frac{1}{2})}\leq4\|(e^{it\Delta}u_0,e^{\frac{1}{2}it\Delta}v_0)\|_{S(\dot{H}^\frac{1}{2})\times S(\dot{H}^\frac{1}{2})}.
\]
\end{proof}

%lemma for H^1 scattering

\begin{lemma}\label{lemma for scattering}
Let $(u_0,v_0)\in H^1\!\times\!H^1$ and $(u,v)$ be a time-global solution with
\[
\|(u,v)\|_{S(\dot{H}^\frac{1}{2})\times S(\dot{H}^\frac{1}{2})}<\infty,\  \sup_{t\in [0,\infty)}\|(u(t),v(t))\|_{H^1\times H^1}<\infty.
\]
Then, $(u,v)$ is in $L^\rho([0,\infty):W^{1.\gamma})\times L^\rho([0,\infty):W^{1.\gamma})$ for any $L^2$ admissible pair $(\rho,\gamma)$.
\end{lemma}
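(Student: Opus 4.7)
The plan is to partition $[0,\infty)$ into finitely many subintervals on which the $S(\dot H^{1/2})\times S(\dot H^{1/2})$-norm of $(u,v)$ is small, then run a Strichartz bootstrap at $H^1$-regularity on each piece and sum. Concretely, fix a small $\eta>0$ to be chosen later. Since $\|(u,v)\|_{S(\dot H^{1/2})\times S(\dot H^{1/2})}<\infty$, one may split $[0,\infty)=\bigcup_{j=1}^{N} I_j$, $I_j=[t_{j-1},t_j]$, with finitely many $j$, so that $\|u\|_{S(\dot H^{1/2})(I_j)}+\|v\|_{S(\dot H^{1/2})(I_j)}\leq\eta$ for every $j$. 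On $I_j$ I start Duhamel from $t_{j-1}$, apply Theorem \ref{Strichartz estimates} to $\langle\nabla\rangle u$ and $\langle\nabla\rangle v$, and bound the nonlinearities $v\bar u$ and $u^2$ (and their gradients) in the dual Strichartz space $S'(L^2)$.

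The crucial exponent choice is the pair $(q,r)=(\tfrac{12}{5},3)$, which is $L^2$-admissible in dimension $5$ (since $\tfrac{2}{12/5}+\tfrac{5}{3}=\tfrac{5}{2}$), paired with the $\dot H^{1/2}$-admissible pair $(6,3)$ already used in the proof of Theorem \ref{Small data globally existence}. H\"older then yields
\[
\|v\,\nabla u\|_{L^{12/7}L^{3/2}(I_j)}\leq \|v\|_{L^6L^3(I_j)}\|\nabla u\|_{L^{12/5}L^3(I_j)}\leq \|v\|_{S(\dot H^{1/2})(I_j)}\|\nabla u\|_{S(L^2)(I_j)},
\]
and likewise for $(\nabla v)\bar u$ and $u\,\nabla u$; the corresponding estimates without derivatives (for $u$ and $v$ themselves, and for $vu$, $u^2$) are analogous. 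Combined with Strichartz one obtains, on each $I_j$,
\[
\|\langle\nabla\rangle u\|_{S(L^2)(I_j)}+\|\langle\nabla\rangle v\|_{S(L^2)(I_j)}\leq c\bigl(\|u(t_{j-1})\|_{H^1}+\|v(t_{j-1})\|_{H^1}\bigr)+c\eta\bigl(\|\langle\nabla\rangle u\|_{S(L^2)(I_j)}+\|\langle\nabla\rangle v\|_{S(L^2)(I_j)}\bigr).
\]
Choosing $\eta\leq 1/(2c)$ lets me absorb the nonlinear term into the left-hand side and gives a bound depending only on $\sup_{t\geq 0}\|(u(t),v(t))\|_{H^1\times H^1}$, which is finite by hypothesis.

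Summing the resulting finite list of bounds over $j=1,\dots,N$ yields $\|\langle\nabla\rangle u\|_{S(L^2)([0,\infty))}+\|\langle\nabla\rangle v\|_{S(L^2)([0,\infty))}<\infty$. Since the full Strichartz norm $S(L^2)$ is the supremum over all $L^2$-admissible Lebesgue norms (Definition \ref{Strichartz norm}), this immediately gives $(u,v)\in L^\rho([0,\infty);W^{1,\gamma})\times L^\rho([0,\infty);W^{1,\gamma})$ for every $L^2$-admissible pair $(\rho,\gamma)$.

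I expect the main obstacle to be purely technical, namely choosing exponents so that the asymmetric interactions $v\bar u$ and $u^2$ are both handled by a single pair of admissible exponents and so that the dual norms land inside the allowed range for Strichartz; the pair $(\tfrac{12}{5},3)$ together with $(6,3)$ is tailored precisely for this. Minor additional care is needed because $S(\dot H^{1/2})$ is defined over the restricted range of Definition \ref{Strichartz norm}, but $(6,3)$ lies strictly inside that range, so no endpoint issues appear. Everything else reduces to a straightforward continuity/absorption argument, so the construction of the finite partition and the H\"older splitting above are the only substantive steps.
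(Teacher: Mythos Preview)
Your proposal is correct and takes essentially the same approach as the paper: both use the $L^2$-admissible pair $(\tfrac{12}{5},3)$ together with the $\dot H^{1/2}$-admissible pair $(6,3)$ to estimate $\|vu\|_{L^{12/7}W^{1,3/2}}$ and run a Strichartz bootstrap at $H^1$ regularity. The only organizational difference is that the paper picks a single large $T$ so that the $L^6L^3$ norm on $[T,\infty)$ is small and handles $[0,T]$ via the local theory, whereas you partition $[0,\infty)$ into finitely many small-norm subintervals and iterate; these are equivalent uses of the finiteness of $\|(u,v)\|_{S(\dot H^{1/2})}$.
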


%proof of lemma for H^1 scattering

\begin{proof}
We fix $T\geq0$. We consider the integral equation with data at $T$
\begin{equation}
\notag
\begin{cases}
\hspace{-0.4cm}&\displaystyle{u(t)=e^{i(t-T)\Delta}u(T)+2i\int_T^te^{i(t-s)\Delta}(v\overline{u})(s)ds,}\\[0.3cm]
\hspace{-0.4cm}&\displaystyle{v(t)=e^{\frac{1}{2}i(t-T)\Delta}v(T)+i\int_T^te^{\frac{1}{2}i(t-s)\Delta}(u^2)(s)ds.}
\end{cases}
\end{equation}
Replacing $t$ with $t+T$,
\begin{equation}
\notag
\begin{cases}
\hspace{-0.4cm}&\displaystyle{u(t+T)=e^{it\Delta}u(T)+2i\int_T^{t+T}e^{i(t+T-s)\Delta}(v\overline{u})(s)ds,}\\[0.3cm]
\hspace{-0.4cm}&\displaystyle{v(t+T)=e^{\frac{1}{2}it\Delta}v(T)+i\int_T^{t+T}e^{\frac{1}{2}i(t+T-s)\Delta}(u^2)(s)ds.}
\end{cases}
\end{equation}
Replacing $s$ with $s+T$,
\begin{equation}
\notag
\begin{cases}
\hspace{-0.4cm}&\displaystyle{u(t+T)=e^{it\Delta}u(T)+2i\int_0^te^{i(t-s)\Delta}(v\overline{u})(s+T)ds,}\\[0.3cm]
\hspace{-0.4cm}&\displaystyle{v(t+T)=e^{\frac{1}{2}it\Delta}v(T)+i\int_0^te^{\frac{1}{2}i(t-s)\Delta}(u^2)(s+T)ds.}
\end{cases}
\end{equation}
These equations combined with $\left(\frac{12}{5},3\right)$ being a $L^2$\,admissible pair, gives
\begin{align*}
\|u(\cdot+T)\|_{L_{[0,\tau]}^\rho W^{1.\gamma}}&\leq\|e^{iT\Delta}u(T)\|_{L_{[0,\tau]}^\rho W^{1.\gamma}}+2\left\|\int_0^te^{i(t-s)\Delta}(v\overline{u})(s+T)ds\right\|_{L_{[0,\tau]}^\rho W^{1.\gamma}}\\
&\leq c\|u(T)\|_{H^1}+c\|(vu)(\cdot+T)\|_{L_{[0,\tau]}^\frac{12}{7} W^{1.\frac{3}{2}}}\\
&=c\|u(T)\|_{H^1}+c\|vu\|_{L_{[T,T+\tau]}^\frac{12}{7} W^{1.\frac{3}{2}}}\\
&\leq c\|u(T)\|_{H^1}+c\|v\|_{L_{[T,T+\tau]}^\frac{12}{5} W^{1.3}}\|u\|_{L_{[T,T+\tau]}^6L^3}+c\|v\|_{L_{[T,T+\tau]}^6L^3}\|u\|_{L_{[T,T+\tau]}^\frac{12}{5} W^{1.3}}.
\end{align*}
for any $\tau>0$. Therefore,
\[
\|u\|_{L_{[T,T+\tau]}^\rho W^{1.\gamma}}\leq c\|u(T)\|_{H^1}+c\|v\|_{L_{[T,T+\tau]}^\frac{12}{5} W^{1.3}}\|u\|_{L_{[T,T+\tau]}^6L^3}+c\|v\|_{L_{[T,T+\tau]}^6L^3}\|u\|_{L_{[T,T+\tau]}^\frac{12}{5} W^{1.3}}.
\]
Similarly,
\[
\|v\|_{L_{[T,T+\tau]}^\rho W^{1.\gamma}}\leq c\|v(T)\|_{H^1}+c\|u\|_{L_{[T,T+\tau]}^\frac{12}{5} W^{1.3}}\|u\|_{L_{[T,T+\tau]}^6L^3}.
\]
Here, since $(6,3)$ is a $\dot{H}^\frac{1}{2}$ admissible pair and $\|(u,v)\|_{S(\dot{H}^\frac{1}{2})\times S(\dot{H}^\frac{1}{2})}<\infty$, we obtain \[
\max\{c\|u\|_{L_{[T,T+\tau]}^6L^3},c\|v\|_{L_{[T,T+\tau]}^6L^3}\}<\frac{1}{4}
\]
for sufficiently large $T>0$. For such $T>0$,
\begin{align}
\|u\|_{L_{[T,T+\tau]}^\rho W^{1.\gamma}}+\|v\|_{L_{[T,T+\tau]}^\rho W^{1.\gamma}}\leq c\|u(T)\|_{H^1}+c\|v(T)\|_{H^1}+\frac{1}{2}\|v\|_{L_{[T,T+\tau]}^\frac{12}{5} W^{1.3}}+\frac{1}{2}\|u\|_{L_{[T,T+\tau]}^\frac{12}{5} W^{1.3}}.\label{66}
\end{align}
We can take $(\rho,\gamma)=\left(\frac{12}{5},3\right)$, which is a $L^2$ admissible pair. The $H^1\!\times\!H^1$ time local solution is in $L^\frac{12}{5}(I:W^{1.3})$ see \cite{007}. This fact combined with $\displaystyle\sup_{t\in [0,\infty)}\|(u(t),v(t))\|_{H^1\times H^1}<\infty$ gives $u,v\in L_{\text{loc}}^\frac{12}{5}([0,\infty):W^{1.3})$.
\[
\|u\|_{L_{[T,T+\tau]}^\frac{12}{5} W^{1.3}}+\|v\|_{L_{[T,T+\tau]}^\frac{12}{5} W^{1.3}}\leq 2c\|u(T)\|_{H^1}+2c\|v(T)\|_{H^1}.
\]
Because $\tau>0$ is arbitrary, it follows that
\[
\|u\|_{L_{[T,\infty)}^\frac{12}{5} W^{1.3}}+\|v\|_{L_{[T,\infty)}^\frac{12}{5} W^{1.3}}\leq 2c\|u(T)\|_{H^1}+2c\|v(T)\|_{H^1}.
\]
This formula combined with \eqref{66} gives
\[
\|u\|_{L_{[T,\infty)}^\rho W^{1.\gamma}}+\|v\|_{L_{[T,\infty)}^\rho W^{1.\gamma}}\leq 2c\|u(T)\|_{H^1}+2c\|v(T)\|_{H^1}.
\]
Therefore, $(u,v)\in L^\rho([0,\infty):W^{1.\gamma})\!\times\!L^\rho([0,\infty):W^{1.\gamma}).$\\
\end{proof}

%H^1 scattering

\begin{theorem}[$H^1\!\times\!H^1$\ Scattering]\label{Scattering}
Under the same assumption as Lemma \ref{lemma for scattering}, $(u,v)$ scatters in $H^1\!\times\!H^1$.
\end{theorem}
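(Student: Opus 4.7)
The plan is to produce the scattering states via the integral formulation and show the wave operator errors decay in $H^1$ using the Strichartz machinery. Define
\[
\phi_+ := u_0 + 2i\int_0^\infty e^{-is\Delta}(v\bar u)(s)\,ds,\qquad
\psi_+ := v_0 + i\int_0^\infty e^{-\frac12 is\Delta}(u^2)(s)\,ds,
\]
as candidates in $H^1$. The Duhamel identity then yields
\[
u(t)-e^{it\Delta}\phi_+ = -2i\int_t^\infty e^{i(t-s)\Delta}(v\bar u)(s)\,ds,
\]
and analogously for the second component, so scattering is equivalent to showing that the right hand sides tend to zero in $H^1$ as $t\to\infty$ (which simultaneously verifies that $\phi_+,\psi_+\in H^1$).

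First I would fix the $L^2$-admissible pair $(q,r)=(\tfrac{12}{5},3)$, whose H\"older dual $(\tfrac{12}{7},\tfrac32)$ is precisely the exponent pair used in the proof of Lemma \ref{lemma for scattering}. By the inhomogeneous Strichartz estimate of Theorem \ref{Strichartz estimates} applied on $[t,\infty)$,
\[
\bigl\|u(t)-e^{it\Delta}\phi_+\bigr\|_{H^1}
\;\lesssim\; \bigl\|v\bar u\bigr\|_{L^{12/7}_{[t,\infty)} W^{1,3/2}}.
\]
A standard product rule and H\"older in space and time give
\[
\bigl\|v\bar u\bigr\|_{L^{12/7}_{[t,\infty)} W^{1,3/2}}
\;\lesssim\; \|v\|_{L^{12/5}_{[t,\infty)} W^{1,3}}\,\|u\|_{L^{6}_{[t,\infty)} L^{3}}
+\|u\|_{L^{12/5}_{[t,\infty)} W^{1,3}}\,\|v\|_{L^{6}_{[t,\infty)} L^{3}},
\]
and the same argument with the exponents halved (because of the $\tfrac12\Delta$ propagator) applies to the $\psi_+$ component using $\|u^2\|_{L^{12/7}_{[t,\infty)}W^{1,3/2}}\lesssim \|u\|_{L^{12/5}_{[t,\infty)}W^{1,3}}\|u\|_{L^6_{[t,\infty)}L^3}$.

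Next I would show both factors in each summand are controlled. The $L^{12/5}W^{1,3}$ norms on the full half-line are finite by Lemma \ref{lemma for scattering}, since $(\tfrac{12}{5},3)$ is $L^2$-admissible; in particular they are uniformly bounded in $t$. On the other hand, $(6,3)$ is $\dot H^{1/2}$-admissible, so $\|u\|_{L^{6}_{[0,\infty)}L^3}+\|v\|_{L^{6}_{[0,\infty)}L^3}\le \|(u,v)\|_{S(\dot H^{1/2})\times S(\dot H^{1/2})}<\infty$ by assumption. By the dominated convergence property of the Lebesgue integral,
\[
\|u\|_{L^{6}_{[t,\infty)}L^{3}}+\|v\|_{L^{6}_{[t,\infty)}L^{3}} \longrightarrow 0\quad\text{as } t\to\infty.
\]
Combining the last three displays, $\|u(t)-e^{it\Delta}\phi_+\|_{H^1}$ (and the analogous $\psi_+$ quantity) tends to $0$, which proves scattering.

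The only delicate point is to make sure the Strichartz constants and product estimates are applied with exponents that are simultaneously in the admissible range required by Theorem \ref{Strichartz estimates} and compatible with the $\dot H^{1/2}$ and $L^2$ norms at our disposal; the choice $(6,3)$ on the $\dot H^{1/2}$ side together with $(\tfrac{12}{5},3)$ on the $L^2$ side, already used in Lemma \ref{lemma for scattering}, handles this exactly. The convergence of $\phi_+,\psi_+$ as $H^1$-valued integrals is a byproduct of the same estimate applied with $t=0$, so no additional work is required there.
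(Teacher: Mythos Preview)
Your proof is correct and follows essentially the same route as the paper: both arguments control the Duhamel tail in $H^1$ via the Strichartz/H\"older splitting with the $L^2$-admissible pair $(\tfrac{12}{5},3)$ and the $\dot H^{1/2}$-admissible pair $(6,3)$, the only cosmetic difference being that the paper phrases it as a Cauchy-sequence argument for $e^{-it\Delta}u(t)$ while you construct $\phi_+$ directly. One small quibble: the parenthetical ``with the exponents halved'' is misleading---the Strichartz estimates for $e^{\frac12 it\Delta}$ use exactly the same admissible exponents as for $e^{it\Delta}$ (only the implicit constant changes via a time rescaling), so no modification of the exponents is needed for the $\psi_+$ component.
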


%proof of scattering

\begin{proof}
Let
\[
U(t)=e^{-it\Delta}u(t)=u_0+2i\int_0^te^{-is\Delta}(v\overline{u})(s)ds,
\]
\[
V(t)=e^{-\frac{1}{2}it\Delta}v(t)=v_0+i\int_0^te^{-\frac{1}{2}is\Delta}(u^2)(s)ds.
\]
If $t>\tau>0$, then
\begin{align*}
\|U(t)-U(\tau)\|_{H^1}&=2\left\|\int_\tau^te^{-is\Delta}(v\overline{u})(s)ds\right\|_{H^1}\\
&\leq c\|v\|_{L_{[\tau,t]}^\frac{12}{5} W^{1.3}}\|u\|_{L_{[\tau,t]}^6L^3}+c\|v\|_{L_{[\tau,t]}^6L^3}\|u\|_{L_{[\tau,t]}^\frac{12}{5} W^{1.3}}\\
&\longrightarrow0\ \ \text{as}\ \ \tau,t\rightarrow\infty
\end{align*}
by the proof of Lemma \ref{lemma for scattering}. Similarly,
\[
\|V(t)-V(\tau)\|_{H^1}\longrightarrow0\ \ \text{as}\ \ \tau,t\rightarrow\infty.
\]
Therefore, there exists $u^+$ and $v^+\in H^1$ such that
\[
u(t)-e^{it\Delta}u^+\longrightarrow0,\ \ v(t)-e^{\frac{1}{2}it\Delta}v^+\longrightarrow0\ \ \text{in}\ \ H^1\ \ \text{as}\ \ t\rightarrow\infty.
\]
\end{proof}

\subsection{Long time perturbation}

%Long time perturbation theory

\begin{theorem}[Long time perturbation]\label{Long time perturbation}
For each $A>1$, there exist $\varepsilon_0(A)<1$ and $C(A)>1$ such that the following holds. Let $(u,v)=(u(x,t),v(x,t))\in H^1\!\times\!H^1$ for all $t$ and solve (NLS). Let $e_1$, $e_2$, $\widetilde{u}$, and $\widetilde{v}$ satisfy
\begin{equation}
\notag
\begin{cases}
\hspace{-0.4cm}&\displaystyle{e_1=i\partial_t\widetilde{u}+\Delta \widetilde{u}+2\widetilde{v}\overline{\widetilde{u}},}\\
\hspace{-0.4cm}&\displaystyle{e_2=i\partial_t\widetilde{v}+\frac{1}{2}\Delta \widetilde{v}+\widetilde{u}^2.}
\end{cases}
\end{equation}
\begin{align}
\|(\widetilde{u},\widetilde{v})\|_{S(\dot{H}^\frac{1}{2})\times S(\dot{H}^\frac{1}{2})}\leq A,\label{06}
\end{align}
\begin{align}
\|(e_1,e_2)\|_{S'(\dot{H}^{-\frac{1}{2}})\times S'(\dot{H}^{-\frac{1}{2}})}\leq\varepsilon_0,\label{07}
\end{align}
and
\begin{align}
\left\|\left(e^{i(t-t_0)\Delta}\left(u(t_0)-\widetilde{u}(t_0)\right),e^{\frac{1}{2}i(t-t_0)\Delta}\left(v(t_0)-\widetilde{v}(t_0)\right)\right)\right\|_{S(\dot{H}^\frac{1}{2})\times S(\dot{H}^\frac{1}{2})}\leq\varepsilon_0.\label{08}
\end{align}
Then,
\[
\|(u,v)\|_{S(\dot{H}^\frac{1}{2})\times S(\dot{H}^\frac{1}{2})}\leq C=C(A)<\infty.
\]
\end{theorem}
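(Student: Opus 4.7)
Set $w := u - \widetilde u$ and $z := v - \widetilde v$. Subtracting the equations satisfied by $(\widetilde u,\widetilde v)$ (with errors $e_1,e_2$) from (NLS) and expanding $v\bar u = (\widetilde v + z)\overline{(\widetilde u + w)}$ and $u^{2} = (\widetilde u + w)^{2}$ gives
\begin{align*}
i\partial_t w + \Delta w &= -2\bigl(\widetilde v\,\bar w + z\,\overline{\widetilde u} + z\,\bar w\bigr) - e_{1},\\
i\partial_t z + \tfrac{1}{2}\Delta z &= -\bigl(2\widetilde u\,w + w^{2}\bigr) - e_{2}.
\end{align*}
The plan is to bound $(w,z)$ in $S(\dot H^{1/2})\times S(\dot H^{1/2})$ by iterating a short-time Strichartz estimate on a finite partition of the time axis where $(\widetilde u,\widetilde v)$ has small Strichartz norm, and then conclude via $\|(u,v)\|_{S(\dot H^{1/2})\times S(\dot H^{1/2})} \leq A + \|(w,z)\|_{S(\dot H^{1/2})\times S(\dot H^{1/2})}$.

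Using \eqref{06}, I would partition $\mathbb{R}$ into $N = N(A,\eta)$ consecutive subintervals $I_j = [t_j,t_{j+1}]$ (with the initial time $t_0$ as a partition point) such that $\|\widetilde u\|_{S(\dot H^{1/2};I_j)} + \|\widetilde v\|_{S(\dot H^{1/2};I_j)} \leq \eta$ for a small $\eta = \eta(A)$ to be fixed. Applying Duhamel, Theorem \ref{Strichartz estimates}, and the H\"older bound $\|fg\|_{L^{3}_{t}L^{3/2}_{x}} \leq \|f\|_{L^{6}_{t}L^{3}_{x}}\|g\|_{L^{6}_{t}L^{3}_{x}}$ (using that $(6,3)$ is $\dot H^{1/2}$-admissible and $(3/2,3)$ is $\dot H^{-1/2}$-admissible, exactly as in the proof of Theorem \ref{Small data globally existence}) on each $I_j$ yields
\[
\|(w,z)\|_{S(\dot H^{1/2};I_j)\times S(\dot H^{1/2};I_j)} \leq D_j + C\eta\,\mathcal{N}_j + C\mathcal{N}_j^{2} + C\varepsilon_{0},
\]
where $\mathcal{N}_j := \|(w,z)\|_{S(\dot H^{1/2};I_j)\times S(\dot H^{1/2};I_j)}$ and
\[
D_j := \bigl\|\bigl(e^{i(t-t_j)\Delta}w(t_j),\, e^{\frac{1}{2}i(t-t_j)\Delta}z(t_j)\bigr)\bigr\|_{S(\dot H^{1/2};I_j)\times S(\dot H^{1/2};I_j)}.
\]

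Fix $\eta$ so that $C\eta < 1/4$; a standard continuity argument then shows that as long as $D_j + C\varepsilon_{0}$ stays below an absolute threshold one has $\mathcal{N}_j \leq 2(D_j + C\varepsilon_{0})$. Applying the homogeneous Strichartz estimate to the Duhamel representation at $t_{j+1}$ yields an induction relation of the form $D_{j+1} \leq C'(D_j + \varepsilon_{0})$ with $C'$ independent of $j$. Starting from $D_{0} \leq \varepsilon_{0}$ (by \eqref{08}) and iterating over the $N$ subintervals gives $D_j \leq (2C')^{j}\varepsilon_{0}$. Choosing $\varepsilon_{0} = \varepsilon_{0}(A)$ small enough that $(2C')^{N(A)}\varepsilon_{0}$ remains below the smallness threshold at every step, and summing the estimates for $\mathcal{N}_j$ over $j=0,\dots,N-1$, delivers $\|(w,z)\|_{S(\dot H^{1/2})\times S(\dot H^{1/2})} \leq C(A)$ and hence the claimed bound for $(u,v)$.

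The main obstacle is the asymmetric coupling: the linear-in-$(w,z)$ terms $\widetilde v\,\bar w$ and $z\,\overline{\widetilde u}$ mix the two components, so the scheme has to be run for the pair $(w,z)$ simultaneously rather than for each unknown separately. The quadratic self-interaction terms $z\bar w$ and $w^{2}$ prevent a direct contraction on each $I_j$, forcing the continuity/bootstrap step above; keeping the exponential-in-$N$ growth of the iteration constants under control is precisely what fixes the $A$-dependence of $\varepsilon_{0}(A)$ and $C(A)$.
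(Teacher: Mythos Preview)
Your proposal is correct and follows essentially the same route as the paper: derive the difference equations for $(w,z)=(u-\widetilde u,\,v-\widetilde v)$, partition the time axis into $N=N(A)$ subintervals on which $(\widetilde u,\widetilde v)$ has small $S(\dot H^{1/2})$ norm, apply the $S(\dot H^{1/2})\times S'(\dot H^{-1/2})$ Strichartz estimates with the H\"older pairing $L^6_tL^3_x\cdot L^6_tL^3_x\hookrightarrow L^3_tL^{3/2}_x$, iterate the bound on the free evolution of the difference across the subintervals picking up a geometric factor, and absorb everything by choosing $\varepsilon_0(A)$ small relative to $N(A)$. The only cosmetic difference is that the paper phrases the per-interval step as a contraction mapping (constructing $(w_1,w_2)$ in a ball of radius $4B$), whereas you phrase it as a continuity/bootstrap argument; since $(u,v)$ is already assumed to exist globally, both formulations close the same a~priori estimate.
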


%proof of long time perturbation theory

\begin{proof}
We define $w_1=u-\widetilde{u},\ w_2=v-\widetilde{v}$, then
\begin{align*}
0&=i\partial_tu+\Delta u+2v\bar{u}\\
&=i\partial_t(w_1+\widetilde{u})+\Delta (w_1+\widetilde{u})+2(w_2+\widetilde{v})(\overline{w_1}+\overline{\widetilde{u}})\\
&=i\partial_tw_1+i\partial_t\widetilde{u}+\Delta w_1+\Delta \widetilde{u}+2\overline{w_1}w_2+2\overline{w_1}\widetilde{v}+2w_2\overline{\widetilde{u}}+2\widetilde{v}\overline{\widetilde{u}}\\
&=i\partial_tw_1+\Delta w_1+2\overline{w_1}w_2+2\overline{w_1}\widetilde{v}+2w_2\overline{\widetilde{u}}+e_1,\\
0&=i\partial_t(w_2+\widetilde{v})+\frac{1}{2}\Delta(w_2+\widetilde{v})+(w_1+\widetilde{u})^2\\
&=i\partial_tw_2+i\partial_t\widetilde{v}+\frac{1}{2}\Delta w_2+\frac{1}{2}\Delta\widetilde{v}+w_1^2+2w_1\widetilde{u}+\widetilde{u}^2\\
&=i\partial_tw_2+\frac{1}{2}\Delta w_2+w_1^2+2w_1\widetilde{u}+e_2.
\end{align*}
Thus,
\begin{equation}
\notag
\begin{cases}
\hspace{-0.4cm}&\displaystyle{i\partial_tw_1+\Delta w_1+2\overline{w_1}w_2+2\overline{w_1}\widetilde{v}+2w_2\overline{\widetilde{u}}+e_1=0,}\\
\hspace{-0.4cm}&\displaystyle{i\partial_tw_2+\frac{1}{2}\Delta w_2+w_1^2+2w_1\widetilde{u}+e_2=0.}
\end{cases}
\end{equation}
Since $\|(\widetilde{u},\widetilde{v})\|_{S(\dot{H}^\frac{1}{2})\times S(\dot{H}^\frac{1}{2})}\leq A$, for any $\delta>0$, there exists $N\in\mathbb{N}$ and $I_j$ $(j=1,2,\cdots,N)$ with $\displaystyle[t_0,\infty)=\bigcup_{j=1}^NI_j=\bigcup_{j=1}^N[t_{j-1},t_j)$ and $I_j$ are pairwise disjoint such that $\|(\widetilde{u},\widetilde{v})\|_{S(\dot{H}^\frac{1}{2}:I_j)\times S(\dot{H}^\frac{1}{2}:I_j)}\leq\delta$.
We consider the following integral equation.
\begin{equation}
\notag
\begin{cases}
\hspace{-0.4cm}&\displaystyle{w_1(t)=e^{i(t-t_j)\Delta}w_1(t_j)+i\int_{t_j}^te^{i(t-s)\Delta}(2\overline{w_1}w_2+2\overline{w_1}\widetilde{v}+2w_2\overline{\widetilde{u}}+e_1)(s)ds,}\\[0.3cm]
\hspace{-0.4cm}&\displaystyle{w_2(t)=e^{\frac{1}{2}i(t-t_j)\Delta}w_2(t_j)+i\int_{t_j}^te^{\frac{1}{2}i(t-s)\Delta}(w_1^2+2w_1\widetilde{u}+e_2)(s)ds.}
\end{cases}
\end{equation}
We define a set
\[
E=\left\{(w_1,w_2):\|(w_1,w_2)\|_{S(\dot{H}^\frac{1}{2}:I_j)\times S(\dot{H}^\frac{1}{2}:I_j)}\leq4B\right\},
\]
where $B=\|(e^{i(t-t_j)\Delta}w_1(t_j),e^{\frac{1}{2}i(t-t_j)\Delta}w_2(t_j))\|_{S(\dot{H}^\frac{1}{2}:I_j)\times S(\dot{H}^\frac{1}{2}:I_j)}+c\varepsilon_0$.
We define a distance $d((u_1,v_1),(u_2,v_2))$ on $E$
\[
d((u_1,v_1),(u_2,v_2))=\|(u_1,v_1)-(u_2,v_2)\|_{S(\dot{H}^\frac{1}{2}:I_j)\times S(\dot{H}^\frac{1}{2}:I_j)}.
\]
We define maps
\begin{align*}
\Phi_1(w_1,w_2)(t)&=e^{i(t-t_j)\Delta}w_1(t_j)+i\int_{t_j}^te^{i(t-s)\Delta}(2\overline{w_1}w_2+2\overline{w_1}\widetilde{v}+2w_2\overline{\widetilde{u}}+e_1)(s)ds,\\
\Phi_2(w_1,w_2)(t)&=e^{\frac{1}{2}i(t-t_j)\Delta}w_2(t_j)+i\int_{t_j}^te^{\frac{1}{2}i(t-s)\Delta}(w_1^2+2w_1\widetilde{u}+e_2)(s)ds
\end{align*}
for $(w_1,w_2)\in E$. Then,
\begin{align*}
&\|\Phi_1(w_1,w_2)\|_{S(\dot{H}^\frac{1}{2}:I_j)}\\
&\ \ \ \ \ \ \ \ \leq\|e^{i(t-t_j)\Delta}w_1(t_j)\|_{S(\dot{H}^\frac{1}{2}:I_j)}+\left\|\int_{t_j}^te^{i(t-s)\Delta}(2\overline{w_1}w_2+2\overline{w_1}\widetilde{v}+2w_2\overline{\widetilde{u}}+e_1)(s)ds\right\|_{S(\dot{H}^\frac{1}{2}:I_j)}\\
&\ \ \ \ \ \ \ \ \leq\|e^{i(t-t_j)\Delta}w_1(t_j)\|_{S(\dot{H}^\frac{1}{2}:I_j)}+2c\|w_1w_2\|_{S'(\dot{H}^{-\frac{1}{2}}:I_j)}+2c\|w_1\widetilde{v}\|_{S'(\dot{H}^{-\frac{1}{2}}:I_j)}\\
&\ \ \ \ \ \ \ \ \ \ \ \ \ \ \ \ \ \ \ \ \ \ \ \ \ \ \ \ \ \ \ \ \ \ \ \ \ \ \ \ \ \ \ \ \ \ \ \ \ \ \ \ \ \ \ \ \ \ \ \ \ \ \ \ \ \ \ \ +2c\|w_2\widetilde{u}\|_{S'(\dot{H}^{-\frac{1}{2}}:I_j)}+c\|e_1\|_{S'(\dot{H}^{-\frac{1}{2}}:I_j)}\\
&\ \ \ \ \ \ \ \ \leq\|e^{i(t-t_j)\Delta}w_1(t_j)\|_{S(\dot{H}^\frac{1}{2}:I_j)}+2c\|w_1w_2\|_{L_{I_j}^3L^\frac{3}{2}}+2c\|w_1\widetilde{v}\|_{L_{I_j}^3L^\frac{3}{2}}+2c\|w_2\widetilde{u}\|_{L_{I_j}^3L^\frac{3}{2}}+c\varepsilon_0\\
&\ \ \ \ \ \ \ \ \leq\|e^{i(t-t_j)\Delta}w_1(t_j)\|_{S(\dot{H}^\frac{1}{2}:I_j)}+2c\|w_1\|_{L_{I_j}^6L^3}\|w_2\|_{L_{I_j}^6L^3}+2c\|w_1\|_{L_{I_j}^6L^3}\|\widetilde{v}\|_{L_{I_j}^6L^3}\\
&\ \ \ \ \ \ \ \ \ \ \ \ \ \ \ \ \ \ \ \ \ \ \ \ \ \ \ \ \ \ \ \ \ \ \ \ \ \ \ \ \ \ \ \ \ \ \ \ \ \ \ \ \ \ \ \ \ \ \ \ \ \ \ \ \ \ \ \ \ \ \ \ \ \ \ \ \ \ \ \ +2c\|w_2\|_{L_{I_j}^6L^3}\|\widetilde{u}\|_{L_{I_j}^6L^3}+c\varepsilon_0\\
&\ \ \ \ \ \ \ \ \leq B+32cB^2+8c\delta B+8c\delta B\\
&\ \ \ \ \ \ \ \ =(1+32cB+16c\delta)B,\\
&\|\Phi_2(w_1,w_2)\|_{S(\dot{H}^\frac{1}{2}:I_j)}\\
&\ \ \ \ \ \ \ \ \leq\|e^{\frac{1}{2}i(t-t_j)\Delta}w_2(t_j)\|_{S(\dot{H}^\frac{1}{2}:I_j)}+\left\|\int_{t_j}^te^{\frac{1}{2}i(t-s)\Delta}(w_1^2+2w_1\widetilde{u}+e_2)(s)ds\right\|_{S(\dot{H}^\frac{1}{2}:I_j)}\\
&\ \ \ \ \ \ \ \ \leq\|e^{\frac{1}{2}i(t-t_j)\Delta}w_2(t_j)\|_{S(\dot{H}^\frac{1}{2}:I_j)}+c\|w_1\|_{L_{I_j}^6L^3}^2+2c\|w_1\|_{L_{I_j}^6L^3}\|\widetilde{u}\|_{L_{I_j}^6L^3}+c\varepsilon_0\\
&\ \ \ \ \ \ \ \ \leq B+16cB^2+8c\delta B\\
&\ \ \ \ \ \ \ \ =(1+16cB+8c\delta)B.
\end{align*}
Combining these inequalities,
\[
\|(\Phi_1(w_1,w_2),\Phi_2(w_1,w_2))\|_{S(\dot{H}^\frac{1}{2}:I_j)\times S(\dot{H}^\frac{1}{2}:I_j)}\leq(2+48cB+24c\delta)B.
\]
Thus, if $48cB\leq1$ and $24c\delta\leq1$, i.e. $B\leq1/48c$ and $\delta\leq1/24c$, then
\[
\|(\Phi_1(w_1,w_2),\Phi_2(w_1,w_2))\|_{S(\dot{H}^\frac{1}{2}:I_j)\times S(\dot{H}^\frac{1}{2}:I_j)}\leq4B.
\]
This means $(\Phi_1(w_1,w_2),\Phi_2(w_1,w_2))\in E$. We estimate
\begin{align*}
&\|\Phi_1(w_1,w_2)-\Phi_1(w_1',w_2')\|_{S(\dot{H}^\frac{1}{2}:I_j)}\\
&\ \ \ \ \ \ \ \ \ \ \ \ \ \ \ \ \ \ \ \ \leq2c\|\overline{w_1}w_2-\overline{w_1'}w_2'\|_{L_{I_j}^3L^\frac{3}{2}}+2c\|\overline{w_1}\widetilde{v}-\overline{w_1'}\widetilde{v}\|_{L_{I_j}^3L^\frac{3}{2}}+2c\|w_2\overline{\widetilde{u}}-w_2'\overline{\widetilde{u}}\|_{L_{I_j}^3L^\frac{3}{2}}\\
&\ \ \ \ \ \ \ \ \ \ \ \ \ \ \ \ \ \ \ \ \leq2c\left(\|(\overline{w_1}-\overline{w_1'})w_2\|_{L_{I_j}^3L^\frac{3}{2}}+\|\overline{w_1'}(w_2-w_2')\|_{L_{I_j}^3L^\frac{3}{2}}\right)\\
&\ \ \ \ \ \ \ \ \ \ \ \ \ \ \ \ \ \ \ \ \ \ \ \ \ \ \ \ \ \ \ \ \ \ \ \ \ \ \ \ +2c\|w_1-w_1'\|_{L_{I_j}^6L^3}\|\widetilde{v}\|_{L_{I_j}^6L^3}+2c\|w_2-w_2'\|_{L_{I_j}^6L^3}\|\widetilde{u}\|_{L_{I_j}^6L^3}\\
&\ \ \ \ \ \ \ \ \ \ \ \ \ \ \ \ \ \ \ \ \leq2c\left(\|w_1-w_1'\|_{L_{I_j}^6L^3}\|w_2\|_{L_{I_j}^6L^3}+\|w_1'\|_{L_{I_j}^6L^3}\|w_2-w_2'\|_{L_{I_j}^6L^3}\right)\\
&\ \ \ \ \ \ \ \ \ \ \ \ \ \ \ \ \ \ \ \ \ \ \ \ \ \ \ \ \ \ \ \ \ \ \ \ \ \ \ \ +2c\|w_1-w_1'\|_{L_{I_j}^6L^3}\|\widetilde{v}\|_{L_{I_j}^6L^3}+2c\|w_2-w_2'\|_{L_{I_j}^6L^3}\|\widetilde{u}\|_{L_{I_j}^6L^3}\\
&\ \ \ \ \ \ \ \ \ \ \ \ \ \ \ \ \ \ \ \ \leq2c\left(4B\|w_1-w_1'\|_{L_{I_j}^6L^3}+4B\|w_2-w_2'\|_{L_{I_j}^6L^3}\right)\\
&\ \ \ \ \ \ \ \ \ \ \ \ \ \ \ \ \ \ \ \ \ \ \ \ \ \ \ \ \ \ \ \ \ \ \ \ \ \ \ \ \ \ \ \ \ \ \ \ \ \ \ \ \ \ \ \ \ +2c\delta\|w_1-w_1'\|_{L_{I_j}^6L^3}+2c\delta\|w_2-w_2'\|_{L_{I_j}^6L^3}\\
&\ \ \ \ \ \ \ \ \ \ \ \ \ \ \ \ \ \ \ \ =(8cB+2c\delta)\left(\|w_1-w_1'\|_{S(\dot{H}^\frac{1}{2}:I_j)}+\|w_2-w_2'\|_{S(\dot{H}^\frac{1}{2}:I_j)}\right)\\
&\ \ \ \ \ \ \ \ \ \ \ \ \ \ \ \ \ \ \ \ \leq\frac{1}{4}\|(w_1,w_2)-(w_1',w_2')\|_{S(\dot{H}^\frac{1}{2}:I_j)\times S(\dot{H}^\frac{1}{2}:I_j)},\\
&\|\Phi_2(w_1,w_2)-\Phi_2(w_1',w_2')\|_{S(\dot{H}^\frac{1}{2}:I_j)}\\
&\ \ \ \ \ \ \ \ \ \ \ \ \ \ \ \ \ \ \ \ \leq c\|w_1^2-w_1'^2\|_{L_{I_j}^3L^\frac{3}{2}}+2c\|w_1\widetilde{u}-w_1'\widetilde{u}\|_{L_{I_j}^3L^\frac{3}{2}}\\
&\ \ \ \ \ \ \ \ \ \ \ \ \ \ \ \ \ \ \ \ \leq c\left(\|w_1\|_{L_{I_j}^6L^3}+\|w_1'\|_{L_{I_j}^6L^3}+2\|\widetilde{u}\|_{L_{I_j}^6L^3}\right)\|w_1-w_1'\|_{L_{I_j}^6L^3}\\
&\ \ \ \ \ \ \ \ \ \ \ \ \ \ \ \ \ \ \ \ \leq(8cB+2c\delta)\|w_1-w_1'\|_{L_{I_j}^6L^3}\\
&\ \ \ \ \ \ \ \ \ \ \ \ \ \ \ \ \ \ \ \ \leq\frac{1}{4}\|(w_1,w_2)-(w_1',w_2')\|_{S(\dot{H}^\frac{1}{2}:I_j)\times S(\dot{H}^\frac{1}{2}:I_j)}.
\end{align*}
Combining these inequalities,
\begin{align*}
&\|(\Phi_1(w_1,w_2),\Phi_2(w_1,w_2))-(\Phi_1(w_1',w_2'),\Phi_2(w_1',w_2'))\|_{S(\dot{H}^\frac{1}{2}:I_j)\times S(\dot{H}^\frac{1}{2}:I_j)}\\
&~~~~~~~~~~~~~~~~~~~~~~~~~~~~~~~~~~~~~~~~~~~~~~~~~~~~~~~~~~~~~~~~~~~~~~~~~~~~\leq\frac{1}{2}\|(w_1,w_2)-(w_1',w_2')\|_{S(\dot{H}^\frac{1}{2}:I_j)\times S(\dot{H}^\frac{1}{2}:I_j)}.
\end{align*}
Therefore, the unique solution $(w_1,w_2)$ exists on $E$.\\
Substituting $t=t_{j+1}$ into the integral equation,
\begin{equation}
\notag
\begin{cases}
\hspace{-0.4cm}&\displaystyle{w_1(t_{j+1})=e^{i(t_{j+1}-t_j)\Delta}w_1(t_j)+i\int_{t_j}^{t_{j+1}}e^{i(t_{j+1}-s)\Delta}(2\overline{w_1}w_2+2\overline{w_1}\widetilde{v}+2w_2\overline{\widetilde{u}}+e_1)(s)ds,}\\[0.3cm]
\hspace{-0.4cm}&\displaystyle{w_2(t_{j+1})=e^{\frac{1}{2}i(t_{j+1}-t_j)\Delta}w_2(t_j)+i\int_{t_j}^{t_{j+1}}e^{\frac{1}{2}i(t_{j+1}-s)\Delta}(w_1^2+2w_1\widetilde{u}+e_2)(s)ds,}
\end{cases}
\end{equation}
and so
\begin{equation}
\notag
\begin{cases}
\hspace{-0.4cm}&\displaystyle{e^{i(t-t_{j+1})\Delta}w_1(t_{j+1})=e^{i(t-t_j)\Delta}w_1(t_j)+i\int_{t_j}^{t_{j+1}}e^{i(t-s)\Delta}(2\overline{w_1}w_2+2\overline{w_1}\widetilde{v}+2w_2\overline{\widetilde{u}}+e_1)(s)ds,}\\[0.3cm]
\hspace{-0.4cm}&\displaystyle{e^{\frac{1}{2}i(t-t_{j+1})\Delta}w_2(t_{j+1})=e^{\frac{1}{2}i(t-t_j)\Delta}w_2(t_j)+i\int_{t_j}^{t_{j+1}}e^{\frac{1}{2}i(t-s)\Delta}(w_1^2+2w_1\widetilde{u}+e_2)(s)ds.}
\end{cases}
\end{equation}
By the same argument as the proof of uniqueness for $(w_1,w_2)$,
\begin{align*}
&\left\|\left(e^{i(t-t_{j+1})\Delta}w_1(t_{j+1}),e^{\frac{1}{2}i(t-t_{j+1})\Delta}w_2(t_{j+1})\right)\right\|_{S(\dot{H}^\frac{1}{2})\times S(\dot{H}^\frac{1}{2})}\\
&\ \ \ \ \ \ \ \ \ \ \ \ \ \ \ \ \ \ \ \ \ \ \ \ \ \ \ \ \ \ \ \ \ \ \ \ \ \ \leq4\left\|\left(e^{i(t-t_j)\Delta}w_1(t_j),e^{\frac{1}{2}i(t-t_j)\Delta}w_2(t_j)\right)\right\|_{S(\dot{H}^\frac{1}{2})\times S(\dot{H}^\frac{1}{2})}+4c\varepsilon_0.
\end{align*}
Iterating by beginning with $j=0$, we obtain
\begin{align*}
&\left\|\left(e^{i(t-t_j)\Delta}w_1(t_j),e^{\frac{1}{2}i(t-t_j)\Delta}w_2(t_j)\right)\right\|_{S(\dot{H}^\frac{1}{2})\times S(\dot{H}^\frac{1}{2})}+c\varepsilon_0\\
&\hspace{1.5cm}\leq4^j\left\|\left(e^{i(t-t_0)\Delta}w_1(t_0),e^{\frac{1}{2}i(t-t_0)\Delta}w_2(t_0)\right)\right\|_{S(\dot{H}^\frac{1}{2})\times S(\dot{H}^\frac{1}{2})}+(4^j+4^{j-1}+\cdots+4+1)c\varepsilon_0\\
&\hspace{1.5cm}\leq4^j\varepsilon_0+\frac{4^{j+1}-1}{4-1}c\varepsilon_0\\
&\hspace{1.5cm}=4^j\varepsilon_0+\frac{1}{3}(4^{j+1}-1)c\varepsilon_0.
\end{align*}
We remark that $N$ is fixed. We take $\varepsilon_0>0$ with $\displaystyle4^N\varepsilon_0+\frac{1}{3}(4^{N+1}-1)c\varepsilon_0\leq\frac{1}{48c}$. Then,
\begin{align*}
\|(w_1,w_2)\|_{S(\dot{H}^\frac{1}{2})\times S(\dot{H}^\frac{1}{2})}&\leq\sum_{j=0}^{N-1}\|(w_1,w_2)\|_{S(\dot{H}^\frac{1}{2}:I_j)\times S(\dot{H}^\frac{1}{2}:I_j)}\\
&\leq\sum_{j=0}^{N-1}4\left(\|(e^{i(t-t_j)\Delta}w_1(t_j),e^{\frac{1}{2}i(t-t_j)\Delta}w_2(t_j))\|_{S(\dot{H}^\frac{1}{2}:I_j)\times S(\dot{H}^\frac{1}{2}:I_j)}+c\varepsilon_0\right)\\
&\leq\sum_{j=0}^{N-1}4\left(4^j\varepsilon_0+\frac{1}{3}(4^{j+1}-1)c\varepsilon_0\right)\\
&\leq\sum_{j=0}^{N-1}4^{j+2}c\varepsilon_0\\
&=\frac{16(4^N-1)}{3}c\varepsilon_0.
\end{align*}
Therefore,
\begin{align*}
\|(u,v)\|_{S(\dot{H}^\frac{1}{2})\times S(\dot{H}^\frac{1}{2})}&\leq\|(\widetilde{u},\widetilde{v})\|_{S(\dot{H}^\frac{1}{2})\times S(\dot{H}^\frac{1}{2})}+\|(w_1,w_2)\|_{S(\dot{H}^\frac{1}{2})\times S(\dot{H}^\frac{1}{2})}\\
&\leq A+\frac{16(4^N-1)}{3}c\varepsilon_0=\vcentcolon C(A)<\infty.
\end{align*}
\end{proof}

\subsection{Localized virial identity}

%radial Sobolev inequality

\begin{lemma}[Radial Sobolev inequality]\label{Radial Sobolev inequality}
There exists $c>0$ such that for any $R>0$ and $u\in H^1$,
\[
\|u\|_{L^3(|x|>R)}\leq\frac{c}{R^\frac{2}{3}}\|u\|_{L^2(|x|>R)}^\frac{5}{6}\|\nabla u\|_{L^2(|x|>R)}^\frac{1}{6}.
\]
\end{lemma}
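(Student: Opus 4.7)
The plan is to prove the inequality in two steps by first upgrading the $L^3$ bound to an $L^\infty$ bound via a pointwise Strauss-type decay estimate for radial functions, and then interpolating $L^3$ between $L^\infty$ and $L^2$ on the exterior region. (The statement is the ``radial'' Sobolev inequality, so I will assume $u$ radial; this is the version that will be used in the rigidity argument.)

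First I would establish the pointwise bound
\[
|x|^{2}|u(x)|\leq c\,\|u\|_{L^{2}(|y|>|x|)}^{1/2}\,\|\nabla u\|_{L^{2}(|y|>|x|)}^{1/2}\qquad(u\text{ radial}).
\]
Writing $u(r)$ for the radial profile of $u$ and using that $u(s)\to 0$ as $s\to\infty$ (true by density from $C_{c,\mathrm{rad}}^{\infty}$ and then for general $u\in H^{1}_{\mathrm{rad}}$), the fundamental theorem of calculus gives
\[
u(r)^{2}=-2\int_{r}^{\infty}u(s)\,\partial_{s}u(s)\,ds.
\]
Since $r\leq s$ on the domain of integration, multiplying by $r^{4}$ and using Cauchy--Schwarz yields
\[
r^{4}u(r)^{2}\leq 2\!\left(\int_{r}^{\infty}s^{4}|u(s)|^{2}\,ds\right)^{\!1/2}\!\left(\int_{r}^{\infty}s^{4}|\partial_{s}u(s)|^{2}\,ds\right)^{\!1/2}.
\]
Converting the one-dimensional integrals back to $\mathbb{R}^{5}$ via the area element $s^{4}\,ds$, the two factors on the right are (up to the constant coming from the surface area of $S^{4}$) exactly $\|u\|_{L^{2}(|y|>r)}$ and $\|\nabla u\|_{L^{2}(|y|>r)}$, giving the claimed pointwise estimate. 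Taking the supremum over $|x|>R$ yields
\[
\|u\|_{L^{\infty}(|x|>R)}\leq\frac{c}{R^{2}}\|u\|_{L^{2}(|x|>R)}^{1/2}\|\nabla u\|_{L^{2}(|x|>R)}^{1/2}.
\]

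For the second step I would simply write
\[
\|u\|_{L^{3}(|x|>R)}^{3}=\int_{|x|>R}|u|^{2}\cdot|u|\,dx\leq\|u\|_{L^{\infty}(|x|>R)}\,\|u\|_{L^{2}(|x|>R)}^{2},
\]
substitute the bound from the first step, and take the cube root:
\[
\|u\|_{L^{3}(|x|>R)}\leq\frac{c}{R^{2/3}}\|u\|_{L^{2}(|x|>R)}^{(1+4)/6}\|\nabla u\|_{L^{2}(|x|>R)}^{1/6}=\frac{c}{R^{2/3}}\|u\|_{L^{2}(|x|>R)}^{5/6}\|\nabla u\|_{L^{2}(|x|>R)}^{1/6}.
\]

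The only subtle point is justifying the identity $u(r)^{2}=-2\int_{r}^{\infty}u(s)\,\partial_{s}u(s)\,ds$ for a general radial $u\in H^{1}$, since the pointwise trace $u(r)$ and the limit $u(s)\to 0$ are not literal for $H^{1}$ functions. The standard fix is to prove the pointwise bound first for $u\in C_{c,\mathrm{rad}}^{\infty}(\mathbb{R}^{5})$, where everything is classical, and then pass to the limit using density of smooth compactly supported radial functions in $H^{1}_{\mathrm{rad}}$ together with the continuity of both sides of the pointwise bound with respect to the $H^{1}$ norm (the right-hand side is obviously continuous; the left-hand side gives the standard continuous radial representative of $u$). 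Apart from this density step, all remaining computations are routine.
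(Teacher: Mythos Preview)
Your proof is correct and follows essentially the same approach as the paper: obtain the Strauss $L^\infty$ decay bound $\|u\|_{L^\infty(|x|>R)}\le cR^{-2}\|u\|_{L^2(|x|>R)}^{1/2}\|\nabla u\|_{L^2(|x|>R)}^{1/2}$ for radial $u$, then interpolate $L^3$ between $L^\infty$ and $L^2$. The only difference is that the paper invokes Strauss' theorem as a black box while you derive it via the fundamental theorem of calculus and density, which is the standard proof of that theorem anyway.
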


%proof of radial Sobolev inequality

\begin{proof}
By Strauss' theorem\,:\,$\|u\|_{L^\infty(|x|>R)}\leq\frac{c}{R^2}\|u\|_{L^2(|x|>R)}^\frac{1}{2}\|\nabla u\|_{L^2(|x|>R)}^\frac{1}{2}$, we obtain
\[
\|u\|_{L^3(|x|>R)}^3\leq\|u\|_{L^\infty(|x|>R)}\|u\|_{L^2(|x|>R)}^2\leq\frac{c}{R^2}\|u\|_{L^2(|x|>R)}^\frac{5}{2}\|\nabla u\|_{L^2(|x|>R)}^\frac{1}{2}.
\]
Therefore,
\[
\|u\|_{L^3(|x|>R)}\leq\frac{c}{R^\frac{2}{3}}\|u\|_{L^2(|x|>R)}^\frac{5}{6}\|\nabla u\|_{L^2(|x|>R)}^\frac{1}{6}.
\]
\end{proof}

%Localized virial identity

\begin{lemma}[Localized virial identity]\label{Localized virial identity}
Let $\chi\in C^4(\mathbb{R}^5)$ and $(u,v)$ be the solution to (NLS).\\
Let $\displaystyle I(t)=\int_{\mathbb{R}^5}\chi(x)\left(|u(t,x)|^2+2|v(t,x)|^2\right)dx$.\ Then, we have
\begin{align}
I'(t)=2\text{Im}\int_{\mathbb{R}^5}\left(\nabla\chi\cdot\nabla u\overline{u}+\nabla\chi\cdot\nabla v\overline{v}\right)dx,\label{09}
\end{align}
\begin{align}
I''(t)=\text{Re}\sum_{j=1}^5\sum_{k=1}^5\int_{\mathbb{R}^5}\chi_{jk}\left(4u_j\overline{u}_k+2v_j\overline{v}_k\right)dx-\int_{\mathbb{R}^5}\Delta^2\chi\left(|u|^2+\frac{1}{2}|v|^2\right)dx-2\text{Re}\int_{\mathbb{R}^5}\Delta\chi v\overline{u}^2dx.\label{10}
\end{align}
If $\chi$ is radial, then we can write
\begin{align}
I'(t)=2\text{Im}\int_{\mathbb{R}^5}\chi'\left(\frac{x\cdot\nabla u}{r}\overline{u}+\frac{x\cdot\nabla v}{r}\overline{v}\right)dx,\label{54}
\end{align}
\begin{align}
I''(t)&=\int_{\mathbb{R}^5}\left(\frac{\chi''}{r^2}-\frac{\chi'}{r^3}\right)\left(4|x\cdot\nabla u|^2+2|x\cdot\nabla v|^2\right)dx+\int_{\mathbb{R}^5}\frac{\chi'}{r}\left(4|\nabla u|^2+2|\nabla v|^2\right)dx \notag \\
&~~~~-\int_{\mathbb{R}^5}\left(\chi^{(4)}+\frac{8}{r}\chi^{(3)}+\frac{8}{r^2}\chi''-\frac{8}{r^3}\chi'\right)\left(|u|^2+\frac{1}{2}|v|^2\right)dx-2\text{Re}\int_{\mathbb{R}^5}\left(\chi''+\frac{4}{r}\chi'\right)v\overline{u}^2dx\label{55}
\end{align}
for $r=|x|$.
\end{lemma}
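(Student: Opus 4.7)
The plan is to prove \eqref{09} and \eqref{10} by direct differentiation in $t$, using the NLS equations to replace time derivatives and then integrating by parts in $x$ to transfer derivatives onto $\chi$. The radial formulas \eqref{54}--\eqref{55} follow from the Cartesian ones by substituting $\chi_j = \tfrac{\chi'(r)}{r}x_j$ and $\chi_{jk} = \tfrac{\chi'}{r}\delta_{jk} + \bigl(\tfrac{\chi''}{r^2}-\tfrac{\chi'}{r^3}\bigr)x_jx_k$, together with the radial Laplacian identities $\Delta\chi = \chi'' + \tfrac{4}{r}\chi'$ and $\Delta^2\chi = \chi^{(4)} + \tfrac{8}{r}\chi^{(3)} + \tfrac{8}{r^2}\chi'' - \tfrac{8}{r^3}\chi'$ on $\mathbb{R}^5$, combined with $\sum_{j,k}x_jx_k u_j\overline{u}_k = |x\cdot\nabla u|^2$ and $\sum_j|u_j|^2 = |\nabla u|^2$; so I focus on the Cartesian identities.

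For \eqref{09}, using $\partial_t u = i\Delta u + 2iv\overline{u}$ and $\partial_t v = \tfrac{i}{2}\Delta v + iu^2$ I compute
\[
\partial_t|u|^2 = -2\,\text{Im}(\Delta u\,\overline{u}) - 4\,\text{Im}(v\overline{u}^2), \qquad 2\partial_t|v|^2 = -2\,\text{Im}(\Delta v\,\overline{v}) - 4\,\text{Im}(u^2\overline{v}),
\]
and the two nonlinear contributions cancel by the mass-resonance identity $\text{Im}(u^2\overline{v}) = -\text{Im}(v\overline{u}^2)$. Thus $\partial_t(|u|^2+2|v|^2) = -2\,\text{div}\,\text{Im}(\nabla u\,\overline{u} + \nabla v\,\overline{v})$, and one integration by parts against $\chi$ yields \eqref{09}.

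For \eqref{10}, I differentiate \eqref{09} once more. A direct computation shows that the nonlinear contributions to $\partial_t[\text{Im}(u_j\overline{u})]$ and $\partial_t[\text{Im}(v_j\overline{v})]$ are $2\,\text{Re}(v_j\overline{u}^2)$ and $2\,\text{Re}(\overline{v}uu_j)-\text{Re}(v_j\overline{u}^2)$ respectively; using the conjugate identity $\text{Re}(\overline{v}u u_j) = \text{Re}(v\overline{u}\,\overline{u}_j)$, their sum collapses to $\text{Re}(v_j\overline{u}^2 + 2v\overline{u}\,\overline{u}_j) = \text{Re}(\partial_j(v\overline{u}^2))$. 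Hence
\[
\partial_t[\text{Im}(u_j\overline{u}) + \text{Im}(v_j\overline{v})] = \text{Re}(\overline{u}\,\Delta u_j - u_j\,\Delta\overline{u}) + \tfrac12\text{Re}(\overline{v}\,\Delta v_j - v_j\,\Delta\overline{v}) + \text{Re}(\partial_j(v\overline{u}^2)),
\]
and integrating $2\chi_j$ against the nonlinear piece yields, after one integration by parts, exactly the term $-2\,\text{Re}\int\Delta\chi\, v\overline{u}^2\,dx$ of \eqref{10}.

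For the remaining quadratic-in-gradient terms I use the pointwise identity $\overline{u}\,\Delta u_j - u_j\,\Delta\overline{u} = \sum_k \partial_k(\overline{u}\,u_{jk} - u_j\overline{u}_k)$, integrate against $2\chi_j$, sum in $j$, and perform one integration by parts to produce the Hessian $\chi_{jk}$. The product rule $\partial_j\partial_k|u|^2 = 2\,\text{Re}(\overline{u}\,u_{jk}) + 2\,\text{Re}(u_j\overline{u}_k)$ then lets me rewrite $\text{Re}(\overline{u}\,u_{jk}) = \tfrac12\partial_j\partial_k|u|^2 - \text{Re}(u_j\overline{u}_k)$; integrating the first term by parts twice produces $-\int\Delta^2\chi\,|u|^2\,dx$, and the second contributes an additional $4\,\text{Re}\,\chi_{jk}u_j\overline{u}_k$. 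The same computation for $v$ picks up the coefficient $\tfrac12$ from the equation for $v$, yielding $-\tfrac12\int\Delta^2\chi\,|v|^2\,dx$ and $2\,\text{Re}\,\chi_{jk}v_j\overline{v}_k$; adding everything reproduces \eqref{10}. I expect the main obstacle to be bookkeeping for the nonlinear cross-terms: verifying that, thanks to mass resonance and the symmetry $\chi_{jk}=\chi_{kj}$, the many interaction contributions indeed collapse to the single clean divergence $\partial_j(v\overline{u}^2)$ displayed above.
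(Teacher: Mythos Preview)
Your proof is correct and follows essentially the same approach as the paper: differentiate $I(t)$, substitute the NLS equations, and integrate by parts to transfer derivatives onto $\chi$. The only cosmetic difference is in the organization of the $I''(t)$ computation---the paper expands the right-hand side of \eqref{10} term by term and matches it against the raw expression obtained from differentiating \eqref{09}, whereas you work forward using the pointwise divergence identity $\overline{u}\,\Delta u_j - u_j\,\Delta\overline{u} = \sum_k \partial_k(\overline{u}\,u_{jk} - u_j\overline{u}_k)$ and the product rule $\partial_j\partial_k|u|^2 = 2\,\mathrm{Re}(\overline{u}\,u_{jk}) + 2\,\mathrm{Re}(u_j\overline{u}_k)$ to arrive at the Hessian and bilaplacian directly; both are the same calculation.
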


%proof of local virial identity

\begin{proof}
Since $(u,v)$ is the solution to (NLS), $(u,v)$ satisfies
\begin{equation}
\notag
\begin{cases}
\hspace{-0.4cm}&\displaystyle{\partial_tu=i\Delta u+2iv\overline{u},}\\
\hspace{-0.4cm}&\displaystyle{\partial_tv=\frac{1}{2}i\Delta v+iu^2.}
\end{cases}
\end{equation}
Thus, we obtain
\begin{align*}
I'(t)&=\int_{\mathbb{R}^5}\chi\left(u_t\overline{u}+u\overline{u}_t+2v_t\overline{v}+2v\overline{v}_t\right)dx\\
&=2\text{Re}\int_{\mathbb{R}^5}\chi\left(u_t\overline{u}+2v_t\overline{v}\right)dx\\
&=2\text{Re}\int_{\mathbb{R}^5}\chi\left(i\Delta u\overline{u}+2iv\overline{u}^2+i\Delta v\overline{v}+2iu^2\overline{v}\right)dx\\
&=2\text{Re}\int_{\mathbb{R}^5}\chi\left(i\Delta u\overline{u}+i\Delta v\overline{v}\right)dx\\
&=-2\text{Im}\int_{\mathbb{R}^5}\chi\left(\Delta u\overline{u}+\Delta v\overline{v}\right)dx\\
&=2\text{Im}\int_{\mathbb{R}^5}\left(\nabla\chi\cdot\nabla u\overline{u}+\chi\nabla u\cdot\nabla\overline{u}+\nabla\chi\cdot\nabla v\overline{v}+\chi\nabla v\cdot\nabla\overline{v}\right)dx\\
&=2\text{Im}\int_{\mathbb{R}^5}\left(\nabla\chi\cdot\nabla u\overline{u}+\nabla\chi\cdot\nabla v\overline{v}\right)dx,
\end{align*}
\begin{align}
I''(t)&=2\text{Im}\int_{\mathbb{R}^5}\left(\nabla\chi\cdot\nabla u_t\overline{u}+\nabla\chi\cdot\nabla u\overline{u}_t+\nabla\chi\cdot\nabla v_t\overline{v}+\nabla\chi\cdot\nabla v\overline{v}_t\right)dx \notag \\
&=2\text{Im}\int_{\mathbb{R}^5}\left\{\nabla\chi\cdot\nabla\left(i\Delta u+2iv\overline{u}\right)\overline{u}+\nabla\chi\cdot\nabla u\left(-i\Delta\overline{u}-2i\overline{v}u\right)\frac{ }{ }\right. \notag \\
&\left.\ \ \ \ \ \ \ \ \ \ \ \ \ \ \ \ \ \ \ \ \ \ \ \ \ \ \ \ \ \ \ \ \ \ +\nabla\chi\cdot\nabla\left(\frac{1}{2}i\Delta v+iu^2\right)\overline{v}+\nabla\chi\cdot\nabla v\left(-\frac{1}{2}i\Delta\overline{v}-i\overline{u}^2\right)\right\}dx \notag \\
&=2\text{Im}\int_{\mathbb{R}^5}\left\{i\nabla\chi\cdot\nabla\left(\Delta u\right)\overline{u}+2i\nabla\chi\cdot\nabla(v\overline{u})\overline{u}+\nabla\chi\cdot\nabla u\left(-i\Delta\overline{u}-2i\overline{v}u\right)\frac{ }{ }\right. \notag \\
&\left.\ \ \ \ \ \ \ \ \ \ \ \ \ \ \ \ \ \ \ \ \ \ \ +\frac{1}{2}i\nabla\chi\cdot\nabla\left(\Delta v\right)\overline{v}+i\nabla\chi\cdot\nabla(u^2)\overline{v}+\nabla\chi\cdot\nabla v\left(-\frac{1}{2}i\Delta\overline{v}-i\overline{u}^2\right)\right\}dx \notag \\
&=2\text{Re}\int_{\mathbb{R}^5}\left\{\nabla\chi\cdot\nabla\left(\Delta u\right)\overline{u}+2\nabla\chi\cdot\nabla(v\overline{u})\overline{u}+\nabla\chi\cdot\nabla u\left(-\Delta\overline{u}-2\overline{v}u\right)\frac{ }{ }\right. \notag \\
&\left.\ \ \ \ \ \ \ \ \ \ \ \ \ \ \ \ \ \ \ \ \ \ \ \ \ \ \ +\frac{1}{2}\nabla\chi\cdot\nabla\left(\Delta v\right)\overline{v}+\nabla\chi\cdot\nabla(u^2)\overline{v}+\nabla\chi\cdot\nabla v\left(-\frac{1}{2}\Delta\overline{v}-\overline{u}^2\right)\right\}dx \notag \\
&=2\text{Re}\int_{\mathbb{R}^5}\left\{\nabla\chi\cdot\nabla\left(\Delta u\right)\overline{u}+2\nabla\chi\cdot\nabla v\overline{u}^2+2\nabla\chi\cdot\nabla\overline{u}v\overline{u}-\nabla\chi\cdot\nabla u\Delta\overline{u}-2\nabla\chi\cdot\nabla u\overline{v}u\frac{ }{ }\right. \notag \\
&\left.\ \ \ \ \ \ \ \ \ \ \ \ \ \ \ \ \ \ \ \ \ \ \ \ +\frac{1}{2}\nabla\chi\cdot\nabla\left(\Delta v\right)\overline{v}+2\nabla\chi\cdot\nabla uu\overline{v}-\frac{1}{2}\nabla\chi\cdot\nabla v\Delta\overline{v}-\nabla\chi\cdot\nabla v\overline{u}^2\right\}dx \notag \\
&=\text{Re}\int_{\mathbb{R}^5}\left\{2\nabla\chi\cdot\nabla\left(\Delta u\right)\overline{u}+\nabla\chi\cdot\nabla\left(\Delta v\right)\overline{v}+4\nabla\chi\cdot\nabla u\overline{v}u\right. \notag \\
&\left.\ \ \ \ \ \ \ \ \ \ \ \ \ \ \ \ \ \ \ \ \ \ \ \ \ \ \ \ \ \ \ \ \ \ \ \ \ \ \ \ \ \ \ \ \ \ \ \ +2\nabla\chi\cdot\nabla v\overline{u}^2-2\nabla\chi\cdot\nabla u\Delta\overline{u}-\nabla\chi\cdot\nabla v\Delta\overline{v}\right\}dx.\label{11}
\end{align}
On the other hand, we calculate the right side of \eqref{10}.
\begin{align*}
&\sum_{j=1}^5\sum_{k=1}^5\int_{\mathbb{R}^5}\chi_{jk}u_j\overline{u}_kdx\\
&\hspace{1cm}=\sum_{j=1}^5\sum_{k=1}^5\int_{\mathbb{R}^4}\left\{\left[\chi_ju_j\overline{u}_k\right]_{x_k=-\infty}^{x_k=\infty}-\int_{\mathbb{R}}\chi_j\left(u_{kj}\overline{u}_k+u_j\overline{u}_{kk}\right)dx_k\right\}d\overline{x_k}\\
&\hspace{1cm}=-\sum_{j=1}^5\sum_{k=1}^5\int_{\mathbb{R}^5}\chi_ju_{kj}\overline{u}_kdx-\int_{\mathbb{R}^5}\nabla\chi\cdot\nabla u\Delta\overline{u}dx \notag \\
&\hspace{1cm}=-\sum_{j=1}^5\sum_{k=1}^5\int_{\mathbb{R}^4}\left\{\left[\chi_ju_k\overline{u}_k\right]_{x_j=-\infty}^{x_j=\infty}-\int_{\mathbb{R}}\left(\chi_{jj}\overline{u}_k+\chi_j\overline{u}_{jk}\right)u_kdx_j\right\}d\overline{x_j}-\int_{\mathbb{R}^5}\nabla\chi\cdot\nabla u\Delta\overline{u}dx\\
&\hspace{1cm}=\sum_{j=1}^5\sum_{k=1}^5\int_{\mathbb{R}^5}\chi_j\overline{u}_{jk}u_kdx+\int_{\mathbb{R}^5}\Delta\chi|\nabla u|^2dx-\int_{\mathbb{R}^5}\nabla\chi\cdot\nabla u\Delta\overline{u}dx\\
&\hspace{1cm}=\sum_{j=1}^5\sum_{k=1}^5\int_{\mathbb{R}^4}\left\{\left[\chi_j\overline{u}_ju_k\right]_{x_k=-\infty}^{x_k=\infty}-\int_{\mathbb{R}}\left(\chi_{kj}u_k+\chi_ju_{kk}\right)\overline{u}_jdx_k\right\}d\overline{x_k}\\
&\hspace{9cm}+\int_{\mathbb{R}^5}\Delta\chi|\nabla u|^2dx-\int_{\mathbb{R}^5}\nabla\chi\cdot\nabla u\Delta\overline{u}dx\\
&\hspace{1cm}=-\sum_{j=1}^5\sum_{k=1}^5\int_{\mathbb{R}^5}\chi_{kj}u_k\overline{u}_j
dx-\int_{\mathbb{R}^5}\nabla\chi\cdot\nabla\overline{u}\Delta udx+\int_{\mathbb{R}^5}\Delta\chi|\nabla u|^2dx-\int_{\mathbb{R}^5}\nabla\chi\cdot\nabla u\Delta\overline{u}dx.
\end{align*}
This gives the following for the first term of \eqref{10}:
\begin{align}
2\text{Re}\sum_{j=1}^5\sum_{k=1}^5\int_{\mathbb{R}^5}\chi_{jk}u_j\overline{u}_kdx&=\sum_{j=1}^5\sum_{k=1}^5\int_{\mathbb{R}^5}\left(\chi_{jk}u_j\overline{u}_k+\chi_{jk}\overline{u}_ju_k\right)dx \notag \\
&=\int_{\mathbb{R}^5}\Delta\chi|\nabla u|^2dx-2\text{Re}\int_{\mathbb{R}^5}\nabla\chi\cdot\nabla u\Delta\overline{u}dx \notag \\
&=-\int_{\mathbb{R}^5}\nabla\chi\cdot\nabla\left(|\nabla u|^2\right)dx-2\text{Re}\int_{\mathbb{R}^5}\nabla\chi\cdot\nabla u\Delta\overline{u}dx.\label{12}
\end{align}
We obtain
\begin{align}
\int_{\mathbb{R}^5}\Delta^2\chi|u|^2dx&=-\int_{\mathbb{R}^5}\nabla\left(\Delta\chi\right)\cdot\left(\nabla u\overline{u}+u\nabla\overline{u}\right)dx \notag \\
&=-2\text{Re}\int_{\mathbb{R}^5}\nabla\left(\Delta\chi\right)\cdot\nabla u\overline{u}dx \notag \\
&=2\text{Re}\int_{\mathbb{R}^5}\Delta\chi\left(\Delta u\overline{u}+|\nabla u|^2\right)dx \notag \\
&=-2\text{Re}\int_{\mathbb{R}^5}\nabla\chi\cdot\left(\nabla\left(\Delta u\right)\overline{u}+\Delta u\nabla\overline{u}+\nabla\left(|\nabla u|^2\right)\right)dx,\label{13}
\end{align}
for the second term of \eqref{10} and
\begin{align}
\int_{\mathbb{R}^5}\Delta\chi v\overline{u}^2dx=-\int_{\mathbb{R}^5}\nabla\chi\cdot\left(\nabla v\overline{u}^2+2v\nabla\overline{u}\,\overline{u}\right)dx.\label{14}
\end{align}
for the third term of \eqref{10}. Combining \eqref{11},\,\eqref{12},\,\eqref{13} and \eqref{14},
\begin{align*}
&\text{Re}\sum_{j=1}^5\sum_{k=1}^5\int_{\mathbb{R}^5}\chi_{jk}\left(4u_j\overline{u}_k+2v_j\overline{v}_k\right)dx-\int_{\mathbb{R}^5}\Delta^2\chi\left(|u|^2+\frac{1}{2}|v|^2\right)dx-2\text{Re}\int_{\mathbb{R}^5}\Delta\chi v\overline{u}^2dx\\
&=-2\int_{\mathbb{R}^5}\nabla\chi\cdot\nabla\left(|\nabla u|^2\right)dx-4\text{Re}\int_{\mathbb{R}^5}\nabla\chi\cdot\nabla u\Delta\overline{u}dx-\int_{\mathbb{R}^5}\nabla\chi\cdot\nabla\left(|\nabla v|^2\right)dx\\
&\ \ \ \ \ \ \ \ -2\text{Re}\int_{\mathbb{R}^5}\nabla\chi\cdot\nabla v\Delta\overline{v}dx+2\text{Re}\int_{\mathbb{R}^5}\nabla\chi\cdot\left(\nabla\left(\Delta u\right)\overline{u}+\Delta u\nabla\overline{u}+\nabla\left(|\nabla u|^2\right)\right)dx\\
&\ \ \ \ \ \ \ \ +\text{Re}\int_{\mathbb{R}^5}\nabla\chi\cdot\left(\nabla\left(\Delta v\right)\overline{v}+\Delta v\nabla\overline{v}+\nabla\left(|\nabla v|^2\right)\right)dx+2\text{Re}\int_{\mathbb{R}^5}\nabla\chi\cdot\left(\nabla v\overline{u}^2+2v\nabla\overline{u}\overline{u}\right)dx\\
&=\text{Re}\int_{\mathbb{R}^5}\left\{2\nabla\chi\cdot\nabla\left(\Delta u\right)\overline{u}+\nabla\chi\cdot\nabla\left(\Delta v\right)\overline{v}+4\nabla\chi\cdot\nabla u\overline{v}u\right.\\
&\left.\ \ \ \ \ \ \ \ +2\nabla\chi\cdot\nabla v\overline{u}^2-2\nabla\chi\cdot\nabla u\Delta\overline{u}-\nabla\chi\cdot\nabla v\Delta\overline{v}\right\}dx\\
&=I''(t).
\end{align*}
When $\chi$ is radial,
\begin{align}
\int_{\mathbb{R}^5}\nabla\chi\cdot\nabla u\overline{u}dx&=\int_{\mathbb{R}^5}\sum_{j=1}^5\chi_ju_j\overline{u}dx=\int_{\mathbb{R}^5}\sum_{j=1}^5\chi'\frac{x_j}{r}u_j\overline{u}dx=\int_{\mathbb{R}^5}\chi'\frac{x\cdot\nabla u}{r}\overline{u}dx,\label{15}
\end{align}
\begin{align}
\sum_{j=1}^5\sum_{k=1}^5\int_{\mathbb{R}^5}\chi_{jk}u_j\overline{u}_kdx&=\sum_{j=1}^5\sum_{k=1}^5\int_{\mathbb{R}^5}\partial_j\left(\chi'\frac{x_k}{r}\right)u_j\overline{u}_kdx \notag \\
&=\sum_{j=1}^5\sum_{k=1}^5\int_{\mathbb{R}^5}\left(\chi''\frac{x_jx_k}{r^2}+\chi'\frac{\delta_{jk}}{r}-\chi'\frac{x_jx_k}{r^3}\right)u_j\overline{u}_kdx \notag \\
&=\int_{\mathbb{R}^5}\left(\frac{\chi''}{r^2}-\frac{\chi'}{r^3}\right)|x\cdot\nabla u|^2dx+\int_{\mathbb{R}^5}\frac{\chi'}{r}|\nabla u|^2dx,\label{16}
\end{align}
where $\delta_{jk}$ denotes the Kronecker delta.
\begin{align}
\int_{\mathbb{R}^5}\Delta\chi v\overline{u}^2dx&=\int_{\mathbb{R}^5}\sum_{j=1}^5\partial_j\left(\chi'\frac{x_j}{r}\right)v\overline{u}^2dx \notag \\
&=\int_{\mathbb{R}^5}\sum_{j=1}^5\left(\chi''\frac{x_j^2}{r^2}+\chi'\frac{1}{r}-\chi'\frac{x_j^2}{r^3}\right)v\overline{u}^2dx \notag \\
&=\int_{\mathbb{R}^5}\left(\chi''+\frac{4}{r}\chi'\right)v\overline{u}^2dx,\label{17}
\end{align}
\begin{align}
\Delta^2\chi&=\Delta\left(\frac{4}{r}\chi'+\chi''\right)=\sum_{j=1}^5\partial_j\left(-\frac{4x_j}{r^3}\chi'+\frac{4x_j}{r^2}\chi''+\frac{x_j}{r}\chi^{(3)}\right) \notag \\
&=\sum_{j=1}^5\left(-\frac{4}{r^3}\chi'+\frac{12x_j^2}{r^5}\chi'-\frac{4x_j^2}{r^4}\chi''+\frac{4}{r^2}\chi''-\frac{8x_j^2}{r^4}\chi''+\frac{4x_j^2}{r^3}\chi^{(3)}+\frac{1}{r}\chi^{(3)}-\frac{x_j^2}{r^3}\chi^{(3)}+\frac{x_j^2}{r^2}\chi^{(4)}\right) \notag \\
&=\chi^{(4)}+\frac{8}{r}\chi^{(3)}+\frac{8}{r^2}\chi''-\frac{8}{r^3}\chi'.\label{18}
\end{align}
Combining \eqref{09} and \eqref{15},
\[
I'(t)=2\text{Im}\int_{\mathbb{R}^5}\chi'\left(\frac{x\cdot\nabla u}{r}\overline{u}+\frac{x\cdot\nabla v}{r}\overline{v}\right)dx,
\]
Combining \eqref{10},\,\eqref{16},\,\eqref{17} and \eqref{18}, we have
\begin{align*}
I''(t)&=\int_{\mathbb{R}^5}\left(\frac{\chi''}{r^2}-\frac{\chi'}{r^3}\right)\left(4|x\cdot\nabla u|^2+2|x\cdot\nabla v|^2\right)dx+\int_{\mathbb{R}^5}\frac{\chi'}{r}\left(4|\nabla u|^2+2|\nabla v|^2\right)dx\\
&~~~~~~~~~~~~~~~-\int_{\mathbb{R}^5}\left(\chi^{(4)}+\frac{8}{r}\chi^{(3)}+\frac{8}{r^2}\chi''-\frac{8}{r^3}\chi'\right)\left(|u|^2+\frac{1}{2}|v|^2\right)dx-2\text{Re}\int_{\mathbb{R}^5}\left(\chi''+\frac{4}{r}\chi'\right)v\overline{u}^2dx.
\end{align*}
\end{proof}

\section{Global versus blowing-up dichotomy}

\subsection{Global versus blowing-up dichotomy}

%estimates for K_\omega^{20,8}(u,v)
　\\
We recall that $(\phi_\omega,\psi_\omega)$ attains the infimum of $\mu_\omega^{20,8}$ and solves (gNLS).

\begin{lemma}[Estimates for $K_\omega^{20,8}(u,v)$]\label{estimates for K}
Let $(u_0,v_0)\in H^1\!\times\!H^1$ and $(u,v)$ be the corresponding solution to (NLS). Let $[0,T^\ast)$ be the maximal forward lifespan of $(u,v)$. If $I_\omega(u_0,v_0)<I_\omega(\phi_\omega,\psi_\omega)$, then the following holds.\\
If $K_\omega^{20,8}(u_0,v_0)>0$, then $K_\omega^{20,8}(u(t),v(t))\geq\min\left\{I_\omega(\phi_\omega,\psi_\omega)-I_\omega(u,v),\,K(u,v)\right\}>0$ for any $t\in[0,T^\ast)$,\\
If $K_\omega^{20,8}(u_0,v_0)<0$, then $K_\omega^{20,8}(u(t),v(t))\leq16\left(I_\omega(u,v)-I_\omega(\phi_\omega,\psi_\omega)\right)<0$ for any $t\in[0,T^\ast)$.
\end{lemma}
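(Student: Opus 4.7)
My approach rests on three ingredients that I would prepare first: (i) conservation of mass and energy (Theorem~\ref{Conservation law}) yields $I_\omega(u(t),v(t)) \equiv I_\omega(u_0,v_0) < \mu_\omega^{20,8}$ throughout $[0,T^*)$; (ii) Proposition~\ref{K and virial} gives $K_\omega^{20,8}(u,v) = 8K(u,v) - 20P(u,v)$, from which a direct comparison of definitions produces the algebraic identity $I_\omega = L_\omega + \tfrac{1}{20} K_\omega^{20,8}$; and (iii) Lemma~\ref{lemma of inferior} provides the variational characterization $\mu_\omega^{20,8} = \inf\{L_\omega(u,v) : K_\omega^{20,8}(u,v) \leq 0,\ (u,v) \neq (0,0)\}$. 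With these in hand I would prove sign preservation: $t \mapsto K_\omega^{20,8}(u(t),v(t))$ is continuous on $[0,T^*)$ by $C_t H^1_x$-regularity plus Sobolev control of $P$, and if it vanished at some intermediate $t_*$, then $(u,v)(t_*) \neq (0,0)$ (by uniqueness with $K_\omega^{20,8}(u_0,v_0) \neq 0$ forcing $(u_0,v_0) \neq (0,0)$), so $(u,v)(t_*) \in C_\omega$ would give $I_\omega(u,v)(t_*) = L_\omega(u,v)(t_*) \geq \mu_\omega^{20,8}$, contradicting (i). Hence $\operatorname{sgn} K_\omega^{20,8}(u(t),v(t)) = \operatorname{sgn} K_\omega^{20,8}(u_0,v_0)$ on $[0,T^*)$.

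For the quantitative bound when $K_\omega^{20,8}(u_0,v_0) < 0$, Lemma~\ref{lemma of inferior} applies pointwise in $t$ to give $L_\omega(u(t),v(t)) \geq \mu_\omega^{20,8}$, and (ii) then yields
\[
K_\omega^{20,8}(u,v) = 20\bigl(I_\omega(u,v) - L_\omega(u,v)\bigr) \leq 20\bigl(I_\omega(u,v) - \mu_\omega^{20,8}\bigr) \leq 16\bigl(I_\omega(u,v) - I_\omega(\phi_\omega,\psi_\omega)\bigr),
\]
where the last inequality uses the negativity of the bracket.

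When $K_\omega^{20,8}(u_0,v_0) > 0$ the argument is more delicate and splits on whether $P \leq 7K/20$. If so, then $K_\omega^{20,8} = 8K - 20P \geq K$ immediately, yielding the $K$ term in the minimum. Otherwise $P > 7K/20 > 0$, and I would introduce the scaling family $(u_\lambda,v_\lambda) := (e^{20\lambda} u(e^{8\lambda}\cdot), e^{20\lambda} v(e^{8\lambda}\cdot))$: the equation $K_\omega^{20,8}(u_\lambda, v_\lambda) = 8 e^{16\lambda}K - 20 e^{20\lambda}P = 0$ has a unique root $\lambda_0 > 0$ with $r := e^{4\lambda_0} = 2K/(5P) \in (1, 8/7)$, at which $(u_{\lambda_0},v_{\lambda_0}) \in C_\omega$ and hence $I_\omega(u_{\lambda_0},v_{\lambda_0}) \geq \mu_\omega^{20,8}$. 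A direct calculation of the scaling yields
\[
I_\omega(u_{\lambda_0},v_{\lambda_0}) - I_\omega(u,v) = K\Bigl[\tfrac{r^4-1}{2} - \tfrac{2(r^5-1)}{5r}\Bigr], \qquad K_\omega^{20,8}(u,v) = \tfrac{8K(r-1)}{r},
\]
and the target estimate $K_\omega^{20,8}(u,v) \geq I_\omega(u_{\lambda_0},v_{\lambda_0}) - I_\omega(u,v)$ reduces to the elementary polynomial bound $80 \geq (r-1)(r^3 + 2r^2 + 3r + 4)$ on $r \in (1,8/7)$, easily verified since the right-hand side is bounded by $(1/7)\cdot 12$. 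Combining the two sub-cases gives $K_\omega^{20,8}(u,v) \geq \min\{\mu_\omega^{20,8} - I_\omega(u,v),\,K(u,v)\}$. I expect the main obstacle to be the book-keeping in this sub-case: identifying $7K/20$ as the precise threshold between the $K$ bound and the $\mu_\omega^{20,8} - I_\omega$ bound, and confirming that the scaling argument ``closes up'' exactly on the complementary interval $r \in (1,8/7)$.
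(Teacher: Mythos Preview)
Your argument is correct, and it differs from the paper's in both the positive and negative cases. In the negative case you exploit the algebraic identity $I_\omega = L_\omega + \tfrac{1}{20}K_\omega^{20,8}$ together with Lemma~\ref{lemma of inferior} to obtain the bound in one line; the paper instead sets $J_{20,8}(\lambda) = I_\omega(e^{20\lambda}u(e^{8\lambda}\cdot), e^{20\lambda}v(e^{8\lambda}\cdot))$, observes the differential inequality $J_{20,8}''(\lambda) \leq 16 J_{20,8}'(\lambda)$ (valid since $P(u,v) > 0$), and integrates it over $[\lambda_0,0]$. In the positive case you split at $P = 7K/20$ and, on the nontrivial side, carry out a direct polynomial comparison between $K_\omega^{20,8}(u,v)$ and $I_\omega(u_{\lambda_0},v_{\lambda_0}) - I_\omega(u,v)$ in the variable $r = e^{4\lambda_0} \in (1,8/7)$; the paper instead introduces the auxiliary point $\lambda_1$ where $J_{20,8}'' + J_{20,8}' = 0$ (corresponding to the threshold $P = \tfrac{34}{105}K$), and in the sub-case $\lambda_1 < 0$ integrates $J_{20,8}'' + J_{20,8}' < 0$ over $[0,\lambda_0]$ to obtain the same conclusion without any explicit polynomial check. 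Your route is shorter and entirely algebraic; the paper's route is perhaps more conceptual in that the ODE structure of $J_{20,8}$ drives both halves of the argument uniformly, but requires tracking the extra parameter $\lambda_1$.
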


%proof of estimates for K_\omega^{20,8}(u,v)

\begin{proof}
We define that
\begin{align*}
&J_{\alpha,\beta}(\lambda)=I_\omega\left(e^{\alpha\lambda}u(e^{\beta\lambda}\cdot),e^{\alpha\lambda}v(e^{\beta\lambda}\cdot)\right)\\
&\ \ \ \ \ \ \ \ \ \ =\frac{\omega}{2}\|e^{\alpha\lambda}u(e^{\beta\lambda}\cdot)\|_{L^2}^2+\omega\|{e^{\alpha\lambda}v(e^{\beta\lambda}\cdot)}\|_{L^2}^2+\frac{1}{2}\|\nabla e^{\alpha\lambda}u(e^{\beta\lambda}\cdot)\|_{L^2}^2\\
&\hspace{6cm}+\frac{1}{4}\|\nabla e^{\alpha\lambda}v(e^{\beta\lambda}\cdot)\|_{L^2}^2-\text{Re}\left(e^{\alpha\lambda}v(e^{\beta\lambda}\cdot),(e^{\alpha\lambda}u(e^{\beta\lambda}\cdot))^2\right)_{L^2}\\
&\ \ \ \ \ \ \ \ \ \ =\frac{\omega}{2}e^{(2\alpha-5\beta)\lambda}\|u\|_{L^2}^2+\omega e^{(2\alpha-5\beta)\lambda}\|v\|_{L^2}^2+\frac{1}{2}e^{(2\alpha-3\beta)\lambda}\|\nabla u\|_{L^2}^2\\
&\ \ \ \ \ \ \ \ \ \ \ \ \ \ \ \ \ \ \ \ \ \ \ \ \ \ \ \ \ \ \ \ \ \ \ \ \ \ \ \ \ \ \ \ \ \ \ \ \ \ \ \ \ \ \ \ \ \ \ \ +\frac{1}{4}e^{(2\alpha-3\beta)\lambda}\|\nabla v\|_{L^2}^2-e^{(3\alpha-5\beta)\lambda}\text{Re}(v,u^2)_{L^2}.
\end{align*}
If $\alpha=20,\beta=8$, then
\[
J_{20,8}(\lambda)=\frac{\omega}{2}\|u\|_{L^2}^2+\omega\|v\|_{L^2}^2+\frac{1}{2}e^{16\lambda}\|\nabla u\|_{L^2}^2+\frac{1}{4}e^{16\lambda}\|\nabla v\|_{L^2}^2-e^{20\lambda}\text{Re}(v,u^2)_{L^2}.
\]
Thus,
\[
J_{20,8}(0)=\frac{\omega}{2}M(u,v)+\frac{1}{2}E(u,v)=I_\omega(u,v).
\]
Also,
\begin{align*}
J'_{20,8}(\lambda)&=8e^{16\lambda}\|\nabla u\|_{L^2}^2+4e^{16\lambda}\|\nabla v\|_{L^2}^2-20e^{20\lambda}\text{Re}(v,u^2)_{L^2}\\
&=4e^{16\lambda}(2\|\nabla u\|_{L^2}^2+\|\nabla v\|_{L^2}^2-5e^{4\lambda}\text{Re}(v,u^2)_{L^2}).
\end{align*}
Thus, $J'_{20,8}(0)=K_\omega^{20,8}(u,v)$. Moreover,
\begin{align*}
J''_{20,8}(\lambda)&=8\cdot16e^{16\lambda}\|\nabla u\|_{L^2}^2+4\cdot16e^{16\lambda}\|\nabla v\|_{L^2}^2-20\cdot20e^{20\lambda}\text{Re}(v,u^2)_{L^2}\\
&=16(8e^{16\lambda}\|\nabla u\|_{L^2}^2+4e^{16\lambda}\|\nabla v\|_{L^2}^2-20e^{20\lambda}\text{Re}(v,u^2)_{L^2})-80e^{20\lambda}\text{Re}(v,u^2)_{L^2}\\
&=16J'_{20,8}(\lambda)-80e^{20\lambda}\text{Re}(v,u^2)_{L^2}.
\end{align*}
In the case $K_\omega^{20,8}(u_0,v_0)>0$, we first prove that $K_\omega^{20,8}(u(t),v(t))>0$ for any $t\in[0,T^\ast)$. If not, then there exists $ t\in[0,T^\ast)$ such that $K_\omega^{20,8}(u(t),v(t))=0.$ By the definition of $\mu_\omega^{20,8}(=I_\omega(\phi_\omega,\psi_\omega))$, we have $I_\omega(\phi_\omega,\psi_\omega)\leq I_\omega(u(t),v(t))$ for such $t\in[0,T^\ast)$. Thus, $I_\omega(u(t),v(t))<I_\omega(\phi_\omega,\psi_\omega)\leq I_\omega(u(t),v(t))$, which is contradiction. Therefore, $K_\omega^{20,8}(u,v)>0$ for any $t\in[0,T^\ast)$.\\
We will prove that $K_\omega^{20,8}(u(t),v(t))\geq\min\{I_\omega(\phi_\omega,\psi_\omega)-I_\omega(u,v),K(u,v)\}$ from here.\\
If $P(u,v)\leq0$, then $K_\omega^{20,8}(u,v)\geq8K(u,v)$.\\
Let $P(u,v)>0$. We solve $J'_{20,8}(\lambda)=0$. Since
\[
4e^{16\lambda}(2K(u,v)-5e^{4\lambda}P(u,v))=0,
\]
it follows that
\[
e^{4\lambda}=\frac{2K(u,v)}{5P(u,v)}>1,
\]
\[
\lambda=\frac{1}{4}\log\frac{2K(u,v)}{5P(u,v)}>0.
\]
Let $\lambda_0$ be this $\lambda$, i.e.
\[
\lambda_0=\frac{1}{4}\log\frac{2K(u,v)}{5P(u,v)}>0.
\]
Next, we solve $J''_{20,8}(\lambda)+J'_{20,8}(\lambda)=0$.
\[
\left(8\cdot16e^{16\lambda}K(u,v)-20\cdot20e^{20\lambda}P(u,v)\right)+\left(8e^{16\lambda}K(u,v)-20e^{20\lambda}P(u,v)\right)=0,
\]
\[
e^{16\lambda}\left(8\cdot17K(u,v)-20\cdot21e^{4\lambda}P(u,v)\right)=0,
\]
\[
e^{4\lambda}=\frac{8\cdot17K(u,v)}{20\cdot21P(u,v)},
\]
\[
\lambda=\frac{1}{4}\log\frac{8\cdot17K(u,v)}{20\cdot21P(u,v)}.
\]
Let $\lambda_1$ be this $\lambda$, i.e.
\[
\lambda_1=\frac{1}{4}\log\frac{8\cdot17K(u,v)}{20\cdot21P(u,v)}.
\]
In the case $\lambda_1<0$, because $J''_{20,8}(\lambda)+J'_{20,8}(\lambda)<0$\ \ in\ \ $[0,\lambda_0]$,
\begin{align}
0&>\int_0^{\lambda_0}(J''_{20,8}(\lambda)+J'_{20,8}(\lambda))d\lambda \notag \\
&=J'_{20,8}(\lambda_0)-J'_{20,8}(0)+J_{20,8}(\lambda_0)-J_{20,8}(0) \notag \\
&=-K_\omega^{20,8}(u,v)+J_{20,8}(\lambda_0)-I_\omega(u,v).\label{76}
\end{align}
Since
\[
J'_{20,8}(\lambda_0)=8K\left(e^{20\lambda_0}u(e^{8\lambda_0}\cdot),e^{20\lambda_0}v(e^{8\lambda_0}\cdot)\right)-20P\left(e^{20\lambda_0}u(e^{8\lambda_0}\cdot),e^{20\lambda_0}v(e^{8\lambda_0}\cdot)\right)=0,
\]
it follows that
\[
K_\omega^{20,8}\left(e^{20\lambda_0}u(e^{8\lambda_0}\cdot),e^{20\lambda_0}v(e^{8\lambda_0}\cdot)\right)=0.
\]
Hence, $J_{20,8}(\lambda_0)=I_\omega\left(e^{20\lambda_0}u(e^{8\lambda_0}\cdot),e^{20\lambda_0}v(e^{8\lambda_0}\cdot)\right)\geq I_\omega(\phi_\omega,\psi_\omega)$.\\
This inequality combined with \eqref{76} gives
\[
0>-K_\omega^{20,8}(u,v)+I_\omega(\phi_\omega,\psi_\omega)-I_\omega(u,v),
\]
i.e.
\[
K_\omega^{20,8}(u,v)>I_\omega(\phi_\omega,\psi_\omega)-I_\omega(u,v).
\]
In the case $\lambda_1\geq0$, because $\frac{8\cdot17}{21}K(u,v)\geq20P(u,v)$,
\[
K_\omega^{20,8}(u,v)=8K(u,v)-20P(u,v)\geq8K(u,v)-\frac{8\cdot17}{21}K(u,v)=\frac{32}{21}K(u,v)\geq K(u,v).
\]
Therefore,
\[
K_\omega^{20,8}(u,v)\geq\min\left\{I_\omega(\phi_\omega,\psi_\omega)-I_\omega(u,v),\,K(u,v)\right\}.
\]
In the case $K_\omega^{20,8}(u_0,v_0)<0$, it follows that $K_\omega^{20,8}(u,v)<0$ for any $t\in[0,T^\ast)$ by the same argument as $K_\omega^{20,8}(u_0,v_0)>0$. Thus, $P(u,v)>0$. 
For $\lambda_0$ taked above, we have $J'_{20,8}(\lambda)<0$ for $\lambda\in (\lambda_0,0)$\ \ and\ \ $J'_{20,8}(\lambda_0)=0$. Also, it follows that
\[
J''_{20,8}(\lambda)=16J'_{20,8}(\lambda)-80e^{20\lambda}\text{Re}(v,u^2)_{L^2}\leq16J'_{20,8}(\lambda).
\]
Integrating the most left side and the most right side in $[\lambda_0,0]$,
\[
\int_{\lambda_0}^0J''_{20,8}(\lambda)d\lambda\leq16\int_{\lambda_0}^0J'_{20,8}(\lambda)d\lambda,
\]
\[
J'_{20,8}(0)-J'_{20,8}(\lambda_0)\leq16(J_{20,8}(0)-J_{20,8}(\lambda_0)),
\]
\[
K_\omega^{20,8}(u,v)\leq16(J_{20,8}(0)-J_{20,8}(\lambda_0))\leq16(I_\omega(u,v)-I_\omega(\phi_\omega,\psi_\omega))<0.
\]
\end{proof}

%Global versus blow-up dichotomy

\begin{theorem}[Global versus blowing-up dichotomy]\label{Global versus blow-up dichotomy}
Let $(u_0,v_0)\in H^1\!\times\!H^1$ and $(u,v)$ be the solution to (NLS) with initial data $(u_0,v_0)$. We assume that
\[
I_\omega(u_0,v_0)<I_\omega(\phi_\omega,\psi_\omega).
\]
\begin{itemize}
\item[(1)]In the case $K_\omega^{20,8}(u_0,v_0)\geq0$, $(u(t),v(t))$ is time-global.
\item[(2)]In the case $K_\omega^{20,8}(u_0,v_0)<0$, if $(xu_0,xv_0)\in L^2\times L^2$ or $(u_0,v_0)$ is radial, then $(u(t),v(t))$ blows up.
\end{itemize}
\end{theorem}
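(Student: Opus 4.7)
My plan treats the three cases of the statement in turn.

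For Part (1), the goal is a uniform $H^1\!\times\!H^1$ bound, after which the blow-up alternative in Theorem \ref{H1 local} yields $T^\ast=\infty$. The edge case $K_\omega^{20,8}(u_0,v_0)=0$ forces $(u_0,v_0)=(0,0)$ via the definition of $\mu_\omega^{20,8}$ combined with $I_\omega(u_0,v_0)<I_\omega(\phi_\omega,\psi_\omega)$, and is trivial. When $K_\omega^{20,8}(u_0,v_0)>0$, Lemma \ref{estimates for K} propagates positivity to every $t\in[0,T^\ast)$, so $P(u,v)\leq\tfrac{2}{5}K(u,v)$ and hence $L_\omega(u(t),v(t))\leq I_\omega(u(t),v(t))=I_\omega(u_0,v_0)$ by Theorem \ref{Conservation law}; the coercivity of $L_\omega$ on $H^1\!\times\!H^1$ supplies the required bound.

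For Part (2) under the finite-variance hypothesis, I run the classical Zakharov--Glassey argument. Proposition \ref{K and virial} gives
\[
\frac{d^2}{dt^2}\bigl(\|xu(t)\|_{L^2}^2+2\|xv(t)\|_{L^2}^2\bigr)=K_\omega^{20,8}(u(t),v(t)),
\]
and Lemma \ref{estimates for K} together with energy conservation bounds the right-hand side above by $-16\eta$, where $\eta:=I_\omega(\phi_\omega,\psi_\omega)-I_\omega(u_0,v_0)>0$. Two integrations produce a strictly concave-down quadratic upper bound on a nonnegative quantity, forcing $T^\ast<\infty$.

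For Part (2) in the radial case, I apply the localized virial identity of Lemma \ref{Localized virial identity} with $\chi_R(x)=R^2\chi(|x|/R)$, where $\chi\in C^4([0,\infty))$ equals $r^2$ on $[0,1]$, is bounded, and satisfies $\chi''\leq 2$. For radial functions the identity $|x\cdot\nabla u|^2=r^2|\nabla u|^2$ collapses the first two gradient integrals of \eqref{55} into $\int\chi_R''(4|\nabla u|^2+2|\nabla v|^2)\,dx\leq 8K(u,v)$, saturating inside $\{|x|\leq R\}$. Mass conservation bounds the $\Delta^2\chi_R$ term by $CR^{-2}M(u_0,v_0)$, and Lemma \ref{Radial Sobolev inequality} handles the exterior of the nonlinear term with an error of order $CR^{-2}K(u,v)^{1/4}$. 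Combined with Lemma \ref{estimates for K}, this yields
\[
I_R''(t)\leq -16\eta+\frac{C}{R^2}\bigl(1+K(u(t),v(t))^{1/4}\bigr).
\]

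Assuming $T^\ast=\infty$ for contradiction, I split into sub-cases. If $\sup_{t\geq 0}K(u(t),v(t))<\infty$, choosing $R$ large enough makes $I_R''(t)\leq -8\eta$, and two integrations against $I_R(t)\geq 0$ produce a finite upper bound on $t$, a contradiction. If $K$ is unbounded---the growing-up alternative, otherwise open in the general setting---I plan to follow Xu's approach in \cite{014}: use the Cauchy--Schwarz bound $|I_R'(t)|\leq CR\,M(u_0,v_0)^{1/2}K(u(t),v(t))^{1/2}$ together with the twice-integrated virial inequality and Young's inequality to absorb the $K^{1/4}$-error into the coercive term after choosing $R$ large relative to $\eta$. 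This last absorption is the main obstacle, and it relies decisively on the $R^{-2}$ decay supplied by Lemma \ref{Radial Sobolev inequality} rather than the weaker Gagliardo--Nirenberg rate available in the nonradial setting.
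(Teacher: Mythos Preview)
Your treatment of Part~(1) and of the finite-variance case in Part~(2) matches the paper's. The gap is in the radial case of Part~(2), specifically your handling of the possibility that $K(u(t),v(t))$ is unbounded.

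Your inequality $I_R''(t)\leq -16\eta+CR^{-2}\bigl(1+K(u,v)^{1/4}\bigr)$ is obtained by already replacing $K_\omega^{20,8}(u,v)$ with the constant upper bound $-16\eta$ from Lemma~\ref{estimates for K}. At that point there is no coercive term left: $-16\eta$ is a fixed number, while $K^{1/4}$ may be arbitrarily large. Invoking ``Xu's approach'' and the bound $|I_R'|\leq CR\,K^{1/2}$ does not produce an absorption mechanism either---the twice-integrated inequality still carries an uncontrolled $\int_0^t K^{1/4}\,ds$. Splitting into bounded versus unbounded $K$ does not close the argument, and indeed the paper never makes this split.

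What the paper does instead is retain $K_\omega^{20,8}(u,v)$ in full, use Young's inequality on the radial-Sobolev error to produce $\varepsilon K(u,v)+C_\varepsilon R^{-8/3}$, and then exploit the variational lower bound of Lemma~\ref{lemma of inferior}: since $K_\omega^{20,8}(u,v)<0$ one has $K(u,v)+5\omega M(u,v)\geq 10\,\mu_\omega^{20,8}$. Rewriting
\[
K_\omega^{20,8}(u,v)+\varepsilon K(u,v)=20\,I_\omega(u,v)-(2-\varepsilon)\bigl(K(u,v)+5\omega M(u,v)\bigr)-5\varepsilon\omega M(u,v)
\]
and combining $I_\omega(u,v)<(1-\delta)I_\omega(\phi_\omega,\psi_\omega)$ with the lower bound above yields
\[
K_\omega^{20,8}(u,v)+\varepsilon K(u,v)\leq 10(\varepsilon-2\delta)\mu_\omega^{20,8}<0
\]
for $\varepsilon<2\delta$, \emph{uniformly in $t$ and independently of the size of $K(u,v)$}. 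This is the missing ingredient: Lemma~\ref{lemma of inferior} converts the sign condition on $K_\omega^{20,8}$ into a \emph{lower} bound on $K+5\omega M$, which is exactly what is needed to absorb the $\varepsilon K$ error. With this in hand the argument concludes by choosing $\varepsilon$ small and then $R$ large, with no case analysis on the boundedness of $K$.
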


%proof of global versus blow-up dichotomy

\begin{proof}
(1)\ \ $K_\omega^{20,8}(u,v)=8\|\nabla u(t)\|_{L^2}^2+4\|\nabla v(t)\|_{L^2}^2-20\text{Re}(v,u^2)_{L^2}>0.$\\
By the energy conservation \eqref{02}, $K_\omega^{20,8}(u,v)=8E(u_0,v_0)-4\text{Re}(v,u^2)_{L^2}>0$, i.e.
\begin{align}
-\text{Re}(v,u^2)_{L^2}>-2E(u_0,v_0).\label{36}
\end{align}
Thus,
\begin{align*}
I_\omega(\phi_\omega,\psi_\omega)&>I_\omega(u,v)\\
&=\frac{\omega}{2}\left(\|u(t)\|_{L^2}^2+2\|v(t)\|_{L^2}^2\right)+\frac{1}{2}\left(\|\nabla u(t)\|_{L^2}^2+\frac{1}{2}\|\nabla v(t)\|_{L^2}^2-2\text{Re}(v,u^2)_{L^2}\right)\\
&>\frac{\omega}{2}\left(\|u(t)\|_{L^2}^2+2\|v(t)\|_{L^2}^2\right)+\frac{1}{2}\left(\|\nabla u(t)\|_{L^2}^2+\frac{1}{2}\|\nabla v(t)\|_{L^2}^2\right)-2E(u_0,v_0).
\end{align*}
Therefore,
\begin{align}
\displaystyle I_\omega(\phi_\omega,\psi_\omega)+2E(u_0,v_0)>\frac{\omega}{2}\left(\|u(t)\|_{L^2}^2+2\|v(t)\|_{L^2}^2\right)+\frac{1}{2}\left(\|\nabla u(t)\|_{L^2}^2+\frac{1}{2}\|\nabla v(t)\|_{L^2}^2\right)\label{56}
\end{align}
and hence, the solution exists time-globally.\\
(2)\ \ Let $(xu_0,xv_0)\in L^2\!\times\!L^2$. Combining Proposition \ref{K and virial} and Lemma \ref{estimates for K},
\[
\frac{d^2}{dt^2}(\|xu(t)\|_{L^2}^2+2\|xv(t)\|_{L^2}^2)=K_\omega^{20,8}(u,v)\leq16(I_\omega(u,v)-I_\omega(\phi_\omega,\psi_\omega))<0.
\]
Therefore, the solution blows up.\\
Let $(u_0,v_0)$ be radial.\\
We take $\chi\in C_0^\infty(\mathbb{R}^5)$, which is radial and satisfies
\begin{equation}
\notag \chi(r)=
\begin{cases}
\hspace{-0.4cm}&\displaystyle{\ \ \ \,r^2\ \ \ \ \ \,(0\leq r\leq1),}\\
\hspace{-0.4cm}&\displaystyle{smooth\ \ (1\leq r\leq 3),}\\
\hspace{-0.4cm}&\displaystyle{\ \ \ \ 0\ \ \ \ \ \ \,(3\leq r),}
\end{cases}
\end{equation}
where $r=|x|$. Also, $\chi$ satisfies $\chi''(r)\leq2$ $(r\geq0)$. Moreover, we define $\chi_R(r)=R^2\chi(\frac{r}{R})$. Then,\\
$(\chi_R)'(r)=R\chi'(\frac{r}{R})\,,\ (\chi_R)''(r)=\chi''(\frac{r}{R})\,,\ (\chi_R)^{(3)}(r)=\frac{1}{R}\chi^{(3)}(\frac{r}{R})\,,$ and $(\chi_R)^{(4)}(r)=\frac{1}{R^2}\chi^{(4)}(\frac{r}{R}).$\\
By Lemma \ref{Localized virial identity}, for $I(t)=\int_{\mathbb{R}^5}\chi_R\left(|u|^2+2|v|^2\right)dx$,
\begin{align*}
I''(t)&=K_\omega^{20,8}(u,v)+\int_{\mathbb{R}^5}\left\{\frac{1}{r^2}\chi''\left(\frac{r}{R}\right)-\frac{R}{r^3}\chi'\left(\frac{r}{R}\right)\right\}\left(4|x\cdot\nabla u|^2+2|x\cdot\nabla v|^2\right)dx\\
&~~~~~~~~~~~~~~~~~~+\int_{\mathbb{R}^5}\left\{\frac{R}{r}\chi'\left(\frac{r}{R}\right)-2\right\}\left(4|\nabla u|^2+2|\nabla v|^2\right)dx\\
&~~~~~~~~~~~~~~~~~~-\int_{\mathbb{R}^5}\left\{\frac{1}{R^2}\chi^{(4)}\left(\frac{r}{R}\right)+\frac{8}{Rr}\chi^{(3)}\left(\frac{r}{R}\right)+\frac{8}{r^2}\chi''\left(\frac{r}{R}\right)-\frac{8R}{r^3}\chi'\left(\frac{r}{R}\right)\right\}\left(|u|^2+\frac{1}{2}|v|^2\right)dx\\
&~~~~~~~~~~~~~~~~~~-2\text{Re}\int_{\mathbb{R}^5}\left\{\chi''\left(\frac{r}{R}\right)+\frac{4R}{r}\chi'\left(\frac{r}{R}\right)-10\right\}v\overline{u}^2dx.
\end{align*}
Let
\begin{align*}
&R_1=\int_{\mathbb{R}^5}\left\{\frac{1}{r^2}\chi''\left(\frac{r}{R}\right)-\frac{R}{r^3}\chi'\left(\frac{r}{R}\right)\right\}\left(4|x\cdot\nabla u|^2+2|x\cdot\nabla v|^2\right)dx\\
&\ \ \ \ \ \ \ \ \ \ \ \ \ \ \ \ \ \ \ \ \ \ \ \ \ \ \ \ \ \ \ \ \ \ \ \ \ \ \ \ \ \ \ \ \ \ \ \ \ \ \ +\int_{\mathbb{R}^5}\left\{\frac{R}{r}\chi'\left(\frac{r}{R}\right)-2\right\}\left(4|\nabla u|^2+2|\nabla v|^2\right)dx,
\end{align*}
\[
R_2=-\int_{\mathbb{R}^5}\left\{\frac{1}{R^2}\chi^{(4)}\left(\frac{r}{R}\right)+\frac{8}{Rr}\chi^{(3)}\left(\frac{r}{R}\right)+\frac{8}{r^2}\chi''\left(\frac{r}{R}\right)-\frac{8R}{r^3}\chi'\left(\frac{r}{R}\right)\right\}\left(|u|^2+\frac{1}{2}|v|^2\right)dx,
\]
and
\[
R_3=-2\text{Re}\int_{\mathbb{R}^5}\left\{\chi''\left(\frac{r}{R}\right)+\frac{4R}{r}\chi'\left(\frac{r}{R}\right)-10\right\}v\overline{u}^2dx.
\]
By Lemma \ref{estimates for K}, $K_\omega^{20,8}(u,v)\leq16(I_\omega(u,v)-I_\omega(\phi_\omega,\psi_\omega))<0$.\\
For $R_1$, in the case $\frac{1}{r^2}\chi''\left(\frac{r}{R}\right)-\frac{R}{r^3}\chi'\left(\frac{r}{R}\right)\leq0$, because $\chi''(\frac{r}{R})\leq2$, we have $\chi'(\frac{r}{R})\leq\frac{2r}{R}$. Thus,
\begin{align}
R_1&\leq\int_{\mathbb{R}^5}\left\{\frac{R}{r}\chi'\left(\frac{r}{R}\right)-2\right\}\left(4|\nabla u|^2+2|\nabla v|^2\right)dx\leq0.\label{19}
\end{align}
In the case $\frac{1}{r^2}\chi''\left(\frac{r}{R}\right)-\frac{R}{r^3}\chi'\left(\frac{r}{R}\right)>0$, we have
\begin{align}
R_1&\leq\int_{\mathbb{R}^5}\left\{\frac{1}{r^2}\chi''\left(\frac{r}{R}\right)-\frac{R}{r^3}\chi'\left(\frac{r}{R}\right)\right\}\left(4r^2|\nabla u|^2+2r^2|\nabla v|^2\right)dx \notag \\
&\ \ \ \ \ \ \ \ \ \ \ \ \ \ \ \ \ \ \ \ \ \ \ \ \ \ \ \ \ \ \ \ \ \ \ \ \ \ \ \ \ +\int_{\mathbb{R}^5}\left\{\frac{R}{r}\chi'\left(\frac{r}{R}\right)-2\right\}\left(4|\nabla u|^2+2|\nabla v|^2\right)dx \notag \\
&=\int_{\mathbb{R}^5}\left\{\chi''\left(\frac{r}{R}\right)-2\right\}\left(4|\nabla u|^2+2|\nabla v|^2\right)dx\leq0.\label{20}
\end{align}
For $R_2$,
\begin{align}
R_2&=-\int_{\mathbb{R}^5}\left\{\frac{1}{R^2}\chi^{(4)}\left(\frac{r}{R}\right)+\frac{8}{Rr}\chi^{(3)}\left(\frac{r}{R}\right)+\frac{8}{r^2}\chi''\left(\frac{r}{R}\right)-\frac{8R}{r^3}\chi'\left(\frac{r}{R}\right)\right\}\left(|u|^2+\frac{1}{2}|v|^2\right)dx \notag \\
&=-\int_{R\leq|x|\leq3R}\left\{\frac{1}{R^2}\chi^{(4)}\left(\frac{r}{R}\right)+\frac{8}{Rr}\chi^{(3)}\left(\frac{r}{R}\right)+\frac{8}{r^2}\chi''\left(\frac{r}{R}\right)-\frac{8R}{r^3}\chi'\left(\frac{r}{R}\right)\right\}\left(|u^2|+\frac{1}{2}|v|^2\right)dx \notag \\
&\leq\frac{c}{R^2}\int_{R\leq|x|\leq3R}\left(|u|^2+\frac{1}{2}|v|^2\right)dx\label{21}\\
&\leq\frac{c}{R^2}\int_{\mathbb{R}^5}\left(|u|^2+2|v|^2\right)dx \notag \\
&=\frac{c}{R^2}M(u,v). \notag
\end{align}
For $R_3$, by Lemma \ref{Radial Sobolev inequality},
\begin{align*}
R_3&=-2\text{Re}\int_{\mathbb{R}^5}\left\{\chi''\left(\frac{r}{R}\right)+\frac{4R}{r}\chi'\left(\frac{r}{R}\right)-10\right\}v\overline{u}^2dx\\
&=-2\text{Re}\int_{R\leq|x|\leq3R}\left\{\chi''\left(\frac{r}{R}\right)+\frac{4R}{r}\chi'\left(\frac{r}{R}\right)-10\right\}v\overline{u}^2dx\\
&\leq c\int_{R\leq|x|}|vu^2|dx\\
&\leq c\|v\|_{L^3(R\leq|x|)}\|u\|_{L^3(R\leq|x|)}^2\\
&\leq\frac{c}{R^2}\|v\|_{L^2(R\leq|x|)}^\frac{5}{6}\|\nabla v\|_{L^2(R\leq|x|)}^\frac{1}{6}\|u\|_{L^2(R\leq|x|)}^\frac{5}{3}\|\nabla u\|_{L^2(R\leq|x|)}^\frac{1}{3}\\
&\leq\frac{c}{R^2}M(u,v)^\frac{5}{4}\|\nabla v\|_{L^2(R\leq|x|)}^\frac{1}{6}\|\nabla u\|_{L^2(R\leq|x|)}^\frac{1}{3}\\
&=c\frac{1}{R^2\varepsilon^2}M(u,v)^\frac{5}{4}\cdot\varepsilon\|\nabla v\|_{L^2(R\leq|x|)}^\frac{1}{6}\cdot\varepsilon\|\nabla u\|_{L^2(R\leq|x|)}^\frac{1}{3}\\
&\leq c\left(\frac{3}{4}\cdot\frac{1}{R^\frac{8}{3}\varepsilon^\frac{8}{3}}M(u,v)^\frac{5}{3}+\frac{1}{12}\varepsilon^{12}\|\nabla v\|_{L^2(R\leq|x|)}^2+\frac{1}{6}\varepsilon^6\|\nabla u\|_{L^2(R\leq|x|)}^2\right)\\
&\leq\frac{c}{R^\frac{8}{3}\varepsilon^\frac{8}{3}}M(u,v)^\frac{5}{3}+\frac{1}{2}\varepsilon\|\nabla v\|_{L^2}^2+\varepsilon\|\nabla u\|_{L^2}^2.
\end{align*}
Since $I_\omega(u,v)<I_\omega(\phi_\omega,\psi_\omega)$, we may take $\delta>0$ with $I_\omega(u,v)<(1-\delta)I_\omega(\phi_\omega,\psi_\omega)$. Then, using Lemma \ref{lemma of inferior},
\begin{align*}
I''(t)&\leq K_\omega^{20,8}(u,v)+\varepsilon K(u,v)+\frac{c}{R^2}M(u,v)+\frac{c}{R^\frac{8}{3}\varepsilon^\frac{8}{3}}M(u,v)^\frac{5}{3}\\
&=20I_\omega(u,v)-10\omega M(u,v)-(2-\varepsilon)K(u,v)+\frac{c}{R^2}M(u,v)+\frac{c}{R^\frac{8}{3}\varepsilon^\frac{8}{3}}M(u,v)^\frac{5}{3}\\
&<20(1-\delta)I_\omega(\phi_\omega,\psi_\omega)-(2-\varepsilon)\left\{K(u,v)+5\omega M(u,v)\right\}\\
&\ \ \ \ \ \ \ \ \ \ \ \ \ \ \ \ \ \ \ \ \ \ \ \ \ \ \ \ \ \ \ \ \ \ \ \ \ \ \ -5\varepsilon\omega M(u,v)+\frac{c}{R^2}M(u,v)+\frac{c}{R^\frac{8}{3}\varepsilon^\frac{8}{3}}M(u,v)^\frac{5}{3}\\
&\leq\left\{20(1-\delta)-10(2-\varepsilon)\right\}I_\omega(\phi_\omega,\psi_\omega)+\frac{c}{R^2}M(u,v)+\frac{c}{R^\frac{8}{3}\varepsilon^\frac{8}{3}}M(u,v)^\frac{5}{3}\\
&=10(\varepsilon-2\delta)\mu_\omega^{20,8}+\frac{c}{R^2}M(u,v)+\frac{c}{R^\frac{8}{3}\varepsilon^\frac{8}{3}}M(u,v)^\frac{5}{3}.
\end{align*}
Thus, if we take sufficiently small $\varepsilon>0$, and sufficiently large $R>0$, then $I''(t)<0$. Therefore, the solution blows up.
\end{proof}

\subsection{Blowing-up or growing-up}

%lemma for blow-up or grow-up

\begin{lemma}\label{lemma for blow-up or grow-up}
Let $(u_0,v_0)\in H^1\!\times\!H^1$ and $(u,v)$ be the time-global solution to (NLS). We fix $\eta_0>0$.\\
If we define $\displaystyle C_0=\max\left\{\sup_{t\in\mathbb{R}}\|\nabla u(t)\|_{L^2},\,\sup_{t\in\mathbb{R}}\|\nabla v(t)\|_{L^2}\right\}$, then for any $0\leq t\leq\frac{\eta_0R}{16C_0M(u,v)^\frac{1}{2}}$, we have
\[
\int_{|x|\geq R}\left(|u(x,t)|^2+2|v(x,t)|^2\right)dx\leq\eta_0+o_R(1).
\]
\end{lemma}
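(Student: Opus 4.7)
The plan is to apply the first-order localized virial identity \eqref{09} to a spatial cutoff that dominates $\mathbf{1}_{|x|\geq R}$, bound its time derivative uniformly via $C_0$ and the conserved mass, and then integrate in time over the interval allowed by the hypothesis. The initial contribution will be controlled by dominated convergence since $(u_0,v_0)\in L^2\times L^2$.

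Concretely, I would choose a smooth radial cutoff $\chi_R\in C^\infty(\mathbb{R}^5)$ satisfying $0\le\chi_R\le 1$, with $\chi_R(x)=0$ for $|x|\le R/2$, $\chi_R(x)=1$ for $|x|\ge R$, and $|\nabla\chi_R|\le C/R$ for a harmless universal constant $C$ (which can be made as small as needed by stretching the transition region). Set
\[
I_R(t)=\int_{\mathbb{R}^5}\chi_R(x)\bigl(|u(t,x)|^2+2|v(t,x)|^2\bigr)\,dx.
\]
Because $\chi_R\ge \mathbf{1}_{|x|\ge R}$, the quantity we want to control is bounded by $I_R(t)$.

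Applying \eqref{09} from Lemma \ref{Localized virial identity} and Cauchy--Schwarz,
\[
|I_R'(t)|\le 2\int_{\mathbb{R}^5}|\nabla\chi_R|\bigl(|\nabla u||u|+|\nabla v||v|\bigr)\,dx
\le \frac{C'}{R}\bigl(\|u(t)\|_{L^2}\|\nabla u(t)\|_{L^2}+\|v(t)\|_{L^2}\|\nabla v(t)\|_{L^2}\bigr).
\]
The $L^2$ norms of $u$ and $v$ are bounded by $M(u,v)^{1/2}$ thanks to mass conservation \eqref{01} and the definition of $M$, and the Dirichlet norms by $C_0$. Tuning the cutoff so that the resulting constant is $\le 16$, we obtain
\[
|I_R'(t)|\le \frac{16\,C_0\,M(u,v)^{1/2}}{R}\qquad\text{for all }t\in\mathbb{R}.
\]
Integrating in time on $[0,t]$ with $t\le \eta_0 R/(16 C_0 M(u,v)^{1/2})$ gives $I_R(t)\le I_R(0)+\eta_0$.

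Finally, $I_R(0)=\int\chi_R(|u_0|^2+2|v_0|^2)\,dx$ and $\chi_R$ is supported in $\{|x|\ge R/2\}$, so $\chi_R(x)(|u_0(x)|^2+2|v_0(x)|^2)\to 0$ pointwise as $R\to\infty$ while being dominated by the integrable function $|u_0|^2+2|v_0|^2$; dominated convergence yields $I_R(0)=o_R(1)$. Combining these estimates and using $I_R(t)\ge\int_{|x|\ge R}(|u|^2+2|v|^2)\,dx$ completes the proof. The only point requiring any care is choosing the cutoff with a small enough gradient to match the stated constant $1/16$; the dynamics itself enters only through the trivial identity \eqref{09} and the uniform bounds on $M$ and $C_0$.
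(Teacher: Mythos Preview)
Your proof is correct and follows essentially the same approach as the paper: the same smooth cutoff supported outside $\{|x|\le R/2\}$ and equal to $1$ on $\{|x|\ge R\}$, the first-order virial identity \eqref{09} bounded via Cauchy--Schwarz and $\|\chi'\|_{L^\infty}\le 4/R$, integration in $t$ over the allowed interval, and the tail of the initial mass giving the $o_R(1)$ term. The only cosmetic difference is that the paper writes out the explicit constant $16$ from $2\cdot(4/R)\cdot(\|u\|_{L^2}+\|v\|_{L^2})C_0\le 16C_0M(u,v)^{1/2}/R$ directly rather than appealing to ``tuning the cutoff''; your parenthetical about stretching the transition region is unnecessary since the standard choice already delivers the stated constant.
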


%proof of lemma for blow-up or grow-up

\begin{proof}
We take $\chi\in C^\infty(\mathbb{R}^5)$, which is radial and satisfies
\begin{equation}
\notag \chi(r)=
\begin{cases}
\hspace{-0.4cm}&\displaystyle{\ \ \ \ 0\ \ \ \ \ \,\,(0\leq r\leq R/2),}\\
\hspace{-0.4cm}&\displaystyle{smooth\ \ (R/2\leq r\leq R),}\\
\hspace{-0.4cm}&\displaystyle{\ \ \ \ \,1\ \ \ \ \ \ (R\leq r),}
\end{cases}
\end{equation}
where $r=|x|$ and $R>0$. $\chi$ satisfies $\chi'(r)\leq\frac{4}{R}$ $(r\geq0)$.\\
We define $I(t)=\int_{\mathbb{R}^5}\chi(r)\left(|u(t,x)|^2+2|v(t,x)|^2\right)dx$. Then, by Lemma \ref{Localized virial identity},
\begin{align}
I(t)&=I(0)+\int_0^tI'(s)ds \notag \\
&\leq I(0)+\int_0^t|I'(s)|ds \notag \\
&=I(0)+2\int_0^t\left|\text{Im}\int_{\mathbb{R}^5}\chi'\left(\frac{x\cdot\nabla u}{r}\overline{u}+\frac{x\cdot\nabla v}{r}\overline{v}\right)dx\right|ds \notag \\
&\leq I(0)+2\int_0^t\int_{\mathbb{R}^5}|\chi'|\left(|\nabla u||u|+|\nabla v||v|\right)dxds \notag \\
&\leq I(0)+2\int_0^t\|\chi'\|_{L^\infty}\left(\|\nabla u(s)\|_{L^2}\|u(s)\|_{L^2}+\|\nabla v(s)\|_{L^2}\|v(s)\|_{L^2}\right)ds \notag \\
&\leq I(0)+\frac{16tC_0M(u,v)^\frac{1}{2}}{R}.\label{22}
\end{align}
Moreover,
\begin{align}
I(0)=\int_{|x|\geq\frac{R}{2}}\chi(r)\left(|u_0(x)|^2+2|v_0(x)|^2\right)dx\leq\int_{|x|\geq\frac{R}{2}}\left(|u_0(x)|^2+2|v_0(x)|^2\right)dx=o_R(1),\label{23}
\end{align}
\begin{align}
\int_{|x|\geq R}\left(|u(t,x)|^2+2|v(t,x)|^2\right)dx\leq I(t),\label{24}
\end{align}
\begin{align}
0\leq t\leq\frac{\eta_0R}{16C_0M(u,v)^\frac{1}{2}}.\label{25}
\end{align}
Combining \eqref{22},\,\eqref{23},\,\eqref{24} and \eqref{25}, it follows that
\[
\int_{|x|\geq R}\left(|u(t,x)|^2+2|v(t,x)|^2\right)dx\leq\eta_0+o_R(1).
\]
\end{proof}

We prove the part of growing-up or blowing-up for Theorem \ref{Main theorem 1} (2) in the next theorem.

%blow-up or grow-up

\begin{theorem}[Blowing-up or growing-up]
Let $(u_0,v_0)\in H^1\!\times\! H^1$ and $I=[0,T^\ast)$ be the maximal forward lifespan of the solution $(u,v)$ to (NLS). We assume for $\omega>0$
\[
I_\omega(\phi_\omega,\psi_\omega)>I_\omega(u_0,v_0)\ \ \ \text{and}\ \ \ K_\omega^{20,8}(u_0,v_0)<0.
\]
Then, the solution grows up or blows up.
\end{theorem}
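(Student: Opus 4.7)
The plan is an argument by contradiction. Suppose $T^{\ast}=\infty$ and that $(u,v)$ does not grow up in positive time, so that
\[
C_0:=\sup_{t\ge 0}\max\bigl(\|\nabla u(t)\|_{L^2},\|\nabla v(t)\|_{L^2}\bigr)<\infty.
\]
Then Lemma~\ref{estimates for K} (together with the conservation of energy) furnishes the uniform bound $K_\omega^{20,8}(u(t),v(t))\le -\delta_0$ for every $t\ge 0$, where $\delta_0:=16\bigl(I_\omega(\phi_\omega,\psi_\omega)-I_\omega(u_0,v_0)\bigr)>0$. This is the variational input that drives the whole proof.

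Following the strategy of Du--Wu--Zhang, I would introduce a smooth radial cutoff $\chi_R(x)=R^{2}\chi(|x|/R)$ with $\chi(r)=r^{2}$ for $r\le 1$, satisfying $\chi'(r)/r\le 2$ and $\chi''(r)\le 2$ for all $r\ge 0$, and becoming constant for $r\ge 3$. Set
\[
I_R(t):=\int_{\mathbb{R}^5}\chi_R(x)\bigl(|u(t,x)|^2+2|v(t,x)|^2\bigr)\,dx.
\]
The localized virial identity (Lemma~\ref{Localized virial identity}) combined with the fact that $\chi_R(x)=|x|^2$ on $\{|x|\le R\}$ yields $I_R''(t)\le K_\omega^{20,8}(u(t),v(t))+\mathcal{E}_R(t)$, where $\mathcal{E}_R(t)$ is supported on $\{|x|>R\}$ and decomposes into a kinetic part (made nonpositive by the Hessian constraints on $\chi_R$), a bi-Laplacian part bounded by $CR^{-2}M(u_0,v_0)$ via mass conservation, and a nonlinear part dominated by $C\int_{|x|>R}|v||u|^2\,dx$.

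The crux is controlling the nonlinear part of $\mathcal{E}_R$ without recourse to Lemma~\ref{Radial Sobolev inequality}. For any $\eta_0>0$, Lemma~\ref{lemma for blow-up or grow-up} supplies
\[
\|u(t)\|_{L^2(|x|>R/2)}^2+2\|v(t)\|_{L^2(|x|>R/2)}^2\le \eta_0+o_R(1)\qquad\text{for every }t\in[0,T_R],
\]
with $T_R:=\eta_0 R/\bigl(32 C_0 M(u_0,v_0)^{1/2}\bigr)$. A standard cutoff argument together with the $5$d Gagliardo--Nirenberg inequality $\|u\|_{L^3}\le c\|u\|_{L^2}^{1/6}\|\nabla u\|_{L^2}^{5/6}$ then gives $\int_{|x|>R}|v||u|^2\,dx\le CC_0^{5/2}(\eta_0+o_R(1))^{1/4}$ on $[0,T_R]$. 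Choosing $\eta_0$ sufficiently small and then $R$ sufficiently large produces $|\mathcal{E}_R(t)|\le\delta_0/2$, hence $I_R''(t)\le -\delta_0/2$ throughout $[0,T_R]$.

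Integrating twice and using $I_R(0)\le CR^{2}M(u_0,v_0)$ together with $|I_R'(0)|\le CRC_0 M(u_0,v_0)^{1/2}$ (the latter from~\eqref{09} and Cauchy--Schwarz) yields
\[
I_R(T_R)\le CR^{2}M(u_0,v_0)+CRC_0 M(u_0,v_0)^{1/2}T_R-\tfrac{\delta_0}{4}T_R^{2}.
\]
Since $T_R$ is proportional to $R$, the negative quadratic term overcomes the other two for $R$ large enough, making the right-hand side strictly negative and contradicting $I_R(T_R)\ge 0$. The chief obstacle is the delicate balancing between the smallness of $\eta_0$ required to render the nonlinear error bounded by $\delta_0/2$ and the length of the validity window $T_R$ needed to close the virial contradiction; this is precisely where the specific Hessian constraints on $\chi_R$ (killing the kinetic portion of $\mathcal{E}_R$) and the sharp dependence of $T_R$ on the data in Lemma~\ref{lemma for blow-up or grow-up} are essential.
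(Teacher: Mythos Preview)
Your overall strategy matches the paper's: assume global boundedness of the $\dot H^1$ norm, invoke Lemma~\ref{estimates for K} for the uniform negative bound $K_\omega^{20,8}\le -\delta_0$, run a truncated virial $I_R(t)$, absorb the exterior errors via Lemma~\ref{lemma for blow-up or grow-up}, and reach $I_R(T_R)<0$. Your treatment of the nonlinear error via a cutoff and Gagliardo--Nirenberg (rather than the paper's $L^2$--$L^q$ interpolation) is fine.

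However, the closing step does \emph{not} work as written. You estimate $I_R(0)\le CR^{2}M(u_0,v_0)$ and $|I_R'(0)|\le CRC_0M(u_0,v_0)^{1/2}$, and then write
\[
I_R(T_R)\le CR^{2}M(u_0,v_0)+CRC_0M(u_0,v_0)^{1/2}T_R-\tfrac{\delta_0}{4}T_R^{2}.
\]
With $T_R=\eta_0 R/(32C_0M^{1/2})$ all three terms are of order $R^{2}$, so taking $R$ large gains nothing; the sign is decided by the coefficients. The coefficient of the negative term is proportional to $\eta_0^{2}$, while the first term has the \emph{fixed} positive coefficient $CM(u_0,v_0)$. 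Since $\eta_0$ must be chosen small \emph{first} (to make the nonlinear error $\le\delta_0/2$), the right-hand side stays positive for every $R$, and no contradiction follows.

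The fix (and what the paper does) is to sharpen the initial-time bounds to $I_R(0)=o_R(1)R^{2}$ and $I_R'(0)=o_R(1)R$. One obtains this by splitting at an intermediate radius such as $\sqrt{R}$: on $\{|x|\le\sqrt{R}\}$ the weight $|x|^{2}$ contributes at most $R$, giving $RM(u_0,v_0)=o(R^{2})$; on $\{\sqrt{R}\le|x|\le 3R\}$ one uses that $u_0,v_0\in L^{2}$ to get $R^{2}\!\int_{|x|\ge\sqrt{R}}(|u_0|^2+2|v_0|^2)\,dx=o_R(1)R^{2}$. The analogous splitting handles $I_R'(0)$. With these refined bounds one fixes $\eta_0$ small, then sends $R\to\infty$ so that the $o_R(1)R^{2}$ terms are absorbed by $-\tfrac{\delta_0}{4}T_R^{2}$, yielding the desired contradiction.
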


%proof of blow-up or grow-up

\begin{proof}
We prove that there  exists no time-global solutions $(u,v)$ such that
\[
(u,v)\in C(\mathbb{R}^+\!\!:H^1)\times C(\mathbb{R}^+\!\!:H^1)\ with\ \sup_{t\in\mathbb{R}^+}\|u(t)\|_{L^q}+\sup_{t\in\mathbb{R}^+}\|v(t)\|_{L^q}<\infty\ \ \text{for some}\ q>3
\]
by contradiction. Let $\displaystyle C_0=\sup_{t\in\mathbb{R}^+}\|u(t)\|_{L^q}+\sup_{t\in\mathbb{R}^+}\|v(t)\|_{L^q}<\infty$. Then,
\begin{align*}
\|\nabla u(t)\|_{L^2}^2+\frac{1}{2}\|\nabla v(t)\|_{L^2}^2&=E(u,v)+2\text{Re}(v,u^2)_{L^2}\\
&\leq E(u,v)+2\|v(t)\|_{L^3}\|u(t)\|_{L^3}^2\\
&\leq E(u,v)+2\|v(t)\|_{L^q}^{1-\theta}\|v(t)\|_{L^2}^\theta\|v(t)\|_{L^q}^{2(1-\theta)}\|v(t)\|_{L^2}^{2\theta}\\
&\leq E(u,v)+2C_0^{3(1-\theta)}M(u,v)^{\frac{3}{2}\theta}<\infty
\end{align*}
for $\theta\in(0,1)$ satisfying $\frac{1}{3}=\frac{1-\theta}{q}+\frac{\theta}{2}$. Thus, we obtain
\[
\displaystyle\overline{C_0}\vcentcolon=\max\left\{\sup_{t\in\mathbb{R}}\|\nabla u(t)\|_{L^2},\,\sup_{t\in\mathbb{R}}\|\nabla v(t)\|_{L^2}\right\}<\infty.
\]
We take $\chi\in C_0^\infty(\mathbb{R}^5)$, which is radial and satisfies
\begin{equation}
\notag \chi(r)=
\begin{cases}
\hspace{-0.4cm}&\displaystyle{\ \ \ \,r^2\ \ \ \,\ \ (0\leq r\leq1),}\\
\hspace{-0.4cm}&\displaystyle{smooth\ \ (1\leq r\leq 3),}\\
\hspace{-0.4cm}&\displaystyle{\ \ \ \ 0\ \ \ \ \ \ (3\leq r),}
\end{cases}
\end{equation}
where $r=|x|$. Also, $\chi$ satisfies $\chi''(r)\leq2$ $(r\geq0)$. We define $\chi_R(r)=R^2\chi(\frac{r}{R})$.\\
By Lemma \ref{Localized virial identity}, for $I(t)=\int_{\mathbb{R}^5}\chi_R\left(|u|^2+2|v|^2\right)dx$
\begin{align*}
I''(t)&=K_\omega^{20,8}(u,v)+\int_{\mathbb{R}^5}\left\{\frac{1}{r^2}\chi''\left(\frac{r}{R}\right)-\frac{R}{r^3}\chi'\left(\frac{r}{R}\right)\right\}\left(4|x\cdot\nabla u|^2+2|x\cdot\nabla v|^2\right)dx\\
&~~~~~~~~~~~~~~~~~~+\int_{\mathbb{R}^5}\left\{\frac{R}{r}\chi'\left(\frac{r}{R}\right)-2\right\}\left(4|\nabla u|^2+2|\nabla v|^2\right)dx\\
&~~~~~~~~~~~~~~~~~~-\int_{\mathbb{R}^5}\left\{\frac{1}{R^2}\chi^{(4)}\left(\frac{r}{R}\right)+\frac{8}{Rr}\chi^{(3)}\left(\frac{r}{R}\right)+\frac{8}{r^2}\chi''\left(\frac{r}{R}\right)-\frac{8R}{r^3}\chi'\left(\frac{r}{R}\right)\right\}\left(|u|^2+\frac{1}{2}|v|^2\right)dx\\
&~~~~~~~~~~~~~~~~~~-2\text{Re}\int_{\mathbb{R}^5}\left\{\chi''\left(\frac{r}{R}\right)+\frac{4R}{r}\chi'\left(\frac{r}{R}\right)-10\right\}v\overline{u}^2dx.
\end{align*}
Let
\begin{align*}
R_1=\int_{\mathbb{R}^5}\left\{\frac{1}{r^2}\chi''\left(\frac{r}{R}\right)-\frac{R}{r^3}\chi'\left(\frac{r}{R}\right)\right\}&\left(4|x\cdot\nabla u|^2+2|x\cdot\nabla v|^2\right)dx\\
&+\int_{\mathbb{R}^5}\left\{\frac{R}{r}\chi'\left(\frac{r}{R}\right)-2\right\}\left(4|\nabla u|^2+2|\nabla v|^2\right)dx,
\end{align*}
\[
R_2=-\int_{\mathbb{R}^5}\left\{\frac{1}{R^2}\chi^{(4)}\left(\frac{r}{R}\right)+\frac{8}{Rr}\chi^{(3)}\left(\frac{r}{R}\right)+\frac{8}{r^2}\chi''\left(\frac{r}{R}\right)-\frac{8R}{r^3}\chi'\left(\frac{r}{R}\right)\right\}\left(|u|^2+\frac{1}{2}|v|^2\right)dx,
\]
and
\[
R_3=-2\text{Re}\int_{\mathbb{R}^5}\left\{\chi''\left(\frac{r}{R}\right)+\frac{4R}{r}\chi'\left(\frac{r}{R}\right)-10\right\}v\overline{u}^2dx.
\]
By Lemma \ref{estimates for K}, $K_\omega^{20,8}(u,v)<16\left(I_\omega(u,v)-I_\omega(\phi_\omega,\psi_\omega)\right)=\vcentcolon16\widetilde{C_0}<0$.\\
Combining \eqref{19} and \eqref{20}, we have $R_1\leq0.$\\
By \eqref{21},
\begin{align*}
R_2&\leq\frac{c}{R^2}\int_{R\leq|x|}\left(|u|^2+\frac{1}{2}|v|^2\right)dx\\
&=\frac{c}{R^2}\left(\|u\|_{L^2(R\leq|x|)}^{2\theta}\|u\|_{L^2(R\leq|x|)}^{2(1-\theta)}+\frac{1}{2}\|v\|_{L^2(R\leq|x|)}^{2\theta}\|v\|_{L^2(R\leq|x|)}^{2(1-\theta)}\right)\\
&\leq\frac{c}{R^2}\left(\|u\|_{L^2(R\leq|x|)}^{2(1-\theta)}+\|v\|_{L^2(R\leq|x|)}^{2(1-\theta)}\right)\left(\|u\|_{L^2(R\leq|x|)}^{2\theta}+\frac{1}{2}\|v\|_{L^2(R\leq|x|)}^{2\theta}\right)\\
&\leq\frac{c}{R^2}\left(\|u\|_{L^2(R\leq|x|)}^{2\theta}+\frac{1}{2}\|v\|_{L^2(R\leq|x|)}^{2\theta}\right),
\end{align*}
\begin{align*}
R_3&=-2\text{Re}\int_{R\leq|x|\leq3R}\left\{\chi''\left(\frac{r}{R}\right)+\frac{4R}{r}\chi'\left(\frac{r}{R}\right)-10\right\}v\overline{u}^2dx\\
&\leq c\|v\|_{L^3(R\leq|x|)}\|u\|_{L^3(R\leq|x|)}^2\\
&\leq c\|v\|_{L^q(R\leq|x|)}^{1-\theta}\|v\|_{L^2(R\leq|x|)}^{\theta}\|u\|_{L^q(R\leq|x|)}^{2(1-\theta)}\|u\|_{L^2(R\leq|x|)}^{2\theta}\\
&\leq c\,C_0^{3(1-\theta)}\|v\|_{L^2(R\leq|x|)}^{\theta}\|u\|_{L^2(R\leq|x|)}^{2\theta}\\
&\leq c\,C_0^{3(1-\theta)}\|u\|_{L^2(R\leq|x|)}^\theta\left(\frac{1}{2}\|u\|_{L^2(R\leq|x|)}^{2\theta}+\frac{1}{2}\|v\|_{L^2(R\leq|x|)}^{2\theta}\right)\\
&\leq c\,C_0^{3(1-\theta)}\|u\|_{L^2(R\leq|x|)}^\theta\left(\|u\|_{L^2(R\leq|x|)}^{2\theta}+\frac{1}{2}\|v\|_{L^2(R\leq|x|)}^{2\theta}\right).
\end{align*}
Thus, if $R>1$, then
\begin{align}
I''(t)&\leq16\widetilde{C_0}+\frac{c}{R^2}\left(\|u\|_{L^2(R\leq|x|)}^{2\theta}+\frac{1}{2}\|v\|_{L^2(R\leq|x|)}^{2\theta}\right) \notag \\
&\ \ \ \ \ \ \ \ \ \ \ \ \ \ \ \ \ \ \ \ \ \ \ \ \ \ \ \ \ \ \ \ \ \ \ \ \ \ \ +c\,C_0^{3(1-\theta)}\|u\|_{L^2(R\leq|x|)}^\theta\left(\|u\|_{L^2(R\leq|x|)}^{2\theta}+\frac{1}{2}\|v\|_{L^2(R\leq|x|)}^{2\theta}\right) \notag \\
&\leq16\widetilde{C_0}+\widetilde{C}\left(\|u\|_{L^2(R\leq|x|)}^{2\theta}+\frac{1}{2}\|v\|_{L^2(R\leq|x|)}^{2\theta}\right),\label{75}
\end{align}
where $\widetilde{C}=\widetilde{C}\left(M(u,v),q,C_0\right)$.\\
By Lemma \ref{lemma for blow-up or grow-up},
\[
\int_{R\leq|x|}\left(|u(x,t)|^2+2|v(x,t)|^2\right)dx\leq\eta_0+o_R(1)
\]
for any $0\leq t\leq\frac{R\eta_0}{16\overline{C_0}M(u,v)^\frac{1}{2}}=\vcentcolon T$. Hence, $\|u(t)\|_{L^2(R\leq|x|)}^2\leq\eta_0+o_R(1)$ and $\|v(t)\|_{L^2(R\leq|x|)}^2\leq\eta_0+o_R(1).$\\
This inequality combined with \eqref{75} gives
\[
I''(t)\leq16\widetilde{C_0}+\widetilde{C}\left(\eta_0^\theta+o_R(1)\right).
\]
Integrating this inequality in $[0,t]$,
\[
I'(t)\leq I'(0)+\left\{16\widetilde{C_0}+\widetilde{C}\left(\eta_0^\theta+o_R(1)\right)\right\}t.
\]
Also, integrating this inequality in $[0,T]$,
\[
I(T)\leq I(0)+I'(0)T+\left\{16\widetilde{C_0}+\widetilde{C}\left(\eta_0^\theta+o_R(1)\right)\right\}\cdot\frac{1}{2}T^2.
\]
Here, we take $\eta_0>0$ with $\widetilde{C}\eta_0^\theta=-8\widetilde{C_0}>0$. Then,
\[
I(T)\leq I(0)+I'(0)\cdot\frac{R\eta_0}{16\overline{C_0}M(u,v)^\frac{1}{2}}+\left(8\widetilde{C_0}+o_R(1)\right)\cdot\frac{1}{2}\left(\frac{\eta_0}{16\overline{C_0}M(u,v)^\frac{1}{2}}\right)^2R^2.
\]
We take $R>0$ with $8\widetilde{C_0}+o_R(1)<4\widetilde{C_0}$. Then,
\begin{align*}
I(T)&<I(0)+I'(0)\cdot\frac{R\eta_0}{16\overline{C_0}M(u,v)^\frac{1}{2}}+2\widetilde{C_0}\left(\frac{\eta_0}{16\overline{C_0}M(u,v)^\frac{1}{2}}\right)^2R^2\\
&=\vcentcolon I(0)+I'(0)\cdot\frac{R\eta_0}{16\overline{C_0}M(u,v)^\frac{1}{2}}+2\alpha_0R^2.
\end{align*}
In addition,
\begin{align*}
I(0)&=\int_{\mathbb{R}^5}\chi_R(r)\left(|u_0(x)|^2+2|v_0(x)|^2\right)dx\\
&=\int_{|x|\leq\sqrt{R}}r^2\left(|u_0(x)|^2+2|v_0(x)|^2\right)dx+\int_{\sqrt{R}\leq|x|\leq3R}R^2\chi\left(\frac{r}{R}\right)\left(|u_0(x)|^2+2|v_0(x)|^2\right)dx\\
&\leq RM(u,v)+cR^2\int_{\sqrt{R}\leq|x|\leq 3R}\left(|u_0(x)|^2+2|v_0(x)|^2\right)dx\\
&=o_R(1)R^2,\\
I'(0)&=2\text{Im}\int_{\mathbb{R}^5}\chi_R'\left(\frac{x\cdot\nabla u_0}{r}\overline{u_0}+\frac{x\cdot\nabla v_0}{r}\overline{v_0}\right)dx\\
&=4\text{Im}\int_{|x|\leq\sqrt{R}}r\left(\frac{x\cdot\nabla u_0}{r}\overline{u_0}+\frac{x\cdot\nabla v_0}{r}\overline{v_0}\right)dx\\
&\ \ \ \ \ \ \ \ \ \ \ \ \ \ \ \ \ \ \ \ \ \ \ \ \ \ \ \ \ \ \ \ \ \ \ \ \ \ +2\text{Im}\int_{\sqrt{R}\leq|x|\leq3R}R\chi'\left(\frac{r}{R}\right)\left(\frac{x\cdot\nabla u_0}{r}\overline{u_0}+\frac{x\cdot\nabla v_0}{r}\overline{v_0}\right)dx\\
&\leq4\int_{|x|\leq\sqrt{R}}|x|\left(|\nabla u_0||u_0|+|\nabla v_0||v_0|\right)dx\\
&\ \ \ \ \ \ \ \ \ \ \ \ \ \ \ \ \ \ \ \ \ \ \ \ \ \ \ \ \ \ \ \ \ \ \ \ \ \ +cR\int_{\sqrt{R}\leq|x|\leq3R}\left(|\nabla u_0||u_0|+|\nabla v_0||v_0|\right)dx\\
&\leq4\sqrt{R}\left(\|\nabla u_0\|_{L^2}\|u_0\|_{L^2}+\|\nabla v_0\|_{L^2}\|v_0\|_{L^2}\right)\\
&\ \ \ \ \ \ \ \ \ \ \ \ \ \ \ \ \ \ \ \ \ \ \ \ \ \ \ \ \ \ \ \ \ \ \ \ \ \ +cR\left(\|\nabla u_0\|_{L^2}\|u_0\|_{L^2(\sqrt{R}\leq|x|)}+\|\nabla v_0\|_{L^2}\|v_0\|_{L^2(\sqrt{R}\leq|x|)}\right)\\
&=o_R(1)R.
\end{align*}
Thus,
\[
I(T)<o_R(1)R^2+2\alpha_0R^2.
\]
If we take $R>0$ with $o_R(1)+2\alpha_0<\alpha_0$, then
\[
I(T)<\alpha_0R^2<0.
\]
However, this is a contradiction with $I(T)=\int_{\mathbb{R}^5}\chi_R(r)\left(|u(x,T)|^2+2|v(x,T)|^2\right)dx\geq0$. Therefore, there exists no time-global solutions $(u,v)\in C(\mathbb{R}^+\!\!:H^1)\times C(\mathbb{R}^+\!\!:H^1)$ such that
\[
\sup_{t\in\mathbb{R}^+}\|u(t)\|_{L^q}+\sup_{t\in\mathbb{R}^+}\|v(t)\|_{L^q}<\infty\ \text{ for some }q>3.
\]
Applying Sobolev embedding $H^1(\mathbb{R}^5)\hookrightarrow L^\frac{10}{3}(\mathbb{R}^5)$, we completely prove.
\end{proof}

\section{Profile decomposition}
\subsection{Linear profile decomposition}

%Comparability of K_ω and I_ω

\begin{lemma}[Comparability of $K_\omega$ and $I_\omega$]\label{Comparability of K and I}
Let $(u,v)\in H^1\!\times\! H^1$ satisfy
\[
I_\omega(u,v)<I_\omega(\phi_\omega,\psi_\omega)\ \ \ \text{and}\ \ \ K_\omega^{20,8}(u,v)>0.
\]
Then, we have
\[
\frac{1}{10}K_\omega(u,v)<I_\omega(u,v).
\]
\end{lemma}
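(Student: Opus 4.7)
The plan is to unravel the definitions and chain two elementary inequalities. From Proposition \ref{K and virial} we know
\[
K_\omega^{20,8}(u,v) = 8K(u,v) - 20P(u,v),
\]
so the hypothesis $K_\omega^{20,8}(u,v) > 0$ is equivalent to the bound $P(u,v) < \tfrac{2}{5}K(u,v)$ on the nonlinear term. This will be the only place where the sign assumption enters.

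Next I would substitute this bound into the explicit formula
\[
I_\omega(u,v) = \tfrac{\omega}{2}M(u,v) + \tfrac{1}{2}K(u,v) - P(u,v),
\]
which yields
\[
I_\omega(u,v) > \tfrac{\omega}{2}M(u,v) + \tfrac{1}{2}K(u,v) - \tfrac{2}{5}K(u,v) = \tfrac{\omega}{2}M(u,v) + \tfrac{1}{10}K(u,v) = L_\omega(u,v).
\]
Finally, since $M(u,v)\ge 0$, the trivial estimate $\tfrac{\omega}{2}M(u,v) \ge \tfrac{\omega}{10}M(u,v)$ gives
\[
L_\omega(u,v) \ge \tfrac{\omega}{10}M(u,v) + \tfrac{1}{10}K(u,v) = \tfrac{1}{10}K_\omega(u,v),
\]
and combining the two inequalities produces $\tfrac{1}{10}K_\omega(u,v) < I_\omega(u,v)$, as desired.

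There is essentially no obstacle here: the lemma is a direct algebraic consequence of the definitions together with the sign information extracted from $K_\omega^{20,8}(u,v) > 0$. I note that the assumption $I_\omega(u,v) < I_\omega(\phi_\omega,\psi_\omega)$ is not actually used in the argument; it is listed because the lemma is stated in the regime where Lemma \ref{estimates for K} applies, so that $K_\omega^{20,8}(u,v) > 0$ in fact propagates along the flow.
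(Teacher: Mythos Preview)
Your proof is correct and essentially the same as the paper's. The paper takes a slightly more circuitous path---it first rewrites $K_\omega^{20,8}=8E-4P$ to obtain $-P>-2E$ and then expands $E$ to reach $-5P>-2K$, i.e.\ $P<\tfrac{2}{5}K$---but from that point the computation is identical to yours, and your observation that the hypothesis $I_\omega(u,v)<I_\omega(\phi_\omega,\psi_\omega)$ is not actually used is accurate.
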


\begin{proof}
By \eqref{36}, for any $t\in\mathbb{R}$, $-\text{Re}(v,u^2)_{L^2}>-2E(u_0,v_0)=-2\|\nabla u\|_{L^2}^2-\|\nabla v\|_{L^2}^2+4\text{Re}(v,u^2)_{L^2}$, i.e. $-5\text{Re}(v,u^2)_{L^2}>-2\|\nabla u\|_{L^2}^2-\|\nabla v\|_{L^2}^2.$\\
Thus,
\begin{align*}
I_\omega(u,v)&=\frac{\omega}{2}M(u,v)+\frac{1}{2}E(u,v)\\
&=\frac{\omega}{2}M(u,v)+\frac{1}{2}\left(\|\nabla u\|_{L^2}^2+\frac{1}{2}\|\nabla v\|_{L^2}^2-2\text{Re}(v,u^2)_{L^2}\right)\\
&>\frac{\omega}{2}M(u,v)+\frac{1}{2}\left(\|\nabla u\|_{L^2}^2+\frac{1}{2}\|\nabla v\|_{L^2}^2\right)-\frac{1}{5}\left(2\|\nabla u\|_{L^2}^2+\|\nabla v\|_{L^2}^2\right)\\
&=\frac{\omega}{2}M(u,v)+\frac{1}{10}K(u,v)\\
&\geq\frac{1}{10}K_\omega(u,v).
\end{align*}
Therefore,
\[
\frac{1}{10}K_\omega(u,v)<I_\omega(u,v).
\]
\end{proof}

\begin{lemma}\label{time shift lemma}
Let $\{t_n\}\subset\mathbb{R}$ and $\{x_n\}\subset\mathbb{R}^5$ be two sequences.\\
(a)\ \ If $\displaystyle\lim_{n\rightarrow\infty}\left(|t_n|+|x_n|\right)=\infty$, then $e^{it_n\Delta}\psi(\cdot+x_n)\rightharpoonup0\ \ \text{in}\ \ H^1$ for any $\psi\in H^1$.\\
(b)\ \ If there exists $\{z_n\}\subset H^1$ and $\psi\in H^1\setminus\{0\}$ such that
\[
z_n\rightharpoonup0\ \ \ \text{and}\ \ \ e^{it_n\Delta}z_n(\cdot+x_n)\rightharpoonup\psi\ \ \text{in}\ \ H^1,
\]
then $\displaystyle\lim_{n\rightarrow\infty}\left(|t_n|+|x_n|\right)=\infty$.
\end{lemma}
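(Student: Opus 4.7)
The plan is to treat (a) by a subsequence case analysis on $(t_n,x_n)$ and to prove (b) by contradiction using continuity of the Schr\"odinger semigroup and spatial translation on $H^1$. Since the sequence $\{e^{it_n\Delta}\psi(\cdot + x_n)\}$ is uniformly bounded in $H^1$ (both operators being isometries), $H^1$-weak convergence is equivalent to distributional convergence plus boundedness. It therefore suffices to verify the $L^2$-pairing against arbitrary $\varphi \in C_0^\infty(\mathbb{R}^5)$.

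For (a), extract a subsequence along which either $|t_n|$ is bounded or $|t_n| \to \infty$. In the bounded case, a further extraction gives $t_n \to t_\infty \in \mathbb{R}$; strong $H^1$-continuity of $\{e^{it\Delta}\}_{t\in\mathbb{R}}$ reduces the claim to $e^{it_\infty\Delta}\psi(\cdot + x_n) \rightharpoonup 0$ with $|x_n| \to \infty$, which follows for any $\varphi \in C_0^\infty$ from
\[
\int e^{it_\infty\Delta}\psi(y)\,\overline{\varphi(y - x_n)}\,dy \to 0,
\]
because $\varphi(\cdot - x_n)$ is eventually supported in $\{|y| \geq |x_n| - R_\varphi\}$ and $|e^{it_\infty\Delta}\psi|^2\,dy$ has vanishing tails. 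When $|t_n| \to \infty$, dualize:
\[
\langle e^{it_n\Delta}\psi(\cdot + x_n), \varphi\rangle_{L^2} = \langle \psi, e^{-it_n\Delta}\varphi(\cdot - x_n)\rangle_{L^2}.
\]
By Theorem \ref{Linear estimate} applied to $\varphi \in C_0^\infty \subset L^1$, together with translation invariance of $\|\cdot\|_{L^\infty}$, one has $\|e^{-it_n\Delta}\varphi(\cdot - x_n)\|_{L^\infty} \leq c|t_n|^{-5/2}\|\varphi\|_{L^1} \to 0$. Approximate $\psi$ in $L^2$ by $\psi_k \in C_0^\infty$: H\"older gives
\[
|\langle \psi, e^{-it_n\Delta}\varphi(\cdot - x_n)\rangle_{L^2}| \leq \|\psi - \psi_k\|_{L^2}\|\varphi\|_{L^2} + \|\psi_k\|_{L^1}\|e^{-it_n\Delta}\varphi(\cdot - x_n)\|_{L^\infty},
\]
and sending $n \to \infty$ then $k \to \infty$ closes the case.

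For (b), argue by contradiction: if $\{|t_n| + |x_n|\}$ had a bounded subsequence, extract $t_n \to t_\infty$ and $x_n \to x_\infty$. For any $\varphi \in H^1$, the identity
\[
\langle e^{it_n\Delta}z_n(\cdot + x_n), \varphi\rangle_{H^1} = \langle z_n, e^{-it_n\Delta}\varphi(\cdot - x_n)\rangle_{H^1}
\]
combined with strong continuity of $e^{it\Delta}$ and translation on $H^1$ yields $e^{-it_n\Delta}\varphi(\cdot - x_n) \to e^{-it_\infty\Delta}\varphi(\cdot - x_\infty)$ strongly in $H^1$. With $z_n \rightharpoonup 0$ and $\sup_n \|z_n\|_{H^1} < \infty$, the pairing vanishes, forcing the weak limit to be $0$ and contradicting $\psi \ne 0$. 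The main obstacle is the $|t_n| \to \infty$ regime of (a): the dispersive estimate delivers decay of the dualized test function only in $L^p$ for $p > 2$, so one must introduce a $C_0^\infty$-approximation of the general $H^1$ datum $\psi$ in order to pair against such norms.
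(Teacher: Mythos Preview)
Your proof is correct and follows essentially the same approach as the paper: a case analysis on whether $|t_n|$ diverges (handled via the dispersive estimate of Theorem~\ref{Linear estimate} together with a $C_0^\infty$ density approximation) or stays bounded (handled via strong continuity of $e^{it\Delta}$ and the escaping support of translated test functions), and a contradiction argument for (b) using strong convergence of $e^{-it_n\Delta}\varphi(\cdot-x_n)$. The only cosmetic differences are that the paper argues (a) by contradiction against the $H^1$ pairing directly, whereas you argue directly after reducing to the $L^2$ pairing via boundedness; both are equally valid.
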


\begin{proof}
Since $C_0^\infty\subset H^1$ is dense, for any $\varepsilon>0$ and $\phi$, $\psi\in H^1$ there exists $f$, $g\in C_0^\infty$ such that $\|\phi-f\|_{H^1}<\varepsilon\,,\ \|\psi-g\|_{H^1}<\varepsilon$.\\
We will prove Lemma \ref{time shift lemma} by contradiction.\\
If not $(a)$, then there exists $\varepsilon>0$, $\psi$, $\phi\in H^1$ and $\{t_{n_k}\}$, $\{x_{n_k}\}$ such that $|(e^{it_{n_k}\Delta}\psi(\cdot+x_{n_k}),\phi)_{H^1}|\geq\varepsilon$.
In the case $|t_{n_k}|\rightarrow\infty$, using Theorem \ref{Linear estimate}, it follows that
\begin{align*}
&|(e^{it_{n_k}\Delta}\psi(\cdot+x_{n_k}),\phi)_{H^1}|\\
&\ \ \ \ \ \ \ \ \ \leq|(e^{it_{n_k}\Delta}(\psi-g)(\cdot+x_{n_k}),\phi)_{H^1}|+|(e^{it_n\Delta}g(\cdot+x_{n_k}),\phi-f)_{H^1}|+|(e^{it_{n_k}\Delta}g(\cdot+x_{n_k}),f)_{H^1}|\\
&\ \ \ \ \ \ \ \ \ \leq\|\psi-g\|_{H^1}\|\phi\|_{H^1}+\|g\|_{H^1}\|\phi-f\|_{H^1}+\|e^{it_{n_k}\Delta}g\|_{W^{1,\infty}}\|f\|_{W^{1,1}}\\
&\ \ \ \ \ \ \ \ \ \leq\|\psi-g\|_{H^1}\|\phi\|_{H^1}+\|g\|_{H^1}\|\phi-f\|_{H^1}+c|t_{n_k}|^{-\frac{5}{2}}\|g\|_{W^{1,1}}\|f\|_{W^{1,1}}<\varepsilon.
\end{align*}
This is contradiction.\\
When the case $\{t_{n_k}\}$ is bounded, there exists a subsequence such that $t_{n_k}\longrightarrow t^\ast$. Also, by $|x_{n_k}|\rightarrow\infty$,
\begin{align*}
|(e^{it_{n_k}\Delta}\psi(\cdot+x_{n_k}),\phi)_{H^1}|&\leq|((e^{it_{n_k}\Delta}-e^{it^\ast\Delta})\psi,\phi(\cdot-x_{n_k}))_{H^1}|+|(e^{it^\ast\Delta}\psi,\phi(\cdot-x_{n_k}))_{H^1}|\\
&\leq\|(e^{it_{n_k}\Delta}-e^{it^\ast\Delta})\psi\|_{H^1}\|\phi\|_{H^1}+|(e^{it^\ast\Delta}(\psi-g),\phi(\cdot-x_{n_k}))_{H^1}|\\
&\ \ \ \ \ \ \ \ \ \ \ \ \ \ \ +|(e^{it^\ast\Delta}g,(\phi-f)(\cdot-x_{n_k}))_{H^1}|+|(e^{it^\ast\Delta}g,f(\cdot-x_{n_k}))_{H^1}|\\
&\leq\|(e^{it_{n_k}\Delta}-e^{it^\ast\Delta})\psi\|_{H^1}\|\phi\|_{H^1}+\|\psi-g\|_{H^1}\|\phi\|_{H^1}\\
&\ \ \ \ \ \ \ \ \ \ \ \ \ \ \ +\|g\|_{H^1}\|\phi-f\|_{H^1}+|(e^{it^\ast\Delta}g,f(\cdot-x_{n_k}))_{H^1}|
<\varepsilon.
\end{align*}
This is contradiction. Therefore, $e^{it_n\Delta}\psi(\cdot+x_n)\rightharpoonup0\ \ \text{in}\ \ H^1$ holds.\\
Next, we prove (b). If not (b), then there exists a subsequence $\{t_{n_k}\}$, $\{x_{n_k}\}$ such that $(t_{n_k},x_{n_k})\longrightarrow(t^\ast,x^\ast)$. Since $z_n\rightharpoonup0$, for any $\phi\in H^1$,
\begin{align*}
&|(e^{it_{n_k}\Delta}z_{n_k}(\cdot+x_{n_k}),\phi)_{H^1}|\\
&\ \ \ \ \ \ \ \ \ \ \ =|(z_{n_k},e^{-it_{n_k}\Delta}\phi(\cdot-x_{n_k}))_{H^1}|\\
&\ \ \ \ \ \ \ \ \ \ \ \leq|(z_{n_k},e^{-it_{n_k}\Delta}(\phi-f)(\cdot-x_{n_k}))_{H^1}|+|(z_{n_k},(e^{-it_{n_k}\Delta}-e^{-it^\ast\Delta})f(\cdot-x_{n_k}))_{H^1}|\\
&\ \ \ \ \ \ \ \ \ \ \ \ \ \ \ \ \ \ \ \ \ \ \ \ \ \ \ \ \ \ \ \ \ +|(z_{n_k},e^{-it^\ast\Delta}(f(\cdot-x_{n_k})-f(\cdot-x^\ast)))_{H^1}|+|(z_{n_k},e^{-it^\ast\Delta}f(\cdot-x^\ast))_{H^1}|\\
&\ \ \ \ \ \ \ \ \ \ \ \leq\|z_{n_k}\|_{H^1}\|\phi-f\|_{H^1}+\|z_{n_k}\|_{H^1}\|(e^{-it_{n_k}\Delta}-e^{-it^\ast\Delta})f\|_{H^1}\\
&\ \ \ \ \ \ \ \ \ \ \ \ \ \ \ \ \ \ \ \ \ \ \ \ \ \ \ \ \ \ \ \ \ +\|z_{n_k}\|_{H^1}\|f(\cdot-x_{n_k})-f(\cdot-x^\ast)\|_{H^1}+|(z_{n_k},e^{-it^\ast\Delta}f(\cdot-x^\ast))_{H^1}|\\
&\ \ \ \ \ \ \ \ \ \ \ \longrightarrow0\ \ \text{as}\ \ k\rightarrow\infty.
\end{align*}
Thus, $e^{it_{n_k}\Delta}z_{n_k}(\cdot+x_{n_k})\rightharpoonup0$ holds. However, this is contradiction. Therefore, $|t_n|+|x_n|\longrightarrow\infty$ holds.
\end{proof}

%Linear profile decomposition

\begin{theorem}[Linear profile decomposition]\label{Linear profile decomposition}
Let $(\phi_n,\psi_n)$ be a bounded sequence in $H^1\!\times\! H^1$. Then, after passing to a subsequence of $(\phi_n,\psi_n)$ necessary, also denoted $(\phi_n,\psi_n)$, and\\
(1)\ \ for each $1\leq j\leq M$, there exists a profile $(\phi^j,\psi^j)$ (fixed in $n$) in $H^1\times H^1$,\\
(2)\ \ for each $1\leq j\leq M$, there exists a sequence (in $n$) of time shifts $\{t_n^j\}$,\\
(3)\ \ for each $1\leq j\leq M$, there exists a sequence (in $n$) of space shifts $\{x_n^j\}$,\\
(4)\ \ there exists a sequence (in $n$) of remainders $(\Phi_n^M,\Psi_n^M)$ in $H^1\!\times\! H^1$,\\
such that
\[
(\phi_n(x),\psi_n(x))=\sum_{j=1}^M(e^{-it_n^j\Delta}\phi^j(x-x_n^j),e^{-\frac{1}{2}it_n^j\Delta}\psi^j(x-x_n^j))+(\Phi_n^M(x),\Psi_n^M(x)).
\]
for any $M\in\mathbb{R}$. For fixed $j$, we have
\begin{align}
either\ \ \ t_n^j=0\ \ \ ^\forall n\in\mathbb{N}\ \ \ or\ \ \ t_n^j\longrightarrow-\infty\ \ as\ \ n\rightarrow\infty,\label{77}
\end{align}
\begin{align}
either\ \ \ x_n^j=0\ \ \ ^\forall n\in\mathbb{N}\ \ \ or\ \ \ |x_n^j|\longrightarrow\infty\ \ as\ \ n\rightarrow\infty.\label{78}
\end{align}
Pairwise divergence property:
\begin{align}
1\leq^\forall i\neq^\forall j\leq M,\ \ \lim_{n\rightarrow\infty}(|t_n^i-t_n^j|+|x_n^i-x_n^j|)=\infty.\label{40}
\end{align}
Asymptotic smallness property:
\begin{align}
\lim_{M\rightarrow\infty}\left[\lim_{n\rightarrow\infty}\|(e^{it\Delta}\Phi_n^M,e^{\frac{1}{2}it\Delta}\Psi_n^M)\|_{S(\dot{H}^\frac{1}{2})\times S(\dot{H}^\frac{1}{2})}\right]=0.\label{26}
\end{align}
For fixed $M$ and any $0\leq s\leq 1$, we have the asymptotic Pythagorean expansion:
\begin{align}
\|\phi_n\|_{\dot{H}^s}^2=\sum_{j=1}^M\|\phi^j\|_{\dot{H}^s}^2+\|\Phi_n^M\|_{\dot{H}^s}^2+o_n(1),\label{27}
\end{align}
\begin{align}
\|\psi_n\|_{\dot{H}^s}^2=\sum_{j=1}^M\|\psi^j\|_{\dot{H}^s}^2+\|\Psi_n^M\|_{\dot{H}^s}^2+o_n(1).\label{28}
\end{align}
\end{theorem}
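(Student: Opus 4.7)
I would follow the Bahouri--Gérard/Keraani scheme, running the profile extraction simultaneously on both components $\phi_n,\psi_n$, since they must share the time and space parameters $(t_n^j,x_n^j)$. The key analytic ingredient is a refined Strichartz inequality of the form
\[
\|e^{it\Delta}f\|_{S(\dot H^{1/2})} \le C\,\|f\|_{\dot H^{1/2}}^{1-\theta}\,\|f\|_{Y}^{\theta}
\]
for some $\theta\in(0,1)$ and some Besov-type norm $Y$ weaker than $\dot H^{1/2}$, with the analogous bound for $e^{\frac{1}{2}it\Delta}$ (these can be established in dimension $5$ via bilinear/frequency-localized estimates, as in \cite{005}). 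Since $(\phi_n,\psi_n)$ is bounded in $H^1\!\times\!H^1$, hence in $\dot H^{1/2}\!\times\!\dot H^{1/2}$, a positive lower bound on the left-hand side forces a frequency--space concentration bubble and, hence, shifts $\{t_n\},\{x_n\}$ along which $e^{it_n\Delta}\phi_n(\cdot+x_n)$ admits a nonzero weak $H^1$ limit.

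To extract the first profile, if $\|(e^{it\Delta}\phi_n,e^{\frac{1}{2}it\Delta}\psi_n)\|_{S(\dot H^{1/2})\times S(\dot H^{1/2})}\to 0$ along a subsequence, the statement holds with $M=0$. Otherwise at least one of the two norms stays bounded below; assume without loss of generality it is the $\phi_n$-norm. The refined Strichartz inequality provides $(t_n^1,x_n^1)$ and a nonzero $\phi^1\in H^1$ with $e^{it_n^1\Delta}\phi_n(\cdot+x_n^1)\rightharpoonup\phi^1$. Applying the same shifts to the second component and passing to a further subsequence, I take $\psi^1$ as the weak limit of $e^{\frac{1}{2}it_n^1\Delta}\psi_n(\cdot+x_n^1)$ (possibly zero). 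I then normalize: after extracting a subsequence, Lemma \ref{time shift lemma}~(a) lets me replace a bounded $t_n^1$ by $0$ (absorbing the bounded evolution into $\phi^1$ and $\psi^1$), and similarly for $x_n^1$, so that \eqref{77}--\eqref{78} hold.

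Setting $\Phi_n^1(x)=\phi_n(x)-e^{-it_n^1\Delta}\phi^1(x-x_n^1)$ and $\Psi_n^1(x)=\psi_n(x)-e^{-\frac{1}{2}it_n^1\Delta}\psi^1(x-x_n^1)$, I iterate the procedure on $(\Phi_n^1,\Psi_n^1)$. By construction $e^{it_n^1\Delta}\Phi_n^1(\cdot+x_n^1)\rightharpoonup 0$ in $H^1$, and the parallelogram law for $\dot H^s$ gives at each step the orthogonality identity $\|\phi_n\|_{\dot H^s}^2=\|\phi^1\|_{\dot H^s}^2+\|\Phi_n^1\|_{\dot H^s}^2+o_n(1)$ for $0\le s\le 1$ (and likewise for $\psi_n$), which iterates to \eqref{27}--\eqref{28}. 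The pairwise divergence \eqref{40} is enforced at each extraction: were the shifts at step $j$ to satisfy $|t_n^j-t_n^i|+|x_n^j-x_n^i|=O(1)$ for some earlier $i$, Lemma \ref{time shift lemma}~(b) would force the weak limit defining $\phi^j$ to vanish, contradicting its nontriviality.

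Finally, the asymptotic smallness \eqref{26} follows from feeding the Pythagorean bound $\sum_j(\|\phi^j\|_{\dot H^{1/2}}^2+\|\psi^j\|_{\dot H^{1/2}}^2)\le \sup_n(\|\phi_n\|_{\dot H^{1/2}}^2+\|\psi_n\|_{\dot H^{1/2}}^2)<\infty$ back into the refined Strichartz inequality: as $M\to\infty$, the $\dot H^{1/2}$-norms of the extracted profiles tend to zero, which forces $\lim_n\|(e^{it\Delta}\Phi_n^M,e^{\frac{1}{2}it\Delta}\Psi_n^M)\|_{S(\dot H^{1/2})\times S(\dot H^{1/2})}\to 0$. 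The main obstacle is the simultaneous extraction for two components with \emph{shared} parameters $(t_n^j,x_n^j)$ but \emph{distinct} free evolutions $e^{it\Delta}$ and $e^{\frac{1}{2}it\Delta}$: one must verify that a bubble detected in one component yields a meaningful (possibly zero) weak limit for the other along the \emph{same} shifts, and that the Pythagorean decomposition survives independently in each component. A secondary technical point is establishing the refined Strichartz inequality in the $\dot H^{1/2}$ setting on $\mathbb{R}^5$ with the nonstandard endpoints used in Definition \ref{Strichartz norm}.
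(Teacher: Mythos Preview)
Your plan is correct and matches the paper's approach: both run the Bahouri--G\'erard/Keraani scheme with shared shifts $(t_n^j,x_n^j)$ for the two components and use Lemma~\ref{time shift lemma} exactly as you describe for normalization and pairwise divergence. The paper's concrete refined Strichartz is the interpolation $\|e^{it\Delta}f\|_{L^qL^r}\lesssim\|f\|_{\dot H^{1/2}}^\theta\|e^{it\Delta}f\|_{L^\infty L^{5/2}}^{1-\theta}$, which reduces \eqref{26} to $L^\infty L^{5/2}$ control; a frequency-truncation argument then locates the bubble and yields the quantitative lower bound $\|\phi^j\|_{\dot H^{1/2}}+\|\psi^j\|_{\dot H^{1/2}}\gtrsim A_j^{9}$, and it is this bound together with Pythagorean summability that forces $A_M\to 0$---this is the precise mechanism behind your final paragraph and simultaneously resolves the technical point you flagged.
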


%proof of Linear profile decomposition

\begin{proof}
Since $(\phi_n,\psi_n)$ is bounded in $H^1\!\times\! H^1$, there exists $c_1>0$ such that $\|(\phi_n,\psi_n)\|_{H^1\times H^1}\leq c_1$.\\
Let $(q,r)$ be a $\dot{H}^\frac{1}{2}$\,admissible pair.
\begin{align*}
\|e^{it\Delta}\Phi_n^M\|_{L^qL^r}&\leq\left\|\|e^{it\Delta}\Phi_n^M\|_{L^{r_1}}^\theta\|e^{it\Delta}\Phi_n^M\|_{L^\frac{5}{2}}^{1-\theta}\right\|_{L^q}\\
&\leq\|e^{it\Delta}\Phi_n^M\|_{L^{q\theta}L^{r_1}}^\theta\|e^{it\Delta}\Phi_n^M\|_{L^\infty L^\frac{5}{2}}^{1-\theta}.
\end{align*}
for $(r_1,\theta)$ satisfying $\frac{1}{r}=\frac{\theta}{r_1}+\frac{2(1-\theta)}{5}$. Here, we assume that $(q\theta,r_1)$ is $\dot{H}^\frac{1}{2}$ admissible and $(q,r)$ satisfies $4<q<\infty$, $\frac{5}{2}<r<\frac{10}{3}$. If $r_1=\frac{3}{2}r$, then $\frac{15}{4}<r_1<5$. Since $\displaystyle\frac{1}{r}=\frac{2\theta}{3r}+\frac{2(1-\theta)}{5}$, i.e. $\displaystyle0<\theta=\frac{6r-15}{6r-10}<1$,\\
we have $\displaystyle\frac{1}{3}<\frac{1}{q\theta}=\left(1-\frac{5}{2r}\right)\cdot\frac{6r-10}{6r-15}=1-\frac{5}{3r}<\frac{1}{2}$, i.e. $\displaystyle2<q\theta<3$. Also, we obtain $\displaystyle\frac{2}{q\theta}+\frac{5}{r_1}=\frac{6r-10}{3r}+\frac{10}{3r}=2$. Thus
\[
\|e^{it\Delta}\Phi_n^M\|_{L^qL^r}\leq\|e^{it\Delta}\Phi_n^M\|_{L^{q\theta}L^{r_1}}^\theta\|e^{it\Delta}\Phi_n^M\|_{L^\infty L^\frac{5}{2}}^{1-\theta}\leq c\|\Phi_n^M\|_{\dot{H}^\frac{1}{2}}^\theta\|e^{it\Delta}\Phi_n^M\|_{L^\infty L^\frac{5}{2}}^{1-\theta}.
\]
This inequality also holds when $q=\infty$, $r=\frac{5}{2}$. Hence, it suffices to establish
\[
\limsup_{n\rightarrow\infty}\|(\Phi_n^M,\Psi_n^M)\|_{\dot{H}^\frac{1}{2}\times \dot{H}^\frac{1}{2}}\leq c<\infty\ \ \ \text{and}\ \ \ \lim_{M\rightarrow\infty}\left[\limsup_{n\rightarrow\infty}\|(e^{it\Delta}\Phi_n^M,e^{\frac{1}{2}it\Delta}\Psi_n^M)\|_{L^\infty L^\frac{5}{2}\times L^\infty L^\frac{5}{2}}\right]=0
\]
to obtain \eqref{26}.\\
(i)\ \ Let $M=1$.\\
We set $\displaystyle A_1=\limsup_{n\rightarrow\infty}\|(e^{it\Delta}\phi_n,e^{\frac{1}{2}it\Delta}\psi_n)\|_{L^\infty L^\frac{5}{2}\times L^\infty L^\frac{5}{2}}$. If $A_1=0$, then Theorem \ref{Linear profile decomposition} holds by taking $\phi^j=0,\psi^j=0\ \ (j=1,2,\cdots,M)$. Hence, we assume $A_1>0$. We take a subsequence $\{(\phi_n,\psi_n)\}$ so that $\displaystyle A_1=\lim_{n\rightarrow\infty}\|(e^{it\Delta}\phi_n,e^{\frac{1}{2}it\Delta}\psi_n)\|_{L^\infty L^\frac{5}{2}\times L^\infty L^\frac{5}{2}}$. We take $\chi\in S(\mathbb{R}^5)$ with $\widehat{\chi}(\xi)=1$ $(\frac{1}{r}\leq|\xi|\leq r)$ and $\text{supp}\widehat{\chi}\subset[\frac{1}{2r},2r]$ for $r>0$. Applying Sobolev embedding,
\begin{align*}
\|e^{it\Delta}\phi_n-\chi\ast e^{it\Delta}\phi_n\|_{L^\infty L^\frac{5}{2}}^2&\leq c\|e^{it\Delta}\phi_n-\chi\ast e^{it\Delta}\phi_n\|_{L^\infty \dot{H}^\frac{1}{2}}^2\\
&=c\|\,|\xi|^\frac{1}{2}e^{-4\pi^2it|\xi|^2}\widehat{\phi}_n-|\xi|^\frac{1}{2}\widehat{\chi}e^{-4\pi^2it|\xi|^2}\widehat{\phi}_n\|_{L^\infty L^2}^2\\
&=c\int_{\mathbb{R}^5}|\xi||1-\widehat{\chi}(\xi)|^2|\widehat{\phi}_n|^2d\xi\\
&\leq c\int_{|\xi|\leq\frac{1}{r}}|\xi||\widehat{\phi}_n|^2
d\xi+c\int_{|\xi|\geq r}|\xi||\widehat{\phi}_n|^2d\xi\\
&\leq\frac{c}{r}\|\widehat{\phi}_n\|_{L^2}^2+\frac{c}{r}\|\,|\xi|\widehat{\phi}_n\|_{L^2}^2\\
&=\frac{c}{r}\|\phi_n\|_{H^1}^2\leq\frac{c\cdot c_1^2}{r}.
\end{align*}
Thus,
\[
\|e^{it\Delta}\phi_n-\chi\ast e^{it\Delta}\phi_n\|_{L^\infty L^\frac{5}{2}}\leq\frac{c\cdot c_1}{\sqrt{r}}.
\]
Similarly,
\[
\|e^{\frac{1}{2}it\Delta}\psi_n-\chi\ast e^{\frac{1}{2}it\Delta}\psi_n\|_{L^\infty L^\frac{5}{2}}\leq\frac{c\cdot c_1}{\sqrt{r}}.
\]
We deduce
\begin{align*}
&\|e^{it\Delta}\phi_n\|_{L^\infty L^\frac{5}{2}}-\|\chi\ast e^{it\Delta}\phi_n\|_{L^\infty L^\frac{5}{2}}\leq\frac{c\cdot c_1}{\sqrt{r}},\\
&\|e^{\frac{1}{2}it\Delta}\psi_n\|_{L^\infty L^\frac{5}{2}}-\|\chi\ast e^{\frac{1}{2}it\Delta}\psi_n\|_{L^\infty L^\frac{5}{2}}\leq\frac{c\cdot c_1}{\sqrt{r}}
\end{align*}
from these inequalities, and hence
\begin{align*}
\|\chi\ast e^{it\Delta}\phi_n\|_{L^\infty L^\frac{5}{2}}+\|\chi\ast e^{\frac{1}{2}it\Delta}\psi_n\|_{L^\infty L^\frac{5}{2}}&\geq\|e^{it\Delta}\phi_n\|_{L^\infty L^\frac{5}{2}}+\|e^{\frac{1}{2}it\Delta}\psi_n\|_{L^\infty L^\frac{5}{2}}-\frac{2c\cdot c_1}{\sqrt{r}}\\
&\longrightarrow A_1-\frac{2c\cdot c_1}{\sqrt{r}}\ \ \text{as}\ \ n\rightarrow\infty.
\end{align*}
Here, if we take $r>0$ with $\displaystyle\frac{2c\cdot c_1}{\sqrt{r}}=\frac{A_1}{4}\Longleftrightarrow r=\frac{2^6c^2\,c_1^2}{A_1^2}$, then there exists $n_0\in \mathbb{N}$ such that for any $n\geq n_0$,\\
\[
\|\chi\ast e^{it\Delta}\phi_n\|_{L^\infty L^\frac{5}{2}}+\|\chi\ast e^{\frac{1}{2}it\Delta}\psi_n\|_{L^\infty L^\frac{5}{2}}\geq\frac{A_1}{2}.
\]
So, if we take a subsequence $\{(\phi_n,\psi_n)\}_{n\geq n_0}$ of $\{(\phi_n,\psi_n)\}_{n\in\mathbb{N}}$, then for any $n\in\mathbb{N}$,
\begin{align}
\|\chi\ast e^{it\Delta}\phi_n\|_{L^\infty L^\frac{5}{2}}+\|\chi\ast e^{\frac{1}{2}it\Delta}\psi_n\|_{L^\infty L^\frac{5}{2}}\geq\frac{A_1}{2}.\label{53}
\end{align}
Also, since
\begin{align*}
\|\chi\ast e^{it\Delta}\phi_n\|_{L^\infty L^\frac{5}{2}}^\frac{5}{2}&\leq\|\chi\ast e^{it\Delta}\phi_n\|_{L^\infty L^\infty}^\frac{1}{2}\|\chi\ast e^{it\Delta}\phi_n\|_{L^\infty L^2}^2\\
&=\|\chi\ast e^{it\Delta}\phi_n\|_{L^\infty L^\infty}^\frac{1}{2}\|\widehat{\chi}e^{-4\pi^2it|\cdot|^2}\widehat{\phi}_n\|_{L^\infty L^2}^2\\
&\leq\|\chi\ast e^{it\Delta}\phi_n\|_{L^\infty L^\infty}^\frac{1}{2}\|\phi_n\|_{L^2}^2\\
&\leq c_1^2\|\chi\ast e^{it\Delta}\phi_n\|_{L^\infty L^\infty}^\frac{1}{2},
\end{align*}
it follows that
\begin{align*}
\left(\|\chi\ast e^{it\Delta}\phi_n\|_{L^\infty L^\frac{5}{2}}+\|\chi\ast e^{\frac{1}{2}it\Delta}\psi_n\|_{L^\infty L^\frac{5}{2}}\right)^5&\leq c\left(\|\chi\ast e^{it\Delta}\phi_n\|_{L^\infty L^\frac{5}{2}}^5+\|\chi\ast e^{\frac{1}{2}it\Delta}\psi_n\|_{L^\infty L^\frac{5}{2}}^5\right)\\
&\leq c\cdot c_1^4\left(\|\chi\ast e^{it\Delta}\phi_n\|_{L^\infty L^\infty}+\|\chi\ast e^{\frac{1}{2}it\Delta}\psi_n\|_{L^\infty L^\infty}\right).
\end{align*}
This inequality combined with \eqref{53} gives
\[
c\cdot c_1^4\left(\|\chi\ast e^{it\Delta}\phi_n\|_{L^\infty L^\infty}+\|\chi\ast e^{\frac{1}{2}it\Delta}\psi_n\|_{L^\infty L^\infty}\right)\geq\left(\frac{A_1}{2}\right)^5,
\]
\[
\|\chi\ast e^{it\Delta}\phi_n\|_{L^\infty L^\infty}+\|\chi\ast e^{\frac{1}{2}it\Delta}\psi_n\|_{L^\infty L^\infty}\geq\frac{A_1^5}{2^5c\,c_1^4},
\]
\[
\max\left\{\|\chi\ast e^{it\Delta}\phi_n\|_{L^\infty L^\infty},\ \|\chi\ast e^{\frac{1}{2}it\Delta}\psi_n\|_{L^\infty L^\infty}\right\}\geq\frac{A_1^5}{2^6c\,c_1^4}.
\]
Therefore, we can take time shifts $\{t_n^1\}\subset(-\infty,0]$ and space shifts $\{x_n^1\}\subset\mathbb{R}^5$ so that
\[
\max\left\{|\chi\ast e^{it_n^1\Delta}\phi_n(x_n^1)|,\ |\chi\ast e^{\frac{1}{2}it_n^1\Delta}\psi_n(x_n^1)|\right\}\geq\frac{A_1^5}{2^7c\,c_1^4}.
\]
for any $n\in\mathbb{N}$. Thus, we obtain
\begin{align}
|\chi\ast e^{it_n^1\Delta}\phi_n(x_n^1)|+|\chi\ast e^{\frac{1}{2}it_n^1\Delta}\psi_n(x_n^1)|\geq\frac{A_1^5}{2^7c\,c_1^4}.\label{79}
\end{align}
Since $\{e^{it_n^1\Delta}\phi_n(\cdot+x_n^1)\}$ and $\{e^{\frac{1}{2}it_n^1\Delta}\psi_n(\cdot+x_n^1)\}$ are bounded sequences in $H^1$, we can set that
\begin{align}
e^{it_n^1\Delta}\phi_n(\cdot+x_n^1)\rightharpoonup\widetilde{\phi^1}\,,\ \ e^{\frac{1}{2}it_n^1\Delta}\psi_n(\cdot+x_n^1)\rightharpoonup\widetilde{\psi^1}\ \ \text{in}\ \ H^1\label{80}
\end{align}
by passing to subsequences.
\begin{align*}
\left|\int_{\mathbb{R}^5}\chi(y)\widetilde{\phi^1}(-y)dy\right|&=\left|\int_{\mathbb{R}^5}\chi(y)e^{it_n^1\Delta}\phi_n(x_n^1-y)dy+\int_{\mathbb{R}^5}\chi(y)(\widetilde{\phi^1}(-y)-e^{it_n^1\Delta}\phi_n(x_n^1-y))dy\right|\\
&\geq\left|\int_{\mathbb{R}^5}\chi(y)e^{it_n^1\Delta}\phi_n(x_n^1-y)dy\right|-\left|\int_{\mathbb{R}^5}\chi(y)(\widetilde{\phi^1}(-y)-e^{it_n^1\Delta}\phi_n(x_n^1-y))dy\right|\\
&=|\chi\ast e^{it_n^1\Delta}\phi_n(x_n^1)|-|(\widetilde{\phi^1}-e^{it_n^1\Delta}\phi_n(x_n^1+\cdot),\chi(-\ \cdot\ ))_{L^2}|.
\end{align*}
Similarly,
\[
\left|\int_{\mathbb{R}^5}\chi(y)\widetilde{\psi^1}(-y)dy\right|\geq|\chi\ast e^{\frac{1}{2}it_n^1\Delta}\psi_n(x_n^1)|-|(\widetilde{\psi^1}-e^{\frac{1}{2}it_n^1\Delta}\psi_n(x_n^1+\cdot),\chi(-\ \cdot\ ))_{L^2}|.
\]
Thus, using \eqref{79} and \eqref{80},
\begin{align*}
&\left|\int_{\mathbb{R}^5}\chi(y)\widetilde{\phi^1}(-y)dy\right|+\left|\int_{\mathbb{R}^5}\chi(y)\widetilde{\psi^1}(-y)dy\right|\\
&\ \ \ \ \ \ \ \ \ \ \ \ \ \geq|\chi\ast e^{it_n^1\Delta}\phi_n(x_n^1)|+|\chi\ast e^{\frac{1}{2}it_n^1\Delta}\psi_n(x_n^1)|\\
&\ \ \ \ \ \ \ \ \ \ \ \ \ \ \ \ \ \ \ \ \ \ \ \ \ \ -|(\widetilde{\phi^1}-e^{it_n^1\Delta}\phi_n(x_n^1+\cdot),\chi(-\ \cdot\ ))_{L^2}|-|(\widetilde{\psi^1}-e^{\frac{1}{2}it_n^1\Delta}\psi_n(x_n^1+\cdot),\chi(-\ \cdot\ ))_{L^2}|\\
&\ \ \ \ \ \ \ \ \ \ \ \ \ \geq\frac{A_1^5}{2^7c\,c_1^4}-|(\widetilde{\phi^1}-e^{it_n^1\Delta}\phi_n(x_n^1+\cdot),\chi(-\ \cdot\ ))_{L^2}|-|(\widetilde{\psi^1}-e^{\frac{1}{2}it_n^1\Delta}\psi_n(x_n^1+\cdot),\chi(-\ \cdot\ ))_{L^2}|\\
&\ \ \ \ \ \ \ \ \ \ \ \ \ \longrightarrow\frac{A_1^5}{2^7c\,c_1^4}\ \ \text{as}\ \ n\rightarrow\infty.
\end{align*}
Here,
\begin{align*}
\left|\int_{\mathbb{R}^5}\chi(y)\widetilde{\phi^1}(-y)dy\right|&=\left|\int_{\mathbb{R}^5}\widehat{\chi}(\xi)\widehat{\widetilde{\phi^1}}(\xi)d\xi\right|\\
&\leq\int_{\mathbb{R}^5}\frac{1}{|\xi|^\frac{1}{2}}|\widehat{\chi}(\xi)|\cdot|\xi|^\frac{1}{2}|\widehat{\widetilde{\phi^1}}(\xi)|d\xi\\
&\leq\|\chi\|_{\dot{H}^{-\frac{1}{2}}}\|\widetilde{\phi^1}\|_{\dot{H}^\frac{1}{2}},\\
\|\chi\|_{\dot{H}^{-\frac{1}{2}}}^2&=\int_{\frac{1}{2r}\leq|\xi|\leq2r}\frac{1}{|\xi|}|\widehat{\chi}(\xi)|^2d\xi\\
&\leq\int_{\frac{1}{2r}\leq|\xi|\leq2r}\frac{1}{|\xi|}d\xi.
\end{align*}
We transform $\xi$ into a polar coordinate, i.e. $\xi_1=\eta\cos\theta_1,\,\xi_2=\eta\cos\theta_2\sin\theta_1,\,\xi_3=\eta\cos\theta_3\sin\theta_2\sin\theta_1$,\\
$\xi_4=\eta\cos\theta_4\sin\theta_3\sin\theta_2\sin\theta_1$, $\xi_5=\eta\sin\theta_4\sin\theta_3\sin\theta_2\sin\theta_1$.\\
Then,  $|\xi|=\eta$ and $d\xi=\eta^4\sin^3\theta_1\sin^2\theta_2\sin\theta_3d\eta d\theta_1d\theta_2d\theta_3d\theta_4$. Thus,
\begin{align*}
\|\chi\|_{\dot{H}^{-\frac{1}{2}}}^2&\leq\int_0^{2\pi}\int_0^\pi\int_0^\pi\int_0^\pi\int_\frac{1}{2r}^{2r}\frac{1}{\eta}\eta^4\sin^3\theta_1\sin^2\theta_2\sin\theta_3\,d\eta d\theta_1d\theta_2d\theta_3d\theta_4\\
&=\int_\frac{1}{2r}^{2r}\eta^3d\eta\int_0^\pi\sin^3\theta_1d\theta_1\int_0^\pi\sin^2\theta_2d\theta_2\int_0^\pi\sin\theta_3d\theta_3\int_0^{2\pi}d\theta_4\\
&=\left[\frac{1}{4}\eta^4\right]_\frac{1}{2r}^{2r}\left(\frac{2}{3}\int_0^\pi\sin\theta_1d\theta_1\right)\left(\frac{1}{2}\pi\right)\left[-\cos\theta_3\frac{}{}\right]_0^\pi\cdot2\pi\\
&=\frac{1}{4}\left(16r^4-\frac{1}{16r^4}\right)\cdot\frac{4}{3}\cdot\frac{\pi}{2}\cdot2\cdot2\pi\\
&\leq\frac{32\pi^2r^4}{3}.
\end{align*}
Therefore,
\begin{align*}
\left|\int_{\mathbb{R}^5}\chi(y)\widetilde{\phi^1}(-y)dy\right|&\leq\|\chi\|_{\dot{H}^{-\frac{1}{2}}}\|\widetilde{\phi^1}\|_{\dot{H}^\frac{1}{2}}\\
&\leq\frac{4\sqrt{2}\,\pi r^2}{\sqrt{3}}\|\widetilde{\phi^1}\|_{\dot{H}^\frac{1}{2}}=\frac{4\sqrt{2}\,\pi}{\sqrt{3}}\cdot\frac{2^{12}c^4\,c_1^4}{A_1^4}\|\widetilde{\phi^1}\|_{\dot{H}^\frac{1}{2}}=\frac{2^{14}\sqrt{2}\,\pi c^4\,c_1^4}{\sqrt{3}A_1^4}\|\widetilde{\phi^1}\|_{\dot{H}^\frac{1}{2}}.
\end{align*}
Similarly,
\[
\left|\int_{\mathbb{R}^5}\chi(y)\widetilde{\psi^1}(-y)dy\right|\leq\frac{2^{14}\sqrt{2}\,\pi c^4\,c_1^4}{\sqrt{3}A_1^4}\|\widetilde{\psi^1}\|_{\dot{H}^\frac{1}{2}}.
\]
Thus, we have
\[
\frac{2^{14}\sqrt{2}\,\pi c^4\,c_1^4}{\sqrt{3}A_1^4}\left(\|\widetilde{\phi^1}\|_{\dot{H}^\frac{1}{2}}+\|\widetilde{\psi^1}\|_{\dot{H}^\frac{1}{2}}\right)\geq\frac{A_1^5}{2^7c\,c_1^4},
\]
\[
\|\widetilde{\phi^1}\|_{\dot{H}^\frac{1}{2}}+\|\widetilde{\psi^1}\|_{\dot{H}^\frac{1}{2}}\geq\frac{\sqrt{3}A_1^9}{2^{21}\sqrt{2}\,\pi c^5\,c_1^8}>0.
\]
We set
\[
\widetilde{\Phi_n^1}(x)=\phi_n(x)-e^{-it_n^1\Delta}\widetilde{\phi^1}(x-x_n^1)\,,\ \ \widetilde{\Psi_n^1}(x)=\psi_n(x)-e^{-\frac{1}{2}it_n^1\Delta}\widetilde{\psi^1}(x-x_n^1).
\]
Then,
\begin{align*}
\|\widetilde{\Phi_n^1}\|_{\dot{H}^s}^2-\|\phi_n\|_{\dot{H}^s}^2&=\|\phi_n-e^{-it_n^1\Delta}\widetilde{\phi^1}(\cdot-x_n^1)\|_{\dot{H}^s}^2-\|\phi_n\|_{\dot{H}^s}^2\\
&=\|e^{it_n^1\Delta}\phi_n-\widetilde{\phi^1}(\cdot-x_n^1)\|_{\dot{H}^s}^2-\|\phi_n\|_{\dot{H}^s}^2\\
&=\|e^{it_n^1\Delta}\phi_n(\cdot+x_n^1)-\widetilde{\phi^1}\|_{\dot{H}^s}^2-\|\phi_n\|_{\dot{H}^s}^2\\
&=-2\text{Re}(e^{it_n^1\Delta}\phi_n(\cdot+x_n^1),\widetilde{\phi^1})_{\dot{H}^s}+\|\widetilde{\phi^1}\|_{\dot{H}^s}^2\\
&\longrightarrow-\|\widetilde{\phi^1}\|_{\dot{H}^s}^2\ \ \text{as}\ \ n\rightarrow\infty.
\end{align*}
Hence,
\[
\|\phi_n\|_{\dot{H}^s}^2=\|\widetilde{\phi^1}\|_{\dot{H}^s}^2+\|\widetilde{\Phi_n^1}\|_{\dot{H}^s}^2+o_n(1).
\]
Similarly,
\[
\|\psi_n\|_{\dot{H}^s}^2=\|\widetilde{\psi^1}\|_{\dot{H}^s}^2+\|\widetilde{\Psi_n^1}\|_{\dot{H}^s}^2+o_n(1).
\]
Here, we will prove that we can take $\{t_n\}\in(-\infty,0]$ and $\{x_n\}\in\mathbb{R}^5$ with
\[
either\ \ \ t_n^1=0\ \ \ ^\forall n\in\mathbb{N}\ \ \ or\ \ \ t_n^1\longrightarrow-\infty\ \ as\ \ n\rightarrow\infty,
\]
\[
either\ \ \ x_n^1=0\ \ \ ^\forall n\in\mathbb{N}\ \ \ or\ \ \ |x_n^1|\longrightarrow\infty\ \ as\ \ n\rightarrow\infty.
\]
In the case $t_n^1\longrightarrow-\infty\ \ $as$\ \ n\rightarrow\infty$\ \ and\ \ $|x_n^1|\longrightarrow\infty\ \ $as$\ \ n\rightarrow\infty$, it holds.\\
In the case $t_n^1\longrightarrow t_1<\infty\ \ $as$\ \ n\rightarrow\infty$\ \ and\ \ $|x_n^1|\longrightarrow\infty\ \ $as$\ \ n\rightarrow\infty$, we set\\
$\phi_n(x)=e^{-it_n^1\Delta}\widetilde{\phi^1}(x-x_n^1)+\widetilde{\Phi_n^1}(x)=e^{-it^1\Delta}\widetilde{\phi^1}(x-x_n^1)+\Phi_n^1(x)$. Since
\begin{align*}
&\|\Phi_n^1\|_{\dot{H}^s}^2-\|\phi_n\|_{\dot{H}^s}^2=\|\phi_n-e^{-it^1\Delta}\widetilde{\phi^1}(\cdot-x_n^1)\|_{\dot{H}^s}^2-\|\phi_n\|_{\dot{H}^s}^2\\
&\ \ \ \ \ \ =-2\text{Re}(\phi_n,e^{-it^1\Delta}\widetilde{\phi^1}(\cdot-x_n^1))_{\dot{H}^s}+\|\widetilde{\phi^1}\|_{\dot{H}^s}^2\\
&\ \ \ \ \ \ =-2\text{Re}(\phi_n,e^{-it_n^1\Delta}\widetilde{\phi^1}(\cdot-x_n^1))_{\dot{H}^s}-2\text{Re}(\phi_n,e^{-it^1\Delta}\widetilde{\phi^1}(\cdot-x_n^1)-e^{-it_n^1\Delta}\widetilde{\phi^1}(\cdot-x_n^1))_{\dot{H}^s}+\|\widetilde{\phi^1}\|_{\dot{H}^s}^2\\
&\ \ \ \ \ \ =-2\text{Re}(e^{it_n^1\Delta}\phi_n(\cdot+x_n^1),\widetilde{\phi^1})_{\dot{H}^s}-2\text{Re}(\phi_n,e^{-it^1\Delta}\widetilde{\phi^1}(\cdot-x_n^1)-e^{-it_n^1\Delta}\widetilde{\phi^1}(\cdot-x_n^1))_{\dot{H}^s}+\|\widetilde{\phi^1}\|_{\dot{H}^s}^2\\
&\ \ \ \ \ \ \longrightarrow-\|\widetilde{\phi^1}\|_{\dot{H}^s}^2\ \ \text{as}\ \ n\rightarrow\infty,
\end{align*}
we obtain
\[
\|\phi_n\|_{\dot{H}^s}^2=\|\widetilde{\phi^1}\|_{\dot{H}^s}^2+\|\Phi_n^1\|_{\dot{H}^s}^2+o_n(1).
\]
If we set $\phi^1=e^{-it^1\Delta}\widetilde{\phi^1}$, then since $\phi_n(x)=\phi^1(x-x_n^1)+\Phi_n^1(x)$ and $\|\phi^1\|_{\dot{H}^s}=\|\widetilde{\phi^1}\|_{\dot{H}^s}$, we have
\[
\|\phi_n\|_{\dot{H}^s}^2=\|\phi^1\|_{\dot{H}^s}^2+\|\Phi_n^1\|_{\dot{H}^s}^2+o_n(1).
\]
Also, for any $\varphi\in H^1$,
\begin{align*}
&\left|\left(\phi_n(\cdot+x_n^1)-\phi^1,\varphi\right)_{\dot{H}^s}\right|=\left|\left(\phi_n(\cdot+x_n^1)-e^{-it^1\Delta}\widetilde{\phi^1},\varphi\right)_{\dot{H}^s}\right|\\
&\hspace{1.5cm}\leq\left|\left(\phi_n(\cdot+x_n^1)-e^{-it_n^1\Delta}\widetilde{\phi^1},\varphi\right)_{\dot{H}^s}\right|+\left|\left(e^{-it_n^1\Delta}\widetilde{\phi^1}-e^{-it^1\Delta}\widetilde{\phi^1},\varphi\right)_{\dot{H}^s}\right|\\
&\hspace{1.5cm}\leq\left|\left(e^{it_n^1\Delta}\phi_n(\cdot+x_n^1)-\widetilde{\phi^1},e^{it_n^1\Delta}\varphi\right)_{\dot{H}^s}\right|+\left\|e^{-it_n^1\Delta}\widetilde{\phi^1}-e^{-it^1\Delta}\widetilde{\phi^1}\right\|_{\dot{H}^s}\|\varphi\|_{\dot{H}^s}\\
&\hspace{1.5cm}\leq\left|\left(e^{it_n^1\Delta}\phi_n(\cdot+x_n^1)-\widetilde{\phi^1},e^{it^1\Delta}\varphi\right)_{\dot{H}^s}\right|+\left|\left(e^{it_n^1\Delta}\phi_n(\cdot+x_n^1)-\widetilde{\phi^1},e^{it_n^1\Delta}\varphi-e^{it^1\Delta}\varphi\right)_{\dot{H}^s}\right|\\
&\hspace{10cm}+\left\|e^{-it_n^1\Delta}\widetilde{\phi^1}-e^{-it^1\Delta}\widetilde{\phi^1}\right\|_{\dot{H}^s}\|\varphi\|_{\dot{H}^s}\\
&\hspace{1.5cm}\leq\left|\left(e^{it_n^1\Delta}\phi_n(\cdot+x_n^1)-\widetilde{\phi^1},e^{it^1\Delta}\varphi\right)_{\dot{H}^s}\right|+\left(c_1+\|\widetilde{\phi^1}\|_{\dot{H}^s}\right)\|e^{it_n^1\Delta}\varphi-e^{it^1\Delta}\varphi\|_{\dot{H}^s}\\
&\hspace{10cm}+\left\|e^{-it_n^1\Delta}\widetilde{\phi^1}-e^{-it^1\Delta}\widetilde{\phi^1}\right\|_{\dot{H}^s}\|\varphi\|_{\dot{H}^s}\\
&\hspace{1.5cm}\longrightarrow0\ \ \text{as}\ \ n\rightarrow\infty.
\end{align*}
Thus, $\phi_n(\cdot+x_n^1)\rightharpoonup\phi^1$\ \ in\ \ $H^1$\ \ as\ \ $n\rightarrow\infty.$\\
In the case $t_n^1\longrightarrow-\infty\ \ $as$\ \ n\rightarrow\infty$\ \ and\ \  $x_n^1\longrightarrow x^1\in\mathbb{R}^5\ \ $as$\ \ n\rightarrow\infty$, we set\\
$\phi_n(x)=e^{-it_n^1\Delta}\widetilde{\phi^1}(x-x_n^1)+\widetilde{\Phi_n^1}(x)=e^{-it_n^1\Delta}\widetilde{\phi^1}(x-x^1)+\Phi_n^1(x)$. Since
\begin{align*}
&\|\Phi_n^1\|_{\dot{H}^s}^2-\|\phi_n\|_{\dot{H}^s}^2=\|\phi_n-e^{-it_n^1\Delta}\widetilde{\phi^1}(\cdot-x^1)\|_{\dot{H}^s}^2-\|\phi_n\|_{\dot{H}^s}^2\\
&\hspace{1cm}=-2\text{Re}(\phi_n,e^{-it_n^1\Delta}\widetilde{\phi^1}(\cdot-x^1))_{\dot{H}^s}+\|\widetilde{\phi^1}\|_{\dot{H}^s}^2\\
&\hspace{1cm}=-2\text{Re}(\phi_n,e^{-it_n^1\Delta}\widetilde{\phi^1}(\cdot-x_n^1))_{\dot{H}^s}-2\text{Re}(\phi_n,e^{-it_n^1\Delta}\widetilde{\phi^1}(\cdot-x^1)-e^{-it_n^1\Delta}\widetilde{\phi^1}(\cdot-x_n^1))_{\dot{H}^s}+\|\widetilde{\phi^1}\|_{\dot{H}^s}^2\\
&\hspace{1cm}=-2\text{Re}(e^{it_n^1\Delta}\phi_n(\cdot+x_n^1),\widetilde{\phi^1})_{\dot{H}^s}-2\text{Re}(\phi_n,e^{-it_n^1\Delta}\widetilde{\phi^1}(\cdot-x^1)-e^{-it_n^1\Delta}\widetilde{\phi^1}(\cdot-x_n^1))_{\dot{H}^s}+\|\widetilde{\phi^1}\|_{\dot{H}^s}^2\\
&\hspace{1cm}\longrightarrow-\|\widetilde{\phi^1}\|_{\dot{H}^s}^2\ \ \text{as}\ \ n\rightarrow\infty,
\end{align*}
we obtain
\[
\|\phi_n\|_{\dot{H}^s}^2=\|\widetilde{\phi^1}\|_{\dot{H}^s}^2+\|\Phi_n^1\|_{\dot{H}^s}^2+o_n(1).
\]
If we set $\phi^1(x)=\widetilde{\phi^1}(x-x^1)$, then since $\phi_n(x)=e^{-it_n^1\Delta}\phi^1(x)+\Phi_n^1(x)$ and $\|\phi^1\|_{\dot{H}^s}=\|\widetilde{\phi^1}\|_{\dot{H}^s}$, we obtain
\[
\|\phi_n\|_{\dot{H}^s}^2=\|\phi^1\|_{\dot{H}^s}^2+\|\Phi_n^1\|_{\dot{H}^s}^2+o_n(1).
\]
Also, for any $\varphi\in H^1$,
\begin{align*}
&\left|\left(e^{it_n^j\Delta}\phi_n-\phi^1,\varphi\right)_{\dot{H}^s}\right|=\left|\left(e^{it_n^j\Delta}\phi_n-\widetilde{\phi^1}(\cdot-x^1),\varphi\right)_{\dot{H}^s}\right|\\
&\hspace{1cm}\leq\left|\left(e^{it_n^j\Delta}\phi_n-\widetilde{\phi^1}(\cdot-x_n^1),\varphi\right)_{\dot{H}^s}\right|+\left|\left(\widetilde{\phi^1}(\cdot-x_n^1)-\widetilde{\phi^1}(\cdot-x^1),\varphi\right)_{\dot{H}^s}\right|\\
&\hspace{1cm}\leq\left|\left(e^{it_n^j\Delta}\phi_n(\cdot+x_n^1)-\widetilde{\phi^1},\varphi(\cdot+x_n^1)\right)_{\dot{H}^s}\right|+\|\widetilde{\phi^1}(\cdot-x_n^1)-\widetilde{\phi^1}(\cdot-x^1)\|_{\dot{H}^s}\|\varphi\|_{\dot{H}^s}\\
&\hspace{1cm}\leq\left|\left(e^{it_n^j\Delta}\phi_n(\cdot+x_n^1)-\widetilde{\phi^1},\varphi(\cdot+x^1)\right)_{\dot{H}^s}\right|+\left|\left(e^{it_n^j\Delta}\phi_n(\cdot+x_n^1)-\widetilde{\phi^1},\varphi(\cdot+x_n^1)-\varphi(\cdot+x^1)\right)_{\dot{H}^s}\right|\\
&\hspace{10cm}+\|\widetilde{\phi^1}(\cdot-x_n^1)-\widetilde{\phi^1}(\cdot-x^1)\|_{\dot{H}^s}\|\varphi\|_{\dot{H}^s}\\
&\hspace{1cm}\leq\left|\left(e^{it_n^j\Delta}\phi_n(\cdot+x_n^1)-\widetilde{\phi^1},\varphi(\cdot+x^1)\right)_{\dot{H}^s}\right|+\left(c_1+\|\widetilde{\phi^1}\|_{\dot{H^s}}\right)\|\varphi(\cdot+x_n^1)-\varphi(\cdot+x^1)\|_{\dot{H}^s}\\
&\hspace{10cm}+\|\widetilde{\phi^1}(\cdot-x_n^1)-\widetilde{\phi^1}(\cdot-x^1)\|_{\dot{H}^s}\|\varphi\|_{\dot{H}^s}.
\end{align*}
Here, since $C_0^\infty$ is dense in $\dot{H}^s$, for any $\varepsilon>0$, there exists $f\in C_0^\infty$ such that $\|\varphi-f\|_{\dot{H}^s}<\varepsilon$. Thus,
\begin{align*}
&\|\varphi(\cdot+x_n^1)-\varphi(\cdot+x^1)\|_{\dot{H}^s}\\
&\hspace{2cm}\leq\|\varphi(\cdot+x_n^1)-f(\cdot+x_n^1)\|_{\dot{H}^s}+\|f(\cdot+x_n^1)-f(\cdot+x^1)\|_{\dot{H}^s}+\|f(\cdot+x^1)-\varphi(\cdot+x^1)\|_{\dot{H}^s}\\
&\hspace{2cm}=\|\varphi-f\|_{\dot{H}^s}+\|f(\cdot+x_n^1)-f(\cdot+x^1)\|_{\dot{H}^s}+\|f-\varphi\|_{\dot{H}^s}<\varepsilon.
\end{align*}
Similarly,
\[
\|\widetilde{\phi^1}(\cdot-x_n^1)-\widetilde{\phi^1}(\cdot-x^1)\|_{\dot{H}^s}<\varepsilon.
\]
Therefore,
\[
\left|\left(e^{it_n^j\Delta}\phi_n-\phi^1,\varphi\right)_{\dot{H}^s}\right|\longrightarrow0\ \ \text{as}\ \ n\rightarrow\infty,
\]
i.e. $e^{it_n^j\Delta}\phi_n\rightharpoonup\phi^1$\ \ in\ \ $H^1$\ \ as\ \ $n\rightarrow\infty$.\\
In the case $t_n^1\longrightarrow t^1<\infty\ \ $as$\ \ n\rightarrow\infty$\ \ and\ \ $x_n^1\longrightarrow x^1\in\mathbb{R}^5\ \ $as$\ \ n\rightarrow\infty$, we set\\
$\phi_n(x)=e^{-it_n^1\Delta}\widetilde{\phi^1}(x-x_n^1)+\widetilde{\Phi_n^1}(x)=e^{-it^1\Delta}\widetilde{\phi^1}(x-x^1)+\Phi_n^1(x)$. Since
\begin{align*}
\|\Phi_n^1\|_{\dot{H}^s}^2-\|\phi_n\|_{\dot{H}^s}^2&=\|\phi_n-e^{-it^1\Delta}\widetilde{\phi^1}(\cdot-x^1)\|_{\dot{H}^s}^2-\|\phi_n\|_{\dot{H}^s}^2\\
&=-2\text{Re}(\phi_n,e^{-it^1\Delta}\widetilde{\phi^1}(\cdot-x^1))_{\dot{H}^s}+\|\widetilde{\phi^1}\|_{\dot{H}^s}^2\\
&=-2\text{Re}(\phi_n,e^{-it_n^1\Delta}\widetilde{\phi^1}(\cdot-x_n^1))_{\dot{H}^s}-2\text{Re}(\phi_n,e^{-it_n^1\Delta}\widetilde{\phi^1}(\cdot-x^1)-e^{-it_n^1\Delta}\widetilde{\phi^1}(\cdot-x_n^1))_{\dot{H}^s}\\
&\hspace{4cm}-2\text{Re}(\phi_n,e^{-it^1\Delta}\widetilde{\phi^1}(\cdot-x^1)-e^{-it_n^1\Delta}\widetilde{\phi^1}(\cdot-x^1))_{\dot{H}^s}+\|\widetilde{\phi^1}\|_{\dot{H}^s}^2\\
&\longrightarrow-\|\widetilde{\phi^1}\|_{\dot{H}^s}^2\ \ \text{as}\ \ n\rightarrow\infty,
\end{align*}
we obtain
\[
\|\phi_n\|_{\dot{H}^s}^2=\|\widetilde{\phi^1}\|_{\dot{H}^s}^2+\|\Phi_n^1\|_{\dot{H}^s}^2+o_n(1).
\]
If we set $\phi^1(x)=e^{-it^1\Delta}\widetilde{\phi^1}(x-x^1)$, then since $\phi_n(x)=\phi^1(x)+\Phi_n^1(x)$ and $\|\phi^1\|_{\dot{H}^s}=\|\widetilde{\phi^1}\|_{\dot{H}^s}$, we have
\[
\|\phi_n\|_{\dot{H}^s}^2=\|\phi^1\|_{\dot{H}^s}^2+\|\Phi_n^1\|_{\dot{H}^s}^2+o_n(1).
\]
Also, for any $\varphi\in H^1$,
\begin{align*}
\left|\left(\phi_n-\phi^1,\varphi\right)_{\dot{H}^s}\right|&=\left|\left(\phi_n-e^{-it^1\Delta}\widetilde{\phi^1}(\cdot-x^1),\varphi\right)_{\dot{H}^s}\right|\\
&\leq\left|\left(\phi_n-e^{-it_n^1\Delta}\widetilde{\phi^1}(\cdot-x_n^1),\varphi\right)_{\dot{H}^s}\right|+\left|\left(e^{-it_n^1\Delta}\widetilde{\phi^1}(\cdot-x_n^1)-e^{-it^1\Delta}\widetilde{\phi^1}(\cdot-x_n^1),\varphi\right)_{\dot{H}^s}\right|\\
&\hspace{1cm}+\left|\left(e^{-it^1\Delta}\widetilde{\phi^1}(\cdot-x_n^1)-e^{-it^1\Delta}\widetilde{\phi^1}(\cdot-x^1),\varphi\right)_{\dot{H}^s}\right|\\
&\leq\left|\left(e^{it_n^1\Delta}\phi_n(\cdot+x_n^1)-\widetilde{\phi^1},e^{it_n^1\Delta}\varphi(\cdot+x_n^1)\right)_{\dot{H}^s}\right|+\|e^{-it_n^1\Delta}\widetilde{\phi^1}-e^{-it^1\Delta}\widetilde{\phi^1}\|_{\dot{H}^s}\|\varphi\|_{\dot{H}^s}\\
&\hspace{1cm}+\|\widetilde{\phi^1}(\cdot-x_n^1)-\widetilde{\phi^1}(\cdot-x^1)\|_{\dot{H}^s}\|\varphi\|_{\dot{H}^s}\\
&\leq\left|\left(e^{it_n^1\Delta}\phi_n(\cdot+x_n^1)-\widetilde{\phi^1},e^{it^1\Delta}\varphi(\cdot+x^1)\right)_{\dot{H}^s}\right|\\
&\hspace{1cm}+\left|\left(e^{it_n^1\Delta}\phi_n(\cdot+x_n^1)-\widetilde{\phi^1},e^{it_n^1\Delta}\varphi(\cdot+x^1)-e^{it^1\Delta}\varphi(\cdot+x^1)\right)_{\dot{H}^s}\right|\\
&\hspace{1cm}+\left|\left(e^{it_n^1\Delta}\phi_n(\cdot+x_n^1)-\widetilde{\phi^1},e^{it_n^1\Delta}\varphi(\cdot+x_n^1)-e^{it_n^1\Delta}\varphi(\cdot+x^1)\right)_{\dot{H}^s}\right|\\
&\hspace{1cm}+\|e^{-it_n^1\Delta}\widetilde{\phi^1}-e^{-it^1\Delta}\widetilde{\phi^1}\|_{\dot{H}^s}\|\varphi\|_{\dot{H}^s}+\|\widetilde{\phi^1}(\cdot-x_n^1)-\widetilde{\phi^1}(\cdot-x^1)\|_{\dot{H}^s}\|\varphi\|_{\dot{H}^s}\\
&\leq\left|\left(e^{it_n^1\Delta}\phi_n(\cdot+x_n^1)-\widetilde{\phi^1},e^{it^1\Delta}\varphi(\cdot+x^1)\right)_{\dot{H}^s}\right|+\left(c_1+\|\widetilde{\phi^1}\|_{\dot{H}^s}\right)\|e^{it_n^1\Delta}\varphi-e^{it^1\Delta}\varphi\|_{\dot{H}^s}\\
&\hspace{1cm}+\left(c_1+\|\widetilde{\phi^1}\|_{\dot{H}^s}\right)\|\varphi(\cdot+x_n^1)-\varphi(\cdot+x^1)\|_{\dot{H}^s}+\|e^{-it_n^1\Delta}\widetilde{\phi^1}-e^{-it^1\Delta}\widetilde{\phi^1}\|_{\dot{H}^s}\|\varphi\|_{\dot{H}^s}\\
&\hspace{1cm}+\|\widetilde{\phi^1}(\cdot-x_n^1)-\widetilde{\phi^1}(\cdot-x^1)\|_{\dot{H}^s}\|\varphi\|_{\dot{H}^s}\longrightarrow0\ \ \text{as}\ \ n\rightarrow\infty,
\end{align*}
i.e. we have $\phi_n\rightharpoonup\phi^1$\ \ in\ \ $H^1$\ \ as\ \ $n\rightarrow\infty.$\\
We construct $\psi^1$ and $\Psi_n^1$ with $\widetilde{\psi^1}$ and $\widetilde{\Psi_n^1}$ respectively. Then, Theorem \ref{Linear profile decomposition} for $M=1$ holds. Moreover, $\phi^1$, $\psi^1$, $\Phi_n^1$, $\Psi_n^1$ satisfy the following properties, which $\widetilde{\phi^1}$, $\widetilde{\psi^1}$, $\widetilde{\Phi_n^1}$, $\widetilde{\Psi_n^1}$ satisfy.
\[
\phi_n(x)=e^{-it_n^1\Delta}\phi^1(x-x_n^1)+\Phi_n^1(x)\,,\ \ \ \psi_n(x)=e^{-\frac{1}{2}it_n^1\Delta}\psi^1(x-x_n^1)+\Psi_n^1(x),
\]
\[
e^{it_n^1\Delta}\phi_n(\cdot+x_n^1)\rightharpoonup\phi^1\,,\ \ e^{\frac{1}{2}it_n^1\Delta}\psi_n(\cdot+x_n^1)\rightharpoonup\psi^1\ \ \text{in}\ \ H^1\ \ \text{as}\ \ n\rightarrow\infty,
\]
\[
\|\phi^1\|_{\dot{H}^\frac{1}{2}}+\|\psi^1\|_{\dot{H}^\frac{1}{2}}\geq\frac{\sqrt{3}A_1^9}{2^{21}\sqrt{2}\,\pi c^5\,c_1^8}>0,
\]
\[
\|\phi_n\|_{\dot{H}^s}^2=\|\phi^1\|_{\dot{H}^s}^2+\|\Phi_n^1\|_{\dot{H}^s}^2+o_n(1)\,,\ \ \ \|\psi_n\|_{\dot{H}^s}^2=\|\psi^1\|_{\dot{H}^s}^2+\|\Psi_n^1\|_{\dot{H}^s}^2+o_n(1).
\]
(ii)\ \ Let $M=2$.\\
We define $\displaystyle A_2=\limsup_{n\rightarrow\infty}\left(\|e^{it\Delta}\Phi_n^1\|_{L^\infty L^\frac{5}{2}}+\|e^{it\Delta}\Psi_n^1\|_{L^\infty L^\frac{5}{2}}\right)$.\\
In the case $A_2=0$, if we take $\phi^j=0,\,\psi^j=0\ (j\geq2)$, then Theorem \ref{Linear profile decomposition} holds.\\
So, we assume that $A_2>0$.\\
Since $\displaystyle \limsup_{n\rightarrow\infty}\left(\|\Phi_n^1\|_{\dot{H}^s}+\|\Psi_n^1\|_{\dot{H}^s}\right)\leq\limsup_{n\rightarrow\infty}\left(\|\phi_n\|_{\dot{H}^s}+\|\psi_n\|_{\dot{H}^s}\right)\leq c_1$, we can take $\{t_n^2\}\subset(-\infty,0]\,,\ \{x_n^2\}\subset\mathbb{R}^5$ by applying the argument for $M=1$.
\[
either\ \ \ t_n^2=0\ \ \ ^\forall n\in\mathbb{N}\ \ \ or\ \ \ t_n^2\longrightarrow-\infty\ \ as\ \ n\rightarrow\infty,
\]
\[
either\ \ \ x_n^2=0\ \ \ ^\forall n\in\mathbb{N}\ \ \ or\ \ \ |x_n^2|\longrightarrow\infty\ \ as\ \ n\rightarrow\infty,
\]
\[
e^{it_n^2\Delta}\Phi_n^1(\cdot+x_n^2)\rightharpoonup\phi^2\,,\ \ e^{\frac{1}{2}it_n^2\Delta}\Psi_n^1(\cdot+x_n^2)\rightharpoonup\psi^2\ \ \text{in}\ \ H^1,
\]
\[
\|\phi^2\|_{\dot{H}^\frac{1}{2}}+\|\psi^2\|_{\dot{H}^\frac{1}{2}}\geq\frac{\sqrt{3}A_2^9}{2^{21}\sqrt{2}\,\pi c^5\,c_1^8}>0.
\]
Here, we define
\[
\Phi_n^2(x)=\Phi_n^1(x)-e^{-it_n^2\Delta}\phi^2(x-x_n^2)\,,\ \ \Psi_n^2(x)=\Psi_n^1(x)-e^{-\frac{1}{2}it_n^2\Delta}\psi^2(x-x_n^2).
\]
Then, we have
\begin{align*}
\|\Phi_n^2\|_{\dot{H}^s}^2-\|\Phi_n^1\|_{\dot{H}^s}^2&=\|\Phi_n^1-e^{-it_n^2\Delta}\phi^2(\cdot-x_n^2)\|_{\dot{H}^s}^2-\|\Phi_n^1\|_{\dot{H}^s}^2\\
&=\|e^{it_n^2\Delta}\Phi_n^1(\cdot+x_n^2)-\phi^2\|_{\dot{H}^s}^2-\|\Phi_n^1\|_{\dot{H}^s}^2\\
&=\|\phi^2\|_{\dot{H}^s}^2-2\text{Re}\left(e^{it_n^2\Delta}\Phi_n^1(\cdot+x_n^2),\,\phi^2\right)_{\dot{H}^s}\\
&\longrightarrow-\|\phi^2\|_{\dot{H}^s}^2\ \ \text{as}\ \ n\rightarrow\infty.
\end{align*}
Thus,
\[
\|\Phi_n^2\|_{\dot{H}^s}^2=\|\Phi_n^1\|_{\dot{H}^s}^2-\|\phi^2\|_{\dot{H}^s}^2+o_n(1)=\|\phi_n\|_{\dot{H}^s}^2-\|\phi^1\|_{\dot{H}^s}^2-\|\phi^2\|_{\dot{H}^s}^2+o_n(1),
\]
i.e.
\[
\|\phi_n\|_{\dot{H}^s}^2=\|\phi^1\|_{\dot{H}^s}^2+\|\phi^2\|_{\dot{H}^s}^2+\|\Phi_n^2\|_{\dot{H}^s}^2+o_n(1).
\]
Similarly,
\[
\|\psi_n\|_{\dot{H}^s}^2=\|\psi^1\|_{\dot{H}^s}^2+\|\psi^2\|_{\dot{H}^s}^2+\|\Psi_n^2\|_{\dot{H}^s}^2+o_n(1).
\]
Here, we will prove that $\displaystyle\lim_{n\rightarrow\infty}\left(|t_n^2-t_n^1|+|x_n^2-x_n^1|\right)=\infty$.\\
From $\displaystyle\|\phi^2\|_{\dot{H}^\frac{1}{2}}+\|\psi^2\|_{\dot{H}^\frac{1}{2}}>0$, we assume that $\phi^2\neq0$ without loss of generality. Then,
\[
z_n\vcentcolon=e^{it_n^1\Delta}\Phi_n^1(\cdot+x_n^1)=e^{it_n^1\Delta}\phi_n(\cdot+x_n^1)-\phi^1\rightharpoonup0\ \ \text{in}\ \ H^1\ \ \text{as}\ \ n\rightarrow\infty
\]
and
\[
e^{i(t_n^2-t_n^1)\Delta}z_n(\cdot+x_n^2-x_n^1)=e^{it_n^2\Delta}\Phi_n^1(\cdot+x_n^2)\rightharpoonup\phi^2\ \ \text{in}\ \ H^1\ \ \text{as}\ \ n\rightarrow\infty.
\]
Since $\phi^2\neq0$, $\displaystyle\lim_{n\rightarrow\infty}\left(|t_n^2-t_n^1|+|x_n^2-x_n^1|\right)=\infty$ holds by applying Lemma \ref{time shift lemma}.\\
Therefore, Theorem \ref{Linear profile decomposition} holds for $M=2$.\\
(iii)\ \ Let $M\geq3$.\\
Since $\displaystyle \limsup_{n\rightarrow\infty}\left(\|\Phi_n^{j-1}\|_{\dot{H}^s}+\|\Psi_n^{j-1}\|_{\dot{H}^s}\right)\leq\limsup_{n\rightarrow\infty}\left(\|\phi_n\|_{\dot{H}^s}+\|\psi_n\|_{\dot{H}^s}\right)\leq c_1$, we can construct $\{t_n^j\}\subset(-\infty,0],\ \{x_n^j\}\subset\mathbb{R}^5,\ \phi^j,\ \psi^j\ (1\leq j\leq M)$ inductively.\\
We define $\displaystyle A_j=\limsup_{n\rightarrow\infty}\left(\|e^{it\Delta}\Phi_n^{j-1}\|_{L^\infty L^\frac{5}{2}}+\|e^{\frac{1}{2}it\Delta}\Psi_n^{j-1}\|_{L^\infty L^\frac{5}{2}}\right)$.\\
When there exists $1\leq j\leq M$ such that $A_j=0$, if we take $\phi^i=0,\ \psi^i=0\ \ (j\leq i\leq M)$, then Theorem \ref{Linear profile decomposition} holds.\\
Thus, we assume that $A_j>0$ for any $1\leq j\leq M$. Then, we have
\[
either\ \ \ t_n^j=0\ \ \ ^\forall n\in\mathbb{N}\ \ \ or\ \ \ t_n^j\longrightarrow-\infty\ \ as\ \ n\rightarrow\infty,
\]
\[
either\ \ \ x_n^j=0\ \ \ ^\forall n\in\mathbb{N}\ \ \ or\ \ \ |x_n^j|\longrightarrow\infty\ \ as\ \ n\rightarrow\infty,
\]
\[
e^{it_n^j\Delta}\Phi_n^{j-1}(\cdot+x_n^j)\rightharpoonup\phi^j\,,\ \ e^{\frac{1}{2}it_n^j\Delta}\Psi_n^{j-1}(\cdot+x_n^j)\rightharpoonup\psi^j\ \ \text{in}\ \ H^1,
\]
\[
\|\phi^j\|_{\dot{H}^\frac{1}{2}}+\|\psi^j\|_{\dot{H}^\frac{1}{2}}\geq\frac{\sqrt{3}A_j^9}{2^{21}\sqrt{2}\,\pi c^5\,c_1^8}>0,
\]
\[
\Phi_n^j(x)=\Phi_n^{j-1}(x)-e^{-it_n^j\Delta}\phi^j(x-x_n^j)\,,\ \ \Psi_n^j(x)=\Psi_n^{j-1}(x)-e^{-\frac{1}{2}it_n^j\Delta}\psi^j(x-x_n^j).
\]
We will prove asymptotic Pythagorean expansion by induction. We assume that
\[
\|\phi_n\|_{\dot{H}^s}^2=\sum_{j=1}^{M-1}\|\phi^j\|_{\dot{H}^s}^2+\|\Phi_n^{M-1}\|_{\dot{H}^s}^2+o_n(1)
\]
holds. Then,
\begin{align*}
\|\Phi_n^M\|_{\dot{H}^s}^2-\|\phi_n\|_{\dot{H}^s}^2&=\|\Phi_n^{M-1}-e^{-it_n^M\Delta}\phi^M(\cdot-x_n^M)\|_{\dot{H}^s}^2-\|\phi_n\|_{\dot{H}^s}^2\\
&=\|e^{it_n^M\Delta}\Phi_n^{M-1}(\cdot+x_n^M)-\phi^M\|_{\dot{H}^s}^2-\|\phi_n\|_{\dot{H}^s}^2\\
&=\|\Phi_n^{M-1}\|_{\dot{H}^s}^2+\|\phi^M\|_{\dot{H}^s}^2-2\text{Re}\left(e^{it_n^M\Delta}\Phi_n^{M-1}(\cdot+x_n^M),\,\phi^M\right)_{\dot{H}^s}-\|\phi_n\|_{\dot{H}^s}^2\\
&=-\sum_{j=1}^{M-1}\|\phi^j\|_{\dot{H}^s}^2+\|\phi^M\|_{\dot{H}^s}^2-2\text{Re}\left(e^{it_n^M\Delta}\Phi_n^{M-1}(\cdot+x_n^M),\,\phi^M\right)_{\dot{H}^s}+o_n(1)\\
&\longrightarrow-\sum_{j=1}^M\|\phi^j\|_{\dot{H}^s}^2\ \ \text{as}\ \ n\rightarrow\infty.
\end{align*}
Thus,
\[
\|\phi_n\|_{\dot{H}^s}^2=\sum_{j=1}^M\|\phi^j\|_{\dot{H}^s}^2+\|\Phi_n^M\|_{\dot{H}^s}^2+o_n(1),
\]
i.e. asymptotic Pythagorean expansion holds for any $M\in\mathbb{N}$.\\
Next, we will prove pairwise divergence property by induction. We assume that for any $j,\,k\in\{1,2,\cdots,M-1\}$ with $j \neq k$,
\[
\lim_{n\rightarrow\infty}\left(|t_n^k-t_n^j|+|x_n^k-x_n^j|\right)=\infty.
\]
Since $\|\phi^M\|_{\dot{H}^\frac{1}{2}}+\|\psi^M\|_{\dot{H}^\frac{1}{2}}>0$, we set $\phi^M\neq0$ without loss of generality.
\begin{align*}
&e^{it_n^j\Delta}\Phi_n^{j-1}(x+x_n^j)-e^{it_n^j\Delta}\Phi_n^{M-1}(x+x_n^j)-\phi^j\\
&\hspace{2cm}=\left(e^{it_n^j\Delta}\phi_n(x+x_n^j)-\sum_{k=1}^{j-1}e^{i(t_n^j-t_n^k)\Delta}\phi^k(x+x_n^j-x_n^k)\right)\\
&\hspace{4cm}-\left(e^{it_n^j\Delta}\phi_n(x+x_n^j)-\sum_{k=1}^{M-1}e^{i(t_n^j-t_n^k)\Delta}\phi^k(x+x_n^j-x_n^k)\right)-\phi^j\\
&\hspace{2cm}=\sum_{k=j}^{M-1}e^{i(t_n^j-t_n^k)\Delta}\phi^k(x+x_n^j-x_n^k)-\phi^j\\
&\hspace{2cm}=\sum_{k=j+1}^{M-1}e^{i(t_n^j-t_n^k)\Delta}\phi^k(x+x_n^j-x_n^k).
\end{align*}
Since $e^{it_n^j\Delta}\Phi_n^{j-1}(\cdot+x_n^j)\rightharpoonup\phi^j$ holds and the right side converges weakly on 0 by Lemma \ref{time shift lemma}, we have\\
$z_n=e^{it_n^j\Delta}\Phi_n^{M-1}(x+x_n^j)\rightharpoonup0$.\\
Also, since $e^{i(t_n^M-t_n^j)\Delta}z_n(\cdot+x_n^M-x_n^j)=e^{it_n^M\Delta}\Phi_n^{M-1}(\cdot+x_n^M)\rightharpoonup\phi^M$, it follows that
\[
\lim_{n\rightarrow\infty}\left(|t_n^M-t_n^j|+|x_n^M-x_n^j|\right)=\infty.
\]
by applying Lemma \ref{time shift lemma}.\\
Finally, we prove asymptotic smallness property. We already established
\[
\limsup_{n\rightarrow\infty}\|(\Phi_n^M,\Psi_n^M)\|_{\dot{H}^\frac{1}{2}\times \dot{H}^\frac{1}{2}}\leq c_1.
\]
Since
\[
\sum_{M=1}^\infty\left(\frac{\sqrt{3}A_M^9}{2^{21}\sqrt{2}\pi c^5c_1^8}\right)^2\leq2\sum_{M=1}^\infty\left(\|\phi^M\|_{\dot{H}^\frac{1}{2}}^2+\|\psi^M\|_{\dot{H}^\frac{1}{2}}^2\right)\leq2\limsup_{n\rightarrow\infty}\left(\|\phi_n\|_{\dot{H}^\frac{1}{2}}^2+\|\psi_n\|_{\dot{H}^\frac{1}{2}}^2\right)\leq 4c_1^2<\infty,
\]
it follows that $\displaystyle\lim_{M\rightarrow\infty}A_M=0$.
\end{proof}

%I decomposition

\begin{corollary}\label{I decomposition}
Under the same assumption as Theorem \ref{Linear profile decomposition}, we have
\[
I_\omega(\phi_n,\psi_n)=\sum_{j=1}^MI_\omega(e^{-it_n^j\Delta}\phi^j,e^{-\frac{1}{2}it_n^j\Delta}\psi^j)+I_\omega(\Phi_n^M,\Psi_n^M)+o_{n}(1).
\]
\end{corollary}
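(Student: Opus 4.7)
The plan is to decompose $I_\omega = \tfrac{\omega}{2}M + \tfrac{1}{2}K - P$ and handle the quadratic pieces and the cubic piece separately. Since $M$ and $K$ are finite linear combinations of squared $L^2$ and $\dot H^1$ norms, and the free propagators $e^{-it_n^j\Delta}$, $e^{-\frac{1}{2}it_n^j\Delta}$ are isometries on $\dot H^s$, the Pythagorean expansions \eqref{27} and \eqref{28} applied with $s=0$ and $s=1$ immediately yield
\[
M(\phi_n,\psi_n)=\sum_{j=1}^M M\!\left(e^{-it_n^j\Delta}\phi^j,e^{-\frac{1}{2}it_n^j\Delta}\psi^j\right)+M(\Phi_n^M,\Psi_n^M)+o_n(1),
\]
and the same identity for $K$. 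The whole content of the corollary is therefore the analogous expansion for the cubic functional $P(u,v)=\operatorname{Re}(v,u^2)_{L^2}$.

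For the cubic term I would write $\phi_n=\sum_{j=1}^M u_n^j+\Phi_n^M$ and $\psi_n=\sum_{j=1}^M v_n^j+\Psi_n^M$ with $u_n^j:=e^{-it_n^j\Delta}\phi^j(\cdot-x_n^j)$ and $v_n^j:=e^{-\frac{1}{2}it_n^j\Delta}\psi^j(\cdot-x_n^j)$, and expand the trilinear form $\int \psi_n\,\overline{\phi_n}\,\overline{\phi_n}\,dx$. The change of variables $x\mapsto x+x_n^j$ in the diagonal term shows
\[
\operatorname{Re}\!\int v_n^j\,(\overline{u_n^j})^2\,dx=\operatorname{Re}\!\left(e^{-\frac{1}{2}it_n^j\Delta}\psi^j,\bigl(e^{-it_n^j\Delta}\phi^j\bigr)^2\right)_{L^2},
\]
which gives the claimed diagonal contribution. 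It remains to show that every off-diagonal cross term is $o_n(1)$.

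The cross terms split into two families. The first consists of profile–profile cross terms of the form $\int v_n^\ell\,\overline{u_n^j}\,\overline{u_n^k}\,dx$ where the three indices $(\ell,j,k)\in\{1,\dots,M\}^3$ are not all equal. By density I would first replace $\phi^i,\psi^i$ by $C_c^\infty$ approximants; then pairwise divergence \eqref{40} plus the dispersive decay of Theorem \ref{Linear estimate} force either $|x_n^i-x_n^{i'}|\to\infty$, in which case the smooth compactly supported profiles have disjoint spatial supports for large $n$, or $|t_n^i-t_n^{i'}|\to\infty$, in which case one factor is bounded in $L^\infty$ by $C|t_n^i-t_n^{i'}|^{-5/2}$ while the others remain bounded in $L^{p}$ for an appropriate $p$ (the mass-resonance compatibility between $e^{it\Delta}$ and $e^{\frac{1}{2}it\Delta}$ does not affect this because we are only using $L^p$ bounds on individual factors). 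Letting the smooth approximations converge back to the profiles then finishes this family. The second family consists of cross terms involving at least one copy of the remainder $\Phi_n^M$ or $\Psi_n^M$; for these I would exploit the weak-convergence identities
\[
e^{it_n^j\Delta}\Phi_n^M(\cdot+x_n^j)\rightharpoonup 0,\qquad e^{\frac{1}{2}it_n^j\Delta}\Psi_n^M(\cdot+x_n^j)\rightharpoonup 0\quad\text{in }H^1,
\]
which are built into the construction of the linear profile decomposition, together with the compact Sobolev embedding on bounded domains (after truncating the profile factor to a compact set via $C_c^\infty$ approximation) to conclude that all such cross terms vanish in the limit. Summing the diagonal contributions with the remainder contribution $P(\Phi_n^M,\Psi_n^M)$ and combining with the quadratic expansion gives the desired identity.

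The main obstacle is controlling the profile--profile cross terms in the regime where the spatial shifts $x_n^i-x_n^{i'}$ stay bounded but the time shifts diverge, because here the argument must genuinely use dispersion rather than disjoint spatial support. I expect that reducing to $C_c^\infty$ profiles and then invoking Theorem \ref{Linear estimate} in the form $\|e^{it\Delta}f\|_{L^\infty}\lesssim|t|^{-5/2}\|f\|_{L^1}$, while placing two of the three factors in $L^2$, yields the necessary decay; once this step is in hand the rest of the argument consists of routine bookkeeping.
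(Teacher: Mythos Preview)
Your reduction to the cubic functional $P$ and your treatment of the profile--profile cross terms are essentially the same as the paper's. The route diverges in how the profile--remainder cross terms are handled.

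The paper never attacks terms like $\int \Psi_n^M\,\overline{u_n^j}\,\overline{u_n^k}\,dx$ or $\int v_n^j\,(\overline{\Phi_n^M})^2\,dx$ directly for fixed $M$. Instead it introduces an auxiliary index $M_1>M$ and proves three intermediate identities: (i) $\int \Psi_n^{M_1}(\overline{\Phi_n^{M_1}})^2\,dx=o_{M_1,n}(1)$, using the asymptotic smallness \eqref{26} in the form $\|\Psi_n^{M_1}\|_{L^{5/2}}\le \|e^{\frac{1}{2}it\Delta}\Psi_n^{M_1}\|_{L^\infty L^{5/2}}\to 0$; (ii) an expansion of $\int(\psi_n\overline{\phi_n}^2-\Psi_n^{M_1}\overline{\Phi_n^{M_1}}^2)\,dx$; and (iii) an identity relating $\sum_{j=M+1}^{M_1}$ of the diagonal terms to $\int(\Psi_n^M\overline{\Phi_n^M}^2-\Psi_n^{M_1}\overline{\Phi_n^{M_1}}^2)\,dx$. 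Combining these and letting $M_1\to\infty$ recovers the claimed expansion at level $M$. Every cross term involving a remainder is thus killed by the Strichartz smallness of $(\Phi_n^{M_1},\Psi_n^{M_1})$, never by weak convergence. Your direct approach via $e^{it_n^j\Delta}\Phi_n^M(\cdot+x_n^j)\rightharpoonup 0$ and Rellich--Kondrachov is more elementary and avoids the auxiliary $M_1$; the paper's route has the advantage of using only the Strichartz-norm information in \eqref{26}, which is exactly what Theorem~\ref{Linear profile decomposition} outputs.

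There is one gap in your sketch. Your mechanism for the remainder cross terms---``truncate the profile factor to a compact set, then use compact Sobolev embedding''---only works when $t_n^j=0$, since then $u_n^j=\phi^j(\cdot-x_n^j)$ really is (approximately) compactly supported after $C_c^\infty$ approximation. When $t_n^j\to-\infty$, the profile $u_n^j=e^{-it_n^j\Delta}\phi^j(\cdot-x_n^j)$ is delocalised by the propagator and your stated argument does not apply. The fix is the same dispersive estimate you already invoke for profile--profile terms: approximate $\phi^j$ by $f\in C_c^\infty$, use $\|e^{-it_n^j\Delta}f\|_{L^p}\to 0$ for $p>2$, and bound the cross term by H\"older against the uniformly $H^1$-bounded remainders. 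With that addition your direct argument goes through.
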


\begin{proof}
Combining
\begin{align*}
I_\omega(u,v)&=\frac{\omega}{2}M(u,v)+\frac{1}{2}E(u,v)\\
&=\frac{\omega}{2}(\|u\|_{L^2}^2+2\|v\|_{L^2}^2)+\frac{1}{2}\left(\|\nabla u\|_{L^2}^2+\frac{1}{2}\|\nabla v\|_{L^2}^2-2\text{Re}(v,u^2)_{L^2}\right),
\end{align*}
\eqref{27}, and \eqref{28}, we find that it is sufficient to prove
\[
\text{Re}(\psi_n,\phi_n^2)_{L^2}=\sum_{j=1}^M\text{Re}\left(e^{-\frac{1}{2}it_n^j\Delta}\psi^j,\left(e^{-it_n^j\Delta}\phi^j\right)^2\right)_{L^2}+\text{Re}(\Psi_n^M,({\Phi_n^M})^2)_{L^2}+o_n(1).
\]
We prove the following three equations to prove this equation.
\[
\int_{\mathbb{R}^5}\Psi_n^{M_1}\overline{\Phi_n^{M_1}}^2dx=o_{M_1,n}(1),
\]
\[
\int_{\mathbb{R}^5}\left(\psi_n\overline{\phi_n}^2-\Psi_n^{M_1}\overline{\Phi_n^{M_1}}^2\right)dx=\sum_{j=1}^{M_1}\int_{\mathbb{R}^5}e^{-\frac{1}{2}it_n^j\Delta}\psi^j\left(\overline{e^{-it_n^j\Delta}\phi^j}\right)^2dx+o_{M_1,n}(1),
\]
\[
\sum_{j=M+1}^{M_1}\int_{\mathbb{R}^5}e^{-\frac{1}{2}it_n^j\Delta}\psi^j\left(\overline{e^{-it_n^j\Delta}\phi^j}\right)^2dx=\int_{\mathbb{R}^5}\left(\Psi_n^M\overline{\Phi_n^M}^2-\Psi_n^{M_1}\overline{\Phi_n^{M_1}}^2\right)dx+o_{M_1,n}(1).
\]
For first equation,
\begin{align*}
\left|\int_{\mathbb{R}^5}\Psi_n^{M_1}\overline{\Phi_n^{M_1}}^2dx\right|&\leq\left\|\Psi_n^{M_1}\right\|_{L^\frac{5}{2}}\left\|\Phi_n^{M_1}\right\|_{L^\frac{10}{3}}^2\leq\left\|e^{\frac{1}{2}it\Delta}\Psi_n^{M_1}\right\|_{L^\infty L^\frac{5}{2}}\left\|\Phi_n^{M_1}\right\|_{\dot{H}^1}^2\\
&\leq\left\|e^{\frac{1}{2}it\Delta}\Psi_n^{M_1}\right\|_{S(\dot{H}^\frac{1}{2})}\left\|\Phi_n^{M_1}\right\|_{\dot{H}^1}^2\longrightarrow0\ \ \text{as}\ \ M_1,n\rightarrow\infty.
\end{align*}
For third equation,
\begin{align*}
&\int_{\mathbb{R}^5}\left(\Psi_n^M\overline{\Phi_n^M}^2-\Psi_n^{M_1}\overline{\Phi_n^{M_1}}^2\right)dx\\
&\hspace{0.3cm}=\int_{\mathbb{R}^5}\left\{\left(\Psi_n^{M_1}+\sum_{j=M+1}^{M_1}e^{-\frac{1}{2}it_n^j\Delta}\psi^j(\cdot-x_n^j)\right)\left(\overline{\Phi_n^{M_1}}+\sum_{j=M+1}^{M_1}\overline{e^{-it_n^j\Delta}\phi^j(\cdot-x_n^j)}\right)^2-\Psi_n^{M_1}\overline{\Phi_n^{M_1}}^2\right\}dx\\
&\hspace{0.3cm}=\int_{\mathbb{R}^5}\left\{\left(\Psi_n^{M_1}+\sum_{j=M+1}^{M_1}e^{-\frac{1}{2}it_n^j\Delta}\psi^j(\cdot-x_n^j)\right)\left(\overline{\Phi_n^{M_1}}^2+2\overline{\Phi_n^{M_1}}\sum_{j=M+1}^{M_1}\overline{e^{-it_n^j\Delta}\phi^j(\cdot-x_n^j)}\right.\right.\\
&\hspace{0.6cm}\left.\left.+\sum_{j=M+1}^{M_1}\overline{e^{-it_n^j\Delta}\phi^j(\cdot-x_n^j)}^2+\sum_{\substack{j,k=M+1\\j\neq k}}^{M_1}\overline{e^{-it_n^j\Delta}\phi^j(\cdot-x_n^j)}\cdot\overline{e^{-it_n^k\Delta}\phi^k(\cdot-x_n^k)}\right)-\Psi_n^{M_1}\overline{\Phi_n^{M_1}}^2\right\}dx\\
&\hspace{0.3cm}=\int_{\mathbb{R}^5}\left(2\Psi_n^{M_1}\overline{\Phi_n^{M_1}}\sum_{j=M+1}^{M_1}\overline{e^{-it_n^j\Delta}\phi^j(\cdot-x_n^j)}+\Psi_n^{M_1}\sum_{j=M+1}^{M_1}\overline{e^{-it_n^j\Delta}\phi^j(\cdot-x_n^j)}^2\right.\\
&\hspace{0.6cm}+\Psi_n^{M_1}\sum_{\substack{j,k=M+1\\j\neq k}}^{M_1}\overline{e^{-it_n^j\Delta}\phi^j(\cdot-x_n^j)}\cdot\overline{e^{-it_n^k\Delta}\phi^k(\cdot-x_n^k)}+\overline{\Phi_n^{M_1}}^2\sum_{j=M+1}^{M_1}e^{-\frac{1}{2}it_n^j\Delta}\psi^j(\cdot-x_n^j)\\
&\hspace{0.6cm}+2\overline{\Phi_n^{M_1}}\sum_{j=M+1}^{M_1}e^{-\frac{1}{2}it_n^j\Delta}\psi^j(\cdot-x_n^j)\sum_{j=M+1}^{M_1}\overline{e^{-it_n^j\Delta}\phi^j(\cdot-x_n^j)}\\
&\hspace{0.6cm}+\sum_{j=M+1}^{M_1}e^{-\frac{1}{2}it_n^j\Delta}\psi^j(\cdot-x_n^j)\sum_{j=M+1}^{M_1}\overline{e^{-it_n^j\Delta}\phi^j(\cdot-x_n^j)}^2\\
&\hspace{0.6cm}\left.+\sum_{j=M+1}^{M_1}e^{-\frac{1}{2}it_n^j\Delta}\psi^j(\cdot-x_n^j)\sum_{\substack{j,k=M+1\\j\neq k}}^{M_1}\overline{e^{-it_n^j\Delta}\phi^j(\cdot-x_n^j)}\cdot\overline{e^{-it_n^k\Delta}\phi^k(\cdot-x_n^k)}\right)dx.
\end{align*}
Here, we consider 
\[
\int_{\mathbb{R}^5}e^{-\frac{1}{2}it_n^j\Delta}\psi^j(x-x_n^j)\cdot\overline{e^{-it_n^k\Delta}\phi^k(x-x_n^k)}\cdot\overline{e^{-it_n^l\Delta}\phi^l(x-x_n^l)}dx\ \ (j\neq k).
\]
Since $C_0^\infty$ is dense in $H^1$, for any $\varepsilon>0$, there exists $f,\,g\in C_0^\infty$ such that
\[
\|\phi^k-f\|_{H^1}<\varepsilon,\,\|\psi^j-g\|_{H^1}<\varepsilon.
\]
In the case $\displaystyle\lim_{n\rightarrow\infty}t_n^j=-\infty$\ \ or\ \ $\displaystyle\lim_{n\rightarrow\infty}t_n^k=-\infty$, we set that $\displaystyle\lim_{n\rightarrow\infty}t_n^j=-\infty$ without loss of generality.
\begin{align*}
&\left|\int_{\mathbb{R}^5}e^{-\frac{1}{2}it_n^j\Delta}\psi^j(x-x_n^j)\cdot\overline{e^{-it_n^k\Delta}\phi^k(x-x_n^k)}\cdot\overline{e^{-it_n^l\Delta}\phi^l(x-x_n^l)}dx\right|\\
&\hspace{1.5cm}\leq\left\|e^{-\frac{1}{2}it_n^j\Delta}\psi^j(\cdot-x_n^j)\cdot e^{-it_n^k\Delta}\phi^k(\cdot-x_n^k)\cdot e^{-it_n^l\Delta}\phi^l(\cdot-x_n^l)\right\|_{L^1}\\
&\hspace{1.5cm}\leq\left\|e^{-\frac{1}{2}it_n^j\Delta}(\psi^j-g)(\cdot-x_n^j)\cdot e^{-it_n^k\Delta}\phi^k(\cdot-x_n^k)\cdot e^{-it_n^l\Delta}\phi^l(\cdot-x_n^l)\right\|_{L^1}\\
&\hspace{3cm}+\left\|e^{-\frac{1}{2}it_n^j\Delta}g(\cdot-x_n^j)\cdot e^{-it_n^k\Delta}\phi^k(\cdot-x_n^k)\cdot e^{-it_n^l\Delta}\phi^l(\cdot-x_n^l)\right\|_{L^1}\\
&\hspace{1.5cm}\leq\left\|e^{-\frac{1}{2}it_n^j\Delta}(\psi^j-g)\right\|_{L^\frac{5}{2}}\left\|e^{-it_n^k\Delta}\phi^k\right\|_{L^\frac{10}{3}}\left\|e^{-it_n^l\Delta}\phi^l\right\|_{L^\frac{10}{3}}+\left\|e^{-\frac{1}{2}it_n^j\Delta}g\right\|_{L^\infty}\left\|\phi^k\right\|_{L^2}\left\|\phi^l\right\|_{L^2}\\
&\hspace{1.5cm}\leq\left\|\psi^j-g\right\|_{\dot{H}^\frac{1}{2}}\left\|\phi^k\right\|_{\dot{H}^1}\left\|\phi^l\right\|_{\dot{H}^1}+c|t_n^j|^{-\frac{5}{2}}\left\|g\right\|_{L^1}\left\|\phi^k\right\|_{L^2}\left\|\phi^l\right\|_{L^2}\\
&\hspace{1.5cm}<\varepsilon.
\end{align*}
In the case $\displaystyle t_n^j=t_n^k=0$ for any $n\in\mathbb{N}$, we have $\displaystyle\lim_{n\rightarrow\infty}|x_n^j-x_n^k|=\infty$ by Theorem \ref{Linear profile decomposition}.
\begin{align*}
&\left|\int_{\mathbb{R}^5}e^{-\frac{1}{2}it_n^j\Delta}\psi^j(x-x_n^j)\cdot\overline{e^{-it_n^k\Delta}\phi^k(x-x_n^k)}\cdot\overline{e^{-it_n^l\Delta}\phi^l(x-x_n^l)}dx\right|\\
&\hspace{1cm}\leq\left\|\psi^j(\cdot-x_n^j)\cdot\phi^k(\cdot-x_n^k)\cdot e^{-it_n^l\Delta}\phi^l(\cdot-x_n^l)\right\|_{L^1}\\
&\hspace{1cm}\leq\left\|(\psi^j-g)(\cdot-x_n^j)\cdot\phi^k(\cdot-x_n^k)\cdot e^{-it_n^l\Delta}\phi^l(\cdot-x_n^l)\right\|_{L^1}\\
&\hspace{7cm}+\left\|g(\cdot-x_n^j)\cdot\left(\phi^k-f\right)\left(\cdot-x_n^k\right)\cdot e^{-it_n^l\Delta}\phi^l(\cdot-x_n^l)\right\|_{L^1}\\
&\hspace{7cm}+\left\|g(\cdot-x_n^j)\cdot f(\cdot-x_n^k)\cdot e^{-it_n^l\Delta}\phi^l(\cdot-x_n^l)\right\|_{L^1}\\
&\hspace{1cm}\leq\left\|\psi^j-g\right\|_{L^\frac{5}{2}}\left\|\phi^k\right\|_{L^\frac{10}{3}}\left\|e^{-it_n^l\Delta}\phi^l\right\|_{L^\frac{10}{3}}+\left\|g\right\|_{L^\frac{5}{2}}\left\|\phi^k-f\right\|_{L^\frac{10}{3}}\left\|e^{-it_n^l\Delta}\phi^l\right\|_{L^\frac{10}{3}}\\
&\hspace{7cm}+\left\|g\cdot f(\cdot+x_n^j-x_n^k)\right\|_{L^2}\left\|\phi^l\right\|_{L^2}\\
&\hspace{1cm}\leq\left\|\psi^j-g\right\|_{\dot{H}^\frac{1}{2}}\left\|\phi^k\right\|_{\dot{H}^1}\left\|\phi^l\right\|_{\dot{H}^1}+\left\|g\right\|_{\dot{H}^\frac{1}{2}}\left\|\phi^k-f\right\|_{\dot{H}^1}\left\|\phi^l\right\|_{\dot{H}^1}+\left\|g\cdot f(\cdot+x_n^j-x_n^k)\right\|_{L^2}\left\|\phi^l\right\|_{\dot{H}^1}\\
&\hspace{1cm}<\varepsilon.
\end{align*}
The term including $e^{-it_n^j\Delta},\,e^{-it_n^k\Delta}\ (j\neq k)$ converges on 0 by the same argument. Also, since
\begin{align*}
\left|\int_{\mathbb{R}^5}\Psi_n^{M_1}\overline{\Phi_n^{M_1}}\sum_{j=M+1}^{M_1}\overline{e^{-it_n^j\Delta}\phi^j(\cdot-x_n^j)}dx\right|&\leq\left\|\Psi_n^{M_1}\right\|_{L^\frac{5}{2}}\left\|\Phi_n^{M_1}\right\|_{L^\frac{10}{3}}\left\|\sum_{j=M+1}^{M_1}e^{-it_n^j\Delta}\phi^j\right\|_{L^\frac{10}{3}}\\
&\leq c\left\|e^{\frac{1}{2}it\Delta}\Psi_n^{M_1}\right\|_{L^\infty L^\frac{5}{2}}\left\|\Phi_n^{M_1}\right\|_{\dot{H}^1}\left\|\Phi_n^M-\Phi_n^{M_1}\right\|_{\dot{H}^1}\\
&\leq c\left\|e^{\frac{1}{2}it\Delta}\Psi_n^{M_1}\right\|_{S(\dot{H}^\frac{1}{2})}\left\|\Phi_n^{M_1}\right\|_{\dot{H}^1}\left(\|\Phi_n^M\|_{\dot{H}^1}+\|\Phi_n^{M_1}\|_{\dot{H}^1}\right)\\
&\longrightarrow0\ \ \text{as}\ \ M_1,n\rightarrow\infty
\end{align*}
and
\[
\left|\int_{\mathbb{R}^5}\Psi_n^{M_1}\sum_{j=M+1}^{M_1}\overline{e^{-it_n^j\Delta}\phi^j(\cdot-x_n^j)}^2dx\right|\leq c\left\|e^{-\frac{1}{2}it\Delta}\Psi_n^{M_1}\right\|_{S(\dot{H}^\frac{1}{2})}\sum_{j=1}^{\infty}\left\|\phi^j\right\|_{\dot{H}^1}^2\longrightarrow0\ \ \text{as}\ \ M_1,n\rightarrow\infty,
\]
it follows that
\[
\sum_{j=M+1}^{M_1}\int_{\mathbb{R}^5}e^{-\frac{1}{2}it_n^j\Delta}\psi^j\left(\overline{e^{-it_n^j\Delta}\phi^j}\right)^2dx=\int_{\mathbb{R}^5}\left(\Psi_n^M\overline{\Phi_n^M}^2-\Psi_n^{M_1}\overline{\Phi_n^{M_1}}^2\right)dx+o_{M_1,n}(1).
\]
For second identity,
\begin{align*}
&\int_{\mathbb{R}^5}\left(\psi_n\overline{\phi_n}^2-\Psi_n^{M_1}\overline{\Phi_n^{M_1}}^2\right)dx\\
&\hspace{0.2cm}=\int_{\mathbb{R}^5}\left\{\left(\Psi_n^{M_1}+\sum_{j=1}^{M_1}e^{-\frac{1}{2}it_n^j\Delta}\psi^j(\cdot-x_n^j)\right)\left(\overline{\Phi_n^{M_1}}+\sum_{j=1}^{M_1}\overline{e^{-it_n^j\Delta}\phi^j(\cdot-x_n^j)}\right)^2-\Psi_n^{M_1}\overline{\Phi_n^{M_1}}^2\right\}dx\\
&\hspace{0.2cm}=\int_{\mathbb{R}^5}\left\{\left(\Psi_n^{M_1}+\sum_{j=1}^{M_1}e^{-\frac{1}{2}it_n^j\Delta}\psi^j(\cdot-x_n^j)\right)\left(\overline{\Phi_n^{M_1}}^2+2\overline{\Phi_n^{M_1}}\sum_{j=1}^{M_1}\overline{e^{-it_n^j\Delta}\phi^j(\cdot-x_n^j)}\right.\right.\\
&\hspace{0.4cm}\left.\left.+\sum_{j=1}^{M_1}\overline{e^{-it_n^j\Delta}\phi^j(\cdot-x_n^j)}^2+\sum_{\substack{j,k=1\\j\neq k}}^{M_1}\overline{e^{-it_n^j\Delta}\phi^j(\cdot-x_n^j)}\cdot\overline{e^{-it_n^k\Delta}\phi^k(\cdot-x_n^k)}\right)-\Psi_n^{M_1}\overline{\Phi_n^{M_1}}^2\right\}dx\\
&\hspace{0.2cm}=\int_{\mathbb{R}^5}\left(2\Psi_n^{M_1}\overline{\Phi_n^{M_1}}\sum_{j=1}^{M_1}\overline{e^{-it_n^j\Delta}\phi^j(\cdot-x_n^j)}+\Psi_n^{M_1}\sum_{j=1}^{M_1}\overline{e^{-it_n^j\Delta}\phi^j(\cdot-x_n^j)}^2\right.\\
&\hspace{0.4cm}+\Psi_n^{M_1}\sum_{\substack{j,k=1\\j\neq k}}^{M_1}\overline{e^{-it_n^j\Delta}\phi^j(\cdot-x_n^j)}\cdot\overline{e^{-it_n^k\Delta}\phi^k(\cdot-x_n^k)}+\overline{\Phi_n^{M_1}}^2\sum_{j=1}^{M_1}e^{-\frac{1}{2}it_n^j\Delta}\psi^j(\cdot-x_n^j)\\
&\hspace{0.4cm}+2\overline{\Phi_n^{M_1}}\sum_{j=1}^{M_1}e^{-\frac{1}{2}it_n^j\Delta}\psi^j(\cdot-x_n^j)\sum_{j=1}^{M_1}\overline{e^{-it_n^j\Delta}\phi^j(\cdot-x_n^j)}+\sum_{j=1}^{M_1}e^{-\frac{1}{2}it_n^j\Delta}\psi^j(\cdot-x_n^j)\sum_{j=1}^{M_1}\overline{e^{-it_n^j\Delta}\phi^j(\cdot-x_n^j)}^2\\
&\hspace{0.4cm}\left.+\sum_{j=1}^{M_1}e^{-\frac{1}{2}it_n^j\Delta}\psi^j(\cdot-x_n^j)\sum_{\substack{j,k=1\\j\neq k}}^{M_1}\overline{e^{-it_n^j\Delta}\phi^j(\cdot-x_n^j)}\cdot\overline{e^{-it_n^k\Delta}\phi^k(\cdot-x_n^k)}\right)dx.
\end{align*}
From the same argument as the proof of the third equation, we have
\[
\int_{\mathbb{R}^5}\left(\psi_n\overline{\phi_n}^2-\Psi_n^{M_1}\overline{\Phi_n^{M_1}}^2\right)dx=\sum_{j=1}^{M_1}\int_{\mathbb{R}^5}e^{-\frac{1}{2}it_n^j\Delta}\psi^j\left(\overline{e^{-it_n^j\Delta}\phi^j}\right)^2dx+o_{M_1,n}(1).
\]
Applying these equations,
\begin{align*}
&\text{Re}(\psi_n,\phi_n^2)_{L^2}=\text{Re}\int_{\mathbb{R}^5}\psi_n\overline{\phi_n}^2dx\\
&\hspace{1cm}=\text{Re}\int_{\mathbb{R}^5}\left(\psi_n\overline{\phi_n}^2-\Psi_n^{M_1}\overline{\Phi_n^{M_1}}^2\right)dx+o_{M_1,n}(1)\\
&\hspace{1cm}=\sum_{j=1}^{M_1}\text{Re}\int_{\mathbb{R}^5}e^{-\frac{1}{2}it_n^j\Delta}\psi^j\left(\overline{e^{-it_n^j\Delta}\phi^j}\right)^2dx+o_{M_1,n}(1)\\
&\hspace{1cm}=\sum_{j=1}^{M}\text{Re}\int_{\mathbb{R}^5}e^{-\frac{1}{2}it_n^j\Delta}\psi^j\left(\overline{e^{-it_n^j\Delta}\phi^j}\right)^2dx+\sum_{j=M+1}^{M_1}\text{Re}\int_{\mathbb{R}^5}e^{-\frac{1}{2}it_n^j\Delta}\psi^j\left(\overline{e^{-it_n^j\Delta}\phi^j}\right)^2dx+o_{M_1,n}(1)\\
&\hspace{1cm}=\sum_{j=1}^{M}\text{Re}\int_{\mathbb{R}^5}e^{-\frac{1}{2}it_n^j\Delta}\psi^j\left(\overline{e^{-it_n^j\Delta}\phi^j}\right)^2dx+\text{Re}\int_{\mathbb{R}^5}\left(\Psi_n^M\overline{\Phi_n^M}^2-\Psi_n^{M_1}\overline{\Phi_n^{M_1}}^2\right)dx+o_{M_1,n}(1)\\
&\hspace{1cm}=\sum_{j=1}^{M}\text{Re}\int_{\mathbb{R}^5}e^{-\frac{1}{2}it_n^j\Delta}\psi^j\left(\overline{e^{-it_n^j\Delta}\phi^j}\right)^2dx+\text{Re}\int_{\mathbb{R}^5}\Psi_n^M\overline{\Phi_n^M}^2dx+o_{M_1,n}(1)\\
&\hspace{1cm}=\sum_{j=1}^{M}\text{Re}\left(e^{-\frac{1}{2}it_n^j\Delta}\psi^j,\left(e^{-it_n^j\Delta}\phi^j\right)^2\right)_{L^2}+\text{Re}\left(\Psi_n^M,\left(\Phi_n^M\right)^2\right)_{L^2}+o_{M_1,n}(1).
\end{align*}
Therefore, we obtain
\[
\text{Re}(\psi_n,\phi_n^2)_{L^2}=\sum_{j=1}^{M}\text{Re}\left(e^{-\frac{1}{2}it_n^j\Delta}\psi^j,\left(e^{-it_n^j\Delta}\phi^j\right)^2\right)_{L^2}+\text{Re}\left(\Psi_n^M,\left(\Phi_n^M\right)^2\right)_{L^2}+o_{n}(1).
\]
\end{proof}

%K decomposition

\begin{corollary}\label{K decomposition}
Under the same assumption as Theorem \ref{Linear profile decomposition}, we have
\[
K_\omega^{20,8}(\phi_n,\psi_n)=\sum_{j=1}^MK_\omega^{20,8}(e^{-it_n^j\Delta}\phi^j,e^{-\frac{1}{2}it_n^j\Delta}\psi^j)+K_\omega^{20,8}(\Phi_n^M,\Psi_n^M)+o_{n}(1).
\]
\end{corollary}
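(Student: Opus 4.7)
\textbf{Proof plan for Corollary \ref{K decomposition}.} The strategy is to reduce everything to quantities for which Pythagorean-type expansions are already available. By Proposition \ref{K and virial} one has the explicit formula
$$K_\omega^{20,8}(u,v)=8\|\nabla u\|_{L^2}^2+4\|\nabla v\|_{L^2}^2-20\,\mathrm{Re}(v,u^2)_{L^2},$$
so that (crucially) the $\omega$-dependent mass part drops out entirely, because $2\cdot 20-5\cdot 8=0$. Hence only two types of terms remain: the gradient $L^2$-norms and the trilinear term $\mathrm{Re}(v,u^2)_{L^2}$.

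First I would handle the gradient terms. Applying the asymptotic Pythagorean expansion \eqref{27} at $s=1$ gives
$$\|\nabla \phi_n\|_{L^2}^2=\sum_{j=1}^M\|\nabla \phi^j\|_{L^2}^2+\|\nabla \Phi_n^M\|_{L^2}^2+o_n(1),$$
and \eqref{28} at $s=1$ yields the analogous identity for $\psi_n$. Since $e^{it\Delta}$ and $e^{\frac12 it\Delta}$ are isometries on $\dot{H}^1$, the spatial translations are also $\dot{H}^1$-isometries, so
$$\|\nabla e^{-it_n^j\Delta}\phi^j(\cdot-x_n^j)\|_{L^2}=\|\nabla \phi^j\|_{L^2},\qquad \|\nabla e^{-\frac12 it_n^j\Delta}\psi^j(\cdot-x_n^j)\|_{L^2}=\|\nabla \psi^j\|_{L^2}.$$
Multiplying the $\|\nabla \phi_n\|_{L^2}^2$ expansion by $8$ and the $\|\nabla \psi_n\|_{L^2}^2$ expansion by $4$ and summing produces the gradient portion of the claimed decomposition.

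Second, for the cubic interaction term I would directly invoke the identity
$$\mathrm{Re}(\psi_n,\phi_n^2)_{L^2}=\sum_{j=1}^M\mathrm{Re}\bigl(e^{-\frac12 it_n^j\Delta}\psi^j(\cdot-x_n^j),(e^{-it_n^j\Delta}\phi^j(\cdot-x_n^j))^2\bigr)_{L^2}+\mathrm{Re}(\Psi_n^M,(\Phi_n^M)^2)_{L^2}+o_n(1),$$
which was proved in the course of establishing Corollary \ref{I decomposition}. Multiplying by $-20$ and adding to the gradient identities from the previous step gives exactly the desired decomposition of $K_\omega^{20,8}(\phi_n,\psi_n)$.

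Since both building blocks — the Pythagorean orthogonality for $\|\nabla\cdot\|_{L^2}^2$ and the asymptotic orthogonality of the trilinear form along the profiles — have already been established, there is no essential obstacle. The only thing that has to be checked carefully is that the coefficients $8,\,4,\,-20$ from $K_\omega^{20,8}$ match between the left- and right-hand sides; this is immediate once one observes that each of the three functionals is linear (or trilinear) in its inputs in such a way that it commutes with the profile splitting. The cubic term is where cross-interactions could have destroyed the clean decomposition, but those cross terms were already shown to be $o_n(1)$ in the proof of Corollary \ref{I decomposition} by combining the pairwise divergence \eqref{40} with Lemma \ref{time shift lemma} and dispersive/translation decay, so this mild worry never materializes.
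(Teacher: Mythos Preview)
Your argument is correct and is precisely the approach the paper takes: the paper's own proof is the single sentence ``This follows from the proof of Corollary \ref{I decomposition},'' and what you have written is exactly the unpacking of that sentence---the gradient part comes from the $\dot H^1$ Pythagorean expansions \eqref{27}--\eqref{28}, the trilinear part from the $P$-decomposition established inside the proof of Corollary \ref{I decomposition}, and the mass contribution vanishes because $2\cdot20-5\cdot8=0$.
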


\begin{proof}
This follows from the proof of Corollary \ref{I decomposition}.
\end{proof}

%lemma for critical solution

\begin{lemma}\label{lemma for critical solution}
Let $M\in\mathbb{N}$. We assume that $(\phi^j,\psi^j)\in H^1\!\times\!H^1{\setminus}\{(0,0)\}$ ($j\in\{1,\cdots,M\}$) satisfy
\[
I_\omega\left(\sum_{j=1}^M(\phi^j,\psi^j)\right)\leq I_\omega(\phi_\omega,\psi_\omega)-\delta\,,\ \ I_\omega\left(\sum_{j=1}^M(\phi^j,\psi^j)\right)\geq\sum_{j=1}^MI_\omega(\phi^j,\psi^j)-\varepsilon,
\]
\[
K_\omega^{20,8}\left(\sum_{j=1}^M(\phi^j,\psi^j)\right)\geq-\varepsilon\,,\ \ K_\omega^{20,8}\left(\sum_{j=1}^M(\phi^j,\psi^j)\right)\leq\sum_{j=1}^MK_\omega^{20,8}(\phi^j,\psi^j)+\varepsilon.
\]
for $\delta,\,\varepsilon>0$ with $2\varepsilon<\delta$. Then, we have
\[
0<I_\omega(\phi^j,\psi^j)<I_\omega(\phi_\omega,\psi_\omega)\,,\ \ K_\omega^{20,8}(\phi^j,\psi^j)>0
\]
for any $j\in\{1,\cdots,M\}$.
\end{lemma}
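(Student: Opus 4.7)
The plan hinges on the algebraic identity
\[
I_\omega(u,v) = L_\omega(u,v) + \tfrac{1}{20}\, K_\omega^{20,8}(u,v),
\]
valid for every $(u,v)\in H^1\!\times\!H^1$. I would verify it by eliminating $P(u,v)$ between $I_\omega = \tfrac{\omega}{2}M + \tfrac{1}{2}K - P$ and $K_\omega^{20,8} = 8K - 20P$, which gives $P = \tfrac{2}{5}K - \tfrac{1}{20} K_\omega^{20,8}$ and hence the identity. This is the crucial decomposition, because $L_\omega$ is strictly positive on any nontrivial pair (it sees only $M$ and $K$), so all the sign information lives in $K_\omega^{20,8}$.

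Next I would chain the two given pairs of inequalities to extract the clean bounds
\[
\sum_{j=1}^M I_\omega(\phi^j,\psi^j) \leq I_\omega(\phi_\omega,\psi_\omega) - \delta + \varepsilon, \qquad \sum_{j=1}^M K_\omega^{20,8}(\phi^j,\psi^j) \geq -2\varepsilon,
\]
which will drive everything else.

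The main (and essentially only substantive) step is to show $K_\omega^{20,8}(\phi^j,\psi^j) > 0$ for every $j$. I would argue by contradiction: assume $K_\omega^{20,8}(\phi^{j_0},\psi^{j_0}) \leq 0$ for some $j_0$. Then $(\phi^{j_0},\psi^{j_0})\neq(0,0)$ together with Lemma \ref{lemma of inferior} yields $L_\omega(\phi^{j_0},\psi^{j_0}) \geq \mu_\omega^{20,8} = I_\omega(\phi_\omega,\psi_\omega)$, while each remaining $L_\omega(\phi^j,\psi^j)>0$. Summing, applying the identity termwise, and inserting the $K_\omega^{20,8}$ bound,
\[
\sum_j I_\omega(\phi^j,\psi^j) = \sum_j L_\omega(\phi^j,\psi^j) + \tfrac{1}{20} \sum_j K_\omega^{20,8}(\phi^j,\psi^j) \geq I_\omega(\phi_\omega,\psi_\omega) - \tfrac{\varepsilon}{10}.
\]
Combining with the displayed upper bound forces $\delta \leq \tfrac{11}{10}\varepsilon$, contradicting $2\varepsilon < \delta$.

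Once $K_\omega^{20,8}(\phi^j,\psi^j)>0$ is known for every $j$, the identity immediately gives $I_\omega(\phi^j,\psi^j) > L_\omega(\phi^j,\psi^j) > 0$. For the strict upper bound $I_\omega(\phi^j,\psi^j) < I_\omega(\phi_\omega,\psi_\omega)$, I would simply discard the now-positive contributions from $k\neq j$:
\[
I_\omega(\phi^j,\psi^j) \leq \sum_k I_\omega(\phi^k,\psi^k) \leq I_\omega(\phi_\omega,\psi_\omega) - \delta + \varepsilon < I_\omega(\phi_\omega,\psi_\omega),
\]
since $\varepsilon < \delta$. The only real obstacle is spotting the identity $I_\omega = L_\omega + \tfrac{1}{20}K_\omega^{20,8}$, after which the whole argument collapses to a lower bound on $L_\omega$ that Lemma \ref{lemma of inferior} supplies directly.
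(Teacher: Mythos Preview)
Your argument is correct and coincides with the paper's proof: both hinge on the identity $I_\omega = L_\omega + \tfrac{1}{20}K_\omega^{20,8}$ (which the paper writes inline as $\frac{\omega}{2}M + \frac{1}{10}K = I_\omega - \frac{1}{20}K_\omega^{20,8}$), invoke Lemma~\ref{lemma of inferior} to bound $L_\omega(\phi^{j_0},\psi^{j_0})$ from below by $\mu_\omega^{20,8}$, and arrive at the same numerical contradiction $\delta \leq \tfrac{11}{10}\varepsilon$. The remaining conclusions $I_\omega(\phi^j,\psi^j)>0$ and $I_\omega(\phi^j,\psi^j)<I_\omega(\phi_\omega,\psi_\omega)$ are handled identically.
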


%proof of lemma for critical solution

\begin{proof}
We assume that there exists $i\in\{1,\cdots,M\}$ such that $K_\omega^{20,8}(\phi^i,\psi^i)\leq0$ and deduce contradiction. From Lemma \ref{lemma of inferior},
\begin{align*}
I_\omega(\phi_\omega,\psi_\omega)=\mu_\omega^{20,8}&\leq\frac{\omega}{2}M(\phi^i,\psi^i)+\frac{1}{10}K(\phi^i,\psi^i)\\
&\leq\sum_{j=1}^M\left(\frac{\omega}{2}M(\phi^j,\psi^j)+\frac{1}{10}K(\phi^j,\psi^j)\right)\\
&=\sum_{j=1}^M\left(I_\omega(\phi^j,\psi^j)-\frac{1}{20}K_\omega^{20,8}(\phi^j,\psi^j)\right)\\
&=\sum_{j=1}^MI_\omega(\phi^j,\psi^j)-\frac{1}{20}\sum_{j=1}^MK_\omega^{20,8}(\phi^j,\psi^j)\\
&\leq I_\omega\left(\sum_{j=1}^M(\phi^j,\psi^j)\right)+\varepsilon-\frac{1}{20}\left(K_\omega^{20,8}\left(\sum_{j=1}^M(\phi^j,\psi^j)\right)-\varepsilon\right)\\
&\leq I_\omega(\phi_\omega,\psi_\omega)-\delta+\varepsilon+\frac{1}{10}\varepsilon<I_\omega(\phi_\omega,\psi_\omega).
\end{align*}
This is contradiction. Thus, $K_\omega^{20,8}(\phi^j,\psi^j)>0$ for any $j\in\{1,\cdots,M\}$. Also,
\begin{align*}
I_\omega(\phi^j,\psi^j)&=\frac{\omega}{2}M(\phi^j,\psi^j)+\frac{1}{2}K(\phi^j,\psi^j)-P(\phi^j,\psi^j)\\
&>\frac{\omega}{2}M(\phi^j,\psi^j)+\frac{1}{2}K(\phi^j,\psi^j)-\frac{2}{5}K(\phi^j,\psi^j)\\
&=\frac{\omega}{2}M(\phi^j,\psi^j)+\frac{1}{10}K(\phi^j,\psi^j)\geq0,
\end{align*}
and
\[
I_\omega(\phi^j,\psi^j)\leq\sum_{j=1}^MI_\omega(\phi^j,\psi^j)\leq I_\omega\left(\sum_{j=1}^M(\phi^j,\psi^j)\right)+\varepsilon\leq I_\omega(\phi_\omega,\psi_\omega)-\delta+\varepsilon<I_\omega(\phi_\omega,\psi_\omega).
\]
\end{proof}

\section{Scattering}
\subsection{Existence of a critical solution}

\begin{definition}\label{I omega c}
Let $(u_0,v_0)\in H^1\!\times\!H^1$ and $(u,v)$ be the $H^1\!\times\! H^1$ solution to (NLS). We say that $SC(u_0,v_0)$ holds if $T^\ast=\infty$ and $\|(u,v)\|_{S(\dot{H}^\frac{1}{2})\times S(\dot{H}^\frac{1}{2})}<\infty$.\\
We define $I_\omega^c=\sup\left\{\delta>0:\text{If }K_\omega^{20,8}(u,v)>0\text{ and }I_\omega(u,v)<\delta,\text{ then }SC(u_0,v_0)\text{ holds}.\right\}$.
\end{definition}

If $I_\omega^c\geq I_\omega(\phi_\omega,\psi_\omega)$, then Theorem \ref{Main theorem 1} (1) holds. Therefore, we proceed with the proof of Theorem \ref{Main theorem 1} (1) by assuming that $I_\omega^c<I_\omega(\phi_\omega,\psi_\omega)$ and ultimately deduce contradiction.

\begin{remark}
A set $\left\{\delta>0:\text{If }K_\omega^{20,8}(u,v)>0\text{ and }I_\omega(u,v)<\delta,\text{ then }SC(u_0,v_0)\text{ holds}.\right\}$ is not empty. In fact, If $(u_0,v_0)$ satisfies $K_\omega^{20,8}(u_0,v_0)>0$ and $I_\omega(u_0,v_0)<\delta<I_\omega(\phi_\omega,\psi_\omega)$ for sufficiently small $\delta>0$, then we obtain $T^\ast=\infty$ by Theorem \ref{Global versus blow-up dichotomy}. Also, it follows that $K_\omega(u_0,v_0)<10I_\omega(u_0,v_0)<10\delta$ by Theorem \ref{Comparability of K and I}. i.e. $\|(u_0,v_0)\|_{H^1\times H^1}<c\delta$ holds. Applying Theorem \ref{Small data globally existence}, we have $\|(u,v)\|_{S(\dot{H}^\frac{1}{2})\times S(\dot{H^\frac{1}{2}})}<\infty$ and hence, $SC(u_0,v_0)$ holds.
\end{remark}

%Existence of wave operators

\begin{lemma}[Existence of wave operators]\label{Existence of wave operators}
Suppose $(\phi,\psi)\in H^1\!\times\!H^1\setminus\{(0,0)\}$ and
\[
\frac{\omega}{2}M(\phi,\psi)+\frac{1}{2}K(\phi,\psi)<I_\omega(\phi_\omega,\psi_\omega).
\]
Then there exists $(u_0,v_0)\in H^1\!\times\!H^1$ such that $(u,v)$ solving (NLS) with initial data $(u_0,v_0)$ is time-global in $H^1\!\times\!H^1$ with
\[
I_\omega(u_0,v_0)<I_\omega(\phi_\omega,\psi_\omega),\ \ \ K_\omega^{20,8}(u_0,v_0)>0\,,\ \ \ \|(u,v)\|_{S(\dot{H}^\frac{1}{2})\times S(\dot{H}^\frac{1}{2})}\leq4\|(e^{it\Delta}\phi,e^{\frac{1}{2}it\Delta}\psi)\|_{S(\dot{H}^\frac{1}{2})\times S(\dot{H}^\frac{1}{2})},
\]
and
\[
\lim_{t\rightarrow\infty}\|(u,v)(t)-(e^{it\Delta}\phi,e^{\frac{1}{2}it\Delta}\psi)\|_{H^1\times H^1}=0.
\]
\end{lemma}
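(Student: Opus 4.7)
The plan is to construct the solution $(u,v)$ backward in time from $t=+\infty$, by the standard wave-operator fixed-point argument, and then transport all the required properties back to $t=0$ by the $H^1$ well-posedness theory and the conservation laws.

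First I would fix $T>0$ large. Since $e^{it\Delta}\phi$ and $e^{\frac12 it\Delta}\psi$ belong to $S(\dot H^{1/2})$ by Strichartz, one can make
\[
\bigl\|\bigl(e^{it\Delta}\phi,e^{\frac12 it\Delta}\psi\bigr)\bigr\|_{S(\dot H^{1/2};[T,\infty))\times S(\dot H^{1/2};[T,\infty))}\le \delta
\]
arbitrarily small. On $[T,\infty)$ I would then solve the integral equations
\[
u(t)=e^{it\Delta}\phi-2i\!\int_t^\infty\! e^{i(t-s)\Delta}(v\bar u)(s)\,ds,\qquad
v(t)=e^{\frac12 it\Delta}\psi-i\!\int_t^\infty\! e^{\frac12 i(t-s)\Delta}u^2(s)\,ds
\]
by Banach's fixed point theorem in a ball of $S(\dot H^{1/2})\times S(\dot H^{1/2})$, exactly as in Theorem~\ref{Small data globally existence} (with $T=+\infty$ playing the role of the initial time). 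This yields a unique $(u,v)$ with
\[
\|(u,v)\|_{S(\dot H^{1/2};[T,\infty))\times S(\dot H^{1/2};[T,\infty))}\le 4\bigl\|(e^{it\Delta}\phi,e^{\frac12 it\Delta}\psi)\bigr\|_{S(\dot H^{1/2};[T,\infty))\times S(\dot H^{1/2};[T,\infty))}.
\]

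Next I would upgrade to $H^1$. Mimicking the proof of Lemma~\ref{lemma for scattering}, I insert one derivative via the Strichartz pair $(\tfrac{12}{5},3)$ and use $\|u\|_{L^6_tL^3_x},\|v\|_{L^6_tL^3_x}\le 4\delta$ to absorb the nonlinear terms. This shows $(u,v)\in L^\infty_{[T,\infty)}H^1\times L^\infty_{[T,\infty)}H^1$ and, more importantly, that
\[
\bigl\|(u(t),v(t))-(e^{it\Delta}\phi,e^{\frac12 it\Delta}\psi)\bigr\|_{H^1\times H^1}
\lesssim \bigl\|(v\bar u,u^2)\bigr\|_{L^{12/7}_{[t,\infty)}W^{1,3/2}\times L^{12/7}_{[t,\infty)}W^{1,3/2}}\longrightarrow 0
\]
as $t\to\infty$, which is the required scattering relation at $+\infty$. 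In particular $M(u(t),v(t))\to M(\phi,\psi)$ and $K(u(t),v(t))\to K(\phi,\psi)$, and since $\|e^{it\Delta}\phi\|_{L^3}+\|e^{\frac12 it\Delta}\psi\|_{L^3}\to 0$ by Theorem~\ref{Linear estimate} applied to a Schwartz approximation plus Hölder, we also have $P(u(t),v(t))\to 0$. Consequently
\[
\lim_{t\to\infty}I_\omega(u(t),v(t))=\tfrac{\omega}{2}M(\phi,\psi)+\tfrac12 K(\phi,\psi)<I_\omega(\phi_\omega,\psi_\omega),
\]
and $K_\omega^{20,8}(u(t),v(t))=8K(u(t),v(t))-20P(u(t),v(t))\to 8K(\phi,\psi)>0$ (note $(\phi,\psi)\neq(0,0)$; if $K(\phi,\psi)=0$ the hypothesis forces $\omega M(\phi,\psi)/2<I_\omega(\phi_\omega,\psi_\omega)$ and $K\equiv0$ trivially gives $K_\omega^{20,8}\geq0$, but in this degenerate case the full argument still goes through by choosing $T$ larger). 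Choosing $T$ large enough, both conditions hold at time $T$.

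Finally I would define $(u_0,v_0)\vcentcolon=(u(0),v(0))$ by evolving backwards from $T$ using Theorem~\ref{H1 local}; the conservation laws (Theorem~\ref{Conservation law}) give $M(u_0,v_0)=M(u(T),v(T))$ and $E(u_0,v_0)=E(u(T),v(T))$, hence $I_\omega(u_0,v_0)=I_\omega(u(T),v(T))<I_\omega(\phi_\omega,\psi_\omega)$. By Lemma~\ref{estimates for K} the sign $K_\omega^{20,8}(u(T),v(T))>0$ propagates to all times in the existence interval, so in particular $K_\omega^{20,8}(u_0,v_0)>0$; then Theorem~\ref{Global versus blow-up dichotomy}~(1) yields global existence in $H^1\times H^1$. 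The bound on $\|(u,v)\|_{S(\dot H^{1/2})\times S(\dot H^{1/2})}$ on $[T,\infty)$ is already given by the fixed point; on $[0,T]$ it is controlled by the $H^1$ solution on a compact interval via Strichartz, so after possibly enlarging $T$ the estimate $\|(u,v)\|_{S(\dot H^{1/2})\times S(\dot H^{1/2})}\le 4\|(e^{it\Delta}\phi,e^{\frac12 it\Delta}\psi)\|_{S(\dot H^{1/2})\times S(\dot H^{1/2})}$ follows. The main delicate point is the second step, establishing convergence to the linear profile in $H^1$ rather than merely $\dot H^{1/2}$: this requires carefully choosing Strichartz exponents compatible with the asymmetric interaction $(v\bar u,u^2)$, and is where the five-dimensional exponent juggling (as in the proof of Theorem~\ref{Small data globally existence}) does its real work.
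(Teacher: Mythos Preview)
Your proposal is correct and follows essentially the same route as the paper: fixed-point construction on $[T,\infty)$ in $S(\dot H^{1/2})$, upgrade to $H^1$ via the $L^{12/5}_tW^{1,3}_x$ Strichartz pair, deduce $P(u(t),v(t))\to 0$ and hence the limiting values of $I_\omega$ and $K_\omega^{20,8}$, then evolve backward using Theorem~\ref{Global versus blow-up dichotomy} and the conservation laws. One small remark: your parenthetical about the ``degenerate case'' $K(\phi,\psi)=0$ is unnecessary, since $(\phi,\psi)\in H^1\times H^1\setminus\{(0,0)\}$ on $\mathbb{R}^5$ forces $K(\phi,\psi)>0$; and your justification of the global factor-of-$4$ bound on $S(\dot H^{1/2})$ via ``enlarging $T$'' is a bit muddled (the paper is equally terse here), but this does not affect the argument.
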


%proof of wave operators

\begin{proof}
By $(\phi,\psi)\in H^1\!\times\!H^1$, there exists sufficiently large $T>0$ such that
\[
\|(e^{it\Delta}\phi,e^{\frac{1}{2}it\Delta}\psi)\|_{S(\dot{H}^\frac{1}{2}:[T,\infty))\times S(\dot{H}^\frac{1}{2}:[T,\infty))}\leq\delta_{sd}.
\]
We consider a integral equation:
\begin{equation}
\notag
\begin{cases}
\hspace{-0.4cm}&\displaystyle{u(t)=e^{it\Delta}\phi+2i\int_t^\infty e^{i(t-s)\Delta}(v\overline{u})(s)ds,}\\[0.3cm]
\hspace{-0.4cm}&\displaystyle{v(t)=e^{\frac{1}{2}it\Delta}\psi+i\int_t^\infty e^{\frac{1}{2}i(t-s)\Delta}(u^2)(s)ds.}
\end{cases}
\end{equation}
We define a set
\[
E=\left\{(u,v):\|(u,v)\|_{S(\dot{H}^\frac{1}{2}:[T,\infty))\times S(\dot{H}^\frac{1}{2}:[T,\infty))}\leq4\|(e^{it\Delta}\phi,e^{\frac{1}{2}it\Delta}\psi)\|_{S(\dot{H}^\frac{1}{2}:[T,\infty))\times S(\dot{H}^\frac{1}{2}:[T,\infty))}\right\}
\]
and a distance $d((u_1,v_1),\,(u_2,v_2))$ on $E$
\[
d(u,v)=\|(u_1,v_1)-(u_2,v_2)\|_{S(\dot{H}^\frac{1}{2}:[T,\infty))\times S(\dot{H}^\frac{1}{2}:[T,\infty))}
\]
for $(u_1,v_1),\,(u_2,v_2)\in E$. By the same argument as the proof for Theorem \ref{Small data globally existence}, there exists the unique solution on $E$. Also,
\begin{align}
\|u(t)-e^{it\Delta}\phi\|_{H^1}&=2\left\|\int_t^\infty e^{i(t-s)\Delta}(v\overline{u})(s)ds\right\|_{H^1} \notag \\
&\leq2c\|vu\|_{{L_{[T,\infty)}^\frac{12}{7}}W^{1,\frac{3}{2}}} \notag \\
&\leq2c\|v\|_{L_{[T,\infty)}^\frac{12}{5}W^{1,3}}\|u\|_{L_{[T,\infty)}^6L^3}+2c\|v\|_{L_{[T,\infty)}^6L^3}\|u\|_{L_{[T,\infty)}^\frac{12}{5}W^{1,3}}.\label{38}
\end{align}
Here, we observe a boundness of $\|v\|_{L_{[T,\infty)}^\frac{12}{5}W^{1,3}},\,\|u\|_{L_{[T,\infty)}^\frac{12}{5}W^{1,3}}$. Since $\|(u,v)\|_{S(\dot{H}^\frac{1}{2}:[T,\infty))\times S(\dot{H}^\frac{1}{2}:[T,\infty))}\leq\delta_{sd}$, we may take pairwise disjoint sets $I_j=[t_j,t_{j+1})$ $(j=0,\cdots,J<\infty)$ with $t_0=T$, $t_{J+1}=\infty$ and 
\[
[\,T,\infty)=\bigcup_{j=0}^JI_j\,,\ \ \|(u,v)\|_{S(\dot{H}^\frac{1}{2}:I_j)\times S(\dot{H}^\frac{1}{2}:I_j)}<\frac{1}{8c}.
\]
As in \eqref{38}, calculating the integral equation in the interval $I_j$,
\begin{align*}
\|u\|_{L_{I_j}^\frac{12}{5}W^{1,3}}&\leq c\|u(t_j)\|_{H^1}+2c\|v\|_{L_{I_j}^\frac{12}{5}W^{1,3}}\|u\|_{L_{I_j}^6L^3}+2c\|v\|_{L_{I_j}^6L^3}\|u\|_{L_{I_j}^\frac{12}{5}W^{1,3}}\\
&\leq c\|u(t_j)\|_{H^1}+\frac{1}{4}\|v\|_{L_{I_j}^\frac{12}{5}W^{1,3}}+\frac{1}{4}\|u\|_{L_{I_j}^\frac{12}{5}W^{1,3}}.
\end{align*}
Similarly,
\[
\|v\|_{L_{I_j}^\frac{12}{5}W^{1,3}}\leq c\|v(t_j)\|_{H^1}+\frac{1}{4}\|u\|_{L_{I_j}^\frac{12}{5}W^{1,3}}.
\]
We add these equalities, then
\[
\|(u,v)\|_{L_{I_j}^\frac{12}{5}W^{1,3}\times L_{I_j}^\frac{12}{5}W^{1,3}}\leq c\|(u(t_j),v(t_j))\|_{H^1\times H^1}+\frac{1}{2}\|(u,v)\|_{L_{I_j}^\frac{12}{5}W^{1,3}\times L_{I_j}^\frac{12}{5}W^{1,3}},
\]
i.e.
\[
\|(u,v)\|_{L_{I_j}^\frac{12}{5}W^{1,3}\times L_{I_j}^\frac{12}{5}W^{1,3}}\leq 2c\|(u(t_j),v(t_j))\|_{H^1\times H^1}.
\]
Also,
\[
\|u\|_{L_{I_j}^\infty H^1}\leq c\|u(t_j)\|_{H^1}+\frac{1}{4}\|(u,v)\|_{L_{I_j}^\frac{12}{5}W^{1,3}\times L_{I_j}^\frac{12}{5}W^{1,3}}.
\]
Similarly,
\[
\|v\|_{L_{I_j}^\infty H^1}\leq c\|v(t_j)\|_{H^1}+\frac{1}{4}\|(u,v)\|_{L_{I_j}^\frac{12}{5}W^{1,3}\times L_{I_j}^\frac{12}{5}W^{1,3}}.
\]
Thus,
\[
\|(u,v)\|_{L_{I_j}^\infty H^1\times L_{I_j}^\infty H^1}\leq c\|(u(t_j),v(t_j))\|_{H^1\times H^1}+\frac{1}{2}\|(u,v)\|_{L_{I_j}^\frac{12}{5}W^{1,3}\times L_{I_j}^\frac{12}{5}W^{1,3}}\leq2c\|(u(t_j),v(t_j))\|_{H^1\times H^1}.
\]
Hence,
\[
\|(u(t_j),v(t_j))\|_{H^1\times H^1}\leq2c\|(u(t_{j-1}),v(t_{j-1}))\|_{H^1\times H^1}\leq\cdots\leq(2c)^j\|u(t_0),v(t_0)\|_{H^1\times H^1}.
\]
Therefore,
\begin{align*}
\|(u,v)\|_{L_{[T,\infty)}^\frac{12}{5}W^{1,3}\times L_{[T,\infty)}^\frac{12}{5}W^{1,3}}&\leq\sum_{j=0}^J\|(u,v)\|_{L_{I_j}^\frac{12}{5}W^{1,3}\times L_{I_j}^\frac{12}{5}W^{1,3}}\\
&\leq\sum_{j=0}^J2c\|(u(t_j),v(t_j))\|_{H^1\times H^1}\\
&\leq\sum_{j=0}^J(2c)^j\|(u(T),v(T))\|_{H^1\times H^1}<\infty.
\end{align*}
Also, since $\|(u,v)\|_{L_{[T,\infty)}^6L^3\times L_{[T,\infty)}^6L^3}\leq\|(u,v)\|_{S(\dot{H}^\frac{1}{2}:[T,\infty))\times S(\dot{H}^\frac{1}{2}:[T,\infty))}<\infty$, we obtain
\[
\|u(t)-e^{it\Delta}\phi\|_{H^1}\longrightarrow0\ \ \text{as}\ \ t\rightarrow\infty.
\]
Similarly,
\[
\|v(t)-e^{\frac{1}{2}it\Delta}\psi\|_{H^1}\longrightarrow0\ \ \text{as}\ \ t\rightarrow\infty.
\]
Combining these formulas,
\[
\lim_{t\rightarrow\infty}\|(u(t),v(t))\|_{H^1\times H^1}=\|(\phi,\psi)\|_{H^1\times H^1}.
\]
Also,
\begin{align*}
|P(u,v)|&\leq|P(u,v)-P(e^{it\Delta}\phi,e^{\frac{1}{2}it\Delta}\psi)|+|P(e^{it\Delta}\phi,e^{\frac{1}{2}it\Delta}\psi)|\\
&=\left|\int_{\mathbb{R}^5}v\overline{u}^2-e^{\frac{1}{2}it\Delta}\psi(\overline{e^{it\Delta}\phi})^2dx\right|+\left|\int_{\mathbb{R}^5}e^{\frac{1}{2}it\Delta}\psi(\overline{e^{it\Delta}\phi})^2dx\right|\\
&\leq\|v\overline{u}^2-e^{\frac{1}{2}it\Delta}\psi(\overline{e^{it\Delta}\phi})^2\|_{L^1}+\|e^{\frac{1}{2}it\Delta}\psi(\overline{e^{it\Delta}\phi})^2\|_{L^1}\\
&\leq\|v(\overline{u}^2-(\overline{e^{it\Delta}\phi})^2)\|_{L^1}+\|(v-e^{\frac{1}{2}it\Delta}\psi)(\overline{e^{it\Delta}\phi})^2\|_{L^1}+\|e^{\frac{1}{2}it\Delta}\psi(\overline{e^{it\Delta}\phi})^2\|_{L^1}\\
&\leq\|v\|_{L^3}\|u+e^{it\Delta}\phi\|_{L^3}\|u-e^{it\Delta}\phi\|_{L^3}+\|v-e^{\frac{1}{2}it\Delta}\psi\|_{L^3}\|e^{it\Delta}\phi\|_{L^3}^2+\|e^{\frac{1}{2}it\Delta}\psi\|_{L^3}\|e^{it\Delta}\phi\|_{L^3}^2\\
&\leq c\|v\|_{H^1}(\|u\|_{H^1}+\|\phi\|_{H^1})\|u-e^{it\Delta}\phi\|_{H^1}+c\|v-e^{\frac{1}{2}it\Delta}\psi\|_{H^1}\|\phi\|_{H^1}^2+\|e^{\frac{1}{2}it\Delta}\psi\|_{L^3}\|e^{it\Delta}\phi\|_{L^3}^2.
\end{align*}
Here, since $C_0^\infty$ is dense in $H^1$, for any $\varepsilon>0$, there exists $f,\,g\in C_0^\infty$ such that
\[
\|\phi-f\|_{H^1}<\varepsilon\,,\ \ \ \|\psi-g\|_{H^1}<\varepsilon
\]
and hence,
\begin{align*}
\|e^{\frac{1}{2}it\Delta}\psi\|_{L^3}\|e^{it\Delta}\phi\|_{L^3}^2&\leq(\|e^{\frac{1}{2}it\Delta}(\psi-g)\|_{L^3}+\|e^{\frac{1}{2}it\Delta}g\|_{L^3})(\|e^{it\Delta}(\phi-f)\|_{L^3}+\|e^{it\Delta}f\|_{L^3})^2\\
&\leq(c\|\psi-g\|_{H^1}+c|t|^{-\frac{5}{6}}\|g\|_{L^\frac{3}{2}})(c\|\phi-f\|_{H^1}+c|t|^{-\frac{5}{6}}\|f\|_{L^\frac{3}{2}})^2.
\end{align*}
Thus,
\[
P(u,v)\longrightarrow0\ \ \text{as}\ \ t\rightarrow\infty,
\]
then
\[
\lim_{t\rightarrow\infty}I_\omega(u(t),v(t))=\frac{\omega}{2}M(\phi,\psi)+\frac{1}{2}K(\phi,\psi)<I_\omega(\phi_\omega,\psi_\omega)\,,\ \ \lim_{t\rightarrow\infty}K_\omega^{20,8}(u(t),v(t))=8K(\phi,\psi)>0.
\]
For sufficiently large $t>0$,
\[
I_\omega(u(t),v(t))<I_\omega(\phi_\omega,\psi_\omega)\,,\ \ K_\omega^{20,8}(u(t),v(t))>0.
\]
If we solve (NLS) with a initial data at this time, then the corresponding solution to (NLS) exists time-globally by Theorem \ref{Global versus blow-up dichotomy}. Also, from \eqref{01},\,\eqref{02}, and Lemma \ref{estimates for K},
\[
I_\omega(u_0,v_0)<I_\omega(\phi_\omega,\psi_\omega)\,,\ \ K_\omega^{20,8}(u_0,v_0)>0.
\]
Furthermore, we have
\[
\|(u,v)\|_{S(\dot{H}^\frac{1}{2})\times S(\dot{H}^\frac{1}{2})}\leq4\|(e^{it\Delta}\phi,e^{\frac{1}{2}it\Delta}\psi)\|_{S(\dot{H}^\frac{1}{2})\times S(\dot{H}^\frac{1}{2})}.
\]
\end{proof}

\begin{lemma}\label{lemma for critical solution 1}
For any $P>1$ and $l\geq2$, there exists $C_{P,l}>0$ such that for any $\{z_j\}_{1\leq j\leq l}\subset\mathbb{C}$, we have
\begin{align}
\left|\left|\sum_{j=1}^lz_j\right|^P-\sum_{j=1}^l|z_j|^P\right|\leq C_{P,l}\sum_{1\leq j\neq k\leq l}|z_j||z_k|^{P-1}.\label{39}
\end{align}
\end{lemma}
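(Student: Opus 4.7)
The plan is to reduce to the case $l=2$ and then induct on $l$. For the base case, I would establish the pointwise bound
\[
\bigl||z_1+z_2|^P - |z_1|^P - |z_2|^P\bigr| \leq C_P\bigl(|z_1||z_2|^{P-1} + |z_1|^{P-1}|z_2|\bigr).
\]
The cleanest derivation writes $|z_1+z_2|^P - |z_1|^P$ as the integral
\[
\int_0^1 \frac{d}{dt}|z_1+tz_2|^P\,dt = P\int_0^1 |z_1+tz_2|^{P-2}\operatorname{Re}\bigl((z_1+tz_2)\overline{z_2}\bigr)\,dt,
\]
so that $\bigl||z_1+z_2|^P - |z_1|^P\bigr| \leq P|z_2|\int_0^1 |z_1+tz_2|^{P-1}\,dt$. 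Assuming WLOG $|z_2|\leq |z_1|$, I would bound $|z_1+tz_2|^{P-1} \leq (|z_1|+|z_2|)^{P-1} \leq 2^{P-1}|z_1|^{P-1}$, which yields a bound by $C_P|z_1|^{P-1}|z_2|$; subtracting $|z_2|^P$ costs at most $|z_2|^P\leq |z_1|^{P-1}|z_2|$ under the same assumption. The symmetric case $|z_1|\leq |z_2|$ is handled identically, giving the base case.

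For the inductive step, assume the inequality holds for $l-1$ terms. Writing $w=\sum_{j=1}^{l-1}z_j$ and applying the $l=2$ bound to $w$ and $z_l$,
\[
\bigl||w+z_l|^P - |w|^P - |z_l|^P\bigr| \leq C_P\bigl(|w||z_l|^{P-1}+|w|^{P-1}|z_l|\bigr),
\]
combined with the inductive hypothesis $\bigl||w|^P-\sum_{j=1}^{l-1}|z_j|^P\bigr|\leq C_{P,l-1}\sum_{1\leq j\neq k\leq l-1}|z_j||z_k|^{P-1}$ and the triangle inequality, reduces matters to estimating $|w||z_l|^{P-1}$ and $|w|^{P-1}|z_l|$ by cross terms of the required form. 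The first is immediate since $|w|\leq\sum_{j=1}^{l-1}|z_j|$; for the second I would use the elementary inequality $\bigl(\sum_{j=1}^{l-1}a_j\bigr)^{P-1} \leq C(P,l)\sum_{j=1}^{l-1}a_j^{P-1}$ valid for nonnegative $a_j$ (with $C=1$ when $P\leq 2$ by subadditivity of $s\mapsto s^{P-1}$, and $C = (l-1)^{P-2}$ when $P>2$ by H\"older). This yields $|w|^{P-1}|z_l|\leq C(P,l)\sum_{j=1}^{l-1}|z_j|^{P-1}|z_l|$, whose terms are of the desired cross-term form.

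There is no serious obstacle: the argument is elementary and the only point requiring slight care is the split into $P\leq 2$ versus $P>2$ for the subadditivity-type step, which is handled by the standard concavity/convexity dichotomy for $s\mapsto s^{P-1}$ on $[0,\infty)$. The constants can be tracked explicitly if desired, but for the application in Corollary \ref{I decomposition} only the qualitative bound matters.
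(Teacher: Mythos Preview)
Your proof is correct and follows the same inductive strategy as the paper: establish the $l=2$ case, then split off one term and combine the base case with the inductive hypothesis. The only differences are technical---the paper obtains the base case from the Lipschitz-type bound $\bigl||a|^P-|b|^P\bigr|\leq C_P(|a|^{P-1}+|b|^{P-1})|a-b|$ and, in the inductive step, handles $|w|^{P-1}$ by assuming WLOG $|z_1|=\max_j|z_j|$ (so that $|w|^{P-1}\leq(l-1)^{P-1}|z_1|^{P-1}$), whereas you use the integral representation and the subadditivity/H\"older split on $P$; both routes work equally well.
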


For the convenient of the reader, we give a proof of this lemma.

\begin{proof}
We prove by induction with respect to $l\geq2$.\\
In the case $l=2$, we assume that $|z_1|\geq|z_2|$ without loss of generality.
\begin{align*}
\left||z_1+z_2|^P-|z_1|^P-|z_2|^P\right|&\leq\left||z_1+z_2|^P-|z_1|^P\right|+|z_2|^P\\
&\leq C_P\left(|z_1+z_2|^{P-1}+|z_1|^{P-1}\right)|z_2|+|z_2|^P\\
&\leq C_P(|z_1|+|z_2|)^{P-1}|z_2|+2|z_1|^{P-1}|z_2|\\
&\leq C_P(2^{P-1}+2)|z_1|^{P-1}|z_2|\\
&\leq C_P(|z_1|^{P-1}|z_2|+|z_1||z_2|^{P-1})
\end{align*}
Thus, Theorem \ref{lemma for critical solution 1} holds in $l=2$.\\
We assume that Theorem \ref {lemma for critical solution 1} holds in $l-1$, i.e.
\[
\left|\left|\sum_{j=1}^{l-1}z_j\right|^P-\sum_{j=1}^{l-1}|z_j|^P\right|\leq C_{P,l}\sum_{1\leq j\neq k\leq l-1}|z_j||z_k|^{P-1}.
\]
for $l\geq3$. Also, we assume that $\displaystyle|z_1|=\max_{1\leq j\leq l}|z_j|$ without loss of generality.
\begin{align*}
\left|\left|\sum_{j=1}^lz_j\right|^P-\sum_{j=1}^l|z_j|^P\right|&=\left|\left|\sum_{j=1}^lz_j\right|^P-\left|\sum_{j=1}^{l-1}z_j\right|^P-|z_l|^P+\left|\sum_{j=1}^{l-1}z_j\right|^P-\sum_{j=1}^{l-1}|z_j|^P\right|\\
&\leq\left|\left|\sum_{j=1}^{l-1}z_j+z_l\right|^P-\left|\sum_{j=1}^{l-1}z_j\right|^P-|z_l|^P\right|+\left|\left|\sum_{j=1}^{l-1}z_j\right|^P-\sum_{j=1}^{l-1}|z_j|^P\right|\\
&\leq C_P\left(\left|\sum_{j=1}^{l-1}z_j\right|^{P-1}|z_l|+\left|\sum_{j=1}^{l-1}z_j\right||z_l|^{P-1}\right)+C_{P,l}\sum_{1\leq j\neq k\leq l-1}|z_j||z_k|^{P-1}\\
&\leq C_P\left\{(l-1)^{P-1}|z_1|^{P-1}|z_l|+\sum_{j=1}^{l-1}|z_j||z_l|^{P-1}\right\}+C_{P,l}\sum_{1\leq j\neq k\leq l-1}|z_j||z_k|^{P-1}\\
&=C_{P,l}\sum_{1\leq j\neq k\leq l}|z_j||z_k|^{P-1}
\end{align*}
Therefore, Theorem \ref{lemma for critical solution 1} also holds in $l$.
\end{proof}

\begin{lemma}\label{lemma for critical solution 2}
There exists $0<\delta_{sd}\leq1$ such that if $\|(u_0,v_0)\|_{H^1\times H^1}\leq\delta_{sd}$, then the unique solution $(u(t),v(t))$ to (NLS) exists time-globally and
\begin{align*}
&\|(u,v)\|_{L^\frac{14}{5}L^\frac{14}{5}\times L^\frac{14}{5}L^\frac{14}{5}}+\|(u,v)\|_{L^6L^3\times L^6L^3}+\|(u,v)\|_{L^\infty H^1\times L^\infty H^1}+\|(u,v)\|_{L^2W^{1,\frac{10}{3}}\times L^2W^{1,\frac{10}{3}}}\\
&\hspace{12cm}\leq 8c\|(u_0,v_0)\|_{H^1\times H^1}.
\end{align*}
\end{lemma}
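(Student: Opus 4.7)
The plan is a Banach contraction argument on the integral system (NLSI) with a complete metric space tailored to capture all four norms simultaneously. Setting $A := \|(u_0,v_0)\|_{H^1\times H^1}$, define
\[
\|(u,v)\|_X := \|(u,v)\|_{L^\infty H^1\times L^\infty H^1}+\|(u,v)\|_{L^2W^{1,10/3}\times L^2W^{1,10/3}}+\|(u,v)\|_{L^{14/5}L^{14/5}\times L^{14/5}L^{14/5}}+\|(u,v)\|_{L^6L^3\times L^6L^3},
\]
and work in $E := \{(u,v):\|(u,v)\|_X\leq 8cA\}$ equipped with the distance induced by $\|\cdot\|_X$ on differences. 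The maps $\Phi_{u_0},\Phi_{v_0}$ are the Duhamel operators from the proof of Theorem \ref{Small data globally existence}.

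For the linear parts, I invoke Theorem \ref{Strichartz estimates}. The pairs $(\infty,2)$, $(2,10/3)$, and $(14/5,14/5)$ are $L^2$-admissible and lie in the allowed range $2\leq r\leq 10/3$, so (using the gradient version for the first two) each contributes $\lesssim \|u_0\|_{H^1}$; the pair $(6,3)$ is $\dot H^{1/2}$-admissible and $\|u_0\|_{\dot H^{1/2}}\leq\|u_0\|_{H^1}$. Hence $\|(e^{it\Delta}u_0,e^{\frac12 it\Delta}v_0)\|_X\leq cA$. For the nonlinear Duhamel pieces I use two dual pairs. First, $(3,3/2)$ is $\dot H^{-1/2}$-admissible, and H\"older gives $\|vu\|_{L^3L^{3/2}}\leq \|u\|_{L^6L^3}\|v\|_{L^6L^3}\leq \|(u,v)\|_X^2$, producing the $L^6L^3$ control on the Duhamel piece. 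Second, writing $\nabla(v\bar u)=\bar u\,\nabla v+v\,\nabla\bar u$ and pairing with the dual $L^2$-admissible norm $L^2L^{10/7}$,
\[
\|\nabla(v\bar u)\|_{L^2L^{10/7}}\leq \|u\|_{L^\infty L^{5/2}}\|\nabla v\|_{L^2L^{10/3}}+\|v\|_{L^\infty L^{5/2}}\|\nabla u\|_{L^2L^{10/3}},
\]
and the Sobolev interpolation $\|u\|_{L^{5/2}}\leq c\|u\|_{L^2}^{1/2}\|u\|_{L^{10/3}}^{1/2}\leq c\|u\|_{H^1}$ valid in dimension five bounds this by $c\|(u,v)\|_X^2$. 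The undifferentiated product and the $u^2$ nonlinearity are treated identically.

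Combining the $\dot H^{1/2}$ bound with the $L^2$ Strichartz bounds (applied both to $v\bar u$ and to $\nabla(v\bar u)$) produces $\|\Phi(u,v)\|_X\leq cA+C\|(u,v)\|_X^2\leq cA+64Cc^2 A^2$, so choosing $\delta_{sd}$ with $64Cc^2\delta_{sd}\leq c$ keeps $\Phi$ inside $E$. The contraction estimate comes from the decompositions $v_1\bar u_1-v_2\bar u_2=(v_1-v_2)\bar u_1+v_2(\bar u_1-\bar u_2)$ and $u_1^2-u_2^2=(u_1-u_2)(u_1+u_2)$, which give a Lipschitz constant $\leq 16CcA$; shrinking $\delta_{sd}$ further makes this $\leq 1/2$, so Banach's fixed-point theorem yields a unique global solution in $E$ meeting the stated bound, and uniqueness in the class of $H^1\times H^1$ solutions follows by Theorem \ref{H1 local}. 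The only technical subtlety is to select dual pairs on the source term that control every component of $\|\cdot\|_X$ at once; the pairing $(2,10/3)/(2,10/7)$ for the gradient estimate together with the Sobolev embedding $H^1(\mathbb R^5)\hookrightarrow L^{5/2}$, and $(3,3/2)$ for the $\dot H^{1/2}$ estimate, make this closure automatic.
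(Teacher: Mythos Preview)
Your argument is correct and follows the same overall strategy as the paper: a contraction mapping on the ball $E=\{(u,v):\|(u,v)\|_X\le 8cA\}$ with the same composite norm $X$. The only difference is in the bookkeeping of dual exponents for the source term. The paper uses three separate $L^2$-dual pairs---$(21/10,42/13)'$ for the $L^{14/5}L^{14/5}$ component (closing via $\|vu\|_{L^{21/11}L^{42/29}}\le\|v\|_{L^{14/5}L^{14/5}}\|u\|_{L^6L^3}$) and $(3,30/11)'$ for the $L^\infty H^1$ and $L^2W^{1,10/3}$ components (closing via $\|vu\|_{L^{3/2}L^{30/19}}\le\|v\|_{L^2L^{10/3}}\|u\|_{L^6L^3}$)---whereas you put all three $L^2$-admissible targets through the single dual pair $(2,10/7)$ together with the embedding $H^1(\mathbb R^5)\hookrightarrow L^{5/2}$. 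Your route is a bit shorter and avoids the somewhat exotic exponents; the paper's route never invokes the Sobolev embedding and keeps every H\"older factor already in the list of $X$-components. Either choice closes the iteration, and the resulting smallness condition and final bound are the same.
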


\begin{remark}
This theorem is different from Theorem \ref{Small data globally existence} in the point to be able to estimate Strichartz norms for $L^2$ admissible $(\frac{14}{5}, \frac{14}{5})$ and $(\infty,2)$ by $H^1$ norm. We will use this theorem in the next theorem.
\end{remark}

\begin{proof}
We define a notation
\begin{align*}
\|(u,v)\|_{X\times X}&=\|(u,v)\|_{L^\frac{14}{5}L^\frac{14}{5}\times L^\frac{14}{5}L^\frac{14}{5}\cap L^6L^3\times L^6L^3\cap L^\infty H^1\times L^\infty H^1\cap L^2W^{1,\frac{10}{3}}\times L^2W^{1,\frac{10}{3}}}\\
&=\|(u,v)\|_{L^\frac{14}{5}L^\frac{14}{5}\times L^\frac{14}{5}L^\frac{14}{5}}+\|(u,v)\|_{L^6L^3\times L^6L^3}\\
&\hspace{4cm}+\|(u,v)\|_{L^\infty H^1\times L^\infty H^1}+\|(u,v)\|_{L^2W^{1,\frac{10}{3}}\times L^2W^{1,\frac{10}{3}}},
\end{align*}
a set
\[
E=\{(u,v):\|(u,v)\|_{X\times X}\leq 8c\|(u_0,v_0)\|_{H^1\times H^1}\},
\]
and a distance on $E$
\[
d((u_1,v_1),(u_2,v_2))=\|(u_1,v_1)-(u_2,v_2)\|_{X\times X}
\]
for $(u_1,v_1)$, $(u_2,v_2)$ and a map on $E$
\[
\Phi_{u_0}(u,v)(t)=e^{it\Delta}u_0+2i\int_0^t e^{i(t-s)\Delta}(v\overline{u})(s)ds,
\]
\[
\Phi_{v_0}(u,v)(t)=e^{\frac{1}{2}it\Delta}v_0+i\int_0^t e^{\frac{1}{2}i(t-s)\Delta}(u^2)(s)ds
\]
for $(u,v)\in E$. Since $(\frac{21}{10},\frac{42}{13})$ is a $L^2$\,admissible pair,
\begin{align*}
\|\Phi_{u_0}(u,v)\|_{L^\frac{14}{5}L^\frac{14}{5}}&\leq\|e^{it\Delta}u_0\|_{L^\frac{14}{5}L^\frac{14}{5}}+2\left\|\int_0^t e^{i(t-s)\Delta}(v\overline{u})(s)ds\right\|_{L^\frac{14}{5}L^\frac{14}{5}}\\
&\leq c\|u_0\|_{L^2}+2c\|vu\|_{S'(L^2)}\\
&\leq c\|u_0\|_{H^1}+2c\|vu\|_{L^\frac{21}{11}L^\frac{42}{29}}\\
&\leq c\|u_0\|_{H^1}+2c\|v\|_{L^\frac{14}{5}L^\frac{14}{5}}\|u\|_{L^6L^3}\\
&\leq c\|u_0\|_{H^1}+16c^2\delta_{sd}\|v\|_{L^\frac{14}{5}L^\frac{14}{5}}.
\end{align*}
Similarly,
\[
\|\Phi_{v_0}(u,v)\|_{L^\frac{14}{5}L^\frac{14}{5}}\leq c\|v_0\|_{H^1}+8c^2\delta_{sd}\|u\|_{L^\frac{14}{5}L^\frac{14}{5}}.
\]
Combining these inequalities,
\begin{align}
\|(\Phi_{u_0}(u,v),\Phi_{v_0}(u,v))\|_{L^\frac{14}{5}L^\frac{14}{5}\times L^\frac{14}{5}L^\frac{14}{5}}\leq c\|(u_0,v_0)\|_{H^1\times H^1}+16c^2\delta_{sd}\|(u,v)\|_{L^\frac{14}{5}L^\frac{14}{5}\times L^\frac{14}{5}L^\frac{14}{5}}.\label{67}
\end{align}
Also,
\begin{align*}
\|\Phi_{u_0}(u,v)\|_{L^6L^3}&\leq \|e^{it\Delta}u_0\|_{L^6L^3}+2\left\|\int_0^t e^{i(t-s)\Delta}(v\overline{u})(s)ds\right\|_{L^6L^3}\\
&\leq c\|u_0\|_{\dot{H}^\frac{1}{2}}+2c\|vu\|_{L^3L^\frac{3}{2}}\\
&\leq c\|u_0\|_{H^1}+2c\|v\|_{L^6L^3}\|u\|_{L^6L^3}\\
&\leq c\|u_0\|_{H^1}+16c^2\delta_{sd}\|v\|_{L^6L^3}.
\end{align*}
Similarly,
\[
\|\Phi_{v_0}(u,v)\|_{L^6L^3}\leq c\|v_0\|_{H^1}+8c^2\delta_{sd}\|u\|_{L^6L^3}.
\]
Combining these inequalities,
\begin{align}
\|(\Phi_{u_0}(u,v),\Phi_{v_0}(u,v))\|_{L^6L^3\times L^6L^3}\leq c\|(u_0,v_0)\|_{H^1\times H^1}+16c^2\delta_{sd}\|(u,v)\|_{L^6L^3\times L^6L^3}.\label{68}
\end{align}
Since $(3,\frac{30}{11})$ is a $L^2$\,admissible pair,
\begin{align*}
\|\Phi_{u_0}(u,v)\|_{L^\infty H^1}&\leq \|e^{it\Delta}u_0\|_{L^\infty H^1}+2\left\|\int_0^t e^{i(t-s)\Delta}(v\overline{u})(s)ds\right\|_{L^\infty H^1}\\
&\leq c\|u_0\|_{H^1}+2c\|vu\|_{L^\frac{3}{2}L^\frac{30}{19}}+2c\|u\nabla v\|_{L^\frac{3}{2}L^\frac{30}{19}}+2c\|v\nabla u\|_{L^\frac{3}{2}L^\frac{30}{19}}\\
&\leq c\|u_0\|_{H^1}+2c\|v\|_{L^2L^\frac{10}{3}}\|u\|_{L^6L^3}+2c\|u\|_{L^6L^3}\|\nabla v\|_{L^2L^\frac{10}{3}}+2c\|v\|_{L^6L^3}\|\nabla u\|_{L^2L^\frac{10}{3}}\\
&\leq c\|u_0\|_{H^1}+2c\|v\|_{L^2W^{1,\frac{10}{3}}}\|u\|_{L^6L^3}+2c\|v\|_{L^6L^3}\|u\|_{L^2W^{1,\frac{10}{3}}}\\
&\leq c\|u_0\|_{H^1}+16c^2\delta_{sd}\|(u,v)\|_{L^2W^{1,\frac{10}{3}}\times L^2W^{1,\frac{10}{3}}}.
\end{align*}
Similarly,
\[
\|\Phi_{v_0}(u,v)\|_{L^\infty H^1}\leq c\|v_0\|_{H^1}+16c^2\delta_{sd}\|(u,v)\|_{L^2W^{1,\frac{10}{3}}\times L^2W^{1,\frac{10}{3}}}.
\]
Combining these inequalities,
\begin{align}
\|(\Phi_{u_0}(u,v),\Phi_{v_0}(u,v))\|_{L^\infty H^1\times L^\infty H^1}\leq c\|(u_0,v_0)\|_{H^1\times H^1}+32c^2\delta_{sd}\|(u,v)\|_{L^2W^{1,\frac{10}{3}}\times L^2W^{1,\frac{10}{3}}}.\label{69}
\end{align}
Here, since both $(\infty,2)$ and $\left(2,\frac{10}{3}\right)$ are $L^2$\,admissible pairs,
\begin{align}
\|(\Phi_{u_0}(u,v),\Phi_{v_0}(u,v))\|_{L^2 W^{1,\frac{10}{3}}\times L^2 W^{1,\frac{10}{3}}}\leq c\|(u_0,v_0)\|_{H^1\times H^1}+32c^2\delta_{sd}\|(u,v)\|_{L^2W^{1,\frac{10}{3}}\times L^2W^{1,\frac{10}{3}}}.\label{70}
\end{align}
Thus, combining \eqref{67},\,\eqref{68},\,\eqref{69}, and \eqref{70},
\begin{align*}
&\|(\Phi_{u_0}(u,v),\Phi_{v_0}(u,v))\|_{X\times X}\\
&~~~~~~~~\leq4c\|(u_0,v_0)\|_{H^1\times H^1}+16c^2\delta_{sd}\|(u,v)\|_{L^\frac{14}{5}L^\frac{14}{5}\times L^\frac{14}{5}L^\frac{14}{5}\cap L^6L^3\times L^6L^3}+64c^2\delta_{sd}\|(u,v)\|_{L^2W^{1,\frac{10}{3}}\times L^2W^{1,\frac{10}{3}}}\\
&~~~~~~~~\leq4c\|(u_0,v_0)\|_{H^1\times H^1}+64c^2\delta_{sd}\|(u,v)\|_{X\times X}\\
&~~~~~~~~\leq(4c+512c^3\delta_{sd})\|(u_0,v_0)\|_{H^1\times H^1}.
\end{align*}
If we take $\delta_{sd}$ with $512c^3\delta_{sd}\leq4c$, i.e. $\delta_{sd}\leq\frac{1}{128c^2}$, then
\[
\|(\Phi_{u_0}(u,v),\Phi_{v_0}(u,v))\|_{X\times X}\leq8c\|(u_0,v_0)\|_{H^1\times H^1}.
\]
Also, for $(u_1,v_1),\,(u_2,v_2)\in E$,
\begin{align*}
\|\Phi_{u_0}(u_1,v_1)-\Phi_{u_0}(u_2,v_2)\|_{L^\frac{14}{5}L^\frac{14}{5}}&=2\left\|\int_0^te^{it\Delta}(v_1\overline{u_1}-v_2\overline{u_2})(s)ds\right\|_{L^\frac{14}{5}L^\frac{14}{5}}\\
&\leq2c\|v_1\overline{u_1}-v_2\overline{u_2}\|_{L^\frac{21}{11}L^\frac{42}{29}}\\
&\leq2c\|v_1(\overline{u_1}-\overline{u_2})\|_{L^\frac{21}{11}L^\frac{42}{29}}+2c\|(v_1-v_2)\overline{u_2}\|_{L^\frac{21}{11}L^\frac{42}{29}}\\
&\leq2c\|v_1\|_{L^6L^3}\|u_1-u_2\|_{L^\frac{14}{5}L^\frac{14}{5}}+2c\|v_1-v_2\|_{L^\frac{14}{5}L^\frac{14}{5}}\|u_2\|_{L^6L^3}\\
&\leq16c^2\delta_{sd}\|u_1-u_2\|_{L^\frac{14}{5}L^\frac{14}{5}}+16c^2\delta_{sd}\|v_1-v_2\|_{L^\frac{14}{5}L^\frac{14}{5}}\\
&=16c^2\delta_{sd}\|(u_1,v_1)-(u_2,v_2)\|_{L^\frac{14}{5}L^\frac{14}{5}\times L^\frac{14}{5}L^\frac{14}{5}}.
\end{align*}
Similarly,
\[
\|\Phi_{v_0}(u_1,v_1)-\Phi_{v_0}(u_2,v_2)\|_{L^\frac{14}{5}L^\frac{14}{5}}\leq16c^2\delta_{sd}\|(u_1,v_1)-(u_2,v_2)\|_{L^\frac{14}{5}L^\frac{14}{5}\times L^\frac{14}{5}L^\frac{14}{5}}.
\]
Combining these inequalities,
\begin{align}
&\|(\Phi_{u_0}(u_1,v_1),\Phi_{v_0}(u_1,v_1))-(\Phi_{u_0}(u_2,v_2),\Phi_{v_0}(u_2,v_2))\|_{L^\frac{14}{5}L^\frac{14}{5}} \notag \\
&\hspace{6cm}\leq32c^2\delta_{sd}\|(u_1,v_1)-(u_2,v_2)\|_{L^\frac{14}{5}L^\frac{14}{5}\times L^\frac{14}{5}L^\frac{14}{5}}.\label{71}
\end{align}
Also,
\begin{align*}
\|\Phi_{u_0}(u_1,v_1)-\Phi_{u_0}(u_2,v_2)\|_{L^6L^3}&=2\left\|\int_0^te^{it\Delta}(v_1\overline{u_1}-v_2\overline{u_2})(s)ds\right\|_{L^6L^3}\\
&\leq2c\|v_1\overline{u_1}-v_2\overline{u_2}\|_{L^3L^\frac{3}{2}}\\
&\leq2c\|v_1(\overline{u_1}-\overline{u_2})\|_{L^3L^\frac{3}{2}}+2c\|(v_1-v_2)\overline{u_2}\|_{L^3L^\frac{3}{2}}\\
&\leq2c\|v_1\|_{L^6L^3}\|u_1-u_2\|_{L^6L^3}+2c\|v_1-v_2\|_{L^6L^3}\|u_2\|_{L^6L^3}\\
&\leq16c^2\delta_{sd}\|(u_1,v_1)-(u_2,v_2)\|_{L^6L^3\times L^6L^3}.
\end{align*}
Similarly,
\[
\|\Phi_{v_0}(u_1,v_1)-\Phi_{v_0}(u_2,v_2)\|_{L^6L^3}\leq16c^2\delta_{sd}\|(u_1,v_1)-(u_2,v_2)\|_{L^6L^3\times L^6L^3}.
\]
Combining these inequalities,
\begin{align}
&\|(\Phi_{u_0}(u_1,v_1),\Phi_{v_0}(u_1,v_1))-(\Phi_{u_0}(u_2,v_2),\Phi_{v_0}(u_2,v_2))\|_{L^6L^3\times L^6L^3} \notag \\
&\hspace{6cm}\leq32c^2\delta_{sd}\|(u_1,v_1)-(u_2,v_2)\|_{L^6L^3\times L^6L^3}.\label{72}
\end{align}
Also,
\begin{align*}
&\|\Phi_{u_0}(u_1,v_1)-\Phi_{u_0}(u_2,v_2)\|_{L^\infty H^1}\leq2\left\|\int_0^te^{it\Delta}(v_1\overline{u_1}-v_2\overline{u_2})(s)ds\right\|_{L^\infty H^1}\\
&\hspace{2cm}\leq2c\|v_1\overline{u_1}-v_2\overline{u_2}\|_{L^\frac{3}{2}L^\frac{30}{19}}+2c\|\nabla(v_1\overline{u_1}-v_2\overline{u_2})\|_{L^\frac{3}{2}L^\frac{30}{19}}\\
&\hspace{2cm}\leq2c\|v_1(\overline{u_1}-\overline{u_2})\|_{L^\frac{3}{2}L^\frac{30}{19}}+2c\|(v_1-v_2)\overline{u_2}\|_{L^\frac{3}{2}L^\frac{30}{19}}\\
&\hspace{4cm}+2c\|\nabla v_1(\overline{u_1}-\overline{u_2})\|_{L^\frac{3}{2}L^\frac{30}{19}}+2c\|\nabla(v_1-v_2)\overline{u_2}\|_{L^\frac{3}{2}L^\frac{30}{19}}\\
&\hspace{4cm}+2c\|v_1\nabla(\overline{u_1}-\overline{u_2})\|_{L^\frac{3}{2}L^\frac{30}{19}}+2c\|\nabla\overline{u_2}(v_1-v_2)\|_{L^\frac{3}{2}L^\frac{30}{19}}\\
&\hspace{2cm}\leq2c\|v_1\|_{L^6L^3}\|u_1-u_2\|_{L^2L^\frac{10}{3}}+2c\|v_1-v_2\|_{L^2L^\frac{10}{3}}\|u_2\|_{L^6L^3}\\
&\hspace{4cm}+2c\|\nabla v_1\|_{L^2L^\frac{10}{3}}\|u_1-u_2\|_{L^6L^3}+2c\|\nabla(v_1-v_2)\|_{L^2L^\frac{10}{3}}\|u_2\|_{L^6L^3}\\
&\hspace{4cm}+2c\|v_1\|_{L^6L^3}\|\nabla(u_1-u_2)\|_{L^2L^\frac{10}{3}}+2c\|\nabla u_2\|_{L^2L^\frac{10}{3}}\|v_1-v_2\|_{L^6L^3}\\
&\hspace{2cm}\leq16c^2\delta_{sd}\|u_1-u_2\|_{L^2L^\frac{10}{3}}+16c^2\delta_{sd}\|v_1-v_2\|_{L^2L^\frac{10}{3}}\\
&\hspace{4cm}+16c^2\delta_{sd}\|u_1-u_2\|_{L^6L^3}+16c^2\delta_{sd}\|\nabla(v_1-v_2)\|_{L^2L^\frac{10}{3}}\\
&\hspace{4cm}+16c^2\delta_{sd}\|\nabla(u_1-u_2)\|_{L^2L^\frac{10}{3}}+16c^2\delta_{sd}\|v_1-v_2\|_{L^6L^3}\\
&\hspace{2cm}\leq16c^2\delta_{sd}\|(u_1,v_1)-(u_2,v_2)\|_{L^6L^3\times L^6L^3\cap L^2W^{1,\frac{10}{3}}\times L^2W^{1,\frac{10}{3}}}.
\end{align*}
Similarly,
\begin{align*}
&\|\Phi_{v_0}(u_1,v_1)-\Phi_{v_0}(u_2,v_2)\|_{L^\infty H^1}\\
&\hspace{3cm}\leq16c^2\delta_{sd}\|(u_1,v_1)-(u_2,v_2)\|_{L^6L^3\times L^6L^3\cap L^2W^{1,\frac{10}{3}}\times L^2W^{1,\frac{10}{3}}}.
\end{align*}
Combining these inequalities,
\begin{align}
&\|(\Phi_{u_0}(u_1,v_1),\Phi_{v_0}(u_1,v_1))-(\Phi_{u_0}(u_2,v_2),\Phi_{v_0}(u_2,v_2))\|_{L^\infty H^1\times L^\infty H^1} \notag \\
&\hspace{5cm}\leq32c^2\delta_{sd}\|(u_1,v_1)-(u_2,v_2)\|_{L^6L^3\times L^6L^3\cap L^2W^{1,\frac{10}{3}}\times L^2W^{1,\frac{10}{3}}}.\label{73}
\end{align}
Also,
\begin{align}
&\|(\Phi_{u_0}(u_1,v_1),\Phi_{v_0}(u_1,v_1))-(\Phi_{u_0}(u_2,v_2),\Phi_{v_0}(u_2,v_2))\|_{L^2W^{1,\frac{10}{3}}\times L^2W^{1,\frac{10}{3}}} \notag \\
&\hspace{4cm}\leq32c^2\delta_{sd}\|(u_1,v_1)-(u_2,v_2)\|_{L^6L^3\times L^6L^3\cap L^2W^{1,\frac{10}{3}}\times L^2W^{1,\frac{10}{3}}}.\label{74}
\end{align}
Thus, combining \eqref{71},\,\eqref{72},\,\eqref{73}, and \eqref{74},
\begin{align*}
&\|(\Phi_{u_0}(u_1,v_1),\Phi_{v_0}(u_1,v_1))-(\Phi_{u_0}(u_2,v_2),\Phi_{v_0}(u_2,v_2))\|_{X\times X} \notag \\
&\hspace{6cm}\leq96c^2\delta_{sd}\|(u_1,v_1)-(u_2,v_2)\|_{X\times X}.
\end{align*}
From $\delta_{sd}\leq\frac{1}{128c^2}$, it follows that $96c^2\delta_{sd}\leq\frac{3}{4}<1$. Therefore, there exists a unique solution $(u,v)$ to (NLS) on $E$.
\end{proof}

%Existence of a critical solution

\begin{proposition}[Existence of a critical solution]\label{Existence of a critical solution}
Let $(u_c,v_c)$ be the time-global solution to (NLS) with initial data $(u_{c,0},v_{c,0})$. We can construct $(u_{c,0},v_{c,0})$ and $(u_c,v_c)$ so that
\[
I_\omega(u_{c.0},v_{c.0})=I_\omega^c<I_\omega(\phi_\omega,\psi_\omega)\ ,\ \ \ K_\omega^{20,8}(u_c,v_c)>0\ \ \text{for\ all}\ \ 0\leq t<\infty,
\]
\[
\|(u_c,v_c)\|_{S(\dot{H}^\frac{1}{2})\times S(\dot{H}^\frac{1}{2})}=\infty.
\]
\end{proposition}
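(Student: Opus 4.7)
The plan is to follow the standard Kenig--Merle critical element construction, adapted to the two-component system.

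First I extract a minimizing sequence. By definition of $I_\omega^c$, there exist initial data $(u_{n,0},v_{n,0})\in H^1\!\times\!H^1$ with $K_\omega^{20,8}(u_{n,0},v_{n,0})>0$, $I_\omega(u_{n,0},v_{n,0})\searrow I_\omega^c$, and whose corresponding solutions $(u_n,v_n)$ satisfy $\|(u_n,v_n)\|_{S(\dot H^{1/2})\times S(\dot H^{1/2})}=\infty$. Lemma \ref{Comparability of K and I} gives uniform boundedness of $(u_{n,0},v_{n,0})$ in $H^1\times H^1$, so I can apply the linear profile decomposition (Theorem \ref{Linear profile decomposition}) to obtain profiles $\{(\phi^j,\psi^j)\}$, parameters $\{(t_n^j,x_n^j)\}$, and remainders $(\Phi_n^M,\Psi_n^M)$.

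Next, combining Corollary \ref{I decomposition} and Corollary \ref{K decomposition} with the strict inequality $I_\omega(u_{n,0},v_{n,0})<I_\omega(\phi_\omega,\psi_\omega)-\delta$ for large $n$ and a suitably small $\varepsilon$, I apply Lemma \ref{lemma for critical solution} to deduce that for every $j$ and for sufficiently large $n$,
\[
0<I_\omega\bigl(e^{-it_n^j\Delta}\phi^j,e^{-\frac{1}{2}it_n^j\Delta}\psi^j\bigr)<I_\omega(\phi_\omega,\psi_\omega),\qquad K_\omega^{20,8}\bigl(e^{-it_n^j\Delta}\phi^j,e^{-\frac{1}{2}it_n^j\Delta}\psi^j\bigr)>0,
\]
and the same holds for $(\Phi_n^M,\Psi_n^M)$. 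For each profile I build a nonlinear profile $(\tilde u^j,\tilde v^j)$: if $t_n^j=0$ I solve (NLS) with data $(\phi^j,\psi^j)$; if $t_n^j\to-\infty$ I invoke Lemma \ref{Existence of wave operators} to produce a time-global $H^1\!\times\!H^1$ solution whose Strichartz norm is controlled by $\|(e^{it\Delta}\phi^j,e^{\frac{1}{2}it\Delta}\psi^j)\|_{S(\dot H^{1/2})\times S(\dot H^{1/2})}$ and which approaches the linear evolution at the relevant temporal endpoint.

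The heart of the argument is to show that not every nonlinear profile can scatter. Suppose, for contradiction, that $\|(\tilde u^j,\tilde v^j)\|_{S(\dot H^{1/2})\times S(\dot H^{1/2})}<\infty$ for every $j$. I form the approximate solution
\[
\bigl(\widetilde U_n^M(t,x),\widetilde V_n^M(t,x)\bigr)=\sum_{j=1}^M\bigl(\tilde u^j(t-t_n^j,x-x_n^j),\tilde v^j(t-t_n^j,x-x_n^j)\bigr)+\bigl(e^{it\Delta}\Phi_n^M,e^{\frac{1}{2}it\Delta}\Psi_n^M\bigr).
\]
Using the pairwise divergence \eqref{40}, orthogonality arguments of the same flavour as in the proof of Corollary \ref{I decomposition} show that cross interaction terms vanish as $n\to\infty$, so the $S(\dot H^{1/2})\times S(\dot H^{1/2})$ norm of $(\widetilde U_n^M,\widetilde V_n^M)$ is bounded by a constant $A$ independent of $M,n$, while the error $(e_1^{M,n},e_2^{M,n})$ obtained by substituting $(\widetilde U_n^M,\widetilde V_n^M)$ into (NLS) satisfies
\[
\lim_{M\to\infty}\limsup_{n\to\infty}\|(e_1^{M,n},e_2^{M,n})\|_{S'(\dot H^{-1/2})\times S'(\dot H^{-1/2})}=0,
\]
the asymptotic smallness \eqref{26} of the linear remainder being used to kill the terms where $(\Phi_n^M,\Psi_n^M)$ appears. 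Theorem \ref{Long time perturbation} then yields $\|(u_n,v_n)\|_{S(\dot H^{1/2})\times S(\dot H^{1/2})}\leq C(A)<\infty$ for $n$ large, contradicting our choice of the sequence.

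Hence some nonlinear profile, relabelled $j=1$, satisfies $\|(\tilde u^1,\tilde v^1)\|_{S(\dot H^{1/2})\times S(\dot H^{1/2})}=\infty$. Positivity of $I_\omega$ on every profile and on the remainder, together with Corollary \ref{I decomposition}, forces $I_\omega(\tilde u^1(0),\tilde v^1(0))=I_\omega^c$, and then $\sum_{j\geq 2}I_\omega(\phi^j,\psi^j)+I_\omega(\Phi_n^M,\Psi_n^M)\to 0$; by Lemma \ref{Comparability of K and I} this implies $(\phi^j,\psi^j)=(0,0)$ for all $j\geq 2$ and $(\Phi_n^M,\Psi_n^M)\to 0$ in $H^1\times H^1$. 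Setting $(u_{c,0},v_{c,0})\vcentcolon=(\tilde u^1(0),\tilde v^1(0))$, Lemma \ref{estimates for K} propagates $K_\omega^{20,8}>0$ for all $t\geq 0$, giving the desired critical solution. The main obstacle is the construction and control of $(\widetilde U_n^M,\widetilde V_n^M)$: because the interaction $v\bar u$, $u^2$ is asymmetric and quadratic, the cross products $\tilde u^j\,\tilde v^k$ and $\tilde u^j\,\tilde u^k$ with $j\neq k$ must be shown to decay in $L^3L^{3/2}$ using the pairwise divergence of $(t_n^j,x_n^j)$, and the remainder contributions must be controlled by \eqref{26}; once this orthogonality bookkeeping is done, Theorem \ref{Long time perturbation} closes the argument.
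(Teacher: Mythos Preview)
Your proposal is correct and follows the same Kenig--Merle critical-element strategy as the paper. The only differences are organizational: the paper places the linear remainder $(\Phi_n^M,\Psi_n^M)$ in the initial-data mismatch \eqref{08} rather than inside the approximate solution, and it orders the contradiction as ``if at least two nontrivial profiles, each has $I_\omega<I_\omega^c$ strictly, hence each scatters by definition of $I_\omega^c$'' before invoking long-time perturbation, whereas you argue ``if all scatter then contradiction, so one fails, and that one must absorb all of $I_\omega^c$''---both are standard and equivalent variants.
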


%proof of existence of a critical solution

\begin{proof}
By the definition of $I_\omega^c$, we may take a sequence of data $\{(u_{n,0},v_{n,0})\}\subset H^1\times H^1$ with
\[
K_\omega^{20,8}(u_{n,0},v_{n,0})>0,\ \ \ \ I_\omega(\phi_\omega,\psi_\omega)>I_\omega(u_{n,0},v_{n,0})\searrow I_\omega^c
\]
and $SC(u_{n,0},v_{n,0})$ does not holds. By Theorem \ref{Global versus blow-up dichotomy}, $(u_n,v_n)$ exists time-globally. By the definition of $SC(u_{n,0},v_{n,0})$, $\|(u_n,v_n)\|_{S(\dot{H}^\frac{1}{2})\times S(\dot{H}^\frac{1}{2})}=\infty$ for any $n\in\mathbb{N}$. By Lemma \ref{Comparability of K and I}, we have $\frac{1}{10}K_\omega(u_{n,0},v_{n,0})<I_\omega(u_{n,0},v_{n,0})$. Thus,
\begin{align}
\|u_{n,0}\|_{H^1}^2\leq10\left(\frac{1}{\omega}+1\right)I_\omega(\phi_\omega,\psi_\omega)\,,\ \ \|v_{n,0}\|_{H^1}^2\leq5\left(\frac{1}{\omega}+4\right)I_\omega(\phi_\omega,\psi_\omega).\label{57}
\end{align}
i.e. $\|(u_{n,0},v_{n,0})\|_{H^1\times H^1}$ is bounded. Appling Theorem \ref{Linear profile decomposition},
\begin{align}
(u_{n,0}(x),v_{n,0}(x))=\sum_{j=1}^M(e^{-it_n^j\Delta}u^j(x-x_n^j),e^{-\frac{1}{2}it_n^j\Delta}v^j(x-x_n^j))+(U_n^M(x),V_n^M(x)).\label{81}
\end{align}
First, we will prove that there is only one $j$ with $(u^j,v^j)\neq(0,0)$.\\
We assume that $(u^j,v^j)=(0,0)$ for any $j$. Then,
\[
\displaystyle\lim_{n\rightarrow\infty}\|(e^{it\Delta}U_n^M,e^{\frac{1}{2}it\Delta}V_n^M)\|_{S(\dot{H}^\frac{1}{2})\times S(\dot{H}^\frac{1}{2})}=\lim_{n\rightarrow\infty}\|(e^{it\Delta}u_{n,0},e^{\frac{1}{2}it\Delta}v_{n,0})\|_{S(\dot{H}^\frac{1}{2})\times S(\dot{H}^\frac{1}{2})}=0
\]
by \eqref{26}. Hence, 
\[
\|(e^{it\Delta}u_{n,0},e^{\frac{1}{2}it\Delta}v_{n,0})\|_{S(\dot{H}^\frac{1}{2})\times S(\dot{H}^\frac{1}{2})}\leq\delta_{sd}
\]
for sufficiently large $n$. From Theorem \ref{Small data globally existence},
\begin{align*}
\|(u_n,v_n)\|_{S(\dot{H}^\frac{1}{2})\times S(\dot{H}^\frac{1}{2})}\leq 4\|(e^{it\Delta}u_{n,0},e^{\frac{1}{2}it\Delta}v_{n,0})\|_{S(\dot{H}^\frac{1}{2})\times S(\dot{H}^\frac{1}{2})}\leq4\delta_{sd}<\infty
\end{align*}
for such $n$. This is in contradiction to $\|(u_n,v_n)\|_{S(\dot{H}^\frac{1}{2})\times S(\dot{H}^\frac{1}{2})}=\infty$. Thus, there exists $j$ such that $(u^j,v^j)\neq(0,0)$. We set $(u^j,v^j)\neq(0,0)$ for any $j$ by removing $j$ with $(u^j,v^j)=(0,0)$.\\
From Corollary \ref{I decomposition},
\[
I_\omega(u_{n,0},v_{n,0})=\sum_{j=1}^MI_\omega(e^{-it_n^j\Delta}u^j(\cdot-x_n^j),e^{-\frac{1}{2}it_n^j\Delta}v^j(\cdot-x_n^j))+I_\omega(U_n^M,V_n^M)+o_n(1).
\]
From Corollary \ref{K decomposition},
\[
K_\omega^{20,8}(u_{n,0},v_{n,0})=\sum_{j=1}^MK_\omega^{20,8}(e^{-it_n^j\Delta}u^j(\cdot-x_n^j),e^{-\frac{1}{2}it_n^j\Delta}v^j(\cdot-x_n^j))+K_\omega^{20,8}(U_n^M,V_n^M)+o_n(1).
\]
For sufficiently large $n$, there exists $\delta>0$ and $\varepsilon>0$ with $2\varepsilon<\delta$ such that
\[
I_\omega(u_{n,0},v_{n,0})\leq I_\omega(\phi_\omega,\psi_\omega)-\delta,
\]
\[
I_\omega(u_{n,0},v_{n,0})\geq\sum_{j=1}^MI_\omega(e^{-it_n^j\Delta}u^j(\cdot-x_n^j),e^{-\frac{1}{2}it_n^j\Delta}v^j(\cdot-x_n^j))+I_\omega(U_n^M,V_n^M)-\varepsilon,
\]
\[
K_\omega^{20,8}(u_{n,0},v_{n,0})\geq-\varepsilon,
\]
\[
K_\omega^{20,8}(u_{n,0},v_{n,0})\leq\sum_{j=1}^MK_\omega^{20,8}(e^{-it_n^j\Delta}u^j(\cdot-x_n^j),e^{-\frac{1}{2}it_n^j\Delta}v^j(\cdot-x_n^j))+K_\omega^{20,8}(U_n^M,V_n^M)+\varepsilon.
\]
From Lemma \ref{lemma for critical solution},
\begin{align}
0<I_\omega(e^{-it_n^j\Delta}u^j(\cdot-x_n^j),e^{-\frac{1}{2}it_n^j\Delta}v^j(\cdot-x_n^j))<I_\omega(\phi_\omega,\psi_\omega)\,,\ \ 0\leq I_\omega(U_n^M,V_n^M)<I_\omega(\phi_\omega,\psi_\omega),\label{58}
\end{align}
\begin{align*}
K_\omega^{20,8}(e^{-it_n^j\Delta}u^j(\cdot-x_n^j),e^{-\frac{1}{2}it_n^j\Delta}v^j(\cdot-x_n^j))>0\,,\ \ K_\omega^{20,8}(U_n^M,V_n^M)\geq0.
\end{align*}
for sufficiently large $n$. (The equality holds if and only if $(U_n^M,V_n^M)=(0,0)$.)\\
Thus,
\begin{align}
0\leq\lim_{n\rightarrow\infty}I_\omega(e^{-it_n^j\Delta}u^j,e^{-\frac{1}{2}it_n^j\Delta}v^j)\leq\lim_{n\rightarrow\infty}I_\omega(u_{n,0},v_{n,0})=I_\omega^c.\label{29}
\end{align}
We assume $(u^j,v^j)\neq(0,0)$ for more than one $j$.\\
In the case $t_n^j\longrightarrow-\infty$, using Theorem \ref{Linear estimate}
\begin{align*}
|P(e^{-it_n^j\Delta}u^j,e^{-\frac{1}{2}it_n^j\Delta}v^j)|&\leq\|e^{-it_n^j\Delta}u^j\|_{L^3}^2\|e^{-\frac{1}{2}it_n^j\Delta}v^j\|_{L^3}\leq c|t_n^j|^{-\frac{5}{2}}\|u^j\|_{L^\frac{3}{2}}^2\|v^j\|_{L^\frac{3}{2}}\\
&\leq c|t_n^j|^{-\frac{5}{2}}\|u^j\|_{H^1}^2\|v^j\|_{H^1}\longrightarrow0\ \ \text{as}\ \ n\rightarrow\infty
\end{align*}
and hence
\[
\frac{\omega}{2}M(u^j,v^j)+\frac{1}{2}K(u^j,v^j)=I_\omega(e^{-it_n^j\Delta}u^j,e^{-\frac{1}{2}it_n^j\Delta}v^j)+P(e^{-it_n^j\Delta}u^j,e^{-\frac{1}{2}it_n^j\Delta}v^j)<I_\omega(\phi_\omega,\psi_\omega)
\]
for sufficiently large $n\in\mathbb{N}$. From Lemma \ref{Existence of wave operators}, there exists a data $(\widetilde{u}_0^j,\widetilde{v}_0^j)$ and a corresponding solution $(\widetilde{u}^j,\widetilde{v}^j)$ to (NLS) such that
\[
0<I_\omega(\widetilde{u}_0^j,\widetilde{v}_0^j)<I_\omega(\phi_\omega,\psi_\omega)\,,\ \ K_\omega^{20,8}(\widetilde{u}_0^j,\widetilde{v}_0^j)>0\,,
\]
\[
\|(\widetilde{u}^j,\widetilde{v}^j)(-t_n^j)-(e^{-it_n^j\Delta}u^j,e^{-\frac{1}{2}it_n^j\Delta}v^j)\|_{H^1\times H^1}\longrightarrow0\ \ \text{as}\ \ n\rightarrow\infty.
\]
In the case $t_n^j=0$, we set $(\widetilde{u}_0^j,\widetilde{v}_0^j)=(u^j,v^j)$ and a solution $(\widetilde{u}^j,\widetilde{v}^j)$ to (NLS) with a data $(\widetilde{u}_0^j,\widetilde{v}_0^j)$. Then,
\[
0<I_\omega(\widetilde{u}_0^j,\widetilde{v}_0^j)<I_\omega(\phi_\omega,\psi_\omega)\,,\ \ K_\omega^{20,8}(\widetilde{u}_0^j,\widetilde{v}_0^j)>0\,,
\]
\[
\|(\widetilde{u}^j,\widetilde{v}^j)(-t_n^j)-(e^{-it_n^j\Delta}u^j,e^{-\frac{1}{2}it_n^j\Delta}v^j)\|_{H^1\times H^1}\longrightarrow0\ \ \text{as}\ \ n\rightarrow\infty.
\]
Therefore, in both cases, we can take new profile\,$(\widetilde{u}_0^j,\widetilde{v}_0^j)$ (which is called nonlinear profile) with
\[
0<I_\omega(\widetilde{u}_0^j,\widetilde{v}_0^j)<I_\omega(\phi_\omega,\psi_\omega)\,,\ \ K_\omega^{20,8}(\widetilde{u}_0^j,\widetilde{v}_0^j)>0\,,
\]
\begin{align}
\|(\widetilde{u}^j,\widetilde{v}^j)(-t_n^j)-(e^{-it_n^j\Delta}u^j,e^{-\frac{1}{2}it_n^j\Delta}v^j)\|_{H^1\times H^1}\longrightarrow0\ \ \text{as}\ \ n\rightarrow\infty.\label{41}
\end{align}
for each linear profile\,$(u^j,v^j)$. Also, we take $(\widetilde{U}_n^M,\widetilde{V}_n^M)$ with
\[
(u_{n,0}(x),v_{n,0}(x))=\sum_{j=1}^M(\widetilde{u}^j(x-x_n^j,-t_n^j),\widetilde{v}^j(x-x_n^j,-t_n^j))+(\widetilde{U}_n^M(x),\widetilde{V}_n^M(x))
\]
(which is called nonlinear profile decomposition). Then, by \eqref{81}, Theorem \ref{Strichartz estimates},\,\eqref{41}, and \eqref{26}, we have
\begin{align}
&\|(e^{it\Delta}\widetilde{U}_n^M,e^{\frac{1}{2}it\Delta}\widetilde{V}_n^M)\|_{S(\dot{H}^\frac{1}{2})\times S(\dot{H}^\frac{1}{2})} \notag \\
&=\left\|(e^{it\Delta}u_{n,0},e^{\frac{1}{2}it\Delta}v_{n,0})-\sum_{j=1}^M(e^{it\Delta}\widetilde{u}^j(x-x_n^j,-t_n^j),e^{\frac{1}{2}it\Delta}\widetilde{v}^j(x-x_n^j,-t_n^j))\right\|_{S(\dot{H}^\frac{1}{2})\times S(\dot{H}^\frac{1}{2})} \notag \\
&\leq\sum_{j=1}^M\left\|\left(e^{it\Delta}(e^{-it_n^j}u^j-\widetilde{u}^j(-t_n^j)),e^{\frac{1}{2}it\Delta}(e^{-\frac{1}{2}it_n^j\Delta}v^j-\widetilde{v}^j(-t_n^j))\right)\right\|_{S(\dot{H}^\frac{1}{2})\times S(\dot{H}^\frac{1}{2})}\\
&\hspace{9.5cm}+\left\|(e^{it\Delta}U_n^M,e^{\frac{1}{2}it\Delta}V_n^M)\right\|_{S(\dot{H}^\frac{1}{2})\times S(\dot{H}^\frac{1}{2})} \notag \\
&\leq c\sum_{j=1}^M\left\|\left(e^{-it_n^j}u^j-\widetilde{u}^j(-t_n^j),e^{-\frac{1}{2}it_n^j\Delta}v^j-\widetilde{v}^j(-t_n^j)\right)\right\|_{H^1\times H^1}+\left\|(e^{it\Delta}U_n^M,e^{\frac{1}{2}it\Delta}V_n^M)\right\|_{S(\dot{H}^\frac{1}{2})\times S(\dot{H}^\frac{1}{2})} \notag \\
&\longrightarrow0\ \ \text{as}\ \ M,n\rightarrow\infty.\label{30}
\end{align}
Here, we set $(\widetilde{u}_n^j,\widetilde{v}_n^j)$ and $(\widetilde{u}_n^{\leq M},\widetilde{v}_n^{\leq M})$ as follows.
\[
(\widetilde{u}_n^j(x,t),\widetilde{v}_n^j(x,t))=(\widetilde{u}^j(x-x_n^j,t-t_n^j),\widetilde{v}^j(x-x_n^j,t-t_n^j))\,,
\]
\[
(\widetilde{u}_n^{\leq M}(x,t),\widetilde{v}_n^{\leq M}(x,t))=\sum_{j=1}^M(\widetilde{u}_n^j(x,t),\widetilde{v}_n^j(x,t)).
\]
Then,
\begin{align*}
i\partial_t\widetilde{u}_n^{\leq M}+\Delta\widetilde{u}_n^{\leq M}+2\widetilde{v}_n^{\leq M}\overline{\widetilde{u}_n^{\leq M}}&=i\partial_t\sum_{j=1}^M\widetilde{u}_n^j+\Delta\sum_{j=1}^M\widetilde{u}_n^j+2\left(\sum_{j=1}^M\widetilde{v}_n^j\right)\left(\sum_{j=1}^M\overline{\widetilde{u}_n^j}\right)\\
&=\sum_{j=1}^M(i\partial_t\widetilde{u}_n^j+\Delta\widetilde{u}_n^j+2\widetilde{v}_n^j\overline{\widetilde{u}_n^j})+2\sum_{1\leq j\neq k\leq M}\widetilde{v}_n^j\overline{\widetilde{u}_n^k}\\
&=2\sum_{1\leq j\neq k\leq M}\widetilde{v}_n^j\overline{\widetilde{u}_n^k}=\vcentcolon e_{1,n}.
\end{align*}
Similarly,
\[
i\partial_t\widetilde{v}_n^{\leq M}+\frac{1}{2}\Delta\widetilde{v}_n^{\leq M}+(\widetilde{u}_n^{\leq M})^2=\sum_{1\leq j\neq k\leq M}\widetilde{u}_n^j\widetilde{u}_n^k=\vcentcolon e_{2,n}.
\]
We will prove that the assumption of Theorem \ref{Long time perturbation} holds.\\
First, we will establish \eqref{08}. Since
\begin{align*}
\|(e^{it\Delta}(\widetilde{u}_n^{\leq M}(0)-u_{n,0}),e^{\frac{1}{2}it\Delta}(\widetilde{u}_n^{\leq M}(0)-v_{n,0}))\|_{S(\dot{H}^\frac{1}{2})\times S(\dot{H}^\frac{1}{2})}&=\|(e^{it\Delta}\widetilde{U}_n^M,e^{\frac{1}{2}it\Delta}\widetilde{V}_n^M)\|_{S(\dot{H}^\frac{1}{2})\times S(\dot{H}^\frac{1}{2})}\\
&\longrightarrow0\ \ \text{as}\ \ M,n\rightarrow\infty,
\end{align*}
for any $\varepsilon>0$, there exists $M_1(\varepsilon)=M_1>0$ such that for any $M>M_1$, there exists $n_1=n_1(M)$ such that for any $n>n_1$,
\begin{align}
\|(e^{it\Delta}(\widetilde{u}_n^{\leq M}(0)-u_{n,0}),e^{\frac{1}{2}it\Delta}(\widetilde{u}_n^{\leq M}(0)-v_{n,0}))\|_{S(\dot{H}^\frac{1}{2})\times S(\dot{H}^\frac{1}{2})}<\varepsilon.\label{31}
\end{align}
Next, we will prove \eqref{06} and \eqref{07}.\\
From Theorem \ref{Linear profile decomposition}\,\eqref{27} and \eqref{28},
\[
\|u_{n,0}\|_{H^1}^2=\sum_{j=1}^M\|u^j\|_{H^1}^2+\|U_n^M\|_{H^1}^2+o_n(1)\,,\ \ \|v_{n,0}\|_{H^1}^2=\sum_{j=1}^M\|v^j\|_{H^1}^2+\|V_n^M\|_{H^1}^2+o_n(1)
\]
and hence
\begin{align}
C_1\vcentcolon=\sum_{j=1}^\infty\|u^j\|_{H^1}^2<\infty\,,\ \ C_2\vcentcolon=\sum_{j=1}^\infty\|v^j\|_{H^1}^2<\infty.\label{42}
\end{align}
In particular, there exists $M_2=M_2(\delta_{sd})>0$ such that $\|(u^j,v^j)\|_{H^1\times H^1}<\frac{\delta_{sd}}{c}$ for any $j>M_2$. Then, from \eqref{41}, for any $j>M_2$, there exists $n_2=n_2(j)$ such that
\[
\|(\widetilde{u}^j(-t_n^j),\widetilde{v}^j(-t_n^j))\|_{H^1}\leq\delta_{sd}\leq1\,,\ \ \ M_2\leq j\leq M,\,n\geq n_2.
\]
Moreover, from \eqref{41} and \eqref{42}, for any $M\geq1$, there exists $n_3=n_3(M)\geq1$ such that
\begin{align}
\sum_{j=1}^M\|\widetilde{u}^j(-t_n^j)\|_{H^1}^2\leq 2C_1\,,\ \sum_{j=1}^M\|\widetilde{v}^j(-t_n^j)\|_{H^1}^2\leq2C_2\,,\ \ \ n\geq n_3.\label{59}
\end{align}
Here, we consider $\|(\widetilde{u}_n^{\leq M},\widetilde{v}_n^{\leq M})\|_{S(\dot{H}^\frac{1}{2})\times S(\dot{H}^\frac{1}{2})}$.
\begin{align*}
\|\widetilde{u}_n^{\leq M}\|_{S(\dot{H}^\frac{1}{2})}&=\left\|e^{it\Delta}\widetilde{u}_n^{\leq M}(0)+i\int_0^te^{i(t-s)\Delta}(2\widetilde{v}_n^{\leq M}\overline{\widetilde{u}_n^{\leq M}}-e_{1,n})ds\right\|_{S(\dot{H}^\frac{1}{2})}\\
&\leq\left\|e^{it\Delta}\widetilde{u}_n^{\leq M}(0)\right\|_{S(\dot{H}^\frac{1}{2})}+\left\|\int_0^te^{i(t-s)\Delta}(2\widetilde{v}_n^{\leq M}\overline{\widetilde{u}_n^{\leq M}}-e_{1,n})ds\right\|_{S(\dot{H}^\frac{1}{2})}\\
&\leq\left\|e^{it\Delta}(u_{n,0}+\widetilde{U}_n^M)\right\|_{S(\dot{H}^\frac{1}{2})}+c\left\|2\widetilde{v}_n^{\leq M}\overline{\widetilde{u}_n^{\leq M}}-e_{1,n}\right\|_{S'(\dot{H}^{-\frac{1}{2}})}\\
&\leq c\left\|u_{n,0}\right\|_{\dot{H}^\frac{1}{2}}+\left\|e^{it\Delta}\widetilde{U}_n^M\right\|_{S(\dot{H}^\frac{1}{2})}+2c\left\|\widetilde{v}_n^{\leq M}\widetilde{u}_n^{\leq M}\right\|_{L^3L^\frac{3}{2}}+c\left\|e_{1,n}\right\|_{L^3L^\frac{3}{2}}\\
&\leq c\left\|u_{n,0}\right\|_{\dot{H}^\frac{1}{2}}+\left\|e^{it\Delta}\widetilde{U}_n^M\right\|_{S(\dot{H}^\frac{1}{2})}+2c\left\|\widetilde{v}_n^{\leq M}\right\|_{L^6L^3}\left\|\widetilde{u}_n^{\leq M}\right\|_{L^6L^3}+c\left\|e_{1,n}\right\|_{L^3L^\frac{3}{2}}.
\end{align*}
Since $C_0^\infty$ is dense in $L^6(\mathbb{R};L^3)$, for any $\varepsilon>0$, there exists $f,\,g\in C_0^\infty$ such that 
\[
\|f-\widetilde{u}^j\|_{L^6L^3}<\varepsilon\,,\ \|g-\widetilde{v}^j\|_{L^6L^3}<\varepsilon.
\]
Using these inequalities and \eqref{40},
\begin{align}
&\left\|e_{1,n}\right\|_{L^3L^\frac{3}{2}}=2\left\|\sum_{1\leq j\neq k\leq M}\widetilde{v}^j(\cdot-x_n^j,\cdot-t_n^j)\overline{\widetilde{u}^k}(\cdot-x_n^k,\cdot-t_n^k)\right\|_{L^3L^\frac{3}{2}} \notag \\
&\hspace{0.5cm}\leq2\sum_{1\leq j\neq k\leq M}\left\|\widetilde{v}^j(\cdot-x_n^j,\cdot-t_n^j)\widetilde{u}^k(\cdot-x_n^k,\cdot-t_n^k)\right\|_{L^3L^\frac{3}{2}} \notag \\
&\hspace{0.5cm}\leq2\sum_{1\leq j\neq k\leq M}\left(\left\|(\widetilde{v}^j-g)(\cdot-x_n^j,\cdot-t_n^j)\widetilde{u}^k(\cdot-x_n^k,\cdot-t_n^k)\right\|_{L^3L^\frac{3}{2}}\right. \notag \\
&\hspace{1cm}\left.+\left\|g(\cdot-x_n^j,\cdot-t_n^j)(\widetilde{u}^k-f)(\cdot-x_n^k,\cdot-t_n^k)\right\|_{L^3L^\frac{3}{2}}+\left\|g(\cdot-x_n^j,\cdot-t_n^j)f(\cdot-x_n^k,\cdot-t_n^k)\right\|_{L^3L^\frac{3}{2}}\right) \notag \\
&\hspace{0.5cm}\leq2\sum_{1\leq j\neq k\leq M}\left(\|\widetilde{v}^j-g\|_{L^6L^3}\|\widetilde{u}^k\|_{L^6L^3}+\|g\|_{L^6L^3}\|\widetilde{u}^k-f\|_{L^6L^3}\right. \notag \\
&\hspace{9cm}\left.+\left\|g(\cdot-x_n^j,\cdot-t_n^j)f(\cdot-x_n^k,\cdot-t_n^k)\right\|_{L^3L^\frac{3}{2}}\right) \notag \\
&\hspace{0.5cm}\longrightarrow0\ \ \text{as}\ \ n\rightarrow\infty.\label{32}
\end{align}
Similarly, we have
\begin{align}
\|e_{2,n}\|_{L^3L^\frac{3}{2}}\longrightarrow0\ \ \text{as}\ \ n\rightarrow\infty\label{82}
\end{align}
Applying Lemma \ref{Existence of wave operators}, $\|(\widetilde{u}^j,\widetilde{v}^j)\|_{S(\dot{H}^\frac{1}{2})\times S(\dot{H}^\frac{1}{2})}\leq4\|(e^{it\Delta}u^j,e^{\frac{1}{2}\Delta}v^j)\|_{S(\dot{H}^\frac{1}{2})\times S(\dot{H}^\frac{1}{2})}$. We set $M_0=\max\{M_1,M_2\}$, then it follows that
\begin{align*}
\|\widetilde{v}_n^{\leq M}\|_{L^6L^3}^6&=\left\|\sum_{j=1}^M\widetilde{v}^j(\cdot-x_n^j,\cdot-t_n^j)\right\|_{L^6L^3}^6\\
&\leq c\left\|\sum_{j=1}^{M_0}\widetilde{v}^j(\cdot-x_n^j,\cdot-t_n^j)\right\|_{L^6L^3}^6+c\left\|\sum_{j=M_0+1}^M\widetilde{v}^j(\cdot-x_n^j,\cdot-t_n^j)\right\|_{L^6L^3}^6,
\end{align*}
\begin{align*}
&\left\|\sum_{j=1}^{M_0}\widetilde{v}^j(\cdot-x_n^j,\cdot-t_n^j)\right\|_{L^6L^3}\leq\sum_{j=1}^{M_0}\left\|\widetilde{v}^j\right\|_{L^6L^3}\leq\sum_{j=1}^{M_0}\left\|\widetilde{v}^j\right\|_{S(\dot{H}^\frac{1}{2})}\\
&\hspace{3cm}\leq4\sum_{j=1}^{M_0}\|(e^{-it_n^j\Delta}u^j,e^{-\frac{1}{2}it_n^j\Delta}v^j)\|_{S(\dot{H}^\frac{1}{2})\times S(\dot{H}^\frac{1}{2})}\leq c\sum_{j=1}^{M_0}\|(u^j,v^j)\|_{\dot{H}^\frac{1}{2}\times\dot{H}^\frac{1}{2}}<\infty,
\end{align*}
and
\begin{align*}
&\left\|\sum_{j=M_0+1}^M\widetilde{v}^j(\cdot-x_n^j,\cdot-t_n^j)\right\|_{L^6L^3}^6\\
&\hspace{3cm}\leq\left\|\left\|\sum_{j=M_0+1}^M\widetilde{v}^j(\cdot-x_n^j,\cdot-t_n^j)\right\|_{L^\frac{16}{5}}^\frac{8}{15}\left\|\sum_{j=M_0+1}^M\widetilde{v}^j(\cdot-x_n^j,\cdot-t_n^j)\right\|_{L^\frac{14}{5}}^\frac{7}{15}\right\|_{L^6}^6\\
&\hspace{3cm}\leq\left\|\left\|\sum_{j=M_0+1}^M\widetilde{v}^j(\cdot-x_n^j,\cdot-t_n^j)\right\|_{L^\frac{16}{5}}^\frac{8}{15}\right\|_{L^\infty}^6\left\|\left\|\sum_{j=M_0+1}^M\widetilde{v}^j(\cdot-x_n^j,\cdot-t_n^j)\right\|_{L^\frac{14}{5}}^\frac{7}{15}\right\|_{L^6}^6\\
&\hspace{3cm}\leq\left\|\sum_{j=M_0+1}^M\widetilde{v}^j(\cdot-x_n^j,\cdot-t_n^j)\right\|_{L^\infty L^\frac{16}{5}}^\frac{16}{5}\left\|\sum_{j=M_0+1}^M\widetilde{v}^j(\cdot-x_n^j,\cdot-t_n^j)\right\|_{L^\frac{14}{5}L^\frac{14}{5}}^\frac{14}{5}\\
&\hspace{3cm}\leq c\left\|\sum_{j=M_0+1}^M\widetilde{v}^j(\cdot-x_n^j,\cdot-t_n^j)\right\|_{L^\infty H^1}^\frac{16}{5}\left\|\sum_{j=M_0+1}^M\widetilde{v}^j(\cdot-x_n^j,\cdot-t_n^j)\right\|_{L^\frac{14}{5}L^\frac{14}{5}}^\frac{14}{5}.
\end{align*}
Here, we estimate
\[
\left\|\sum_{j=M_0+1}^M\widetilde{v}^j(\cdot-x_n^j,\cdot-t_n^j)\right\|_{L^\infty H^1}^2\,\text{and}\ \ \ \left\|\sum_{j=M_0+1}^M\widetilde{v}^j(\cdot-x_n^j,\cdot-t_n^j)\right\|_{L^\frac{14}{5}L^\frac{14}{5}}^\frac{14}{5}.
\]
\begin{align}
&\left\|\sum_{j=M_0+1}^M\widetilde{v}^j(\cdot-x_n^j,t-t_n^j)\right\|_{H^1}^2 \notag \\
&\hspace{1cm}\leq\sum_{j=M_0+1}^M\|\widetilde{v}^j(t-t_n^j)\|_{H^1}^2+2\sum_{M_0+1\leq j\neq k\leq M}\text{Re}(\widetilde{v}^j(\cdot-x_n^j,t-t_n^j),\widetilde{v}^k(\cdot-x_n^k,t-t_n^k))_{H^1}.\label{43}
\end{align}
By Lemma \ref{lemma for critical solution 2},
\begin{align}
\sum_{j=M_0+1}^M\|\widetilde{v}^j(t-t_n^j)\|_{H^1}^2\leq 64c^2\sum_{j=M_0+1}^M\|\widetilde{v}^j(-t_n^j)\|_{H^1}^2\leq128c^2C_1\,,\ \ \ n\geq\max\{n_2,n_3\}.\label{44}
\end{align}
On the other hand, by \eqref{40},
\begin{align}
|(\widetilde{v}^j(\cdot-x_n^j,t-t_n^j),\widetilde{v}^k(\cdot-x_n^k,t-t_n^k))_{H^1}|\longrightarrow0\ \ \text{as}\ \ n\rightarrow\infty.\label{45}
\end{align}
From \eqref{43},\,\eqref{44}, and \eqref{45}, for any $M\geq M_0+1$, there exists $n_4=n_4(M)\geq1$ such that
\begin{align}
\left\|\sum_{j=M_0+1}^M\widetilde{v}^j(\cdot-x_n^j,t-t_n^j)\right\|_{L^\infty H^1}^2\leq256c^2C_1\,,\ \ \ M\geq M_0+1,\,n\geq n_4.\label{46}
\end{align}
Using Lemma \ref{lemma for critical solution 1},
\begin{align*}
&\left\|\sum_{j=M_0+1}^M\widetilde{v}^j(\cdot-x_n^j,\cdot-t_n^j)\right\|_{L^\frac{14}{5}L^\frac{14}{5}}^\frac{14}{5}\\
&\hspace{1cm}\leq\sum_{j=M_0+1}^M\|\widetilde{v}^j\|_{L^\frac{14}{5}L^\frac{14}{5}}^\frac{14}{5}+c\sum_{M_0+1\leq j\neq k\leq M}\int_{\mathbb{R}}\int_{\mathbb{R}^5}|\widetilde{v}^j(x-x_n^j,t-t_n^j)|^\frac{9}{5}|\widetilde{v}^k(x-x_n^k,t-t_n^k)|dxdt.
\end{align*}
By Lemma \ref{lemma for critical solution 2} and \eqref{59},
\begin{align*}
\sum_{j=M_0+1}^M\|\widetilde{v}^j(\cdot-t_n^j)\|_{L^\frac{14}{5}L^\frac{14}{5}}^\frac{14}{5}&\leq (8c)^\frac{14}{5}\sum_{j=M_0+1}^M\|\widetilde{v}^j(-t_n^j)\|_{H^1}^\frac{14}{5}\leq (8c)^\frac{14}{5}\sum_{j=M_0+1}^M\|\widetilde{v}^j(-t_n^j)\|_{H^1}^2\leq 2C_1(8c)^\frac{14}{5}
\end{align*}
for $n\geq\max\{n_2,n_3\}$. On the other hand,
\begin{align*}
&\int_{\mathbb{R}}\int_{\mathbb{R}^5}|\widetilde{v}^j(x-x_n^j,t-t_n^j)|^\frac{9}{5}|\widetilde{v}^k(x-x_n^k,t-t_n^k)|dxdt\\
&\hspace{2cm}=\||\widetilde{v}^j(\cdot-x_n^j,\cdot-t_n^j)|^\frac{9}{5}|\widetilde{v}^k(\cdot-x_n^k,\cdot-t_n^k)|\|_{L^1L^1}\\
&\hspace{2cm}\leq\left\|\|\widetilde{v}^j(\cdot-x_n^j,\cdot-t_n^j)\widetilde{v}^k(\cdot-x_n^k,\cdot-t_n^k)\|_{L^\frac{7}{5}}\||\widetilde{v}^j(\cdot-x_n^j,\cdot-t_n^j)|^\frac{4}{5}\|_{L^\frac{7}{2}}\right\|_{L^1}\\
&\hspace{2cm}=\left\|\|\widetilde{v}^j(\cdot-x_n^j,\cdot-t_n^j)\widetilde{v}^k(\cdot-x_n^k,\cdot-t_n^k)\|_{L^\frac{7}{5}}\|\widetilde{v}^j(\cdot-x_n^j,\cdot-t_n^j)\|_{L^\frac{14}{5}}^\frac{4}{5}\right\|_{L^1}\\
&\hspace{2cm}\leq\left\|\|\widetilde{v}^j(\cdot-x_n^j,\cdot-t_n^j)\widetilde{v}^k(\cdot-x_n^k,\cdot-t_n^k)\|_{L^\frac{7}{5}}\right\|_{L^\frac{7}{5}}\left\|\|\widetilde{v}^j(\cdot-x_n^j,\cdot-t_n^j)\|_{L^\frac{14}{5}}^\frac{4}{5}\right\|_{L^\frac{7}{2}}\\
&\hspace{2cm}=\left\|\widetilde{v}^j(\cdot-x_n^j,\cdot-t_n^j)\widetilde{v}^k(\cdot-x_n^k,\cdot-t_n^k)\right\|_{L^\frac{7}{5}L^\frac{7}{5}}\left\|\widetilde{v}^j(\cdot-x_n^j,\cdot-t_n^j)\right\|_{L^\frac{14}{5}L^\frac{14}{5}}^\frac{4}{5}\\
&\hspace{2cm}\leq (8c)^\frac{4}{5}\|\widetilde{v}^j(-t_n^j)\|_{H^1}^\frac{4}{5}\left(\int_{\mathbb{R}}\int_{\mathbb{R}^5}|\widetilde{v}^j(x-x_n^j,t-t_n^j)|^\frac{7}{5}|\widetilde{v}^k(x-x_n^k,t-t_n^k)|^\frac{7}{5}dxdt\right)^\frac{5}{7}
\end{align*}
for $M_0+1\leq j\neq k\leq M$ by Lemma \ref{lemma for critical solution 2}.
Using \eqref{40},
\[
\int_{\mathbb{R}}\int_{\mathbb{R}^5}|\widetilde{v}^j(x-x_n^j,t-t_n^j)|^\frac{9}{5}|\widetilde{v}^k(x-x_n^k,t-t_n^k)|dxdt\longrightarrow0\ \ \text{as}\ \ n\rightarrow\infty.
\]
Thus,
\begin{align}
\left\|\widetilde{u}_n^{\leq M}\right\|_{S(\dot{H}^\frac{1}{2})}\leq A<\infty.\label{33}
\end{align}
Similarly,
\begin{align}
\left\|\widetilde{v}_n^{\leq M}\right\|_{S(\dot{H}^\frac{1}{2})}\leq A<\infty.\label{34}
\end{align}
Combining \eqref{31},\,\eqref{32},\,\eqref{82},\,\eqref{33},\,\eqref{34}, and Theorem \ref{Long time perturbation},
\[
\left\|(u_n,v_n)\right\|_{S(\dot{H}^\frac{1}{2})\times S(\dot{H}^\frac{1}{2})}\leq C(A)<\infty.
\]
However, this is contradiction. Therefore, there is only one $j$ with $(u^j,v^j)\neq(0,0)$ and we set $j=1$ for such $j$ by rearranging. We consider a profile\,$(u_{c,0},v_{c,0})$ given in the above argument and the corresponding solution $(u_c,v_c)$ to (NLS). Also, we define $(\widetilde{U}_n^1,\widetilde{V}_n^1)$ as follows.
\[
(u_{n,0}(x),v_{n,0}(x))=(u_c(x-x_n^1,-t_n^1),u_c(x-x_n^1,-t_n^1))+(\widetilde{U}_n^1(x),\widetilde{V}_n^1(x)).
\]
From the calculation of \eqref{30}, we have
\[
\|(e^{it\Delta}\widetilde{U}_n^1,e^{\frac{1}{2}it\Delta}\widetilde{V}_n^1)\|_{S(\dot{H}^\frac{1}{2})\times S(\dot{H}^\frac{1}{2})}\longrightarrow0\ \ \text{as}\ \ n\rightarrow\infty.
\]
Here, we set that $(u_{c,n}(x,t),v_{c,n}(x,t))=(u_c(x-x_n^1,t-t_n^1),v_c(x-x_n^1,t-t_n^1))$ and assume that
\[
A\vcentcolon=\|(u_{c,n},v_{c,n})\|_{S(\dot{H}^\frac{1}{2})\times S(\dot{H}^\frac{1}{2})}=\|(u_c,v_c)\|_{S(\dot{H}^\frac{1}{2})\times S(\dot{H}^\frac{1}{2})}<\infty.
\]
We will prove that this assumption is wrong by using Theorem \ref{Long time perturbation}.
\[
i\partial_tu_{c,n}+\Delta u_{c,n}+2v_{c,n}\overline{u_{c,n}}=0\,,\ \ i\partial_tv_{c,n}+\frac{1}{2}\Delta v_{c,n}+u_{c,n}^2=0,
\]
\begin{align*}
\|(e^{it\Delta}(u_{c,n}(0)-u_{n,0}),e^{\frac{1}{2}it\Delta}(v_{c,n}(0)-v_{n,0}))\|_{S(\dot{H}^\frac{1}{2})\times S(\dot{H}^\frac{1}{2})}&=\|(e^{it\Delta}\widetilde{U}_n^1,e^{\frac{1}{2}it\Delta}\widetilde{V}_n^1)\|_{S(\dot{H}^\frac{1}{2})\times S(\dot{H}^\frac{1}{2})}\\
&\longrightarrow0\ \ \text{as}\ \ n\rightarrow\infty.
\end{align*}
Hence, from Theorem \ref{Long time perturbation},
\[
\|(u_n,v_n)\|_{S(\dot{H}^\frac{1}{2})\times S(\dot{H}^\frac{1}{2})}\leq C(A)<\infty.
\]
However, this is in contradiction to the way of taking $(u_n,v_n)$. Thus, we have $\|(u_c,v_c)\|_{S(\dot{H}^\frac{1}{2})\times S(\dot{H}^\frac{1}{2})}=\infty$. Also, from the way of taking a profile,
\[
0<I_\omega(u_{c,0},v_{c,0})<I_\omega(\phi_\omega,\psi_\omega)\,,\ \ K_\omega^{20,8}(u_{c,0},v_{c,0})>0,
\]
\[
\lim_{n\rightarrow\infty}\|(u_c(-t_n^1),v_c(-t_n^1))-(e^{-it_n^1\Delta}u^1,e^{-\frac{1}{2}it_n^1\Delta}v^1)\|_{H^1\times H^1}=0,
\]
i.e.
\[
\lim_{n\rightarrow\infty}\|(u_c(-t_n^1),v_c(-t_n^1))\|_{H^1\times H^1}=\|(u^1,v^1)\|_{H^1\times H^1}.
\]
Therefore, it follows that
\[
|K_\omega(u_c(-t_n^1),v_c(-t_n^1))-K_\omega(e^{-it_n^1\Delta}u^1,e^{-\frac{1}{2}it_n^1\Delta}v^1)|\longrightarrow0\ \ \text{as}\ \ n\rightarrow\infty,
\]
\begin{align*}
&|P(u_c(-t_n^1),v_c(-t_n^1))-P(e^{-it_n^1\Delta}u^1,e^{-\frac{1}{2}it_n^1\Delta}v^1)|\\
&\hspace{1cm}=\left|\int_{\mathbb{R}^5}v_c(x,-t_n^1)\overline{u_c}(x,-t_n^1)^2-e^{-\frac{1}{2}it_n^1\Delta}v^1(x)\overline{e^{-it_n^1\Delta}u^1}(x)^2dx\right|\\
&\hspace{1cm}=\left|\int_{\mathbb{R}^5}v_c(x,-t_n^1)(\overline{u_c}(x,-t_n^1)^2-\overline{e^{-it_n^1\Delta}u^1}(x)^2)+(v_c(x,-t_n^1)-e^{-\frac{1}{2}it_n^1\Delta}v^1(x))\overline{e^{-it_n^1\Delta}u^1}(x)^2dx\right|\\
&\hspace{1cm}\leq\|v_c(-t_n^1)\|_{L^3}\|u_c(-t_n^1)+e^{-it_n^1\Delta}u^1\|_{L^3}\|u_c(-t_n^1)-e^{-it_n^1\Delta}u^1\|_{L^3}\\
&\hspace{9.5cm}+\|v_c(-t_n^1)-e^{-\frac{1}{2}it_n^1\Delta}v^1\|_{L^3}\|e^{-it_n^1\Delta}u^1\|_{L^3}^2\\
&\hspace{1cm}\leq\|v_c(-t_n^1)\|_{H^1}(\|u_c(-t_n^1)\|_{H^1}+\|u^1\|_{H^1})\|u_c(-t_n^1)-e^{-it_n^1\Delta}u^1\|_{H^1}\\
&\hspace{9.5cm}+\|v_c(-t_n^1)-e^{-\frac{1}{2}it_n^1\Delta}v^1\|_{H^1}\|u^1\|_{H^1}^2\\
&\hspace{1cm}\longrightarrow0\ \ \text{as}\ \ n\rightarrow\infty.
\end{align*}
Thus, we have
\[
\lim_{n\rightarrow\infty}|I_\omega(u_c(-t_n^1),v_c(-t_n^1))-I_\omega(e^{-it_n^1\Delta}u^1,e^{-\frac{1}{2}it_n^1\Delta}v^1)|=0,
\]
\[
\lim_{n\rightarrow\infty}|K_\omega^{20,8}(u_c(-t_n^1),v_c(-t_n^1))-K_\omega^{20,8}(e^{-it_n^1\Delta}u^1,e^{-\frac{1}{2}it_n^1\Delta}v^1)|=0.
\]
Since $I_\omega$ conserves with respect to time, it follows that $I_\omega(u_c(-t_n^1),v_c(-t_n^1))=I_\omega(u_{c,0},v_{c,0})$. Also, we obtain $0\leq I_\omega(u_{c,0},v_{c,0})\leq I_\omega^c$ by $\displaystyle0\leq\lim_{n\rightarrow\infty}I_\omega(e^{-it_n^1\Delta}u^1,e^{-\frac{1}{2}it_n^1\Delta}v^1)\leq I_\omega^c$. Here, we assume that $I_\omega(u_{c,0},v_{c,0})<I_\omega^c$, then it follows that $\|(u_c,v_c)\|_{S(\dot{H}^\frac{1}{2})\times S(\dot{H}^\frac{1}{2})}<\infty$ by definition of $I_\omega^c$. However, this is contradiction. Therefore, we obtain $I_\omega(u_{c,0},v_{c,0})=I_\omega^c$.
\end{proof}

\subsection{Compactness of the critical solution}

%equivalent relation on H^1\times H^1
　\\
We define a equivalence relation $\sim$ on $H^1\!\times\!H^1$ as follows:
\[
^\exists x_0\in\mathbb{R}^5\ \ \text{s.t.}\ \ (\phi_1,\phi_2)=(\psi_1(\cdot-x_0),\psi_2(\cdot-x_0))\ \Longrightarrow\ (\phi_1,\phi_2)\sim(\psi_1,\psi_2).
\]
$H^1\!\times\!H^1/{\sim}$ denotes the quotient space, which is constructed by the whole equivalence class with respect to $\sim$. We represent an element of $H^1\!\times\!H^1/{\sim}$ by $[(\phi_1,\phi_2)]$, and let $\pi:H^1\!\times\!H^1\longrightarrow H^1\!\times\!H^1/{\sim}$ be the natural projection.\\ 
Here, we check that $\sim$ defines the equivalence relation on $H^1\!\times\!H^1$.
\[
\text{Since }(\phi_1,\phi_2)=(\phi_1(\cdot-0),\phi_2(\cdot-0)),\ (\phi_1,\phi_2)\sim(\phi_1,\phi_2).
\]
Since
\[
(\phi_1,\phi_2)=(\psi_1(\cdot-x_0),\psi_2(\cdot-x_0))\Longrightarrow(\psi_1,\psi_2)=(\phi_1(\cdot+x_0),\phi_2(\cdot+x_0)),
\]
\[
(\phi_1,\phi_2)\sim(\psi_1,\psi_2)\Longrightarrow(\psi_1,\psi_2)\sim(\phi_1,\phi_2).
\]
Since
\begin{align*}
&(\phi_1,\phi_2)=(\psi_1(\cdot-x_0),\psi_2(\cdot-x_0))\,,\ (\psi_1,\psi_2)=(\varphi_1(\cdot-x_1),\varphi_2(\cdot-x_1))\\
&\hspace{7cm}\Longrightarrow(\phi_1,\phi_2)=(\varphi_1(\cdot-x_0-x_1),\varphi_2(\cdot-x_0-x_1)),
\end{align*}
\[
(\phi_1,\phi_2)\sim(\psi_1,\psi_2)\,,\ (\psi_1,\psi_2)\sim(\varphi_1,\varphi_2)\Longrightarrow(\phi_1,\phi_2)\sim(\varphi_1,\varphi_2).
\]
Therefore, $\sim$ defines the equivalence relation on $H^1\!\times\!H^1$.

%Precompactness of the flow of the critical solution 1

\begin{lemma}\label{Precompactness of the flow of the critical solution 1}
$H^1\!\times\!H^1/{\sim}$ is metrizable with a distance
\[
d([(\phi_1,\phi_2)],[(\psi_1,\psi_2)])=\inf_{x_0\in\mathbb{R}^5}\|(\phi_1(\cdot-x_0),\phi_2(\cdot-x_0))-(\psi_1,\psi_2)\|_{H^1\times H^1}.
\]
Moreover, $H^1\!\times\!H^1/{\sim}$ is complete with respect to this distance.
\end{lemma}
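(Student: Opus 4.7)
The plan is to verify in turn that (i) the proposed $d$ is well-defined on $H^1\!\times\!H^1/{\sim}$, (ii) $d$ satisfies the axioms of a metric, and (iii) the resulting metric space is complete. Steps (i) and most of (ii) are routine manipulations exploiting the translation-invariance of $\|\cdot\|_{H^1\times H^1}$: if $(\widetilde\phi_1,\widetilde\phi_2)=(\phi_1(\cdot-y),\phi_2(\cdot-y))$ and $(\widetilde\psi_1,\widetilde\psi_2)=(\psi_1(\cdot-z),\psi_2(\cdot-z))$ are alternative representatives, the substitution $x_0\mapsto x_0+y-z$ in the infimum shows independence of representatives. Non-negativity is obvious; symmetry follows from the change of variables $x_0\mapsto -x_0$ in the defining infimum; and for the triangle inequality I would, given $\varepsilon>0$, pick shifts $x_1,x_2\in\mathbb{R}^5$ approximating the infima for the first two pairs of classes to within $\varepsilon/2$, and use $x_1+x_2$ (after a translation) as a competitor for the third.

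The genuinely delicate point of (ii) is the separation property: $d([(\phi_1,\phi_2)],[(\psi_1,\psi_2)])=0$ must force the classes to coincide. Here I would take a sequence $\{x_n\}\subset\mathbb{R}^5$ realizing the infimum, so that $(\phi_1(\cdot-x_n),\phi_2(\cdot-x_n))\to(\psi_1,\psi_2)$ strongly in $H^1\!\times\!H^1$, and distinguish two cases. If $\{x_n\}$ admits a bounded subsequence, one can extract $x_{n_k}\to x_0$ and conclude by continuity of translation on $H^1$ that $\psi_i=\phi_i(\cdot-x_0)$, so $[(\phi_1,\phi_2)]=[(\psi_1,\psi_2)]$. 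If instead $|x_n|\to\infty$, then Lemma~\ref{time shift lemma}\,(a), applied with $t_n\equiv0$, gives $\phi_i(\cdot-x_n)\rightharpoonup 0$ weakly in $H^1$; combined with the assumed strong convergence this forces $\psi_1=\psi_2=0$, and then the strong convergence of $\phi_i(\cdot-x_n)$ to $0$ gives $\phi_1=\phi_2=0$ as well, so both classes are the zero class. This is the step I expect to be the main obstacle, but it is fully prepared by the earlier Lemma~\ref{time shift lemma}.

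For completeness I would argue by the standard subsequence-telescoping trick. Given a Cauchy sequence $\{[(\phi_1^n,\phi_2^n)]\}$ in $H^1\!\times\!H^1/{\sim}$, extract a subsequence $\{n_k\}$ with $d\bigl([(\phi_1^{n_k},\phi_2^{n_k})],[(\phi_1^{n_{k+1}},\phi_2^{n_{k+1}})]\bigr)<2^{-k}$. Choose representatives $\Phi^k=(\phi_1^{n_k}(\cdot-y_k),\phi_2^{n_k}(\cdot-y_k))$ inductively so that
\[
\|\Phi^{k+1}-\Phi^k\|_{H^1\times H^1}<2^{1-k},
\]
which is possible because each class contains a shift attaining the infimum to within an arbitrarily small error. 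Then $\{\Phi^k\}$ is Cauchy in $H^1\!\times\!H^1$, hence converges in norm to some $\Phi=(\psi_1,\psi_2)\in H^1\!\times\!H^1$. Because $d([\Phi^k],[\Phi])\leq\|\Phi^k-\Phi\|_{H^1\times H^1}\to 0$, the subsequence $[(\phi_1^{n_k},\phi_2^{n_k})]=[\Phi^k]$ converges to $[\Phi]$, and the Cauchy property of the original sequence upgrades this to convergence of the full sequence. This finishes the proof plan; no ingredient beyond the metric-space bookkeeping and Lemma~\ref{time shift lemma} is required.
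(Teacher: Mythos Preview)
Your proposal is correct and follows essentially the same approach as the paper: the paper first isolates the fact that each translation orbit is closed in $H^1\times H^1$ (via exactly your bounded-versus-unbounded dichotomy on $\{x_n\}$) and then uses this for the separation axiom and triangle inequality, while you fold the argument directly into the separation step and use approximate minimizers for the triangle inequality; the completeness arguments are identical. The one item the paper includes that you omit is the verification $\pi(B((\phi_1,\phi_2),r))=B([(\phi_1,\phi_2)],r)$, i.e.\ that the metric topology of $d$ coincides with the quotient topology on $H^1\times H^1/{\sim}$---this is the full content of ``metrizable''---but the check is routine from the definition of $d$ once the metric axioms are in hand.
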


\begin{remark}
$H^1\!\times\!H^1/{\sim}$ is not a vector space.
\end{remark}

\begin{proof}
First, we establish that the orbits of $(\phi_1(\cdot-x_0),\phi_2(\cdot-x_0))$ are closed in $H^1\!\times\!H^1$. The orbit of $(\phi_1,\phi_2)=(0,0)$ is $(0,0)$. Suppose $(\phi_1,\phi_2)\neq(0,0)$, $\{x_n\}\subset\mathbb{R}^5$, and $(\phi_1(\cdot-x_n),\phi_2(\cdot-x_n))$ converges on $(\psi_1,\psi_2)$ in $H^1\!\times\!H^1$. Then, we claim that $x_n$ converges. If not, then either $x_n$ is unbounded and there exists a subsequence $x_n$ such that $|x_n|\longrightarrow\infty$, or $x_n$ is bounded and two subsequences $x_n\longrightarrow x_0$ and $x_{n'}\longrightarrow x_0'$. In the first case, we obtain $(\psi_1,\psi_2)=(0,0)$. We consider convergence of $(\phi_1,\phi_2)$ in $B(0,R)$ for any $R>0$. Then, $(\psi_1(x),\psi_2(x))=(0,0)$ for $x\in B(0,R)$. Because $R>0$ is arbitrary, $(\psi_1,\psi_2)=(0,0)$. This implies $(\phi_1,\phi_2)=(0,0)$. In the second case, we obtain $(\phi_1(\cdot-x_0),\phi_2(\cdot-x_0))=(\phi_1(\cdot-x_0'),\phi_2(\cdot-x_0'))$. Because $C_0^\infty$ is dense in $H^1$, for any $\varepsilon>0$, there exists $f,\,g\in C_0^\infty$ such that
\[
\|\phi_1-f\|_{H^1}<\varepsilon\ \ \text{ and }\ \ \|\phi_2-g\|_{H^1}<\varepsilon.
\]
Thus,
\begin{align*}
&\|(\phi_1(\cdot-x_0),\phi_2(\cdot-x_0))-(\psi_1,\psi_2)\|_{H^1\times H^1}\\
&\hspace{1cm}\leq\|(\phi_1,\phi_2)(\cdot-x_0)-(f,g)(\cdot-x_0)\|_{H^1\times H^1}+\|(f,g)(\cdot-x_0)-(f,g)(\cdot-x_n)\|_{H^1\times H^1}\\
&\hspace{2cm}+\|(f,g)(\cdot-x_n)-(\phi_1,\phi_2)(\cdot-x_n)\|_{H^1\times H^1}+\|(\phi_1,\phi_2)(\cdot-x_n)-(\psi_1,\psi_2)\|_{H^1\times H^1}<\varepsilon
\end{align*}
and we obtain $(\phi_1(\cdot-x_0),\phi_2(\cdot-x_0))=(\psi_1,\psi_2)$. Similarly, $(\phi_1(\cdot-x_0'),\phi_2(\cdot-x_0'))=(\psi_1,\psi_2)$, i.e. $(\phi_1(\cdot-x_0),\phi_2(\cdot-x_0))=(\phi_1(\cdot-x_0'),\phi_2(\cdot-x_0'))$. This equality holds only at $(\phi_1,\phi_2)=(0,0)$. Therefore, the both cases cause contradiction.\\
Next, we verify that the distance $d$ defines a distance on $H^1\times H^1/{\sim}$. If $[(\phi_1,\phi_2)]=[(\psi_1,\psi_2)]$, then it is clear that $d([(\phi_1,\phi_2)],[(\psi_1,\psi_2)])=0$. Conversely, if $d([(\phi_1,\phi_2)],[(\psi_1,\psi_2)])=0$, then $\displaystyle\inf_{x_0\in\mathbb{R}^5}\|(\phi_1(\cdot-x_0),\phi_2(\cdot-x_0))-(\psi_1,\psi_2)\|_{H^1\times H^1}=0$ by definition of the distance $d$. Since the orbits of $(\phi_1(\cdot-x_0),\phi_2(\cdot-x_0))$ are closed in $H^1\!\times\!H^1$, there exists $x_0^\ast\in\mathbb{R}^5$ such that $\|(\phi_1(\cdot-x_0^\ast),\phi_2(\cdot-x_0^\ast))-(\psi_1,\psi_2)\|_{H^1\times H^1}=0$. Thus, $(\phi_1(\cdot-x_0^\ast),\phi_2(\cdot-x_0^\ast))=(\psi_1,\psi_2)$, i.e. $[(\phi_1,\phi_2)]=[(\psi_1,\psi_2)]$. It follows clearly that $d([(\phi_1,\phi_2)],[(\psi_1,\psi_2)])=d([(\psi_1,\psi_2)],[(\phi_1,\phi_2)])$. If $(\phi_1,\phi_2),\ (\psi_1,\psi_2),\ (\eta_1,\eta_2)\in H^1\!\times\!H^1$, then there exists $x_1^\ast\in\mathbb{R}^5$ and $x_2^\ast\in\mathbb{R}^5$ such that 
\[
d([(\phi_1,\phi_2)],[(\eta_1,\eta_2)])=\|(\phi_1(\cdot-x_1^\ast),\phi_2(\cdot-x_1^\ast))-(\eta_1,\eta_2)\|_{H^1\times H^1},
\]
and
\[
d([(\eta_1,\eta_2)],[(\psi_1,\psi_2)])=\|(\eta_1(\cdot-x_2^\ast),\eta_2(\cdot-x_2^\ast))-(\psi_1,\psi_2)\|_{H^1\times H^1}.
\]
Hence,
\begin{align*}
d([(\phi_1,\phi_2)],[(\psi_1,\psi_2)])&=\inf_{x_0\in\mathbb{R}^5}\|(\phi_1(\cdot-x_0),\phi_2(\cdot-x_0))-(\psi_1,\psi_2)\|_{H^1\times H^1}\\
&\leq\|(\phi_1(\cdot-x_1^\ast-x_2^\ast),\phi_2(\cdot-x_1^\ast-x_2^\ast))-(\psi_1,\psi_2)\|_{H^1\times H^1}\\
&\leq\|(\phi_1(\cdot-x_1^\ast-x_2^\ast),\phi_2(\cdot-x_1^\ast-x_2^\ast))-(\eta_1(\cdot-x_2^\ast),\eta_2(\cdot-x_2^\ast))\|_{H^1\times H^1}\\
&\hspace{5cm}+\|(\eta_1(\cdot-x_2^\ast),\eta_2(\cdot-x_2^\ast))-(\psi_1,\psi_2)\|_{H^1\times H^1}\\
&\leq\|(\phi_1(\cdot-x_1^\ast),\phi_2(\cdot-x_1^\ast))-(\eta_1,\eta_2)\|_{H^1\times H^1}\\
&\hspace{5cm}+\|(\eta_1(\cdot-x_2^\ast),\eta_2(\cdot-x_2^\ast))-(\psi_1,\psi_2)\|_{H^1\times H^1}\\
&=d([(\phi_1,\phi_2)],[(\eta_1,\eta_2)])+d([(\eta_1,\eta_2)],[(\psi_1,\psi_2)]).
\end{align*}
Thus, the triangle inequality also holds. Next, we prove completeness. Suppose that $[(\phi_{1,n},\phi_{2,n})]$ is a Cauchy sequence. It suffices to show that a subsequence converges. We can pass to a subsequence $[(\phi_{1,n},\phi_{2,n})],[(\phi_{1,n+1},\phi_{2,n+1})]$ so that $d([(\phi_{1,n},\phi_{2,n})],[(\phi_{1,n+1},\phi_{2,n+1})])\leq2^{-n}$. We take $x_1=0$. We construct a sequence $x_n$ inductively as follows: given $x_{n-1}$, select $x_n$ so that $\|(\phi_{1,n-1}(\cdot-x_{n-1}),\phi_{2,n-1}(\cdot-x_{n-1}))-(\phi_{1,n}(\cdot-x_n),\phi_{2,n}(\cdot-x_n))\|_{H^1\times H^1}\leq2^{-n+1}$. Then, $(\phi_{1,n}(\cdot-x_{n}),\phi_{2,n}(\cdot-x_{n}))$ is a Cauchy sequence in $H^1\!\times\!H^1$, and hence, there exists $(\phi_1,\phi_2)$ such that $(\phi_{1,n}(\cdot-x_n),\phi_{2,n}(\cdot-x_n))\longrightarrow(\phi_1,\phi_2)$ in $H^1\!\times\!H^1$. Then,
\[
d([\phi_{1,n},\phi_{2,n}],[\phi_1,\phi_2])\leq\|(\phi_{1,n}(\cdot-x_n),\phi_{2,n}(\cdot-x_n))-(\phi_1,\phi_2)\|_{H^1\times H^1}\longrightarrow0.
\]
Thus, $[(\phi_{1,n},\phi_{2,n})]\longrightarrow[(\phi_1,\phi_2)]\ \ \text{in}\ \ H^1\!\times\!H^1/{\sim}.$\\
Finally, we prove that $\pi(B((\phi_1,\phi_2),r))=B([(\phi_1,\phi_2)],r)$ for any $(\phi_1,\phi_2)\in H^1\!\times\!H^1$ and $r>0$. We take any $(\psi_1,\psi_2)\in B((\phi_1,\phi_2),r)$. Then, $\|(\phi_1,\phi_2)-(\psi_1,\psi_2)\|_{H^1\times H^1}<r$, and hence,
\begin{align*}
&d([(\phi_1,\phi_2)],[(\psi_1,\psi_2)])\\
&\hspace{1.5cm}=\inf_{x_0\in\mathbb{R}^5}\|(\phi_1(\cdot-x_0),\phi_2(\cdot-x_0))-(\psi_1,\psi_2)\|_{H^1\times H^1}\leq\|(\phi_1,\phi_2)-(\psi_1,\psi_2)\|_{H^1\times H^1}<r.
\end{align*}
Thus, $[(\psi_1,\psi_2)]\in B([(\phi_1,\phi_2)],r)$. We take any $[(\psi_1,\psi_2)]\in B([(\phi_1,\phi_2)],r)$. Then,
\[
d([(\phi_1,\phi_2)],[(\psi_1,\psi_2)])<r.
\]
Since the orbits of $(\phi_1(\cdot-x_0),\phi_2(\cdot-x_0))$ is closed in $H^1\!\times\!H^1$, there exists $x_0^\ast$ such that
\begin{align*}
d([(\phi_1,\phi_2)],[(\psi_1,\psi_2)])&=\|(\phi_1(\cdot-x_0^\ast),\phi_2(\cdot-x_0^\ast))-(\psi_1,\psi_2)\|_{H^1\times H^1}\\
&=\|(\phi_1,\phi_2)-(\psi_1(\cdot+x_0^\ast),\psi_2(\cdot+x_0^\ast))\|_{H^1\times H^1}<r.
\end{align*}
Since $(\psi_1(\cdot+x_0^\ast),\psi_2(\cdot+x_0^\ast))\in B((\phi_1,\phi_2),r)$, we obtain $[(\psi_1,\psi_2)]\in\pi(B((\phi_1,\phi_2),r))$. Therefore, we have $\pi(B((\phi_1,\phi_2),r))=B([(\phi_1,\phi_2)],r)$ and the topology deduced from the distance $d$ on $H^1\!\times\!H^1/{\sim}$ is quotient topology.
\end{proof}

%Precompactness of the flow of the critical solution 2

\begin{lemma}\label{Precompactness of the flow of the critical solution 2}
Let $K$ be a precompact subset of $H^1\!\times\! H^1/{\sim}$. Assume
\begin{align*}
^\exists \eta>0\text{ such that }^\forall(\phi_1,\phi_2)\in\pi^{-1}(K),\ \ \ \|(\phi_1,\phi_2)\|_{H^1\times H^1}\geq\eta.
\end{align*}
Then there exists $\widetilde{K}\subset H^1\!\times\! H^1$ such that $\pi(\widetilde{K})=K$ and $\widetilde{K}$ is precompact in $H^1\!\times\!H^1$.
\end{lemma}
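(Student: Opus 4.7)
The plan is to construct $\widetilde{K}$ as a set of ``centered'' representatives, using an $L^2$-concentration function. Fix $R_0>0$ (to be chosen) and, for $\phi=(\phi_1,\phi_2)\in H^1\!\times\!H^1$, set
\[
C_\phi(y)=\int_{|x-y|\leq R_0}\bigl(|\phi_1(x)|^2+|\phi_2(x)|^2\bigr)dx.
\]
Since $C_\phi$ is continuous in $y$ and tends to $0$ as $|y|\to\infty$ (by integrability of $|\phi|^2$), its supremum is attained; moreover, a change of variables shows $\sup_y C_\phi(y)$ depends only on the class $[\phi]$. For each $[\phi]\in K$ I choose (via axiom of choice) $z_\phi\in\mathbb{R}^5$ maximizing $C_\phi$ and set $\widetilde{\phi}(x)=\phi(x+z_\phi)$, so that $0$ is a maximizer of $C_{\widetilde{\phi}}$. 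Define $\widetilde{K}=\{\widetilde{\phi}:[\phi]\in K\}$; then $\pi(\widetilde{K})=K$ by construction, and the task reduces to showing $\widetilde{K}$ is precompact in $H^1\!\times\!H^1$.

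To enable this construction I first prove two uniform bounds on $\pi^{-1}(K)$. Translation-invariance of the $H^1\!\times\!H^1$-norm together with precompactness of $K$ gives $\sup_{\phi\in\pi^{-1}(K)}\|\phi\|_{H^1\times H^1}<\infty$. Next, if $\|\phi_n\|_{L^2\times L^2}\to 0$ along some $[\phi_n]\in K$, a convergent subsequence $[\phi_n]\to[\psi]$ together with the closedness of orbits proved in Lemma \ref{Precompactness of the flow of the critical solution 1} produces translations $y_n$ with $\phi_n(\cdot-y_n)\to\psi$ in $H^1\!\times\!H^1$, forcing $\|\psi\|_{L^2\times L^2}=0$ and hence $\psi=0$, which contradicts the nontriviality hypothesis. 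Thus $\|\phi\|_{L^2\times L^2}\geq\eta'>0$ uniformly on $\pi^{-1}(K)$. Covering $K$ by finitely many quotient-balls of radius $\eta'/4$ around classes $[\psi_1],\dots,[\psi_N]$ with fixed representatives $\psi_i$, and choosing $R_0$ large enough that $\|\psi_i\|_{L^2\times L^2(|x|>R_0)}<\eta'/4$ for each $i$, I conclude $\sup_y C_\phi(y)\geq\delta:=(\eta')^2/4$ for every $\phi\in\pi^{-1}(K)$.

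To show precompactness of $\widetilde{K}$ I argue sequentially. Given $\{\widetilde{\phi}_n\}\subset\widetilde{K}$, precompactness of $K$ yields a subsequence with $[\widetilde{\phi}_n]\to[\psi]$ for some $\psi\in H^1\!\times\!H^1$ with $\|\psi\|_{H^1\times H^1}\geq\eta$, and closedness of orbits provides $y_n\in\mathbb{R}^5$ with $\widetilde{\phi}_n(\cdot-y_n)\to\psi$ in $H^1\!\times\!H^1$. The decisive step is to show that $\{y_n\}$ is bounded. Using the elementary inequality
\[
\bigl|\|f_n\|_{L^2(A)}^2-\|\psi\|_{L^2(A)}^2\bigr|\leq\|f_n-\psi\|_{L^2}\|f_n+\psi\|_{L^2}
\]
with $f_n=\widetilde{\phi}_n(\cdot-y_n)$ and translated balls, one obtains $C_{\widetilde{\phi}_n}(0)=C_\psi(y_n)+o(1)$; since centering gives $C_{\widetilde{\phi}_n}(0)\geq\delta$ and $C_\psi(y)\to0$ as $|y|\to\infty$, this forces $\{y_n\}$ bounded. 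Extracting a further subsequence $y_n\to y^*$ and using the $H^1\!\times\!H^1$-continuity of translation, one concludes $\widetilde{\phi}_n\to\psi(\cdot+y^*)$ strongly in $H^1\!\times\!H^1$.

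The main obstacle is exactly the boundedness of $\{y_n\}$, which is where both hypotheses enter decisively: without $\|\phi\|_{H^1\times H^1}\geq\eta$ the quotient limit $\psi$ could vanish and $C_\psi(y_n)$ would not be bounded below, making the centering procedure collapse; without precompactness of $K$ there would be no finite family $\{\psi_i\}$ to calibrate the uniform concentration constant $\delta$.
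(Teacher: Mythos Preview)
Your proof is correct and shares the paper's overall architecture: choose for each class a ``centered'' representative with a uniform local lower bound, then use that bound to show the translations $y_n$ arising from quotient convergence stay bounded. The difference lies only in the centering device. The paper works with the $H^1$-norm on the fixed unit ball: it shows by contradiction that some translate of every $\phi\in\pi^{-1}(K)$ satisfies $\|\phi\|_{H^1(B(0,1))\times H^1(B(0,1))}\geq\varepsilon$, and then, assuming $|x_n|\to\infty$, uses disjointness of $B(0,1)$ and $B(-x_n,|x_n|-1)$ to force the limit's norm strictly below $l$, a contradiction. You instead employ a Lions-type $L^2$ concentration function, calibrate the radius $R_0$ via a finite cover of $K$, and center each representative at the concentration maximizer; boundedness of $y_n$ then drops out of $C_\psi(y_n)\geq\delta-o(1)$ together with $C_\psi(y)\to0$ at infinity. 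Your route costs an extra preliminary step (upgrading the assumed $H^1$ lower bound to an $L^2$ one, since your concentration function sees only $L^2$ data), but your argument for boundedness of $y_n$ is somewhat cleaner than the paper's disjoint-balls computation. Both approaches are standard and equally effective here.
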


\begin{proof}
We first show that there exists $\varepsilon>0$ such that for any $(p,q)\in K$, there exists $(\psi_1(p),\psi_2(q))=(\psi_1,\psi_2)\in\pi^{-1}(p,q)$ such that
\begin{align}
\|(\psi_1,\psi_2)\|_{H^1(B(0,1))\times H^1(B(0,1))}\geq\varepsilon \label{83}
\end{align}
by contradiction. If not, then for any $n\in\mathbb{N}$, there exists $[(\phi_{1,n},\phi_{2,n})]\in K$
\begin{align}
\sup_{x_0\in\mathbb{R}^5}\|(\phi_{1,n}(\cdot-x_0),\phi_{2,n}(\cdot-x_0))\|_{H^1(B(0,1))\times H^1(B(0,1))}\leq\frac{1}{n}.\label{60}
\end{align}
Since $K$ is precompact, we take a subsequence from $\{[(\phi_{1,n},\phi_{2,n})]\}$ if necessary, then there exists $(\phi_1,\phi_2)\in H^1\!\times\! H^1$ such that $[(\phi_{1,n},\phi_{2,n})]\longrightarrow(p,q)$ in  $H^1\!\times\! H^1/{\sim}$, where we set $\pi(\phi_1,\phi_2)=(p,q)$. In other wards, if $(\phi_1,\phi_2)$ is fixed in $\pi^{-1}(p,q)$, then
\[
\inf_{x_0\in\mathbb{R}^5}\|(\phi_{1,n}(\cdot-x_0),\phi_{2,n}(\cdot-x_0))-(\phi_1,\phi_2)\|_{H^1\times H^1}\longrightarrow0\ \ \text{as}\ \ n\rightarrow\infty.
\]
Thus, we may find a sequence $\{x_n\}\subset\mathbb{R}^5$ such that
\[
\|(\phi_{1,n}(\cdot-x_n),\phi_{2,n}(\cdot-x_n))-(\phi_1,\phi_2)\|_{H^1\times H^1}\longrightarrow0\ \ \text{as}\ \ n\rightarrow\infty.
\]
From \eqref{60}, for any $x_0\in\mathbb{R}^5$,
\[
\|(\phi_{1,n}(\cdot+x_0-x_n),\phi_{2,n}(\cdot+x_0-x_n))\|_{H^1(B(0,1))\times H^1(B(0,1))}\leq\frac{1}{n}.
\]
Thus,
\begin{align*}
&\|(\phi_1,\phi_2)\|_{H^1(B(x_0,1))\times H^1(B(x_0,1))}\\
&\hspace{2cm}\leq\|(\phi_1,\phi_2)-(\phi_{1,n}(\cdot-x_n),\phi_{2,n}(\cdot-x_n))\|_{H^1\times H^1}\\
&\hspace{4cm}+\|(\phi_{1,n}(\cdot-x_n),\phi_{2,n}(\cdot-x_n))\|_{H^1(B(x_0,1))\times H^1(B(x_0,1))}\\
&\hspace{2cm}=\|(\phi_1,\phi_2)-(\phi_{1,n}(\cdot-x_n),\phi_{2,n}(\cdot-x_n))\|_{H^1\times H^1}\\
&\hspace{4cm}+\|(\phi_{1,n}(\cdot+x_0-x_n),\phi_{2,n}(\cdot+x_0-x_n))\|_{H^1(B(0,1))\times H^1(B(0,1))}\\
&\hspace{2cm}\leq\|(\phi_1,\phi_2)-(\phi_{1,n}(\cdot-x_n),\phi_{2,n}(\cdot-x_n))\|_{H^1\times H^1}+\frac{1}{n}\longrightarrow0\ \ \text{as}\ \ n\rightarrow\infty.
\end{align*}
Hence, $(\phi_1,\phi_2)$ vanishes on $B(x_0,1)\times B(x_0,1)$. Because $x_0$ is arbitrary, $(\phi_1,\phi_2)=(0,0)$. However, this is contradiction. Next, let $\widetilde{K}=\{(\psi_1(p),\psi_2(q)):(p,q)\in K\text{ and }(\psi_1,\psi_2)\text{ satisfies }\eqref{83}\}$. Then, $(\psi_1,\psi_2)\in\widetilde{K}$ satisfies $\|(\psi_1,\psi_2)\|_{H^1(B(0,1))\times H^1(B(0,1))}\geq\varepsilon$. Also, we have $\pi(\widetilde{K})=K$.
\begin{comment}
Moreover, for any $(\phi_1,\phi_2)\in\pi^{-1}(K)$, 
\[
\|(\phi_1,\phi_2)\|_{H^1(B(0,1))\times H^1(B(0,1))}\geq\varepsilon.
\]
\end{comment}
We prove that $\widetilde{K}$ is precompact. Let $(\phi_{1,n},\phi_{2,n})$ be a sequence in $\widetilde{K}$. Because $K$ is precompact, if we pass to a subsequence, then there exists $(\phi_1,\phi_2)\in H^1\!\times\!H^1$ and a sequence $\{x_n\}\subset\mathbb{R}^5$ such that
\begin{align}
\lim_{n\rightarrow\infty}\|(\phi_{1,n}(\cdot-x_n),\phi_{2,n}(\cdot-x_n))-(\phi_1,\phi_2)\|_{H^1\times H^1}=0.\label{61}
\end{align}
Also, because $(\phi_{1,n},\phi_{2,n})$ is bounded in $H^1\!\times\!H^1$, we can pass to a subsequence so that
\begin{align}
\lim_{n\rightarrow\infty}\|(\phi_{1,n},\phi_{2,n})\|_{H^1\times H^1}=l\in(0,\infty).\label{62}
\end{align}
Here, we prove that $\{x_n\}$ is bounded. If not, then we can pass to a subsequence so that $|x_n|\longrightarrow\infty$.
\begin{align*}
&\limsup_{n\rightarrow\infty}\|(\phi_{1,n}(\cdot-x_n),\phi_{2,n}(\cdot-x_n))\|_{H^1(B(0,|x_n|-1))\times H^1(B(0,|x_n|-1))}\\
&\hspace{2cm}=\limsup_{n\rightarrow\infty}\|(\phi_{1,n},\phi_{2,n})\|_{H^1(B(-x_n,|x_n|-1))\times H^1(B(-x_n,|x_n|-1))}\\
&\hspace{2cm}\leq\limsup_{n\rightarrow\infty}\|(\phi_{1,n},\phi_{2,n})\|_{H^1\times H^1}-\limsup_{n\rightarrow\infty}\|(\phi_{1,n},\phi_{2,n})\|_{H^1(B(0,1))\times H^1(B(0,1))}\\
&\hspace{2cm}\leq l-\varepsilon.
\end{align*}
We obtain $\|(\phi_1,\phi_2)\|_{H^1\times H^1}\leq l-\varepsilon$. However, this is in contradiction to \eqref{61} and \eqref{62}. Thus, $|x_n|$ is bounded and we can pass to a subsequence so that $x_n\longrightarrow\widetilde{x}\ \ \text{as}\ \ n\rightarrow\infty$. Because $C_0^\infty$ is dense in $H^1$, for any $\varepsilon>0$, there exists $f,\,g\in C_0^\infty$ such that
\[
\|\phi_1-f\|_{H^1}<\varepsilon,\,\|\phi_2-g\|_{H^1}<\varepsilon.
\]
Thus,
\begin{align*}
&\|(\phi_{1,n},\phi_{2,n})-(\phi_1(\cdot+\widetilde{x}),\phi_2(\cdot+\widetilde{x}))\|_{H^1\times H^1}\\
&\hspace{1cm}\leq\|(\phi_{1,n},\phi_{2,n})-(\phi_1,\phi_2)(\cdot+x_n)\|_{H^1\times H^1}+\|(\phi_1,\phi_2)(\cdot+x_n)-(f,g)(\cdot+x_n)\|_{H^1\times H^1}\\
&\hspace{2cm}+\|(f,g)(\cdot+x_n)-(f,g)(\cdot+\widetilde{x})\|_{H^1\times H^1}+\|(f,g)(\cdot+\widetilde{x})-(\phi_1,\phi_2)(\cdot+\widetilde{x})\|_{H^1\times H^1}<\varepsilon.
\end{align*}
Hence, $(\phi_{1,n},\phi_{2,n})$ converge in $H^1\!\times\!H^1$. Therefore, $\widetilde{K}$ is precompact.
\end{proof}

%Precompactness of the flow of the critical solution 3

\begin{lemma}\label{Precompactness of the flow of the critical solution 3}
Let $(u,v)$ be the time-global $H^1\!\times\!H^1$ solution to (NLS). Suppose that
\[
K=\pi(\{(u(\cdot,t),v(\cdot,t)):t\in[0,\infty)\})
\]
is precompact in $H^1\!\times\! H^1/{\sim}$. Then there exists $x(t)$, a continuous path in $\mathbb{R}^5$, such that
\[
\{(u(\cdot-x(t),t),v(\cdot-x(t),t)):t\in[0,\infty)\}
\]
is precompact in $H^1\!\times\!H^1$.
\end{lemma}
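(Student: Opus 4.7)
The plan is to reduce the lemma to Lemma \ref{Precompactness of the flow of the critical solution 2} via a mass-conservation argument, and then upgrade the resulting choice of translation to a continuous path using $H^1$-continuity of the flow.

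For the reduction, I verify the lower-bound hypothesis of Lemma \ref{Precompactness of the flow of the critical solution 2}. By Theorem \ref{Conservation law}, $M(u(t),v(t))$ is independent of $t$. In the intended application the flow is that of the critical solution produced by Proposition \ref{Existence of a critical solution}, which is nontrivial since $\|(u_c,v_c)\|_{S(\dot{H}^\frac{1}{2})\times S(\dot{H}^\frac{1}{2})}=\infty$, so $M(u_0,v_0)>0$. Every element of $\pi^{-1}(K)$ is a spatial translate of some $(u(t),v(t))$, and translation preserves $L^2$ norms, so
\[
\|(\phi_1,\phi_2)\|_{H^1\times H^1}^2\geq\|\phi_1\|_{L^2}^2+\|\phi_2\|_{L^2}^2\geq\tfrac{1}{2}M(\phi_1,\phi_2)=\tfrac{1}{2}M(u_0,v_0)
\]
for every $(\phi_1,\phi_2)\in\pi^{-1}(K)$, which furnishes the required uniform $\eta>0$. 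Lemma \ref{Precompactness of the flow of the critical solution 2} then produces a precompact $\widetilde{K}\subset H^1\times H^1$ with $\pi(\widetilde{K})=K$. For each $t\geq 0$, since $[(u(t),v(t))]\in K=\pi(\widetilde{K})$, there exists some $x(t)\in\mathbb{R}^5$ with $(u(\cdot-x(t),t),v(\cdot-x(t),t))\in\widetilde{K}$, so the resulting orbit lies in the precompact set $\widetilde{K}$.

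It remains to select $x(t)$ continuously. I would fix $x(0)$ arbitrarily and extend by local continuous selection. Given $t_0$ with $x(t_0)$ already chosen, Theorem \ref{H1 local} gives $(u(t),v(t))\to(u(t_0),v(t_0))$ in $H^1\times H^1$ as $t\to t_0$, so by equivariance of translation $(u(\cdot-x(t_0),t),v(\cdot-x(t_0),t))$ lies in a vanishing $H^1\times H^1$-neighborhood of $\widetilde{K}$. Since distinct spatial translates of a fixed nonzero element of $H^1\times H^1$ are separated in $H^1\times H^1$ (by the closedness and injectivity of translation orbits verified in the proof of Lemma \ref{Precompactness of the flow of the critical solution 1}), the branch of $S(t):=\{x:(u(\cdot-x,t),v(\cdot-x,t))\in\widetilde{K}\}$ near $x(t_0)$ is locally isolated, and one may select the nearest branch continuously. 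Gluing these local selections via connectedness of $[0,\infty)$ yields a continuous $x:[0,\infty)\to\mathbb{R}^5$.

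The main obstacle is this last continuity step: the multi-valued map $t\mapsto S(t)$ must be shown to admit a continuous single-valued selection. The key technical input is the rigidity of the translation action on $H^1\times H^1\setminus\{(0,0)\}$, namely closedness of orbits and triviality of the stabilizer, which guarantees that different branches of $S(t)$ are uniformly separated. Combined with uniform continuity of the translation map on the compact set $\widetilde{K}$, this rigidity upgrades the continuity of $t\mapsto[(u(t),v(t))]$ in $H^1\times H^1/\sim$ to a genuinely continuous lift in $H^1\times H^1$.
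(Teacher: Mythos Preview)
Your reduction to Lemma \ref{Precompactness of the flow of the critical solution 2} via mass conservation is correct and matches the paper exactly. The gap is in the continuous-selection step. Lemma \ref{Precompactness of the flow of the critical solution 2} only produces \emph{some} precompact $\widetilde{K}$ with $\pi(\widetilde{K})=K$; it gives no information on how the fiber $S(t)=\{x:(u(\cdot-x,t),v(\cdot-x,t))\in\widetilde{K}\}$ varies with $t$. Your claim that ``one may select the nearest branch continuously'' presupposes that some element of $S(t)$ exists near $x(t_0)$ for $t$ near $t_0$, but the representative in $\widetilde{K}$ for the class $[(u(t),v(t))]$ was chosen independently of that for $[(u(t_0),v(t_0))]$ and may sit at a translate far from $x(t_0)$. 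The orbit-rigidity facts from Lemma \ref{Precompactness of the flow of the critical solution 1} give injectivity of the translation action, hence at most one nearby branch; they do not supply the existence of a nearby branch, which is what is actually needed.

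The paper avoids this issue by not attempting to keep the translated flow inside $\widetilde{K}$ for all $t$. Instead it uses uniform continuity of $t\mapsto(u(t),v(t))$ on each $[N,N+1]$ to build an increasing sequence $t_n\to\infty$ with mesh tending to $0$, chooses $x(t_n)$ so that the translate at time $t_n$ lies in $\widetilde{K}$, and defines $x(t)$ by \emph{linear interpolation} between consecutive $x(t_n)$ --- so continuity of $x$ is automatic and costs nothing. Precompactness of $\{(u(\cdot-x(t),t),v(\cdot-x(t),t)):t\geq 0\}$ is then proved directly: given $s_k\to\infty$, one brackets $s_k$ by adjacent mesh points $t_{n(k)-1}\leq s_k<t_{n(k)}$, passes to a subsequence along which the translates at both mesh points converge (precompactness of $\widetilde{K}$), deduces that $x(t_{n(k)})-x(t_{n(k)-1})$ converges, and finally uses that $x(s_k)$ lies on the segment between them together with the uniform-continuity bound $\|(u(s_k),v(s_k))-(u(t_{n(k)}),v(t_{n(k)}))\|_{H^1\times H^1}\to 0$.
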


\begin{proof}
In the case $(u,v)=(0,0)$, the claim holds clearly. Let $(u,v)\neq(0,0)$. Then, it follows that
\[
\|(u(x-x_0,t),v(x-x_0,t))\|_{H^1\times H^1}\geq M(u_0,v_0)>0
\]
for any $(u(x-x_0,t),v(x-x_0,t))\in\pi^{-1}(K)$. We can take $\widetilde{K}$ satisfying $\pi(\widetilde{K})=K$ and being precompact in $H^1\!\times\!H^1$ by Lemma \ref{Precompactness of the flow of the critical solution 2}. Since a map $(u,v):[N,N+1]\ni t\longmapsto\|(u(t),v(t))\|_{H^1\times H^1}\in\mathbb{R}$ is uniformly continuous for each $N\in\mathbb{N}$, there exists $\delta_N>0$
\[
\|(u(\cdot,t),v(\cdot,t))-(u(\cdot,t'),v(\cdot,t'))\|_{H^1\times H^1}\leq\frac{1}{N}
\]
for any $t,t'\in[N,N+1]$ with $|t-t'|\leq\delta_N$. Let a sequence $\{t_n\}$ increase, satisfy $t_n\longrightarrow\infty$, and devide a interval $[N,N+1]$ into less than width $\delta_N$. Since $\pi(\widetilde{K})=K$, we can select $x(t_n)\in\mathbb{R}^5$ with $(u(\cdot-x(t_n),t_n),v(\cdot-x(t_n),t_n))\in\widetilde{K}$ for any $n$. We set that $x(t)$ is a continuous function connecting $x(t_n)$ and $x(t_{n+1})$ with a segment. We will prove that $\{(u(\cdot-x(t),t),v(\cdot-x(t),t)):t\in[0,\infty)\}$ is precompact in $H^1\!\times\!H^1$. We set that a sequence $s_k$ in $[0,\infty)$ satisfies $s_k\longrightarrow s_0$ or $s_k\longrightarrow\infty$ by passing to a subsequence $s_k$. In the case $s_k\longrightarrow s_0$, it follows that $(u(\cdot-x(s_k),s_k),v(\cdot-x(s_k),s_k))\longrightarrow(u(\cdot-x(s_0),s_0),v(\cdot-x(s_0),s_0))$ by the continuity of $(u(t),v(t))$ and $x(t)$. In the case $s_k\longrightarrow\infty$, we can take a unique index $n(k)$ with $t_{n(k)-1}\leq s_k<t_{n(k)}$ for any $k$. Since $\widetilde{K}$ is precompact, $(u(\cdot-x(t_{n(k)-1}),t_{n(k)-1}),v(\cdot-x(t_{n(k)-1}),t_{n(k)-1}))$ and $(u(\cdot-x(t_{n(k)}),t_{n(k)}),v(\cdot-x(t_{n(k)}),t_{n(k)}))$ converge in $H^1\!\times\!H^1$ by passing to a subsequence (with respect to $k$). $(u(\cdot x(t_{n(k)-1}),t_{n(k)}),v(\cdot-x(t_{n(k)-1}),t_{n(k)}))$ converge by the way of taking $t_n$ and the uniform continuity of $(u,v)$. It suffices to prove that $(u(\cdot-x(s_k),t_{n(k)}),v(\cdot-x(s_k),t_{n(k)}))$ has a converging subsequence. Since $(u(\cdot-x(t_{n(k)-1}),t_{n(k)}),v(\cdot-x(t_{n(k)-1}),t_{n(k)}))$ and $(u(\cdot-x(t_{n(k)}),t_{n(k)}),v(\cdot-x(t_{n(k)}),t_{n(k)}))$ converge, $x(t_{n(k)-1})-x(t_{n(k)})$ converges. $x(s_k)-x(t_{n(k)-1})$ converges by passing to a subsequence. Therefore, $(u(\cdot-x(s_k),t_{n(k)}),v(\cdot-x(s_k),t_{n(k)}))$ converges in $H^1\!\times\!H^1$.
\end{proof}

%Precompactness of the flow of the critical solution

\begin{proposition}[Precompactness of the flow of the critical solution]\label{Precompactness of the flow of the critical solution}
With the same $(u_c,v_c)$ as Proposition \ref{Existence of a critical solution}, there exists a continuous path $x(t)$ in $\mathbb{R}^5$ such that
\[
K=\{(u_c(\cdot-x(t),t),v_c(\cdot-x(t),t)):t\in [0,\infty)\}\subset H^1\times H^1
\]
is precompact in $H^1\!\times\!H^1$.
\end{proposition}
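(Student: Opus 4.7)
The plan is to reduce precompactness of $K$ in $H^1 \times H^1$ to precompactness of its projection $\pi(K)$ in the quotient $H^1 \times H^1/\!\sim$, and then extract this precompactness from the linear profile decomposition in the standard Kenig--Merle manner. The reduction is immediate: since $(u_{c,0},v_{c,0}) \neq (0,0)$ and mass is conserved, every element of $\pi^{-1}(\pi(K))$ has $H^1 \times H^1$ norm bounded below by a positive constant depending only on $M(u_{c,0},v_{c,0})$. Lemma~\ref{Precompactness of the flow of the critical solution 3} (whose proof already incorporates Lemma~\ref{Precompactness of the flow of the critical solution 2}) will then produce the continuous path $x(t)$ from precompactness of $\pi(K)$ alone.

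To verify precompactness of $\pi(K)$ I take an arbitrary sequence $\{t_n\} \subset [0,\infty)$ and extract a convergent subsequence. If $\{t_n\}$ has a bounded subsequence converging to some finite $t^\ast$, the $H^1 \times H^1$ continuity of the flow settles the matter at once; so I may assume $t_n \to \infty$. The $H^1 \times H^1$ boundedness of $(u_c(t_n),v_c(t_n))$, which follows from conservation of $I_\omega$ together with Lemma~\ref{Comparability of K and I}, lets me apply Theorem~\ref{Linear profile decomposition}, yielding profiles $(\phi^j,\psi^j)$, time shifts $t_n^j \in (-\infty,0]$, space shifts $x_n^j \in \mathbb{R}^5$, and remainders $(\Phi_n^M,\Psi_n^M)$.

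The heart of the argument is to show that only one profile is nonzero, that $t_n^1 \equiv 0$, and that $I_\omega(\phi^1,\psi^1) = I_\omega^c$. By Corollary~\ref{I decomposition}, Corollary~\ref{K decomposition}, and Lemma~\ref{lemma for critical solution}, each nonzero profile has $0 < I_\omega(\phi^j,\psi^j) < I_\omega(\phi_\omega,\psi_\omega)$ with $K_\omega^{20,8} > 0$, and the remainder has $I_\omega \geq 0$. Assume for contradiction that two or more profiles are nonzero, or that $t_n^1 \to -\infty$, or that $I_\omega(\phi^1,\psi^1) < I_\omega^c$. In any of these scenarios, at least one nonlinear profile $(\widetilde u^j,\widetilde v^j)$ --- constructed either by direct Cauchy theory when $t_n^j \equiv 0$ or via Lemma~\ref{Existence of wave operators} when $t_n^j \to -\infty$ --- carries $I_\omega < I_\omega^c$ and hence scatters by the definition of $I_\omega^c$. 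One then assembles the nonlinear profile decomposition $(\widetilde u_n^{\leq M},\widetilde v_n^{\leq M})$ exactly as in Proposition~\ref{Existence of a critical solution}; the Duhamel error consists only of cross terms $\widetilde v_n^j \overline{\widetilde u_n^k}$ and $\widetilde u_n^j \widetilde u_n^k$ with $j \neq k$, which vanish in $S'(\dot H^{-1/2})$ by the pairwise divergence \eqref{40}. Theorem~\ref{Long time perturbation} then forces $\|(u_c,v_c)\|_{S(\dot H^{1/2}) \times S(\dot H^{1/2})} < \infty$, contradicting Proposition~\ref{Existence of a critical solution}.

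Consequently there is exactly one profile with $t_n^1 \equiv 0$ and $I_\omega(\phi^1,\psi^1) = I_\omega^c$. The $I_\omega$-Pythagorean expansion from Corollary~\ref{I decomposition} then collapses to $I_\omega(\Phi_n^1,\Psi_n^1) \to 0$, and since the hypotheses of Lemma~\ref{Comparability of K and I} hold for $(\Phi_n^1,\Psi_n^1)$ for large $n$ (by Lemma~\ref{lemma for critical solution}), one obtains $\tfrac{1}{10} K_\omega(\Phi_n^1,\Psi_n^1) \leq I_\omega(\Phi_n^1,\Psi_n^1) \to 0$, and therefore $(\Phi_n^1,\Psi_n^1) \to 0$ in $H^1 \times H^1$. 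Hence $(u_c(t_n),v_c(t_n)) - (\phi^1(\cdot - x_n^1),\psi^1(\cdot - x_n^1)) \to 0$ in $H^1 \times H^1$, i.e.\ $[(u_c(t_n),v_c(t_n))] \to [(\phi^1,\psi^1)]$ in the quotient, which completes the extraction. The main technical obstacle is the long time perturbation step in this asymmetric two-equation setting: the cross terms coming from both $2v\overline{u}$ and $u^2$ must be simultaneously controlled in $S'(\dot H^{-1/2})$, but the required estimates are exactly those already developed in Proposition~\ref{Existence of a critical solution} and can be transplanted essentially verbatim.
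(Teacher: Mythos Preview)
Your approach is essentially the same as the paper's: reduce to precompactness in the quotient via Lemma~\ref{Precompactness of the flow of the critical solution 3}, apply the profile decomposition to $(u_c(t_n),v_c(t_n))$, force a single profile with $t_n^1\equiv 0$ and vanishing remainder, and conclude convergence in $H^1\times H^1/{\sim}$.

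There is one imprecision worth flagging. You bundle the exclusion of $t_n^1\to-\infty$ into the contradiction argument by asserting that ``at least one nonlinear profile carries $I_\omega<I_\omega^c$ and hence scatters by the definition of $I_\omega^c$.'' But in the specific sub-case of a \emph{single} profile with $t_n^1\to-\infty$, the nonlinear profile $(\widetilde u^1,\widetilde v^1)$ built via Lemma~\ref{Existence of wave operators} has $I_\omega(\widetilde u^1_0,\widetilde v^1_0)=\lim_n I_\omega(e^{-it_n^1\Delta}\phi^1,e^{-\frac12 it_n^1\Delta}\psi^1)$, which may equal $I_\omega^c$ (this is exactly the scenario where the remainder $I_\omega$ already tends to zero). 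So the reason you give for scattering fails here. The fix is immediate and already implicit in your citation: Lemma~\ref{Existence of wave operators} itself furnishes the bound $\|(\widetilde u^1,\widetilde v^1)\|_{S(\dot H^{1/2})\times S(\dot H^{1/2})}\le 4\|(e^{it\Delta}\phi^1,e^{\frac12 it\Delta}\psi^1)\|_{S(\dot H^{1/2})\times S(\dot H^{1/2})}<\infty$, so the nonlinear profile scatters regardless of whether $I_\omega$ is strictly below or equal to $I_\omega^c$, and the long time perturbation step goes through.

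For comparison, the paper handles $t_n^1\to-\infty$ by a shorter route that avoids nonlinear profiles altogether: once the remainder is known to vanish in $H^1\times H^1$, one observes that $\|(e^{it\Delta}u_c(t_n),e^{\frac12 it\Delta}v_c(t_n))\|_{S(\dot H^{1/2};[0,\infty))}\le \|(e^{it\Delta}\phi^1,e^{\frac12 it\Delta}\psi^1)\|_{S(\dot H^{1/2};[-t_n^1,\infty))}+o_n(1)\to 0$, and then Theorem~\ref{Small data globally existence} forces $\|(u_c,v_c)\|_{S(\dot H^{1/2};[t_n,\infty))}<\infty$, contradicting Proposition~\ref{Existence of a critical solution}. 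Either argument works; yours is more uniform, the paper's is more economical in this one spot.
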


%proof of precompactness of the flow of the critical solution

\begin{proof}
We will deduce contradiction by assuming that Proposition \ref{Precompactness of the flow of the critical solution} does not hold. We assume that Propositon \ref{Precompactness of the flow of the critical solution} does not hold. Then, $\pi(\{(u_c(t),v_c(t)):t\in[0,\infty)\})$ is not precompact in $H^1\!\times\!H^1/{\sim}$ by Lemma \ref{Precompactness of the flow of the critical solution 3}, so there exists $\{[(u_c(t_n),v_c(t_n))]\}$ such that any subsequence does not converges, i.e. for any $[(\phi^1,\psi^1)]\in H^1\!\times\!H^1/{\sim}$, there exists $\varepsilon>0$ such that for any $N\in\mathbb{N}$, there exists $n\geq N$ such that
\[
\inf_{x_0\in\mathbb{R}^5}\|(u_c(\cdot-x_0,t_n),v_c(\cdot-x_0,t_n))-(\phi^1,\psi^1)\|_{H^1\times H^1}\geq\varepsilon.
\]
Thus, it suffices to prove that for any sequence $\{t_n\}\subset[0,\infty)$, there exists a subsequence $\{t_n\}$ such that $\{(u_c(\cdot-x(t_n),t_n),v_c(\cdot-x(t_n),t_n))\}$ converges in $H^1\!\times\!H^1$.\\
In the case $\{t_n\}$ is a covergence sequence, it holds because the solution to (NLS) is continuous with respect to $t$. Thus, we consider a $\{t_n\}$ with $t_n\rightarrow\infty$\ \ as\ \ $n\rightarrow\infty$. We set that $(\phi_n,\psi_n)=(u_c(t_n),v_c(t_n))$. Applying Theorem \ref{Existence of a critical solution}, we have
\[
I_\omega(u_{c},v_{c})=I_\omega^c<I_\omega(\phi_\omega,\psi_\omega)\ ,\ \ \ K_\omega^{20,8}(u_c,v_c)>0
\]
for any $0\leq t<\infty$. From \eqref{57}, $\|(\phi_n,\psi_n)\|_{H^1\times H^1}$ is bounded (with respect to $n$). Using Theorem \ref{Linear profile decomposition}, it follows that
\[
(\phi_n,\psi_n)=\sum_{j=1}^M(e^{-it_n^j\Delta}\phi^j(\cdot-x_n^j),e^{-\frac{1}{2}it_n^j\Delta}\psi^j(\cdot-x_n^j))+(\Phi_n^M,\Psi_n^M).
\]
From the same argument as \eqref{58}, we have
\[
0<I_\omega(e^{-it_n^j\Delta}\phi^j,e^{-\frac{1}{2}it_n^j\Delta}\psi^j)\leq I_\omega^c\,,\ \ 0\leq I_\omega(\Phi_n^M,\Psi_n^M)<I_\omega^c,
\]
\[
K_\omega^{20,8}(e^{-it_n^j\Delta}\phi^j,e^{-\frac{1}{2}it_n^j\Delta}\psi^j)>0\,,\ \ \ K_\omega^{20,8}(\Phi_n^M,\Psi_n^M)\geq0
\]
for sufficiently large $n$. (The equal sign holds if and only if $(\Phi_n^M,\Psi_n^M)=(0,0)$.)\\
There exists only one $j$ with $(\phi^j,\psi^j)\neq(0,0)$ by the same argument as the proof of Proposition \ref{Existence of a critical solution}. Thus, we have
\[
(\phi_n,\psi_n)=(e^{-it_n^1\Delta}\phi^1(\cdot-x_n^1),e^{-\frac{1}{2}it_n^1\Delta}\psi^1(\cdot-x_n^1))+(\Phi_n^1,\Psi_n^1).
\]
Also, applying Corollary \ref{I decomposition},
\[
I_\omega^c=\lim_{n\rightarrow\infty}I_\omega(e^{-it_n^1\Delta}\phi^1,e^{-\frac{1}{2}it_n^1\Delta}\psi^1)+\lim_{n\rightarrow\infty}I_\omega(\Phi_n^1,\Psi_n^1).
\]
Here, we assume that $\displaystyle\lim_{n\rightarrow\infty}I_\omega(e^{-it_n^1\Delta}\phi^1,e^{-\frac{1}{2}it_n^1\Delta}\psi^1)<I_\omega^c$. Also, we express the corresponding solution to (NLS) with initial data $(u_0,v_0)$ in $\text{NLS}[(u_0,v_0)](t)=(\text{NLS}[u_0](t),\text{NLS}[v_0](t))$. Then, we have $\|\text{NLS}[(e^{-it_n^1\Delta}\phi^1,e^{-\frac{1}{2}it_n^1\Delta}\psi^1)](\cdot-t_n)\|_{S(\dot{H}^\frac{1}{2})\times S(\dot{H}^\frac{1}{2})}<\infty$ by the definition of $I_\omega^c$. Futhermore, it follows that
\begin{align*}
&\|e^{i(t-t_n)\Delta}(u_c(t_n)-\text{NLS}[e^{-it_n^1\Delta}\phi^1(\cdot-x_n^1)](t_n-t_n))\|_{S(\dot{H}^\frac{1}{2})}\\
&\hspace{4cm}=\|e^{i(t-t_n)\Delta}(e^{-it_n^1\Delta}\phi^1(\cdot-x_n^1)+\Phi_n^1-e^{-it_n^1\Delta}\phi^1(\cdot-x_n^1))\|_{S(\dot{H}^\frac{1}{2})}\\
&\hspace{4cm}=\|e^{i(t-t_n)\Delta}\Phi_n^1\|_{S(\dot{H}^\frac{1}{2})}=\|e^{it\Delta}\Phi_n^1\|_{S(\dot{H}^\frac{1}{2})}\longrightarrow0\ \ \text{as}\ \ n\rightarrow\infty.
\end{align*}
Similarly,
\[
\|e^{\frac{1}{2}i(t-t_n)\Delta}(v_c(t_n)-\text{NLS}[e^{-\frac{1}{2}it_n^1\Delta}\psi^1(\cdot-x_n^1)](t_n-t_n))\|_{S(\dot{H}^\frac{1}{2})}=\|e^{\frac{1}{2}it\Delta}\Psi_n^1\|_{S(\dot{H}^\frac{1}{2})}\longrightarrow0\ \ \text{as}\ \ n\rightarrow\infty.
\]
Thus, we obtain $\|(u_c,v_c)\|_{S(\dot{H}^\frac{1}{2})\times S(\dot{H}^\frac{1}{2})}<\infty$ by Theorem \ref{Long time perturbation}, but this is in contradiction to Proposition \ref{Existence of a critical solution}. Therefore,
\[
\lim_{n\rightarrow\infty}I_\omega(e^{-it_n^1\Delta}\phi^1,e^{-\frac{1}{2}it_n^1\Delta}\psi^1)=I_\omega^c\,,\ \ \ \lim_{n\rightarrow\infty}I_\omega(\Phi_n^1,\Psi_n^1)=0.
\]
Combining this fact and Lemma \ref{Comparability of K and I},
\[
\lim_{n\rightarrow\infty}K_\omega(\Phi_n^1,\Psi_n^1)\leq10I_\omega(\Phi_n^1,\Psi_n^1)=0,
\]
i.e.
\[
\lim_{n\rightarrow\infty}\|(\Phi_n^1,\Psi_n^1)\|_{H^1\times H^1}=0.
\]
Next, we will prove $t_n^1=0$.\\
We assume that $t_n^1\rightarrow-\infty$. Then,
\begin{align*}
&\left\|\left(e^{it\Delta}\phi_n,e^{\frac{1}{2}it\Delta}\psi_n\right)\right\|_{S(\dot{H}^\frac{1}{2}:[0,\infty))\times S(\dot{H}^\frac{1}{2}:[0,\infty))}\\
&\hspace{0.5cm}=\left\|\left(e^{i(t-t_n^1)\Delta}\phi^1(\cdot-x_n^1),e^{\frac{1}{2}i(t-t_n^1)\Delta}\psi^1(\cdot-x_n^1)\right)+\left(e^{it\Delta}\Phi_n^1,e^{\frac{1}{2}it\Delta}\Psi_n^1\right)\right\|_{S(\dot{H}^\frac{1}{2}:[0,\infty))\times S(\dot{H}^\frac{1}{2}:[0,\infty))}\\
&\hspace{0.5cm}\leq\left\|\left(e^{i(t-t_n^1)\Delta}\phi^1,e^{\frac{1}{2}i(t-t_n^1)\Delta}\psi^1\right)\right\|_{S(\dot{H}^\frac{1}{2}:[0,\infty))\times S(\dot{H}^\frac{1}{2}:[0,\infty))}\\
&\hspace{8cm}+\left\|\left(e^{it\Delta}\Phi_n^1,e^{\frac{1}{2}it\Delta}\Psi_n^1\right)\right\|_{S(\dot{H}^\frac{1}{2}:[0,\infty))\times S(\dot{H}^\frac{1}{2}:[0,\infty))}.
\end{align*}
Here,
\begin{align*}
&\lim_{n\rightarrow\infty}\left\|\left(e^{i(t-t_n^1)\Delta}\phi^1,e^{\frac{1}{2}i(t-t_n^1)\Delta}\psi^1\right)\right\|_{S(\dot{H}^\frac{1}{2}:[0,\infty))\times S(\dot{H}^\frac{1}{2}:[0,\infty))}\\
&\hspace{5cm}=\lim_{n\rightarrow\infty}\left\|\left(e^{it\Delta}\phi^1,e^{\frac{1}{2}it\Delta}\psi^1\right)\right\|_{S(\dot{H}^\frac{1}{2}:[-t_n^1,\infty))\times S(\dot{H}^\frac{1}{2}:[-t_n^1,\infty))}=0
\end{align*}
and $\left\|\left(e^{it\Delta}\Phi_n^1,e^{\frac{1}{2}it\Delta}\Psi_n^1\right)\right\|_{S(\dot{H}^\frac{1}{2}:[0,\infty))\times S(\dot{H}^\frac{1}{2}:[0,\infty))}\leq\frac{1}{4}\delta_{sd}$ for sufficiently large $n\in\mathbb{N}$. These formulas are in contradiction to Theorem \ref{Small data globally existence}. Hence, we have $t_n^1=0$.
\begin{align*}
\left\|(u_c(\cdot+x_n^1,t_n),v_c(\cdot+x_n^1,t_n))-(\phi^1,\psi^1)\right\|_{H^1\times H^1}&=\left\|(\phi_n,\psi_n)-(\phi^1(\cdot-x_n^1),\psi^1(\cdot-x_n^1))\right\|_{H^1\times H^1}\\
&=\left\|(\Phi_n^1,\Psi_n^1)\right\|_{H^1\times H^1}\\
&\longrightarrow0\ \ \text{as}\ \ n\rightarrow\infty.
\end{align*}
Thus, $\{(u_c(\cdot-x(t_n),t_n),v_c(\cdot-x(t_n),t_n))\}$ converges in $H^1\!\times\!H^1$ and we set that $(\phi^1,\psi^1)\in H^1\!\times\!H^1$ denotes the limit. Then,
\begin{align*}
&\inf_{x_0\in\mathbb{R}^5}\|(u_c(\cdot-x_0,t_n),v_c(\cdot-x_0,t_n))-(\phi^1,\phi^2)\|_{H^1\times H^1}\\
&\hspace{2cm}\leq\|(u_c(\cdot-x(t_n),t_n),v_c(\cdot-x(t_n),t_n))-(\phi^1,\phi^2)\|_{H^1\times H^1}\longrightarrow0\ \ \text{as}\ \ n\rightarrow\infty.
\end{align*}
Therefore, $\{(u_c(\cdot-x(t),t),v_c(\cdot-x(t),t))\}$ is precompact.
\end{proof}

%precompactness of the flow implies uniform localization

\begin{lemma}[Precompactness of the flow implies uniform localization]\label{Precompactness of the flow implies uniform localization}
Let $(u,v)$ be the solution to (NLS) such that
\[
K=\{(u(\cdot-x(t),t),v(\cdot-x(t),t)):t\in [0,\infty)\}
\]
is precompact in $H^1\!\times\!H^1$. Then for each $\varepsilon>0$, there exists $R>0$ so that
\[
\int_{|x+x(t)|>R}\left(|u(x,t)|^2+|v(x,t)|^2+|\nabla u(x,t)|^2+|\nabla v(x,t)|^2+|v(x,t)u(x,t)^2|\right)dx\leq\varepsilon
\]
for all $0\leq t<\infty$.
\end{lemma}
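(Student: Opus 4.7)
The plan is to exploit the precompactness of $K$ in $H^1\!\times\!H^1$ to reduce the uniform-in-$t$ tail estimate to a finite family of $H^1$-functions whose tails obviously vanish. First, by the change of variable $y=x+x(t)$,
\[
\int_{|x+x(t)|>R}\!\!\bigl(|u|^2+|v|^2+|\nabla u|^2+|\nabla v|^2+|vu^2|\bigr)(x,t)\,dx=\int_{|y|>R}F\bigl(\widetilde u(t),\widetilde v(t)\bigr)(y)\,dy,
\]
where $\widetilde u(y,t)=u(y-x(t),t)$, $\widetilde v(y,t)=v(y-x(t),t)$, and $F(f,g)(y)=|f|^2+|g|^2+|\nabla f|^2+|\nabla g|^2+|gf^2|$. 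By hypothesis $(\widetilde u(t),\widetilde v(t))\in K$ for every $t\geq 0$, so it suffices to prove that for every $\varepsilon>0$ there exists $R>0$ with $\int_{|y|>R}F(f,g)\,dy\leq\varepsilon$ uniformly in $(f,g)\in K$.

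Next, I would observe two properties of the tail functional $T_R(f,g):=\int_{|y|>R}F(f,g)\,dy$. For each fixed $(f,g)\in H^1\!\times\!H^1$ the quadratic pieces $|f|^2,|g|^2,|\nabla f|^2,|\nabla g|^2$ are in $L^1$, and Hölder combined with the Gagliardo--Nirenberg inequality $\|u\|_{L^3}\leq c\|\nabla u\|_{L^2}^{5/6}\|u\|_{L^2}^{1/6}$ already used in the proof of Proposition \ref{minimizer existence} gives $\int|gf^2|\leq\|g\|_{L^3}\|f\|_{L^3}^2\leq c\|g\|_{H^1}\|f\|_{H^1}^2$, so $|gf^2|\in L^1$ as well. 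Hence $T_R(f,g)\downarrow 0$ as $R\to\infty$ by dominated convergence (in $L^2$ for the quadratic pieces and in $L^3$ for the cubic piece). Moreover, expanding differences and using the same Sobolev embedding on $K$, which is bounded in $H^1\!\times\!H^1$ by precompactness, one gets a uniform Lipschitz bound
\[
|T_R(f,g)-T_R(f',g')|\leq C_K\,\|(f,g)-(f',g')\|_{H^1\times H^1},
\]
with $C_K$ independent of $R$.

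Given $\varepsilon>0$, choose $\delta>0$ with $C_K\delta<\varepsilon/2$, extract a finite $\delta$-net $\{(f_j,g_j)\}_{j=1}^N\subset K$ via precompactness, and select $R_j$ so that $T_{R_j}(f_j,g_j)<\varepsilon/2$. Setting $R=\max_jR_j$, for any $t\geq 0$ we pick $j=j(t)$ with $\|(\widetilde u(t),\widetilde v(t))-(f_j,g_j)\|_{H^1\times H^1}<\delta$ and conclude
\[
T_R(\widetilde u(t),\widetilde v(t))\leq T_R(f_j,g_j)+C_K\delta<\tfrac{\varepsilon}{2}+\tfrac{\varepsilon}{2}=\varepsilon,
\]
which is the desired uniform estimate.

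The only genuinely nontrivial step is the treatment of the cubic term $|vu^2|$, which is not directly controlled by the $H^1$-norm difference; this is what forces the use of the Sobolev embedding $H^1(\mathbb R^5)\hookrightarrow L^3$ together with the uniform $H^1$-bound on $K$ to produce the Lipschitz constant $C_K$. The quadratic pieces and the finite-net/compactness scheme are standard and cause no difficulty.
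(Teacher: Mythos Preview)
Your proof is correct. Both your argument and the paper's rest on the same two ingredients: precompactness of $K$ in $H^1\times H^1$, and the Sobolev embedding $H^1(\mathbb{R}^5)\hookrightarrow L^3$ to handle the cubic term $|vu^2|$. The packaging differs slightly: the paper argues by contradiction, extracting a sequence $t_n$ with bad tails at radius $n$, passing to a convergent subsequence $(u(\cdot-x(t_n),t_n),v(\cdot-x(t_n),t_n))\to(U,V)$ in $H^1\times H^1$, and then estimating each piece of the tail directly from the convergence. You instead argue directly via total boundedness, encoding the same estimates as a uniform Lipschitz bound for the tail functional on the bounded set $K$ and covering by a finite $\delta$-net. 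Your version is arguably cleaner since it avoids the contradiction framing and makes the role of total boundedness explicit, while the paper's version is perhaps more concrete since it writes out each tail estimate ($L^2$, $\dot H^1$, $L^3$) separately rather than bundling them into a single Lipschitz constant.
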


%proof of precompactness of the flow implies uniform localization

\begin{proof}
We will prove by contradiction.\\
We assume that there exists $\varepsilon>0$ such that for any $n\in\mathbb{N}$, there exists $t_n$ such that
\begin{align}
\int_{|x+x(t_n)|>n}\left(|u(x,t_n)|^2+|v(x,t_n)|^2+|\nabla u(x,t_n)|^2+|\nabla v(x,t_n)|^2+|v(x,t_n)u(x,t_n)^2|\right)dx>\varepsilon.\label{35}
\end{align}
By changing  variables,
\begin{align*}
&\int_{|x|>n}\left(|u(x-x(t_n),t_n)|^2+|v(x-x(t_n),t_n)|^2+|\nabla u(x-x(t_n),t_n)|^2\right.\\
&\left.\hspace{3cm}+|\nabla v(x-x(t_n),t_n)|^2+|v(x-x(t_n),t_n)u(x-x(t_n),t_n)^2|\right)dx>\varepsilon.
\end{align*}
Since $K$ is precompact, there exists $(U,V)\in H^1\!\times\!H^1$ such that
\[
\left(u(\cdot-x(t_n),t_n),v(\cdot-x(t_n),t_n)\right)\longrightarrow(U,V)\ \ \text{as}\ \ n\rightarrow\infty\ \ \text{in}\ \ H^1\!\times\!H^1.
\]
 by passing to a subsequence of $\{t_n\}$. From $(U,V)\in H^1\!\times\!H^1\hookrightarrow L^3\!\times\!L^3$, we have 
\[
\|U\|_{L^2(|x|>n)}<\frac{\sqrt{\varepsilon}}{2\sqrt{5}}\,,\ \ \|V\|_{L^2(|x|>n)}<\frac{\sqrt{\varepsilon}}{2\sqrt{5}}\,,\ \ \|\nabla U\|_{L^2(|x|>n)}<\frac{\sqrt{\varepsilon}}{2\sqrt{5}},
\]
\[
\|\nabla V\|_{L^2(|x|>n)}<\frac{\sqrt{\varepsilon}}{2\sqrt{5}}\,,\ \ \|U\|_{L^3(|x|>n)}<\frac{\sqrt[3]{\varepsilon}}{2\sqrt[3]{5}}\,,\ \ \|V\|_{L^3(|x|>n)}<\frac{\sqrt[3]{\varepsilon}}{2\sqrt[3]{5}}.
\]
by taking sufficiently large $n$. Also, since $(u(\cdot-x(t_n),t_n),v(\cdot-x(t_n),t_n))\longrightarrow(U,V)\ \ \text{as}\ \ n\rightarrow\infty\ \ \text{in}\ \ H^1\!\times\!H^1$, if we take sufficiently large $n$, then
\[
\|u(\cdot-x(t_n),t_n)-U\|_{L^2}<\frac{\sqrt{\varepsilon}}{2\sqrt{5}}\,,\ \ \|v(\cdot-x(t_n),t_n)-V\|_{L^2}<\frac{\sqrt{\varepsilon}}{2\sqrt{5}},
\]
\[
\|\nabla(u(\cdot-x(t_n),t_n)-U)\|_{L^2}<\frac{\sqrt{\varepsilon}}{2\sqrt{5}}\,,\ \ \|\nabla(v(\cdot-x(t_n),t_n)-V)\|_{L^2}<\frac{\sqrt{\varepsilon}}{2\sqrt{5}},
\]
\[
\|u(\cdot-x(t_n),t_n)-U\|_{L^3}<\frac{\sqrt[3]{\varepsilon}}{2\sqrt[3]{5}}\,,\ \ \|v(\cdot-x(t_n),t_n)-V\|_{L^3}<\frac{\sqrt[3]{\varepsilon}}{2\sqrt[3]{5}}.
\]
Thus,
\begin{align*}
\|u(\cdot,t_n)\|_{L^2(|x+x(t_n)|>n)}&=\|u(\cdot-x(t_n),t_n)\|_{L^2(|x|>n)}\\
&\leq\|u(\cdot-x(t_n),t_n)-U\|_{L^2}+\|U\|_{L^2(|x|>n)}\\
&<\frac{\sqrt{\varepsilon}}{2\sqrt{5}}+\frac{\sqrt{\varepsilon}}{2\sqrt{5}}=\frac{\sqrt{\varepsilon}}{\sqrt{5}},
\end{align*}
\begin{align*}
\|\nabla u(\cdot,t_n)\|_{L^2(|x+x(t_n)|>n)}&=\|\nabla u(\cdot-x(t_n),t_n)\|_{L^2(|x|>n)}\\
&\leq\|\nabla(u(\cdot-x(t_n),t_n)-U)\|_{L^2}+\|\nabla U\|_{L^2(|x|>n)}\\
&<\frac{\sqrt{\varepsilon}}{2\sqrt{5}}+\frac{\sqrt{\varepsilon}}{2\sqrt{5}}=\frac{\sqrt{\varepsilon}}{\sqrt{5}},
\end{align*}
\begin{align*}
&\int_{|x+x(t_n)|>n}|v(x,t)u(x,t)^2|dx\\
&\hspace{2cm}\leq\|v(\cdot,t)\|_{L^3(|x+x(t_n)|>n)}\|u(\cdot,t)\|_{L^3(|x+x(t_n)|>n)}^2\\
&\hspace{2cm}=\|v(\cdot-x(t_n),t)\|_{L^3(|x|>n)}\|u(\cdot-x(t_n),t)\|_{L^3(|x|>n)}^2\\
&\hspace{2cm}\leq\left(\|v(\cdot-x(t_n),t)-V\|_{L^3}+\|V\|_{L^3(|x|>n)}\right)\left(\|u(\cdot-x(t_n),t)-U\|_{L^3}+\|U\|_{L^3(|x|>n)}\right)^2\\
&\hspace{2cm}<\left(\frac{\sqrt[3]{\varepsilon}}{2\sqrt[3]{5}}+\frac{\sqrt[3]{\varepsilon}}{2\sqrt[3]{5}}\right)\left(\frac{\sqrt[3]{\varepsilon}}{2\sqrt[3]{5}}+\frac{\sqrt[3]{\varepsilon}}{2\sqrt[3]{5}}\right)^2=\frac{\varepsilon}{5}.
\end{align*}
Therefore,
\[
\int_{|x+x(t_n)|>n}\left(|u(x,t_n)|^2+|v(x,t_n)|^2+|\nabla u(x,t_n)|^2+|\nabla v(x,t_n)|^2+|v(x,t_n)u(x,t_n)^2|\right)dx\leq\varepsilon.
\]
Since this is in contradiction to \eqref{35}, for any $\varepsilon>0$, there exists $R>0$ such that for any $0\leq t<\infty$,
\[
\int_{|x+x(t)|>R}\left(|u(x,t)|^2+|v(x,t)|^2+|\nabla u(x,t)|^2+|\nabla v(x,t)|^2+|v(x,t)u(x,t)^2|\right)dx\leq\varepsilon.
\]
\end{proof}

%conservation of momentum
\begin{proposition}
The solution $(u,v)$ to (NLS) satisfies the following conservation law.
\[
\widetilde{P}(u(t),v(t))\vcentcolon=\text{Im}\int_{\mathbb{R}^5}\left(\nabla u\overline{u}+\nabla v\overline{v}\right)dx=\widetilde{P}(u_0,v_0).
\]
\end{proposition}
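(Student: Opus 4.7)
The plan is to differentiate $\widetilde P(u(t),v(t))$ in time and use the evolution equations to show the resulting expression is identically zero; this is the standard Noether-type computation reflecting the translation invariance of (NLS). Using $u_t = i\Delta u + 2iv\bar u$ and $v_t = \tfrac{i}{2}\Delta v + iu^2$, we have
\[
\frac{d}{dt}\widetilde P(u,v) = \text{Im}\int_{\mathbb{R}^5}\bigl(\nabla u_t\,\bar u + \nabla u\,\bar u_t + \nabla v_t\,\bar v + \nabla v\,\bar v_t\bigr)\,dx,
\]
and I would split the right-hand side into a free (Laplacian) part and a nonlinear part coming from each equation, treating them separately.

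For the free parts, the componentwise identity $2\,\text{Re}(\partial_j\partial_k u\,\partial_k\bar u) = \partial_j(\partial_k u\,\partial_k\bar u)$ implies, after one integration by parts,
\[
\text{Im}\int\bigl(\nabla(i\Delta u)\,\bar u + \nabla u\,\overline{i\Delta u}\bigr)\,dx = -\int\nabla|\nabla u|^2\,dx = 0,
\]
and the same identity applied with the factor $\tfrac12$ kills the analogous $v$-contribution. For the nonlinear parts, using $\bar u\,\nabla\bar u = \tfrac12\nabla\bar u^2$ and $u\,\nabla u = \tfrac12\nabla u^2$ together with integration by parts, the $u$-equation contributes $2\,\text{Re}\int\nabla v\,\bar u^2\,dx$, while the $v$-equation contributes $-2\,\text{Re}\int\bar u^2\,\nabla v\,dx$. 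These two expressions cancel exactly, thanks to the coefficients $1$ and $\tfrac12$ in front of the Laplacians in (NLS) together with the coupling $-2v\bar u$, $-u^2$—precisely the mass resonance structure that also underlies Theorem~\ref{virial identity}. Integrating the resulting identity $\frac{d}{dt}\widetilde P(u,v) \equiv 0$ in time yields the claim.

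The main obstacle is justifying these manipulations at the mere $H^1\!\times\!H^1$ regularity of the solution, since $\Delta u$ and $\Delta v$ do not live in $L^2$ pointwise and the computation above treats them as classical functions. The plan here is the same regularization procedure used in Proposition~\ref{proposition of ground state} and Theorem~\ref{virial identity}: approximate $(u_0,v_0)$ in $H^1\!\times\!H^1$ by smooth, rapidly decaying data $(u_0^{(n)},v_0^{(n)})$, run the identity $\widetilde P(u^{(n)}(t),v^{(n)}(t)) = \widetilde P(u_0^{(n)},v_0^{(n)})$ for the corresponding smoother solutions on any compact time interval, and pass to the limit using continuous dependence of the flow in $H^1\!\times\!H^1$ together with the embedding $H^1\hookrightarrow L^3$, which ensures that $\nabla u\,\bar u$ and $\nabla v\,\bar v$ are controlled in $L^1$ uniformly in $n$.
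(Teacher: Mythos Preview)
Your proof is correct and is essentially the same computation the paper carries out, only packaged differently. The paper applies the localized virial identity (Lemma~\ref{Localized virial identity}) with $\chi(x)=x_j$: since all second- and higher-order derivatives of $x_j$ vanish, the formula \eqref{10} for $I''(t)$ is identically zero, while \eqref{09} gives $I'(t)=2\,\text{Im}\int(u_j\bar u+v_j\bar v)\,dx$; hence each component of $\widetilde P$ is constant. Your direct differentiation of $\widetilde P$ and splitting into free and nonlinear parts is exactly the content of that lemma specialized to this affine weight, and your cancellation of the nonlinear terms reproduces precisely why the $\Delta\chi\,v\bar u^2$ term in \eqref{10} disappears. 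The paper's route is more economical within the paper because the integration-by-parts work and the regularization caveat were already absorbed into Lemma~\ref{Localized virial identity}; your route is self-contained and makes the mass-resonance cancellation more visible.
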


\begin{proof}
Applying Lemma \ref{Localized virial identity} as $\chi(x)=x_j$ for $j\in\{1,\cdots,5\}$,
\[
\text{Im}\frac{d}{dt}\int_{\mathbb{R}^5}\left(u_j\overline{u}+v_j\overline{v}\right)dx=0.
\]
Thus,
\[
\text{Im}\frac{d}{dt}\int_{\mathbb{R}^5}\left(\nabla u\overline{u}+\nabla v\overline{v}\right)dx=0
\]
and we have
\[
\widetilde{P}(u(t),v(t))=\text{Im}\int_{\mathbb{R}^5}\left(\nabla u\overline{u}+\nabla v\overline{v}\right)dx=\widetilde{P}(u_0,v_0).
\]
\end{proof}

%Galilean transformation

\begin{proposition}[Galilean transformation]
If $(u,v)$ satisfies (NLS), then
\[
(e^{ix\cdot\xi_0}e^{-it|\xi_0|^2}u(x-2\xi_0t,t),e^{2ix\cdot\xi_0}e^{-2it|\xi_0|^2}v(x-2\xi_0t,t))
\]
also satisfies (NLS).
\end{proposition}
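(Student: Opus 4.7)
The plan is a direct verification by chain rule. Setting
\[
\tilde u(x,t)=e^{ix\cdot\xi_0}e^{-it|\xi_0|^2}u(x-2\xi_0 t,t),\qquad \tilde v(x,t)=e^{2ix\cdot\xi_0}e^{-2it|\xi_0|^2}v(x-2\xi_0 t,t),
\]
I will compute $i\partial_t\tilde u+\Delta \tilde u$ and $i\partial_t\tilde v+\tfrac12\Delta\tilde v$ separately and check that they equal $-2\tilde v\overline{\tilde u}$ and $-\tilde u^2$, respectively.

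For the first equation I would expand $\Delta\tilde u$ by two product rules, producing $-|\xi_0|^2\tilde u$ from differentiating the plane wave twice, a cross term $2i\xi_0\cdot e^{ix\cdot\xi_0}e^{-it|\xi_0|^2}(\nabla u)(x-2\xi_0 t,t)$, and a bulk term $e^{ix\cdot\xi_0}e^{-it|\xi_0|^2}(\Delta u)(x-2\xi_0 t,t)$. The derivative $i\partial_t\tilde u$ gives three analogous contributions: $|\xi_0|^2\tilde u$ from the time phase, the convective term $-2i\xi_0\cdot e^{ix\cdot\xi_0}e^{-it|\xi_0|^2}(\nabla u)(x-2\xi_0 t,t)$ coming from the shift $x\mapsto x-2\xi_0 t$, and $i\,e^{ix\cdot\xi_0}e^{-it|\xi_0|^2}(\partial_t u)(x-2\xi_0 t,t)$. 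The first two pairs cancel, leaving
\[
i\partial_t\tilde u+\Delta\tilde u=e^{ix\cdot\xi_0}e^{-it|\xi_0|^2}(i\partial_t u+\Delta u)(x-2\xi_0 t,t)=-2\,e^{ix\cdot\xi_0}e^{-it|\xi_0|^2}(v\overline u)(x-2\xi_0 t,t).
\]
The identity $e^{ix\cdot\xi_0}e^{-it|\xi_0|^2}=\bigl(e^{2ix\cdot\xi_0}e^{-2it|\xi_0|^2}\bigr)\,\overline{\bigl(e^{ix\cdot\xi_0}e^{-it|\xi_0|^2}\bigr)}$ then identifies the right-hand side with $-2\tilde v\overline{\tilde u}$.

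The second equation is handled in the same way. The doubled phases $e^{2ix\cdot\xi_0}$ and $e^{-2it|\xi_0|^2}$ now yield $-4|\xi_0|^2\tilde v$ from $\Delta\tilde v$ and $2|\xi_0|^2\tilde v$ from $i\partial_t\tilde v$, and the factor $\tfrac12$ in front of $\Delta\tilde v$ exactly balances them; the convective and cross terms cancel analogously. I am left with
\[
i\partial_t\tilde v+\tfrac12\Delta\tilde v=e^{2ix\cdot\xi_0}e^{-2it|\xi_0|^2}\bigl(i\partial_t v+\tfrac12\Delta v\bigr)(x-2\xi_0 t,t)=-e^{2ix\cdot\xi_0}e^{-2it|\xi_0|^2}u(x-2\xi_0 t,t)^2=-\tilde u^2.
\]

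No serious obstacle appears; the computation is elementary, but the key structural observation is that the pairing of plane-wave frequencies $(\xi_0,2\xi_0)$ with the Laplacian coefficients $(1,\tfrac12)$ is precisely the mass-resonance compatibility highlighted in the remark following Theorem \ref{virial identity}. This compatibility is what makes the $|\xi_0|^2$- and gradient-cross terms cancel simultaneously in both equations, and at the same time keeps the nonlinear pairings $\tilde v\overline{\tilde u}$ and $\tilde u^2$ in phase with $\tilde u$ and $\tilde v$, respectively.
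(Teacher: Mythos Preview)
Your proof is correct and follows essentially the same approach as the paper: a direct chain-rule verification that the phase, convective, and Laplacian contributions cancel in pairs, reducing $i\partial_t\tilde u+\Delta\tilde u$ and $i\partial_t\tilde v+\tfrac12\Delta\tilde v$ to the corresponding expressions for $(u,v)$ evaluated at the shifted point, after which the nonlinear terms match. Your closing remark about the mass-resonance pairing $(\xi_0,2\xi_0)$ with Laplacian coefficients $(1,\tfrac12)$ is a nice structural explanation that the paper's proof does not make explicit.
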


\begin{proof}
Let $(u,v)$ solve (NLS), and\[
(z,w)=(e^{ix\cdot\xi_0}e^{-it|\xi_0|^2}u(x-2\xi_0t,t),e^{2ix\cdot\xi_0}e^{-2it|\xi_0|^2}v(x-2\xi_0t,t)).
\]
We obtain the following formula by a direct calculation:
\begin{align*}
&\partial_tz=-i|\xi_0|^2e^{ix\cdot\xi_0}e^{-it|\xi_0|^2}u(x-2\xi_0t,t)\\
&\hspace{2cm}-2e^{ix\cdot\xi_0}e^{-it|\xi_0|^2}\sum_{j=1}^5\xi_{0,j}(\partial_ju)(x-2\xi_0t,t)+e^{ix\cdot\xi_0}e^{-it|\xi_0|^2}(\partial_tu)(x-2\xi_0t,t).
\end{align*}
From
\begin{align*}
&\partial_j^2z=-\xi_{0,j}^2e^{ix\cdot\xi_0}e^{-it|\xi_0|^2}u(x-2\xi_0t,t)\\
&\hspace{2cm}+2i\xi_{0,j}e^{ix\cdot\xi_0}e^{-it|\xi_0|^2}(\partial_ju)(x-2\xi_0t,t)+e^{ix\cdot\xi_0}e^{-it|\xi_0|^2}(\partial_j^2u)(x-2\xi_0t,t),
\end{align*}
we obtain
\begin{align*}
&\Delta z=-|\xi_0|^2e^{ix\cdot\xi_0}e^{-it|\xi_0|^2}u(x-2\xi_0t,t)\\
&\hspace{2cm}+2ie^{ix\cdot\xi_0}e^{-it|\xi_0|^2}\sum_{j=1}^5\xi_{0,j}(\partial_ju)(x-2\xi_0t,t)+e^{ix\cdot\xi_0}e^{-it|\xi_0|^2}(\Delta u)(x-2\xi_0t,t).
\end{align*}
Thus,
\begin{align*}
i\partial_tz+\Delta z+2w\overline{z}&=|\xi_0|^2e^{ix\cdot\xi_0}e^{-it|\xi_0|^2}u(x-2\xi_0t,t)-2ie^{ix\cdot\xi_0}e^{-it|\xi_0|^2}\sum_{j=1}^5\xi_{0,j}(\partial_ju)(x-2\xi_0t,t)\\
&+ie^{ix\cdot\xi_0}e^{-it|\xi_0|^2}(\partial_tu)(x-2\xi_0t,t)-|\xi_0|^2e^{ix\cdot\xi_0}e^{-it|\xi_0|^2}u(x-2\xi_0t,t)\\
&+2ie^{ix\cdot\xi_0}e^{-it|\xi_0|^2}\sum_{j=1}^5\xi_{0,j}(\partial_ju)(x-2\xi_0t,t)+e^{ix\cdot\xi_0}e^{-it|\xi_0|^2}(\Delta u)(x-2\xi_0t,t)\\
&+2e^{2ix\cdot\xi_0}e^{-2it|\xi_0|^2}v(x-2\xi_0t,t)e^{-ix\cdot\xi_0}e^{it|\xi_0|^2}\overline{u}(x-2\xi_0t,t)=0,
\end{align*}
and
\begin{align*}
i\partial_tw+\frac{1}{2}\Delta w+z^2&=2|\xi_0|^2e^{2ix\cdot\xi_0}e^{-2it|\xi_0|^2}v(x-2\xi_0t,t)-2ie^{2ix\cdot\xi_0}e^{-2it|\xi_0|^2}\sum_{j=1}^5\xi_{0,j}(\partial_jv)(x-2\xi_0t,t)\\
&+ie^{2ix\cdot\xi_0}e^{-2it|\xi_0|^2}(\partial_tv)(x-2\xi_0t,t)-2|\xi_0|^2e^{2ix\cdot\xi_0}e^{-2it|\xi_0|^2}v(x-2\xi_0t,t)\\
&+2ie^{2ix\cdot\xi_0}e^{-2it|\xi_0|^2}\sum_{j=1}^5\xi_{0,j}(\partial_jv)(x-2\xi_0t,t)+\frac{1}{2}e^{2ix\cdot\xi_0}e^{-2it|\xi_0|^2}(\Delta v)(x-2\xi_0t,t)\\
&+e^{2ix\cdot\xi_0}e^{-2it|\xi_0|^2}u(x-2\xi_0t,t)^2=0.
\end{align*}
Therefore, $(z,w)$ solves (NLS).
\end{proof}

\begin{proposition}\label{zero momentum}
Let $(u_c,v_c)$ be the critical solution constructed in Proposition \ref{Existence of a critical solution}. Then $\widetilde{P}(u_c,v_c)=0$.
\end{proposition}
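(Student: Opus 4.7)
The plan is to argue by contradiction using the Galilean invariance of (NLS), in the standard Kenig--Merle fashion. Suppose $\widetilde{P}(u_c,v_c)\neq 0$ (which by the conservation law proved just above is equivalent to $\widetilde{P}(u_{c,0},v_{c,0})\neq 0$). I will produce, by applying a small Galilean boost to $(u_c,v_c)$, a new solution $(z,w)$ to (NLS) whose initial data satisfies $I_\omega(z_0,w_0)<I_\omega^c$ and $K_\omega^{20,8}(z_0,w_0)>0$ but for which $\|(z,w)\|_{S(\dot{H}^{1/2})\times S(\dot{H}^{1/2})}=\infty$, contradicting the very definition of $I_\omega^c$.

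For $\xi_0\in\mathbb{R}^5$ I set $z(x,t)=e^{ix\cdot\xi_0}e^{-it|\xi_0|^2}u_c(x-2\xi_0 t,t)$ and $w(x,t)=e^{2ix\cdot\xi_0}e^{-2it|\xi_0|^2}v_c(x-2\xi_0 t,t)$, which solves (NLS) by the preceding Galilean transformation proposition. The key step is an integration-by-parts calculation on the initial data $z_0=e^{ix\cdot\xi_0}u_{c,0}$ and $w_0=e^{2ix\cdot\xi_0}v_{c,0}$ yielding the transformation laws
\begin{align*}
M(z_0,w_0)&=M(u_{c,0},v_{c,0}),\qquad P(z_0,w_0)=P(u_{c,0},v_{c,0}),\\
K(z_0,w_0)&=K(u_{c,0},v_{c,0})+|\xi_0|^2 M(u_{c,0},v_{c,0})+2\xi_0\cdot\widetilde{P}(u_{c,0},v_{c,0}),
\end{align*}
from which, since $I_\omega=\frac{\omega}{2}M+\frac{1}{2}K-P$ and $K_\omega^{20,8}=8K-20P$,
\begin{align*}
I_\omega(z_0,w_0)&=I_\omega(u_{c,0},v_{c,0})+\tfrac{1}{2}|\xi_0|^2 M(u_{c,0},v_{c,0})+\xi_0\cdot\widetilde{P}(u_{c,0},v_{c,0}),\\
K_\omega^{20,8}(z_0,w_0)&=K_\omega^{20,8}(u_{c,0},v_{c,0})+8|\xi_0|^2 M(u_{c,0},v_{c,0})+16\,\xi_0\cdot\widetilde{P}(u_{c,0},v_{c,0}).
\end{align*}
Both expressions are quadratic polynomials in $\xi_0$ with linear coefficient proportional to $\widetilde{P}(u_{c,0},v_{c,0})$. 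I then choose $\xi_0=-s\,\widetilde{P}(u_{c,0},v_{c,0})/|\widetilde{P}(u_{c,0},v_{c,0})|$ with $s>0$: the first-order effect is to decrease $I_\omega$ by $s|\widetilde{P}|$ and $K_\omega^{20,8}$ by $16s|\widetilde{P}|$. Since $K_\omega^{20,8}(u_{c,0},v_{c,0})>0$ strictly by Proposition \ref{Existence of a critical solution}, continuity guarantees that for $s>0$ sufficiently small one has simultaneously $I_\omega(z_0,w_0)<I_\omega^c$ and $K_\omega^{20,8}(z_0,w_0)>0$.

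By the very definition of $I_\omega^c$, these two inequalities force $SC(z_0,w_0)$ to hold, so in particular $\|(z,w)\|_{S(\dot{H}^{1/2})\times S(\dot{H}^{1/2})}<\infty$. On the other hand, $|z(x,t)|=|u_c(x-2\xi_0 t,t)|$ and $|w(x,t)|=|v_c(x-2\xi_0 t,t)|$, so every mixed Lebesgue norm $L^q_t L^r_x$ of $(z,w)$ coincides with that of $(u_c,v_c)$; in particular $\|(z,w)\|_{S(\dot{H}^{1/2})\times S(\dot{H}^{1/2})}=\|(u_c,v_c)\|_{S(\dot{H}^{1/2})\times S(\dot{H}^{1/2})}=\infty$ by Proposition \ref{Existence of a critical solution}. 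This is the desired contradiction, forcing $\widetilde{P}(u_c,v_c)=0$. The main technical work is the careful derivation of the transformation laws above, identifying the linear-in-$\xi_0$ coefficient precisely with the conserved momentum $\widetilde{P}$; once this algebraic identity is in hand, the contradiction follows from a first-order expansion together with the manifest Galilean invariance of all space-time Lebesgue norms.
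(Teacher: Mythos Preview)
Your proof is correct and follows essentially the same approach as the paper: both argue by contradiction via the Galilean boost, derive the identical transformation laws for $M$, $K$, $P$ (and hence for $I_\omega$ and $K_\omega^{20,8}$), and reach the contradiction through the Galilean invariance of the $S(\dot{H}^{1/2})$ norm. The only cosmetic difference is in how $\xi_0$ is selected: the paper completes the square $|\xi_0|^2 M+2\xi_0\cdot\widetilde{P}=M\bigl|\xi_0+\widetilde{P}/M\bigr|^2-|\widetilde{P}|^2/M$ and picks $\xi_0$ near the minimizer, whereas you take $\xi_0=-s\,\widetilde{P}/|\widetilde{P}|$ and invoke a first-order expansion in $s$; both yield a small negative value of the quadratic form and hence the simultaneous inequalities $I_\omega(z_0,w_0)<I_\omega^c$, $K_\omega^{20,8}(z_0,w_0)>0$.
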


\begin{proof}
We assume $\widetilde{P}(u_c,v_c)\neq0$. Let $(z_c,w_c)=(e^{ix\cdot\xi_0}e^{-it|\xi_0|^2}u_c(x-2\xi_0t,t),e^{2ix\cdot\xi_0}e^{-2it|\xi_0|^2}v_c(x-2\xi_0t,t))$. We obtain the following formula by a direct calculation:
\[
\|z_c\|_{L^2}^2=\|u_c\|_{L^2}^2\,,\ \ \ \|w_c\|_{L^2}^2=\|v_c\|_{L^2}^2.
\]
\begin{align*}
\|\nabla z_c\|_{L^2}^2&=\sum_{j=1}^5\|\partial_jz_c\|_{L^2}^2\\
&=\sum_{j=1}^5\|i\xi_{0,j}e^{ix\cdot\xi_0}e^{-it|\xi_0|^2}u_c(\cdot-2\xi_0t,t)+e^{ix\cdot\xi_0}e^{-it|\xi_0|^2}(\partial_ju_c)(\cdot-2\xi_0t,t)\|_{L^2}^2\\
&=\sum_{j=1}^5|\xi_{0,j}|^2\|u_c\|_{L^2}^2+\sum_{j=1}^5\|\partial_ju_c\|_{L^2}^2+\sum_{j=1}^52\text{Re}(\partial_ju_c,i\xi_{0,j}u_c)_{L^2}\\
&=|\xi_0|^2\|u_c\|_{L^2}^2+\|\nabla u_c\|_{L^2}^2+2\text{Im}\sum_{j=1}^5\xi_{0,j}(\partial_ju_c,u_c)_{L^2},\\
\|\nabla w_c\|_{L^2}^2&=\sum_{j=1}^5\|\partial_jw_c\|_{L^2}^2\\
&=\sum_{j=1}^5\|2i\xi_{0,j}e^{2ix\cdot\xi_0}e^{-2it|\xi_0|^2}v_c(\cdot-2\xi_0t,t)+e^{2ix\cdot\xi_0}e^{-2it|\xi_0|^2}(\partial_jv_c)(\cdot-2\xi_0t,t)\|_{L^2}^2\\
&=\sum_{j=1}^54|\xi_{0,j}|^2\|v_c\|_{L^2}^2+\sum_{j=1}^5\|\partial_jv_c\|_{L^2}^2+\sum_{j=1}^54\text{Re}(\partial_jv_c,i\xi_{0,j}v_c)_{L^2}\\
&=4|\xi_0|^2\|v_c\|_{L^2}^2+\|\nabla v_c\|_{L^2}^2+4\text{Im}\sum_{j=1}^5\xi_{0,j}(\partial_jv_c,v_c)_{L^2}.
\end{align*}
Thus,
\begin{align*}
K(z_c,w_c)&=\|\nabla z_c\|_{L^2}^2+\frac{1}{2}\|\nabla w_c\|_{L^2}^2\\
&=|\xi_0|^2(\|u_c\|_{L^2}^2+2\|v_c\|_{L^2}^2)+\|\nabla u_c\|_{L^2}^2+\frac{1}{2}\|\nabla v_c\|_{L^2}^2+2\xi_0\cdot\widetilde{P}(u_c,v_c)\\
&=|\xi_0|^2M(u_c,v_c)+2\xi_0\cdot\widetilde{P}(u_c,v_c)+K(u_c,v_c),
\end{align*}
and
\[
P(z_c,w_c)=\text{Re}\int_{\mathbb{R}^5}w_c\overline{z_c}^2dx=\text{Re}\int_{\mathbb{R}^5}v_c\overline{u_c}^2dx=P(u_c,v_c).
\]
Therefore,
\begin{align*}
I_\omega(z_c,w_c)&=\frac{\omega}{2}M(z_c,w_c)+\frac{1}{2}K(z_c,w_c)-P(z_c,w_c)\\
&=\frac{\omega}{2}M(u_c,v_c)+\frac{1}{2}|\xi_0|^2M(u_c,v_c)+\xi_0\cdot\widetilde{P}(u_c,v_c)+\frac{1}{2}K(u_c,v_c)-P(u_c,v_c)\\
&=I_\omega(u_c,v_c)+\frac{1}{2}\left(|\xi_0|^2M(u_c,v_c)+2\xi_0\cdot\widetilde{P}(u_c,v_c)\right),\\
K_\omega^{20,8}(z_c,w_c)&=8K(z_c,w_c)-20P(z_c,w_c)\\
&=8|\xi_0|^2M(u_c,v_c)+16\xi_0\cdot\widetilde{P}(u_c,v_c)+8K(u_c,v_c)-20P(u_c,v_c)\\
&=K_\omega^{20,8}(u_c,v_c)+8\left(|\xi_0|^2M(u_c,v_c)+2\xi_0\cdot\widetilde{P}(u_c,v_c)\right).
\end{align*}
Here, if we combine
\[
|\xi_0|^2M(u_c,v_c)+2\xi_0\cdot\widetilde{P}(u_c,v_c)=M(u_c,v_c)\left|\xi_0+\frac{\widetilde{P}(u_c,v_c)}{M(u_c,v_c)}\right|^2-\frac{\widetilde{P}(u_c,v_c)^2}{M(u_c,v_c)}
\]
and
\[
0<I_\omega(u_c,v_c)=I_\omega^c\,,\ \ \ K_\omega(u_c,v_c)>0,
\]
then we can take $\xi_0$ with
\[
I_\omega(z_c,w_c)<I_\omega^c\,,\ \ \ K_\omega(z_c,w_c)>0.
\]
On the other hand, we obtain $\|(z_c,w_c)\|_{S(\dot{H}^\frac{1}{2})\times S(\dot{H}^\frac{1}{2})}=\infty$ from $\|(u_c,v_c)\|_{S(\dot{H}^\frac{1}{2})\times S(\dot{H}^\frac{1}{2})}=\infty$. However, this is in contradiction to the definition of $I_\omega^c$. Therefore, we have $\widetilde{P}(u_c,v_c)=0$.
\end{proof}

\begin{lemma}\label{order of x(t)}
Let $(u,v)$ be a solution to (NLS) defined on $[0,\infty)$ so that $\widetilde{P}(u,v)=0$ and $K=\{(u(\cdot-x(t),t),v(\cdot-x(t),t))\}$ is precompact in $H^1\!\times\!H^1$ for some continuous function $x(\cdot)$. Then
\[
\frac{x(t)}{t}\rightarrow0\ \ \text{as}\ \ t\rightarrow\infty.
\]
\end{lemma}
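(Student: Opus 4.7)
I will argue by contradiction using a localized center-of-mass functional whose time derivative is controlled by the vanishing momentum $\widetilde P(u,v)=0$. Suppose the conclusion fails: there exist $\eta_0>0$ and $t_n\to\infty$ with $|x(t_n)|/t_n\geq \eta_0$. Since the case $(u,v)\equiv(0,0)$ is trivial, assume $M_0:=M(u,v)>0$. I first normalize the sequence: replacing $t_n$ by $\tau_n:=\min\{s\in[0,t_n]:|x(s)|=\sup_{0\leq r\leq t_n}|x(r)|\}$ (this still tends to $\infty$, since $|x(\tau_n)|\geq|x(t_n)|\geq\eta_0 t_n$ forces $\tau_n\to\infty$), I may assume
\[
\sup_{s\in[0,t_n]}|x(s)|=|x(t_n)|,\qquad |x(t_n)|/t_n\geq\eta_0,\qquad t_n\to\infty.
\]

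Fix $\varepsilon>0$ (small, to be chosen later) and let $R_0=R_0(\varepsilon)$ be from Lemma~\ref{Precompactness of the flow implies uniform localization}. Set $R_n:=2|x(t_n)|+R_0$, so that $R_n\geq |x(s)|+R_0$ for every $s\in[0,t_n]$. Choose a radial $\eta\in C_c^\infty(\mathbb{R}^5)$ with $\eta\equiv 1$ on $B(0,1)$ and $\mathrm{supp}\,\eta\subset B(0,2)$, and define the vector-valued truncated center of mass
\[
z_R(t):=\int_{\mathbb{R}^5} x\,\eta(x/R)\,\bigl(|u(t,x)|^2+2|v(t,x)|^2\bigr)\,dx.
\]
Applying Lemma~\ref{Localized virial identity} componentwise with weight $\chi(x)=x_j\eta(x/R_n)$,
\[
(z_{R_n}^{(j)})'(t)=2\,\mathrm{Im}\!\int\!\Bigl[\eta(x/R_n)(\partial_ju\,\overline u+\partial_jv\,\overline v)+\tfrac{x_j}{R_n}(\nabla\eta)(x/R_n)\cdot\bigl(\overline u\nabla u+\overline v\nabla v\bigr)\Bigr]\,dx.
\]
Writing $\eta(x/R_n)=1-(1-\eta(x/R_n))$, the first bracket yields $2\widetilde{P}_j(u,v)=0$ (Proposition~\ref{zero momentum}) plus an integral supported in $\{|x|>R_n\}$; the second term is supported in the annulus $\{R_n\leq|x|\leq 2R_n\}$. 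Since $R_n\geq|x(t)|+R_0$ for $t\in[0,t_n]$, both error regions lie inside $\{|x+x(t)|>R_0\}$, so Cauchy--Schwarz together with Lemma~\ref{Precompactness of the flow implies uniform localization} gives $|z_{R_n}'(t)|\leq C\varepsilon$ on $[0,t_n]$. Integrating, $|z_{R_n}(t_n)-z_{R_n}(0)|\leq C\varepsilon t_n$.

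For the pointwise size of $z_{R_n}(t)$ on $[0,t_n]$, decompose
\[
z_{R_n}(t)+x(t)M_0=\int(x+x(t))\,\eta(x/R_n)(|u|^2+2|v|^2)\,dx+x(t)\!\int(1-\eta(x/R_n))(|u|^2+2|v|^2)\,dx.
\]
Splitting the first integral at $|x+x(t)|=R_0$ bounds it by $R_0M_0+CR_n\varepsilon$; the second, whose integrand is supported in $\{|x|>R_n\}\subset\{|x+x(t)|>R_0\}$, is bounded by $|x(t)|\varepsilon$. Combining with the derivative estimate,
\[
M_0\bigl(|x(t_n)|-|x(0)|\bigr)\leq C\varepsilon t_n+C\varepsilon R_n+C\varepsilon|x(t_n)|+CR_0 M_0.
\]
Since $R_n\leq 3|x(t_n)|$ for $n$ large, dividing by $t_n$ and letting $n\to\infty$ gives $\eta_0 M_0\leq C\varepsilon(1+C'\eta_0)$; choosing $\varepsilon$ small contradicts $\eta_0>0$. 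The main technical obstacle is the derivative bound: the annular contributions from $\nabla\eta(\cdot/R_n)$ and $1-\eta(\cdot/R_n)$ must stay uniformly controlled on the whole interval $[0,t_n]$, and this is exactly what the running-maximum reduction in the first step enables, allowing a single $R_n\propto|x(t_n)|$ to dominate $|x(s)|$ for all $s\leq t_n$ without assuming any a priori growth rate on $x(\cdot)$.
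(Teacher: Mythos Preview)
Your proof is correct and follows essentially the same approach as the paper: both argue by contradiction via a truncated vector-valued center-of-mass functional $z_R(t)=\int x\,\chi(x/R)(|u|^2+2|v|^2)\,dx$, use $\widetilde P(u,v)=0$ to make $z_R'$ small outside the localization radius, and reduce to a time sequence along which $|x(t)|$ dominates its running maximum (the paper uses the first hitting time $\tau_n=\inf\{t:|x(t)|\ge|x(t_n)|\}$ where you take the argmax on $[0,t_n]$, which is equivalent). One small polish: in your last display you implicitly let $n\to\infty$ while keeping the $C\varepsilon|x(t_n)|/t_n$ terms on the right, which presupposes $|x(t_n)|/t_n$ stays bounded; it is cleaner to first absorb those terms on the left (valid once $\varepsilon$ is small relative to $M_0$) and only then divide by $t_n$.
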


We prove this lemma by the argument of Fang-Xie-Cazenave \cite{005}.

\begin{proof}
We assume that we do not have Lemma \ref{order of x(t)}. Then, there exist $\delta>0$ and a sequence $t_n\rightarrow\infty$ such that $|x(t_n)|\geq\delta t_n$. Without loss of generality, we assume that $x(0)=0$. We set
\[
\tau_n=\inf\{t\geq0:|x(t)|\geq|x(t_n)|\}.
\]
Since $0<\tau_n\leq t_n$ and $|x(\tau_n)|=|x(t_n)|$, it follows that
\[
\tau_n\rightarrow\infty\ \ \text{as}\ \ n\rightarrow\infty,
\]
\[
|x(t)|<|x(t_n)|\,,\ \ \ 0\leq t<\tau_n,
\]
\[
|x(\tau_n)|\geq\delta\tau_n.
\]
Let $\chi\in C_0^\infty(\mathbb{R}^5)$ be radial with
\begin{equation}
\notag \chi(r)=
\begin{cases}
\hspace{-0.4cm}&\displaystyle{\ \ \ \ 1\ \ \ \ \ \ \,(0\leq r\leq1),}\\
\hspace{-0.4cm}&\displaystyle{smooth\ \ (1\leq r\leq 2),}\\
\hspace{-0.4cm}&\displaystyle{\ \ \ \ 0\ \ \ \ \ \ \,(2\leq r),}
\end{cases}
\end{equation}
where $r=|x|$. Also, let $\chi$ satisfy $|\chi'(r)|\leq2\ \ (r\geq0)$. We define $\chi_R(r)=\chi(\frac{r}{R})$ for $R>0$. Then,
\[
|\chi_R(r)-1|+|r||\chi'_R(r)|\leq1_{\{R\leq r\}}+|r|\left|\frac{1}{R}\chi'\left(\frac{r}{R}\right)\right|\leq1_{\{R\leq r\}}+4\cdot1_{\{R\leq r\leq2R\}}\leq5\cdot1_{\{R\leq r\}},
\]
where we define that a function $1_A$ is $1_A=1$ $(x\in A)$ and $1_A=0$ $(x\notin A)$ for a set A. Also,
\[
|r||\chi_R(r)|\leq2R.
\]
We define
\[
z_R(t)=\int_{\mathbb{R}^5}x\chi_R(|x|)\left(|u(t,x)|^2+2|v(t,x)|^2\right)dx
\]
for $R>0$. By Lemma \ref{Localized virial identity} \eqref{09}, $i$-th component of $z_R'(t)$ for $1\leq i\leq5$ is
\begin{align*}
2\text{Im}\int_{\mathbb{R}^5}\nabla(x_i\chi_R)\cdot(\nabla u\overline{u}+\nabla v\overline{v})&dx=2\text{Im}\sum_{j=1}^5\int_{\mathbb{R}^5}\partial_j(x_i\chi_R)\left(u_j\overline{u}+v_j\overline{v}\right)dx\\
=2\text{Im}&\int_{\mathbb{R}^5}\chi_R\left(u_i\overline{u}+v_i\overline{v}\right)dx+2\text{Im}\sum_{j=1}^5\int_{\mathbb{R}^5}\left(x_i\chi_R'\frac{x_j}{r}\right)\left(u_j\overline{u}+v_j\overline{v}\right)dx\\
=2\text{Im}&\int_{\mathbb{R}^5}\left\{\chi_R\left(u_i\overline{u}+v_i\overline{v}\right)+\left(x_i\chi_R'\frac{x}{r}\right)\cdot\left(\nabla u\overline{u}+\nabla v\overline{v}\right)\right\}dx.
\end{align*}
Thus,
\[
z_R'(t)=2\text{Im}\int_{\mathbb{R}^5}\left\{\chi_R\left(\nabla u\overline{u}+\nabla v\overline{v}\right)+\frac{x\chi_R'}{r}x\cdot\left(\nabla u\overline{u}+\nabla v\overline{v}\right)\right\}dx.
\]
Since $\widetilde{P}(u,v)=0$, it follows that
\[
z_R'(t)=2\text{Im}\int_{\mathbb{R}^5}\left\{\left(\chi_R-1\right)\left(\nabla u\overline{u}+\nabla v\overline{v}\right)+\frac{x\chi_R'}{r}x\cdot\left(\nabla u\overline{u}+\nabla v\overline{v}\right)\right\}dx.
\]
Therefore,
\begin{align*}
|z_R'(t)|&\leq2\int_{\mathbb{R}^5}\left(\left|\chi_R-1\right|+|r||\chi_R'|\right)\left(|\nabla u||\overline{u}|+|\nabla v||\overline{v}|\right)dx\\
&\leq10\int_{\{R\leq |x|\}}\left(|\nabla u||\overline{u}|+|\nabla v||\overline{v}|\right)dx\\
&\leq5\int_{\{R\leq |x|\}}\left(|\nabla u|^2+|u|^2+|\nabla v|^2+|v|^2\right)dx.
\end{align*}
On the other hand, by Lemma \ref{Precompactness of the flow implies uniform localization}, there exists $\rho>0$ such that for any $0\leq t<\infty$,
\[
\int_{\{|x+x(t)|>\rho\}}\left(|\nabla u|^2+|u|^2+|\nabla v|^2+2|v|^2\right)dx\leq\frac{\delta M(u,v)}{10(1+\delta)}.
\]
Let $R_n=|x(\tau_n)|+\rho$. Since for given $0\leq t\leq\tau_n$ and $|x|>R_n$,
\[
|x+x(t)|\geq R_n-|x(t)|\geq R_n-|x(\tau_n)|=\rho,
\]
we obtain
\begin{align}
|z'_{R_n}(t)|\leq5\int_{\{|x+x(t)|>\rho\}}\left(|\nabla u|^2+|u|^2+|\nabla v|^2+|v|^2\right)dx\leq\frac{\delta M(u,v)}{2(1+\delta)}.\label{63}
\end{align}
for any $n\in\mathbb{N}$ and $0\leq t\leq\tau_n$. Also, since $R_n\geq\rho$ and $x(0)=0$,
\begin{align}
|z_{R_n}(0)|&\leq\int_{\{|x|<\rho\}}|x|\chi_{R_n}(|u_0|^2+2|v_0|^2)dx+\int_{\{|x|>\rho\}}|x|\chi_{R_n}(|u_0|^2+2|v_0|^2)dx \notag \\
&=\int_{\{|x|<\rho\}}|x|(|u_0|^2+2|v_0|^2)dx+\int_{\{2R_n>|x+x(0)|>\rho\}}|x|\chi_{R_n}(|u_0|^2+2|v_0|^2)dx \notag \\
&\leq\rho M(u,v)+\frac{\delta M(u,v)}{5(1+\delta)}R_n.\label{64}
\end{align}
Next, we estimate $z_{R_n}(\tau_n)$.
\begin{align*}
z_{R_n}(\tau_n)&=\int_{\{|x+x(\tau_n)|>\rho\}}x\chi_{R_n}\left(|u(\tau_n,x)|^2+2|v(\tau_n,x)|^2\right)dx\\
&\hspace{5cm}+\int_{\{|x+x(\tau_n)|<\rho\}}x\chi_{R_n}\left(|u(\tau_n,x)|^2+2|v(\tau_n,x)|^2\right)dx\\
&=\vcentcolon\Rnum{1}+\Rnum{2}.
\end{align*}
We have
\[
|\Rnum{1}|\leq\frac{\delta M(u,v)}{5(1+\delta)}R_n.
\]
If $|x+x(\tau_n)|<\rho$, then we have $|x|\leq|x+x(\tau_n)|+|x(\tau_n)|\leq\rho+|x(\tau_n)|=R_n$. Thus,
\begin{align*}
-\Rnum{2}&=-\int_{\{|x+x(\tau_n)|<\rho\}}x\left(|u(\tau_n,x)|^2+2|v(\tau_n,x)|^2\right)dx\\
&=-\int_{\{|x+x(\tau_n)|<\rho\}}(x+x(\tau_n))\left(|u(\tau_n,x)|^2+2|v(\tau_n,x)|^2\right)dx\\
&\hspace{3.5cm}+x(\tau_n)\int_{\{|x+x(\tau_n)|<\rho\}}\left(|u(\tau_n,x)|^2+2|v(\tau_n,x)|^2\right)dx\\
&=x(\tau_n)M(u,v)-\int_{\{|x+x(\tau_n)|<\rho\}}(x+x(\tau_n))\left(|u(\tau_n,x)|^2+2|v(\tau_n,x)|^2\right)dx\\
&\hspace{3.5cm}-x(\tau_n)\int_{\{|x+x(\tau_n)|>\rho\}}\left(|u(\tau_n,x)|^2+2|v(\tau_n,x)|^2\right)dx.
\end{align*}
Hence,
\[
|\Rnum{2}|\geq|x(\tau_n)|M(u,v)-\rho M(u,v)-\frac{\delta M(u,v)}{10(1+\delta)}R_n.
\]
Therefore,
\begin{align}
|z_{R_n}(\tau_n)|\geq-|\Rnum{1}|+|\Rnum{2}|\geq|x(\tau_n)|M(u,v)-\rho M(u,v)-\frac{3\delta M(u,v)}{10(1+\delta)}R_n.\label{65}
\end{align}
Combining \eqref{63},\,\eqref{64}, and \eqref{65},
\begin{align*}
\frac{\delta M(u,v)}{2(1+\delta)}\tau_n&=\int_0^{\tau_n}\frac{\delta M(u,v)}{2(1+\delta)}dt\geq\int_0^{\tau_n}|z'_{R_n}(t)|dt\geq\left|\int_0^{\tau_n}z_{R_n}'(t)dt\right|\\
&\geq|z_{R_n}(\tau_n)|-|z_{R_n}(0)|\geq|x(\tau_n)|M(u,v)-2\rho M(u,v)-\frac{\delta M(u,v)}{2(1+\delta)}R_n.
\end{align*}
Substituting $R_n=|x(\tau_n)|+\rho$,
\[
\frac{\delta M(u,v)}{2(1+\delta)}\tau_n\geq|x(\tau_n)|M(u,v)-2\rho M(u,v)-\frac{\delta M(u,v)}{2(1+\delta)}(|x(\tau_n)|+\rho),
\]
\[
\frac{\delta}{2(1+\delta)}\tau_n\geq \frac{2+\delta}{2(1+\delta)}|x(\tau_n)|-\frac{4+5\delta}{2(1+\delta)}\rho,
\]
\[
\frac{\delta}{2+\delta}+\frac{4+5\delta}{2+\delta}\frac{\rho}{\tau_n}\geq \frac{|x(\tau_n)|}{\tau_n}.
\]
Since $0<\tau_n\leq t_n$ and $|x(\tau_n)|=|x(t_n)|$,
\[
\frac{\delta}{2+\delta}+\frac{4+5\delta}{2+\delta}\frac{\rho}{\tau_n}\geq \frac{|x(t_n)|}{t_n}.
\]
We obtain $\frac{4+5\delta}{2+\delta}\frac{\rho}{\tau_n}\leq\frac{\delta}{2}$ for sufficiently large $n\in\mathbb{N}$ by $\tau_n\rightarrow\infty$\ \ as\ \ $n\rightarrow\infty$. Also, since $\frac{\delta}{2+\delta}<\frac{\delta}{2}$ by $\delta>0$, it follows that
\[
\frac{|x(t_n)|}{t_n}<\frac{\delta}{2}+\frac{\delta}{2}=\delta.
\]
This is in contradiction to $|x(t_n)|\geq\delta t_n$. Therefore, Lemma \ref{order of x(t)} holds.
\end{proof}

\begin{lemma}\label{lower bounded K}
Let $(u_c,v_c)$ be the critical solution constructed in Proposition \ref{Existence of a critical solution}. Then, there exists $A>0$ such that
\[
A\,M(u_c,v_c)\leq K(u_c,v_c)
\]
for any $0\leq t<\infty$.
\end{lemma}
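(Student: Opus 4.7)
The plan is to argue by contradiction using the compactness of the trajectory modulo translations together with the conservation of mass. Since $M$ is conserved and $(u_c,v_c)$ is not identically zero (its Strichartz norm is infinite), the quantity $M(u_c,v_c)$ is a strictly positive constant. Hence it suffices to prove that
\[
\inf_{t\in[0,\infty)} K(u_c(t),v_c(t))>0,
\]
and then take $A=\inf_t K(u_c(t),v_c(t))/M(u_c,v_c)>0$.

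Suppose for contradiction that there exists a sequence $\{t_n\}\subset[0,\infty)$ with $K(u_c(t_n),v_c(t_n))\to 0$. By Proposition \ref{Precompactness of the flow of the critical solution}, the set $\{(u_c(\cdot-x(t),t),v_c(\cdot-x(t),t)):t\in[0,\infty)\}$ is precompact in $H^1\!\times\!H^1$, so after passing to a subsequence there exists $(\phi,\psi)\in H^1\!\times\!H^1$ with
\[
(u_c(\cdot-x(t_n),t_n),v_c(\cdot-x(t_n),t_n))\longrightarrow(\phi,\psi)\ \ \text{in}\ \ H^1\!\times\!H^1.
\]

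Next I use two invariances. Since $K$ is invariant under spatial translations and is continuous on $H^1\!\times\!H^1$, the convergence above forces
\[
K(\phi,\psi)=\lim_{n\to\infty}K(u_c(\cdot-x(t_n),t_n),v_c(\cdot-x(t_n),t_n))=\lim_{n\to\infty}K(u_c(t_n),v_c(t_n))=0.
\]
Hence $\nabla\phi\equiv 0$ and $\nabla\psi\equiv 0$ in $L^2(\mathbb{R}^5)$, so $\phi$ and $\psi$ are (a.e.) constants on $\mathbb{R}^5$; being also in $L^2(\mathbb{R}^5)$, they must vanish, i.e.\ $(\phi,\psi)=(0,0)$.

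Finally, applying the same convergence to the mass functional (which is likewise translation invariant and continuous on $L^2\!\times\!L^2$), together with the conservation law \eqref{01}, yields
\[
M(u_c,v_c)=\lim_{n\to\infty}M(u_c(t_n),v_c(t_n))=\lim_{n\to\infty}M(u_c(\cdot-x(t_n),t_n),v_c(\cdot-x(t_n),t_n))=M(\phi,\psi)=0.
\]
This forces $(u_c(t),v_c(t))=(0,0)$ for every $t\geq 0$, contradicting $\|(u_c,v_c)\|_{S(\dot{H}^{\frac{1}{2}})\times S(\dot{H}^{\frac{1}{2}})}=\infty$ from Proposition \ref{Existence of a critical solution}. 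The only step requiring any care is the passage from $K(\phi,\psi)=0$ to $(\phi,\psi)=(0,0)$, which works precisely because constants are not in $L^2(\mathbb{R}^5)$; the rest is routine compactness plus conservation.
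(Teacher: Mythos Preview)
Your proof is correct and follows essentially the same approach as the paper's: argue by contradiction, extract a sequence with $K\to 0$, use precompactness of the translated flow to pass to a limit, deduce the limit is $(0,0)$, and contradict mass conservation together with $\|(u_c,v_c)\|_{S(\dot H^{1/2})\times S(\dot H^{1/2})}=\infty$. The paper's version is terser (it jumps directly from precompactness and $K\to 0$ to $\|(u_c(t_n),v_c(t_n))\|_{H^1\times H^1}\to 0$), whereas you spell out the intermediate step that $K(\phi,\psi)=0$ forces $(\phi,\psi)=(0,0)$ because nonzero constants are not in $L^2(\mathbb{R}^5)$.
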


\begin{proof}
We assume that Lemma \ref{lower bounded K} does not hold. Then, there exists a sequence $\{t_n\}$ such that
\[
M(u_c,v_c)\geq nK(u_c(t_n),v_c(t_n)).
\]
Since $M(u_c,v_c)>0$ is a constant,
\[
K(u_c(t_n),v_c(t_n))\longrightarrow0\ \ \text{as}\ \ n\rightarrow\infty.
\]
By Proposition \ref{Precompactness of the flow of the critical solution}, we can pass to a subsequence $\{t_n\}$ so that
\[
\|(u_c(t_n),v_c(t_n))\|_{H^1\times H^1}\longrightarrow0\ \ \text{as}\ \ n\rightarrow\infty.
\]
Therefore,
\[
M(u_{c,0},v_{c,0})=M(u_c(t_n),v_c(t_n))\longrightarrow0\ \ \text{as}\ \ n\rightarrow\infty.
\]
However, this leads to contradiction with $\|(u_c,v_c)\|_{S(\dot{H}^\frac{1}{2})\times S(\dot{H}^\frac{1}{2})}=\infty$.
\end{proof}

\subsection{Rigidity}

%Rigidity

\begin{theorem}[Rigidity]\label{Rigidity}
Let $(u_0,v_0)\in H^1\!\times\!H^1$ and $(u,v)$ be the time-global solution to (NLS) with initial data $(u_0,v_0)$. Suppose
\[
I_\omega(u_0,v_0)<I_\omega(\phi_\omega,\psi_\omega)\,,\ \ \ K_\omega^{20,8}(u_0,v_0)\geq0\,,\ \ \widetilde{P}(u_0,v_0)=0,
\]
\[
^\exists A>0\text{ such that }0\leq\!^\forall t<\infty,\ \ \ A\,M(u,v)\leq K(u,v)
\]
and there exists a continuous path $x(t)$ such that
\[
K=\{(u(\cdot-x(t),t),v(\cdot-x(t),t)):t\in [0,\infty)\}
\]
is precompact in $H^1\times H^1$. Then, $(u_0,v_0)=(0,0)$.
\end{theorem}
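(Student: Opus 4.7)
The plan is to run a Kenig--Merle style localized virial argument on the precompact orbit. First I would reduce to the case $M(u_0,v_0)>0$, since $(u_0,v_0)=(0,0)$ is the desired conclusion. Because $I_\omega$ and $M$ are conserved by Theorem \ref{Conservation law}, the strict inequality $I_\omega(u_0,v_0)<I_\omega(\phi_\omega,\psi_\omega)$ persists for all $t\geq 0$; moreover $K_\omega^{20,8}(u_0,v_0)=0$ would place $(u_0,v_0)\in C_\omega$ with $I_\omega(u_0,v_0)<\mu_\omega^{20,8}$, which is impossible, so actually $K_\omega^{20,8}(u_0,v_0)>0$. Lemma \ref{estimates for K} then gives
\[
K_\omega^{20,8}(u(t),v(t))\geq \min\bigl\{I_\omega(\phi_\omega,\psi_\omega)-I_\omega(u_0,v_0),\ K(u(t),v(t))\bigr\}
\]
for every $t\geq 0$; combined with the hypothesis $A\,M(u,v)\leq K(u,v)$ and conservation of $M$, this yields a uniform positive lower bound $K_\omega^{20,8}(u(t),v(t))\geq c_0>0$.

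Next I would pick a radial cutoff $\chi\in C_0^\infty(\mathbb{R}^5)$ with $\chi(r)=r^2$ on $\{r\leq 1\}$ and $\chi\equiv 0$ on $\{r\geq 3\}$, set $\chi_R(r)=R^2\chi(r/R)$, and study
\[
I(t)=\int_{\mathbb{R}^5}\chi_R(|x|)\bigl(|u(t,x)|^2+2|v(t,x)|^2\bigr)\,dx.
\]
By Lemma \ref{Localized virial identity}, the weight reproduces the unlocalized virial exactly inside $B(0,R)$ (via Proposition \ref{K and virial}), so
\[
I''(t)=K_\omega^{20,8}(u(t),v(t))+R_1(t)+R_2(t)+R_3(t),
\]
where the remainders $R_j(t)$ are supported in $\{|x|>R\}$. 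Repeating the computations in the proof of Theorem \ref{Global versus blow-up dichotomy}, using $|x\cdot\nabla u|\leq r|\nabla u|$ and the pointwise bounds $|\chi_R^{(k)}|\lesssim R^{2-k}$, produces an $R$-uniform control
\[
|R_1(t)|+|R_2(t)|+|R_3(t)|\leq C\int_{|x|>R}\bigl(|\nabla u|^2+|\nabla v|^2+|u|^2+|v|^2+|v\overline{u}^2|\bigr)\,dx.
\]

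Given $\varepsilon>0$, pick $R_0=R_0(\varepsilon)$ via Lemma \ref{Precompactness of the flow implies uniform localization} so that the right-hand tail integral, re-centered at $-x(t)$, is at most $\varepsilon$ for every $t\geq 0$. For each $T>0$ set $R=R(T):=R_0+\sup_{0\leq t\leq T}|x(t)|$; then $|x|>R$ forces $|x+x(t)|>R_0$ for $t\in[0,T]$, whence
\[
I''(t)\geq c_0-C\varepsilon\geq \tfrac{c_0}{2}\qquad (0\leq t\leq T)
\]
once $\varepsilon$ is fixed small. From the first-order formula \eqref{09} together with $|\nabla\chi_R|\lesssim R$, Cauchy--Schwarz and conservation of $M$, plus Lemma \ref{Comparability of K and I} (which keeps $K$ bounded uniformly in time), one also obtains $|I'(t)|\leq C_1 R$.

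Integrating the lower bound on $I''$ over $[0,T]$ then produces
\[
\frac{c_0}{2}T\leq I'(T)-I'(0)\leq 2C_1 R(T)=2C_1\bigl(R_0+\sup_{0\leq t\leq T}|x(t)|\bigr).
\]
By Lemma \ref{order of x(t)}, $\sup_{0\leq t\leq T}|x(t)|=o(T)$; dividing by $T$ and letting $T\to\infty$ forces $c_0\leq 0$, contradicting $c_0>0$. Therefore $M(u_0,v_0)=0$, i.e.\ $(u_0,v_0)=(0,0)$. The step I expect to require the most care is the $R$-uniform remainder estimate: the coefficient $\chi_R''/r^2-\chi_R'/r^3$ multiplying $|x\cdot\nabla u|^2$ in the radial virial identity is only $O(R^{-2})$ on the annulus $\{R\leq r\leq 3R\}$, and one has to use the factor $r^2$ coming from $|x\cdot\nabla u|^2\leq r^2|\nabla u|^2$ to absorb this loss, with analogous compensations for $R_2$ and for the nonlinear term $R_3$; all of these must be bookkept carefully to ensure no hidden $R$-growth slips through and destroys the final inequality.
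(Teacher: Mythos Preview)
Your proposal is correct and follows essentially the same localized-virial/rigidity argument as the paper: the same cutoff $\chi_R$, the same lower bound on $I''$ via Lemma \ref{estimates for K} and the hypothesis $A\,M\leq K$, the same tail control from Lemma \ref{Precompactness of the flow implies uniform localization} with $R=R_0+\sup_{t}|x(t)|$, and the same contradiction from Lemma \ref{order of x(t)}. The only cosmetic differences are that you write $I''=K_\omega^{20,8}+\text{(remainders)}$ (as in the blow-up proof of Theorem \ref{Global versus blow-up dichotomy}) rather than estimating the full radial-virial terms from below, and you integrate over $[0,T]$ instead of $[T_0,T_1]$; both are equivalent packagings of the same computation.
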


%proof of Rigidity

\begin{proof}
In the case $K_\omega^{20,8}(u_0,v_0)=0$, we have $(u_0,v_0)=(0,0)$ by the definition of $\mu_\omega^{20,8}(=I_\omega(\phi_\omega,\psi_\omega))$. Let $K_\omega^{20,8}(u_0,v_0)>0$. We assume that $M(u,v)=\|u\|_{L^2}^2+2\|v\|_{L^2}^2>0$ and lead to contradiction. By Lemma \ref{order of x(t)}, for any $\eta>0$, there exists $T_0=T_0(\eta)>0$ such that
\[
|x(t)|\leq\eta t
\]
for any $t\geq T_0$. Let $\chi\in C_0^\infty(\mathbb{R}^5)$ be radial with
\begin{equation}
\notag \chi(r)=
\begin{cases}
\hspace{-0.4cm}&\displaystyle{\ \ \ \,r^2\ \ \ \,\ \ (0\leq r\leq1),}\\
\hspace{-0.4cm}&\displaystyle{smooth\ \ (1\leq r\leq 3),}\\
\hspace{-0.4cm}&\displaystyle{\ \ \ \ 0\ \ \ \ \ \ \,(3\leq r),}
\end{cases}
\end{equation}
where $r=|x|$.
Also, we assume that $\chi$ satisfies $\chi''(r)\leq2\ \ (r\geq0)$ and define $\chi_R(r)=R^2\chi(\frac{r}{R})$.\\
We set $I(t)=\int_{\mathbb{R}^5}\chi_R\left(|\nabla u|^2+2|\nabla v|^2\right)dx$. Then,
\[
I'(t)=2\text{Im}\int_{\mathbb{R}^5}\chi_R'\left(\frac{x\cdot\nabla u}{r}\overline{u}+\frac{x\cdot\nabla v}{r}\overline{v}\right)dx=2\text{Im}\int_{\mathbb{R}^5}R\chi'\left(\frac{r}{R}\right)\left(\frac{x\cdot\nabla u}{r}\overline{u}+\frac{x\cdot\nabla v}{r}\overline{v}\right)dx
\]
by Lemma \ref{Localized virial identity}\ \eqref{54}. Therefore,
\begin{align*}
|I'(t)|&\leq2R\left|\int_{\mathbb{R}^5}\chi'\left(\frac{r}{R}\right)\left(\frac{x\cdot\nabla u}{r}\overline{u}+\frac{x\cdot\nabla v}{r}\overline{v}\right)dx\right|\\
&\leq2R\int_{|x|\leq3R}\left|\chi'\left(\frac{r}{R}\right)\right|\left(|\nabla u||u|+|\nabla v||v|\right)dx\\
&\leq cR\int_{|x|\leq3R}\left(|\nabla u||u|+|\nabla v||v|\right)dx\\
&\leq cR\left(\frac{1}{2}\|\nabla u\|_{L^2}^2+\frac{1}{2}\|u\|_{L^2}^2+\frac{1}{2}\|\nabla v\|_{L^2}^2+\frac{1}{2}\|v\|_{L^2}^2\right)\\
&\leq cR\left(M(u,v)+2I_\omega(\phi_\omega,\psi_\omega)+4E(u,v)\right)\\
&=\widetilde{c}R
\end{align*}
for any $0\leq t<\infty$. Also, applying Lemma \ref{Localized virial identity}\ \eqref{55},
\begin{align*}
I''(t)&=\int_{\mathbb{R}^5}\left\{\frac{1}{r^2}\chi''\left(\frac{r}{R}\right)-\frac{R}{r^3}\chi'\left(\frac{r}{R}\right)\right\}\left(4|x\cdot\nabla u|^2+2|x\cdot\nabla v|^2\right)dx\\
&\hspace{1.5cm}+\int_{\mathbb{R}^5}\frac{R}{r}\chi'\left(\frac{r}{R}\right)\left(4|\nabla u|^2+2|\nabla v|^2\right)dx\\
&\hspace{1.5cm}-\int_{\mathbb{R}^5}\left\{\frac{1}{R^2}\chi^{(4)}\left(\frac{r}{R}\right)+\frac{8}{Rr}\chi^{(3)}\left(\frac{r}{R}\right)+\frac{8}{r^2}\chi''\left(\frac{r}{R}\right)-\frac{8R}{r^3}\chi'\left(\frac{r}{R}\right)\right\}\left(|u|^2+\frac{1}{2}|v|^2\right)dx\\
&\hspace{1.5cm}-2\text{Re}\int_{\mathbb{R}^5}\left\{\chi''\left(\frac{r}{R}\right)+\frac{4R}{r}\chi'\left(\frac{r}{R}\right)\right\}v\overline{u}^2dx.
\end{align*}
Let
\[
R_1=\int_{\mathbb{R}^5}\left\{\frac{1}{r^2}\chi''\left(\frac{r}{R}\right)-\frac{R}{r^3}\chi'\left(\frac{r}{R}\right)\right\}\left(4|x\cdot\nabla u|^2+2|x\cdot\nabla v|^2\right)dx+\int_{\mathbb{R}^5}\frac{R}{r}\chi'\left(\frac{r}{R}\right)\left(4|\nabla u|^2+2|\nabla v|^2\right)dx,
\]
\[
R_2=-\int_{\mathbb{R}^5}\left\{\frac{1}{R^2}\chi^{(4)}\left(\frac{r}{R}\right)+\frac{8}{Rr}\chi^{(3)}\left(\frac{r}{R}\right)+\frac{8}{r^2}\chi''\left(\frac{r}{R}\right)-\frac{8R}{r^3}\chi'\left(\frac{r}{R}\right)\right\}\left(|u|^2+\frac{1}{2}|v|^2\right)dx,
\]
and
\[
R_3=-2\text{Re}\int_{\mathbb{R}^5}\left\{\chi''\left(\frac{r}{R}\right)+\frac{4R}{r}\chi'\left(\frac{r}{R}\right)\right\}v\overline{u}^2dx.
\]
First, we estimate $R_1$.\\
In the case $\frac{1}{r^2}\chi''\left(\frac{r}{R}\right)-\frac{R}{r^3}\chi'\left(\frac{r}{R}\right)\geq0$, we have $R_1\geq\int_{\mathbb{R}^5}\frac{R}{r}\chi'\left(\frac{r}{R}\right)\left(4|\nabla u|^2+2|\nabla v|^2\right)dx$.\\
In the case $\frac{1}{r^2}\chi''\left(\frac{r}{R}\right)-\frac{R}{r^3}\chi'\left(\frac{r}{R}\right)<0$, we have
\begin{align*}
R_1&\geq\int_{\mathbb{R}^5}\left\{\frac{1}{r^2}\chi''\left(\frac{r}{R}\right)-\frac{R}{r^3}\chi'\left(\frac{r}{R}\right)\right\}\left(4r^2|\nabla u|^2+2r^2|\nabla v|^2\right)dx+\int_{\mathbb{R}^5}\frac{R}{r}\chi'\left(\frac{r}{R}\right)\left(4|\nabla u|^2+2|\nabla v|^2\right)dx\\
&\geq\int_{\mathbb{R}^5}\chi''\left(\frac{r}{R}\right)\left(4|\nabla u|^2+2|\nabla v|^2\right)dx.
\end{align*}
Combining these inequalities,
\[
R_1\geq\int_{|x|\leq R}\left(8|\nabla u|^2+4|\nabla v|^2\right)dx-c\int_{R\leq|x|}\left(4|\nabla u|^2+2|\nabla v|^2\right)dx.
\]
Next, we estimate $R_2$.
\begin{align*}
R_2&=-\int_{\mathbb{R}^5}\left\{\frac{1}{R^2}\chi^{(4)}\left(\frac{r}{R}\right)+\frac{8}{Rr}\chi^{(3)}\left(\frac{r}{R}\right)+\frac{8}{r^2}\chi''\left(\frac{r}{R}\right)-\frac{8R}{r^3}\chi'\left(\frac{r}{R}\right)\right\}\left(|u|^2+\frac{1}{2}|v|^2\right)dx\\
&\geq-\frac{c}{R^2}\int_{R\leq|x|}\left(|u|^2+\frac{1}{2}|v|^2\right)dx.
\end{align*}
Finally, we estimate $R_3$.
\begin{align*}
R_3&=-20\text{Re}\int_{|x|\leq R}v\overline{u}^2dx-2\text{Re}\int_{R\leq|x|\leq3R}\left\{\chi''\left(\frac{r}{R}\right)+\frac{4R}{r}\chi'\left(\frac{r}{R}\right)\right\}v\overline{u}^2dx\\
&\geq-20\text{Re}\int_{|x|\leq R}v\overline{u}^2dx-c\int_{R\leq|x|}|vu^2|dx.
\end{align*}
Therefore,
\begin{align*}
I''(t)&\geq\int_{|x|\leq R}\left(8|\nabla u|^2+4|\nabla v|^2\right)dx-c\int_{R\leq|x|}\left(4|\nabla u|^2+2|\nabla v|^2\right)dx\\
&~~~~~~~~~~~~~~~~~~~~~~~-\frac{c}{R^2}\int_{R\leq|x|}\left(|u|^2+\frac{1}{2}|v|^2\right)dx-20\text{Re}\int_{|x|\leq R}v\overline{u}^2dx-c\int_{R\leq|x|}|vu^2|dx.
\end{align*}
Here, applying Lemma \ref{estimates for K} and Lemma \ref{lower bounded K},
\begin{align*}
&\int_{|x|\leq R}\left(8|\nabla u|^2+4|\nabla v|^2-20\text{Re}\,v\overline{u}^2\right)dx\\
&~~~~~~~=\int_{\mathbb{R}^5}\left(8|\nabla u|^2+4|\nabla v|^2-20\text{Re}\,v\overline{u}^2\right)dx-\int_{R\leq|x|}\left(8|\nabla u|^2+4|\nabla v|^2-20\text{Re}\,v\overline{u}^2\right)dx\\
&~~~~~~~=K_\omega^{20,8}(u,v)-\int_{R\leq|x|}\left(8|\nabla u|^2+4|\nabla v|^2-20\text{Re}\,v\overline{u}^2\right)dx\\
&~~~~~~~\geq\min\{I_\omega(\phi_\omega,\psi_\omega)-I_\omega(u,v),K(u,v)\}-\int_{R\leq|x|}\left(8|\nabla u|^2+4|\nabla v|^2-20\text{Re}\,v\overline{u}^2\right)dx\\
&~~~~~~~\geq\min\{I_\omega(\phi_\omega,\psi_\omega)-I_\omega(u,v),cM(u,v)\}-\int_{R\leq|x|}\left(8|\nabla u|^2+4|\nabla v|^2-20\text{Re}\,v\overline{u}^2\right)dx,
\end{align*}
and hence it follows that
\begin{align*}
I''(t)&\geq\min\{I_\omega(\phi_\omega,\psi_\omega)-I_\omega(u,v),cM(u,v)\}-c\int_{R\leq|x|}\left(4|\nabla u|^2+2|\nabla v|^2\right)dx\\
&\hspace{4cm}-\frac{c}{R^2}\int_{R\leq|x|}\left(|u|^2+\frac{1}{2}|v|^2\right)dx-c\int_{R\leq|x|}|vu^2|dx\\
&\geq\min\{I_\omega(\phi_\omega,\psi_\omega)-I_\omega(u,v),cM(u,v)\}\\
&\hspace{4cm}-c\int_{R\leq|x|}\left(4|\nabla u|^2+2|\nabla v|^2+\frac{1}{R^2}|u|^2+\frac{1}{2R^2}|v|^2+|vu^2|\right)dx.
\end{align*}
By Lemma \ref{Precompactness of the flow implies uniform localization}, there exists $R_0>1$ such that
\[
c\int_{R_0\leq|x+x(t)|}\left(4|\nabla u|^2+2|\nabla v|^2+|u|^2+\frac{1}{2}|v|^2+|vu^2|\right)dx<\frac{1}{2}\min\{I_\omega(\phi_\omega,\psi_\omega)-I_\omega(u,v),cM(u,v)\}
\]
for any $0\leq t<\infty$. If we take $\displaystyle R=R_0+\sup_{t\in[T_0,T_1]}|x(t)|>1$, then $\displaystyle|x+x(t)|\geq|x|-|x(t)|\geq R-\sup_{[T_0,T_1]}|x(t)|=R_0$ for $x$ with $|x|>R$ and $t\in[T_0,T_1]$, where $T_1>T_0$ is chosen later. Thus,
\begin{align*}
&c\int_{R\leq|x|}\left(4|\nabla u|^2+2|\nabla v|^2+\frac{1}{R^2}|u|^2+\frac{1}{2R^2}|v|^2+|vu^2|\right)dx\\
&\hspace{5cm}\leq c\int_{R\leq|x|}\left(4|\nabla u|^2+2|\nabla v|^2+|u|^2+\frac{1}{2}|v|^2+|vu^2|\right)dx\\
&\hspace{5cm}\leq c\int_{R_0\leq|x+x(t)|}\left(4|\nabla u|^2+2|\nabla v|^2+|u|^2+\frac{1}{2}|v|^2+|vu^2|\right)dx\\
&\hspace{5cm}\leq\frac{1}{2}\min\{I_\omega(\phi_\omega,\psi_\omega)-I_\omega(u,v),cM(u,v)\}.
\end{align*}
For such $R>1$ and $t\in[T_0,T_1]$, we have $I''(t)\geq\frac{1}{2}\min\{I_\omega(\phi_\omega,\psi_\omega)-I_\omega(u,v),cM(u,v)\}$. Integrating this inequality in $[T_0,T_1]$,
\begin{align*}
\frac{1}{2}\min\{I_\omega(\phi_\omega,\psi_\omega)-I_\omega(u,v),cM(u,v)\}(T_1-T_0)&\leq I'(T_1)-I'(T_0)\\
&\leq|I'(T_1)|+|I'(T_0)|\\
&\leq2\widetilde{c}R\\
&=2\widetilde{c}\left(R_0+\sup_{t\in[T_0,T_1]}|x(t)|\right)\\
&\leq2\widetilde{c}\left(R_0+T_1\eta\right).
\end{align*}
This inequality is contradiction if we take $\eta>0$ sufficiently small and $T_1>0$ sufficintly large when $M(u,v)>0$. Therefore, $M(u,v)=0$, i.e. $(u_0,v_0)=(0,0)$. However, $(u_0,v_0)$ does not satisfy $K_\omega^{20,8}(u_0,v_0)>(0,0)$.
\end{proof}

%proof of main theorem 1

Finally, we prove the scattering part of Theorem \ref{Main theorem 1}.
\begin{proof}
We consider $(u_c,v_c)$ constructed in Proposition \ref{Existence of a critical solution}. Then, $(u_c,v_c)$ satisfies the assumption of Theorem \ref{Rigidity} by Proposition \ref{Existence of a critical solution}, Proposition \ref{Precompactness of the flow of the critical solution}, Proposition \ref{zero momentum}, and Lemma \ref{lower bounded K}. By applying Theorem \ref{Rigidity}, we have $(u_{c,0},v_{c,0})=(0,0)$. However, this leads to contradiction with $\|(u_c,v_c)\|_{S(\dot{H}^\frac{1}{2})\times S(\dot{H}^\frac{1}{2})}=\infty$. Therefore, we have $I_\omega^c\geq I_\omega(\phi_\omega,\psi_\omega)$. By definition of $I_\omega^c$, if $I_\omega(u_0,v_0)<I_\omega(\phi_\omega,\psi_\omega)$ and $K_\omega^{20,8}(u_0,v_0)>0$, then $SC(u_0,v_0)$, i.e. $\|(u_0,v_0)\|_{S(\dot{H}^\frac{1}{2})\times S(\dot{H}^\frac{1}{2})}<\infty$. By applying Proposition \ref{Scattering}, the scattering part of Theorem \ref{Main theorem 1} holds.
\end{proof}

\section{Appendix}

%Sharp Gagliardo-Nirenberg inequality

\begin{theorem}[Sharp Gagliardo--Nirenberg type inequality]\label{Sharp Gagliardo-Nirenberg inequality}
It follows that
\[
P(u,v)\leq C_{GN}M(u,v)^\frac{1}{4}K(u,v)^\frac{5}{4}
\]
for any $(u,v)\in H^1\!\times\!H^1$ and the best constant $C_{GN}$ is attained by the ground state, i.e.
\[
C_{GN}=\frac{P(\phi_\omega,\psi_\omega)}{M(\phi_\omega,\psi_\omega)^\frac{1}{4}K(\phi_\omega,\psi_\omega)^\frac{5}{4}}.
\]
\end{theorem}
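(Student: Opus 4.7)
The plan is to derive the sharp inequality from the variational characterization of $\mu_\omega^{20,8}$ established in Proposition \ref{minimizer existence}, combined with a scaling argument. Observe first that the ratio $P(u,v)/[M(u,v)^{1/4} K(u,v)^{5/4}]$ is invariant under the two-parameter family of rescalings $(u,v)\mapsto(e^{\alpha\lambda}u(e^{\beta\lambda}\cdot),e^{\alpha\lambda}v(e^{\beta\lambda}\cdot))$ for every $(\alpha,\beta)\in\mathbb{R}^2$ (by the scaling laws already computed in the proof of Proposition \ref{K and virial}). Since the inequality is trivial when $P(u,v)\le 0$, assume $P(u,v)>0$. Take
\[
\lambda_0=\tfrac14\log\tfrac{2K(u,v)}{5P(u,v)},\qquad (\tilde u,\tilde v):=(e^{20\lambda_0}u(e^{8\lambda_0}\cdot),\,e^{20\lambda_0}v(e^{8\lambda_0}\cdot)),
\]
so that $K_\omega^{20,8}(\tilde u,\tilde v)=8K(\tilde u,\tilde v)-20P(\tilde u,\tilde v)=0$, i.e. $(\tilde u,\tilde v)\in C_\omega$ for every $\omega>0$.

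On $C_\omega$, the identity $K=\tfrac52P$ gives $I_\omega(\tilde u,\tilde v)=\tfrac{\omega}{2}M(u,v)+\tfrac14P(\tilde u,\tilde v)$. Using the scaling $P(\tilde u,\tilde v)=e^{20\lambda_0}P(u,v)=\frac{32\,K(u,v)^5}{3125\,P(u,v)^4}$ together with the basic lower bound $I_\omega(\tilde u,\tilde v)\ge\mu_\omega^{20,8}$ built into the definition of $\mu_\omega^{20,8}$, I obtain the master inequality
\[
\frac{\omega}{2}M(u,v)+\frac{8\,K(u,v)^5}{3125\,P(u,v)^4}\;\ge\;\mu_\omega^{20,8}\qquad\text{for every }\omega>0.
\]
The next step is to evaluate $\mu_\omega^{20,8}$ explicitly. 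Because (gNLS) is scale-invariant in the form $(\phi_\omega,\psi_\omega)(x)=\omega(\phi_1,\psi_1)(\sqrt{\omega}\,x)$, one computes $M(\phi_\omega,\psi_\omega)=\omega^{-1/2}M(\phi_1,\psi_1)$ and, combined with the Pohozaev identities $2K=5P$ and $2\omega M=P$ of Proposition \ref{proposition of ground state}, yields the explicit formula $\mu_\omega^{20,8}=\omega M(\phi_\omega,\psi_\omega)=\omega^{1/2}M(\phi_1,\psi_1)$.

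Optimizing the master inequality over $\omega>0$ is then elementary: the maximum of $\omega^{1/2}M(\phi_1,\psi_1)-\tfrac{\omega}{2}M(u,v)$ is attained at $\omega^\ast=M(\phi_1,\psi_1)^2/M(u,v)^2$ and equals $M(\phi_1,\psi_1)^2/[2M(u,v)]$. Rearranging,
\[
P(u,v)^4\;\le\;\frac{16}{3125\,M(\phi_1,\psi_1)^2}\,M(u,v)\,K(u,v)^5,
\]
which is the claimed Gagliardo--Nirenberg inequality with $C_{GN}^4=16/[3125\,M(\phi_1,\psi_1)^2]$. To verify sharpness, evaluate the right-hand side at $(\phi_\omega,\psi_\omega)$: the Pohozaev identities give $K(\phi_\omega,\psi_\omega)=5\omega M(\phi_\omega,\psi_\omega)$ and $P(\phi_\omega,\psi_\omega)=2\omega M(\phi_\omega,\psi_\omega)$, so
\[
\frac{P(\phi_\omega,\psi_\omega)}{M(\phi_\omega,\psi_\omega)^{1/4}K(\phi_\omega,\psi_\omega)^{5/4}}=\frac{2\omega}{5^{5/4}\,\omega^{5/4}\,M(\phi_\omega,\psi_\omega)^{1/2}}=\frac{2}{5^{5/4}M(\phi_1,\psi_1)^{1/2}}=C_{GN},
\]
where in the last equality I used $M(\phi_\omega,\psi_\omega)=\omega^{-1/2}M(\phi_1,\psi_1)$ (this is also just the statement that the ratio is scale-invariant and hence independent of $\omega$).

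The main obstacle is not any single calculation but making the web of identities fit together: one must combine the variational inequality $I_\omega\ge\mu_\omega^{20,8}$ on $C_\omega$, the explicit $\omega$-dependence of $\mu_\omega^{20,8}$ obtained by scaling the ground state, and the fact that the Pohozaev identities pin down both $K/P$ and $\omega M/P$ at the extremizer. Once these three ingredients are aligned, optimizing over the scaling parameter $\lambda$ (to land in $C_\omega$) and over $\omega$ (to saturate the bound) simultaneously recovers the sharp constant, and the equality case is exhibited by $(\phi_\omega,\psi_\omega)$ itself.
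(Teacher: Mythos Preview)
Your argument is correct and takes a genuinely different route from the paper. The paper proceeds by a direct variational analysis of the functional $J(u,v)=M(u,v)^{1/4}K(u,v)^{5/4}/P(u,v)$: after establishing $0<C_{GN}<\infty$ via the scalar Gagliardo--Nirenberg inequality, it assumes a minimizer $(\widetilde u,\widetilde v)$ of $J$ exists, normalizes so that $M=K=1$, computes the Euler--Lagrange equations, and identifies them (after rescaling) with (gNLS). Your approach, by contrast, never introduces $J$ at all. Instead you rescale an arbitrary $(u,v)$ with $P(u,v)>0$ onto the constraint set $C_\omega$, invoke the already-established variational bound $I_\omega\ge\mu_\omega^{20,8}$ from Proposition~\ref{minimizer existence}, use the explicit value $\mu_\omega^{20,8}=\omega M(\phi_\omega,\psi_\omega)=\omega^{1/2}M(\phi_1,\psi_1)$ (which follows from the Pohozaev identities and the scaling of (gNLS)), and then optimize over $\omega$.

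What your approach buys is that the existence question is handled entirely by Proposition~\ref{minimizer existence}, so you avoid the step in the paper's proof where a minimizer of $J$ is simply posited (``Let $(\widetilde u,\widetilde v)$ attain the infimum''); in this sense your derivation is more self-contained within the paper's logical structure. What the paper's approach buys is that it is more standard and makes the Euler--Lagrange identification explicit, which could be useful if one wanted to characterize \emph{all} extremizers rather than just exhibit one. One minor remark: the scaling relation $\mu_\omega^{20,8}=\omega^{1/2}M(\phi_1,\psi_1)$ that you use is justified by the observation that $(u,v)\mapsto(\omega u(\sqrt\omega\,\cdot),\omega v(\sqrt\omega\,\cdot))$ preserves $C_\omega$ (since $K_\omega^{20,8}=8K-20P$ does not depend on $\omega$) and rescales $I_1$ to $\omega^{-1/2}I_\omega$; you might state this explicitly rather than only invoking the scaling of the ground state.
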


\begin{proof}
Let $\phi_\omega=\omega^\alpha\phi_1(\omega^\beta\cdot),\ \psi_\omega=\omega^\alpha\psi_1(\omega^\beta\cdot)$. We substitute these them into
\begin{equation}
\tag{\text{gNLS}}
\begin{cases}
\hspace{-0.4cm}&\displaystyle{-\Delta\phi_\omega+\omega\phi_\omega=2\psi_\omega\phi_\omega,}\\
\hspace{-0.4cm}&\displaystyle{-\frac{1}{2}\Delta\psi_\omega+2\omega\psi_\omega=\phi_\omega^2,}
\end{cases}
\end{equation}
then
\begin{equation}
\notag
\begin{cases}
\hspace{-0.4cm}&\displaystyle{-\omega^{\alpha+2\beta}(\Delta\phi_1)(\omega^\beta\cdot)+\omega^{\alpha+1}\phi_1(\omega^\beta\cdot)=2\omega^{2\alpha}\psi_1(\omega^\beta\cdot)\phi_1(\omega^\beta\cdot),}\\
\hspace{-0.4cm}&\displaystyle{-\frac{1}{2}\omega^{\alpha+2\beta}(\Delta\psi_1)(\omega^\beta\cdot)+2\omega^{\alpha+1}\psi_1(\omega^\beta\cdot)=\omega^{2\alpha}\phi_1(\omega^\beta\cdot)^2.}
\end{cases}
\end{equation}
Thus, $\alpha+2\beta=\alpha+1=2\alpha$, i.e. $\alpha=1,\,\beta=\frac{1}{2}.$\\
Therefore,
\[
M(\phi_\omega,\psi_\omega)=\|\phi_\omega\|_{L^2}^2+2\|\psi_\omega\|_{L^2}^2=\omega^{-\frac{1}{2}}\|\phi_1\|_{L^2}^2+2\omega^{-\frac{1}{2}}\|\psi_1\|_{L^2}^2=\omega^{-\frac{1}{2}}M(\phi_1,\psi_1),
\]
\[
K(\phi_\omega,\psi_\omega)=\|\nabla \phi_\omega\|_{L^2}^2+\frac{1}{2}\|\nabla\psi_\omega\|_{L^2}^2=\omega^\frac{1}{2}\|\nabla \phi_1\|_{L^2}^2+\frac{1}{2}\omega^\frac{1}{2}\|\nabla\psi_1\|_{L^2}^2=\omega^\frac{1}{2}K(\phi_1,\psi_1)
\]
and
\[
P(\phi_\omega,\psi_\omega)=\text{Re}(\psi_\omega,\phi_\omega^2)_{L^2}=\omega^\frac{1}{2}\text{Re}(\psi_1,\phi_1^2)_{L^2}=\omega^\frac{1}{2}P(\phi_1,\psi_1).
\]
First, we will prove that $0<C_{GN}<\infty$. Using usual Gagliardo--Nirenberg inequality
\[
\|u\|_{L^{p+1}}^{p+1}\leq c_{GN}\|\nabla u\|_{L^2}^\frac{5(p-1)}{2}\|u\|_{L^2}^{2-\frac{3(p-1)}{2}},
\]
we have
\begin{align*}
P(u,v)&\leq\|v\|_{L^3}\|u\|_{L^3}^2\\
&\leq\left(c_{GN}^\frac{1}{3}\|\nabla v\|_{L^2}^\frac{5}{6}\|v\|_{L^2}^\frac{1}{6}\right)\left(c_{GN}^\frac{1}{3}\|\nabla u\|_{L^2}^\frac{5}{6}\|u\|_{L^2}^\frac{1}{6}\right)^2\\
&\leq c_{GN}\left(\sqrt{2}K(u,v)^\frac{1}{2}\right)^\frac{5}{6}\left(\frac{1}{\sqrt{2}}M(u,v)^\frac{1}{2}\right)^\frac{1}{6}\left(\sqrt{2}K(u,v)^\frac{1}{2}\right)^\frac{5}{3}\left(\frac{1}{\sqrt{2}}M(u,v)^\frac{1}{2}\right)^\frac{1}{3}\\
&=2c_{GN}K(u,v)^\frac{5}{4}M(u,v)^\frac{1}{4}
\end{align*}
for any $(u,v)\in H^1\!\times\!H^1$. Thus, we obtain $C_{GN}\leq 2c_{GN}<\infty$ (see \cite{008}). On the other hand, applying Proposition \ref{proposition of ground state}, we have
\begin{align*}
C_{GN}\geq\frac{P(\phi_\omega,\psi_\omega)}{M(\phi_\omega,\psi_\omega)^\frac{1}{4}K(\phi_\omega,\psi_\omega)^\frac{5}{4}}=\frac{2}{5^\frac{5}{4}M(\phi_1,\psi_1)^\frac{1}{2}}>0.
\end{align*}
Thus, we obtain $0<C_{GN}<\infty$.\\
When $P(u,v)\leq0$, the inequality holds. Thus, we assume that $P(u,v)>0$. Next, we consider minimizing problem
\[
\inf_{(u,v)\in \mathcal{N}}J(u,v)\vcentcolon=\inf_{(u,v)\in \mathcal{N}}\frac{M(u,v)^\frac{1}{4}K(u,v)^\frac{5}{4}}{P(u,v)},
\]
where $\mathcal{N}=\{(u,v)\in H^1\!\times\!H^1:P(u,v)>0\}$. $\inf_{(u,v)\in \mathcal{N}}J(u,v)$ is clearly the reciprocal of the best constant $C_{GN}$. We set that $u_{\mu, \lambda}(x)=\mu u(\lambda x)$, $v_{\mu, \lambda}(x)=\mu v(\lambda x)$ for $\mu>0$, $\lambda>0$. Then,
\[
M(u_{\mu, \lambda},v_{\mu, \lambda})=\mu^2\lambda^{-5}M(u,v),\ K(u_{\mu, \lambda},v_{\mu, \lambda})=\mu^2\lambda^{-3}K(u,v),\ P(u_{\mu, \lambda},v_{\mu, \lambda})=\mu^3\lambda^{-5}P(u,v),
\]
and hence
\[
J(u_{\mu, \lambda},v_{\mu, \lambda})=J(u,v).
\]
Let $(\widetilde{u},\widetilde{v})$ attain the infimum of $J(u,v)$. Since the functional $J(u,v)$ is invariance for the above scaling, we can assume that
\[
M(\widetilde{u},\widetilde{v})=1,\ \ K(\widetilde{u},\widetilde{v})=1.
\]
Because $(\widetilde{u},\widetilde{v})$ is a critical point of $J(u,v)$, it follows that
\[
\left.\frac{d}{ds}J(\widetilde{u}+s\phi,\widetilde{v}+s\psi)\right|_{s=0}=0.
\]
for any $(\phi,\psi)\in H^1\!\times\!H^1$. Thus,
\begin{align*}
0&=\left.\frac{d}{ds}J(\widetilde{u}+s\phi,\widetilde{v}+s\psi)\right|_{s=0}\\
&=\left.\frac{d}{ds}\frac{M(\widetilde{u}+s\phi,\widetilde{v}+s\psi)^\frac{1}{4}K(\widetilde{u}+s\phi,\widetilde{v}+s\psi)^\frac{5}{4}}{P(\widetilde{u}+s\phi,\widetilde{v}+s\psi)}\right|_{s=0}\\
&=\frac{\left\{\frac{1}{4}M(\widetilde{u}+s\phi,\widetilde{v}+s\psi)^{-\frac{3}{4}}K(\widetilde{u}+s\phi,\widetilde{v}+s\psi)^\frac{5}{4}\frac{d}{ds}M(\widetilde{u}+s\phi,\widetilde{v}+s\psi)\right.}{P(\widetilde{u}+s\phi,\widetilde{v}+s\psi)^2}\\
&~~~~~~~~~~~~~~\frac{\left.+\frac{5}{4}M(\widetilde{u}+s\phi,\widetilde{v}+s\psi)^\frac{1}{4}K(\widetilde{u}+s\phi,\widetilde{v}+s\psi)^\frac{1}{4}\frac{d}{ds}K(\widetilde{u}+s\phi,\widetilde{v}+s\psi)\right\}P(\widetilde{u}+s\phi,\widetilde{v}+s\psi)}{}\\
&\left.~~~~~~~~~~~~~~\frac{-M(\widetilde{u}+s\phi,\widetilde{v}+s\psi)^\frac{1}{4}K(\widetilde{u}+s\phi,\widetilde{v}+s\psi)^\frac{5}{4}\frac{d}{ds}P(\widetilde{u}+s\phi,\widetilde{v}+s\psi)}{}\right|_{s=0}.
\end{align*}
Taking out the numerator and using \eqref{84}, \eqref{85}
\begin{align*}
&\left\{\frac{1}{4}\left(2\int_{\mathbb{R}^5}\widetilde{u}\phi dx+4\int_{\mathbb{R}^5}\widetilde{v}\psi dx\right)+\frac{5}{4}\left(2\int_{\mathbb{R}^5}\nabla\widetilde{u}\cdot\nabla\phi dx+\int_{\mathbb{R}^5}\nabla\widetilde{v}\cdot\psi dx\right)\right\}C_{GN}\\
&\hspace{10cm}-\int_{\mathbb{R}^5}\left(\widetilde{u}^2\psi+2\widetilde{u}\widetilde{v}\phi\right)dx=0.
\end{align*}
Because $(\phi,\psi)\in H^1\!\times\!H^1$ is arbitrary, it follows that
\begin{equation}
\notag
\begin{cases}
\hspace{-0.4cm}&\displaystyle{\frac{1}{2}\int_{\mathbb{R}^5}\widetilde{u}\phi dx+\frac{5}{2}\int_{\mathbb{R}^5}\nabla\widetilde{u}\cdot\nabla\phi dx=\frac{2}{C_{GN}}\int_{\mathbb{R}^5}\widetilde{u}\widetilde{v}\phi dx,}\\[0.3cm]
\hspace{-0.4cm}&\displaystyle{\int_{\mathbb{R}^5}\widetilde{v}\psi dx+\frac{5}{4}\int_{\mathbb{R}^5}\nabla\widetilde{v}\cdot\nabla\psi dx=\frac{1}{C_{GN}}\int_{\mathbb{R}^5}\widetilde{u}^2\psi dx.}
\end{cases}
\end{equation}
Therefore, $(\widetilde{u},\widetilde{v})$ is a solution to the nonlinear elliptic system:
\begin{equation}
\notag
\begin{cases}
\hspace{-0.4cm}&\displaystyle{-\frac{5}{2}\Delta\widetilde{u}+\frac{1}{2}\widetilde{u}=\frac{2}{C_{GN}}\widetilde{u}\widetilde{v},}\\[0.3cm]
\hspace{-0.4cm}&\displaystyle{-\frac{5}{4}\Delta\widetilde{v}+\widetilde{v}=\frac{1}{C_{GN}}\widetilde{u}^2.}
\end{cases}
\end{equation}
Here, we consider $(\mathring{u},\mathring{v})$ with $(\mathring{u}_{\mu,\lambda},\mathring{v}_{\mu,\lambda})=(\widetilde{u},\widetilde{v})$ for $\mu=C_{GN}/2\omega$, $\lambda=1/\sqrt{5\omega}$. Substituting $(\mathring{u}_{\mu,\lambda},\mathring{v}_{\mu,\lambda})$ into the above nonlinear elliptic system, we obtain
\begin{equation}
\notag
\begin{cases}
\hspace{-0.4cm}&\displaystyle{-\Delta\mathring{u}+\omega\mathring{u}=2\mathring{u}\mathring{v},}\\
\hspace{-0.4cm}&\displaystyle{-\frac{1}{2}\Delta\mathring{v}+2\omega\mathring{v}=\mathring{u}^2.}
\end{cases}
\end{equation}
Hence, $(\mathring{u},\mathring{v})$ solves (gNLS). (gNLS) actually has solutions, so the infimum of $J(u,v)$ is attained by any solution to (gNLS). 
\end{proof}

\begin{remark}
$C_{GN}$ is independent of $\omega>0$. Indeed,
\[
C_{GN}=\frac{P(\phi_\omega,\psi_\omega)}{M(\phi_\omega,\psi_\omega)^\frac{1}{4}K(\phi_\omega,\psi_\omega)^\frac{5}{4}}=\frac{\omega^\frac{1}{2}P(\phi_1,\psi_1)}{\left(\omega^{-\frac{1}{2}}M(\phi_1,\psi_1)\right)^\frac{1}{4}\left(\omega^\frac{1}{2}K(\phi_1,\psi_1)\right)^\frac{5}{4}}=\frac{P(\phi_1,\psi_1)}{M(\phi_1,\psi_1)^\frac{1}{4}K(\phi_1,\psi_1)^\frac{5}{4}}.
\]
\end{remark}

%equivalent of condition

\begin{proposition}\label{equivalent condition}
The following statement holds.
\begin{itemize}
\item[(1)] There exists $\omega>0$ such that $I_\omega(u_0,v_0)<I_\omega(\phi_\omega,\psi_\omega)$\\
\hspace{5cm}$\Longleftrightarrow$\ \ $M(u_0,v_0)E(u_0,v_0)<M(\phi_1,\psi_1)E(\phi_1,\psi_1)$.
\item[(2)] Let there exist $\omega>0$ such that $I_\omega(u_0,v_0)<I_\omega(\phi_\omega,\psi_\omega)$.
\begin{enumerate}
\item $K_\omega^{20,8}(u_0,v_0)\geq0$\ \ $\Longleftrightarrow$\ \ $M(u_0,v_0)K(u_0,v_0)<M(\phi_1,\psi_1)K(\phi_1,\psi_1)$.
\item $K_\omega^{20,8}(u_0,v_0)<0$\ \ $\Longleftrightarrow$\ \ $M(u_0,v_0)K(u_0,v_0)>M(\phi_1,\psi_1)K(\phi_1,\psi_1)$.
\end{enumerate}
\end{itemize}
\end{proposition}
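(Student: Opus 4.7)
My plan is to exploit the scaling properties of the ground state together with the sharp Gagliardo--Nirenberg inequality from Theorem \ref{Sharp Gagliardo-Nirenberg inequality}. First I would record the scaling identities $\phi_\omega(x)=\omega\phi_1(\omega^{1/2}x)$, $\psi_\omega(x)=\omega\psi_1(\omega^{1/2}x)$, which yield
\[
M(\phi_\omega,\psi_\omega)=\omega^{-1/2}M(\phi_1,\psi_1),\quad K(\phi_\omega,\psi_\omega)=\omega^{1/2}K(\phi_1,\psi_1),\quad P(\phi_\omega,\psi_\omega)=\omega^{1/2}P(\phi_1,\psi_1),
\]
and combine them with Proposition \ref{proposition of ground state} to obtain
\[
I_\omega(\phi_\omega,\psi_\omega)=\omega M(\phi_\omega,\psi_\omega)=\omega^{1/2}M(\phi_1,\psi_1),
\]
together with $K(\phi_1,\psi_1)=5M(\phi_1,\psi_1)$ and $E(\phi_1,\psi_1)=M(\phi_1,\psi_1)$. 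In particular $M(\phi_1,\psi_1)E(\phi_1,\psi_1)=M(\phi_1,\psi_1)^2$ and $M(\phi_1,\psi_1)K(\phi_1,\psi_1)=5M(\phi_1,\psi_1)^2$.

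For part (1), introduce
\[
f(\omega):=I_\omega(\phi_\omega,\psi_\omega)-I_\omega(u_0,v_0)=\omega^{1/2}M(\phi_1,\psi_1)-\tfrac{\omega}{2}M(u_0,v_0)-\tfrac{1}{2}E(u_0,v_0).
\]
Since $f''(\omega)<0$ on $(0,\infty)$, $f$ attains its supremum at the unique critical point $\omega^{\ast}=\bigl(M(\phi_1,\psi_1)/M(u_0,v_0)\bigr)^{2}$, with
\[
f(\omega^{\ast})=\frac{M(\phi_1,\psi_1)^{2}-M(u_0,v_0)E(u_0,v_0)}{2M(u_0,v_0)}.
\]
Hence existence of some $\omega>0$ with $f(\omega)>0$ is equivalent to $M(u_0,v_0)E(u_0,v_0)<M(\phi_1,\psi_1)E(\phi_1,\psi_1)$, which establishes (1). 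The degenerate case $M(u_0,v_0)=0$ forces $(u_0,v_0)=(0,0)$ and both sides of the equivalence trivially hold.

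For part (2), assume the hypothesis of (1). For the $\Leftarrow$ direction of (i), Theorem \ref{Sharp Gagliardo-Nirenberg inequality} yields $5P(u_0,v_0)\le 5C_{GN}(M(u_0,v_0)K(u_0,v_0))^{1/4}K(u_0,v_0)$; evaluating at the ground state gives $5C_{GN}(M(\phi_1,\psi_1)K(\phi_1,\psi_1))^{1/4}=2$, so $M(u_0,v_0)K(u_0,v_0)<M(\phi_1,\psi_1)K(\phi_1,\psi_1)$ forces $5P(u_0,v_0)<2K(u_0,v_0)$, i.e.\ $K_\omega^{20,8}(u_0,v_0)>0$. For the $\Rightarrow$ direction of (i), I would use the elementary identity $E(u_0,v_0)=\tfrac{1}{5}K(u_0,v_0)+\tfrac{1}{10}K_\omega^{20,8}(u_0,v_0)$: if $K_\omega^{20,8}(u_0,v_0)\ge 0$ then $M(u_0,v_0)E(u_0,v_0)\ge \tfrac{1}{5}M(u_0,v_0)K(u_0,v_0)$, and combining with part (1), which already gives $M(u_0,v_0)E(u_0,v_0)<M(\phi_1,\psi_1)E(\phi_1,\psi_1)=\tfrac{1}{5}M(\phi_1,\psi_1)K(\phi_1,\psi_1)$, yields the claimed strict inequality. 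The two implications in (ii) follow as contrapositives of those in (i) after checking that the borderline case $M(u_0,v_0)K(u_0,v_0)=M(\phi_1,\psi_1)K(\phi_1,\psi_1)$ is incompatible with $K_\omega^{20,8}(u_0,v_0)<0$ (it forces $K_\omega^{20,8}(u_0,v_0)\ge 0$ via the same sharp GN argument).

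There is no substantive obstacle beyond bookkeeping: once the scaling relations, the Pohozaev-type identities of Proposition \ref{proposition of ground state}, and the equality case of sharp Gagliardo--Nirenberg are correctly assembled, both parts are essentially one-line deductions. The only mildly subtle point is ensuring that the strict/non-strict inequalities match between the two sides, which is handled cleanly by the concavity of $f$ in part (1) and by the identity $E=\tfrac{1}{5}K+\tfrac{1}{10}K_\omega^{20,8}$ in part (2).
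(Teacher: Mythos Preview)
Your proposal is correct and follows essentially the same approach as the paper. Part (1) is identical: the paper defines the same concave function $f(\omega)=\omega^{1/2}I_1(\phi_1,\psi_1)-I_\omega(u_0,v_0)$ (note $I_1(\phi_1,\psi_1)=M(\phi_1,\psi_1)$) and computes its maximum. For part (2) the only cosmetic difference is in the direction $K_\omega^{20,8}\ge 0\Rightarrow M K<M(\phi_1,\psi_1)K(\phi_1,\psi_1)$: the paper uses the AM--GM bound $2\sqrt{\tfrac{\omega}{2}M\cdot\tfrac{1}{10}K}\le \tfrac{\omega}{2}M+\tfrac{1}{10}K+\tfrac{1}{20}K_\omega^{20,8}=I_\omega<\omega^{1/2}M(\phi_1,\psi_1)$, whereas you use the equivalent identity $E=\tfrac{1}{5}K+\tfrac{1}{10}K_\omega^{20,8}$ together with part (1)---these are the same computation, since optimizing the AM--GM over $\omega$ is precisely what part (1) encodes. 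The paper likewise rules out the borderline $MK=M(\phi_1,\psi_1)K(\phi_1,\psi_1)$ via the sharp Gagliardo--Nirenberg inequality (packaged as the function $g(x)=x-2C_{GN}x^{5/4}$), matching your remark.
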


\begin{proof}
\begin{align*}
I_\omega(\phi_\omega,\psi_\omega)&=\frac{\omega}{2}M(\phi_\omega,\psi_\omega)+\frac{1}{2}E(\phi_\omega,\psi_\omega)\\
&=\frac{\omega}{2}M(\phi_\omega,\psi_\omega)+\frac{1}{2}\left(K(\phi_\omega,\psi_\omega)-2P(\phi_\omega,\psi_\omega)\right)\\
&=\frac{\omega^\frac{1}{2}}{2}M(\phi_1,\psi_1)+\frac{1}{2}\left(\omega^\frac{1}{2}K(\phi_1,\psi_1)-2\omega^\frac{1}{2}P(\phi_1,\psi_1)\right)\\
&=\omega^\frac{1}{2}I_1(\phi_1,\psi_1).
\end{align*}
Hence, the condition $I_\omega(u_0,v_0)<I_\omega(\phi_\omega,\psi_\omega)$ is equivalent to $I_\omega(u_0,v_0)<\omega^\frac{1}{2}I_1(\phi_1,\psi_1)$.\ \\
Here, we set $\displaystyle f(\omega)=\omega^\frac{1}{2}I_1(\phi_1,\psi_1)-I_\omega(u_0,v_0)=\omega^\frac{1}{2}I_1(\phi_1,\psi_1)-\frac{\omega}{2}M(u_0,v_0)-\frac{1}{2}E(u_0,v_0)$. Then, we have\\
$\displaystyle f'(\omega)=\frac{1}{2}\omega^{-\frac{1}{2}}I_1(\phi_1,\psi_1)-\frac{1}{2}M(u_0,v_0)$. Solving $f'(\omega_0)=0$, we obtain $\displaystyle\omega_0=\left(\frac{I_1(\phi_1,\psi_1)}{M(u_0,v_0)}\right)^2>0.$\\
Therefore, if there exists $\omega>0$ such that $I_\omega(u_0,v_0)<I_\omega(\phi_\omega,\psi_\omega)$, then $f(\omega_0)>0$ holds.
\begin{align*}
f(\omega_0)=&\omega_0^\frac{1}{2}I_1(\phi_1,\psi_1)-\frac{\omega_0}{2}M(u_0,v_0)-\frac{1}{2}E(u_0,v_0)\\
&=\frac{I_1(\phi_1,\psi_1)^2}{M(u_0,v_0)}-\frac{1}{2}\cdot\frac{I_1(\phi_1,\psi_1)^2}{M(u_0,v_0)}-\frac{1}{2}E(u_0,v_0)\\
&=\frac{1}{2}\cdot\frac{I_1(\phi_1,\psi_1)^2}{M(u_0,v_0)}-\frac{1}{2}E(u_0,v_0)>0,
\end{align*}
i.e. $\displaystyle I_1(\phi_1,\psi_1)^2>M(u_0,v_0)E(u_0,v_0).$ Using Proposition \ref{proposition of ground state},
\begin{align*}
I_1(\phi_1,\psi_1)&=\frac{1}{2}M(\phi_1,\psi_1)+\frac{1}{2}E(\phi_1,\psi_1)\\
&=\frac{1}{2}M(\phi_1,\psi_1)+\frac{1}{2}K(\phi_1,\psi_1)-P(\phi_1,\psi_1)\\
&=\frac{1}{2}M(\phi_1,\psi_1)+\frac{5}{2}M(\phi_1,\psi_1)-2M(\phi_1,\psi_1)\\
&=M(\phi_1,\psi_1),
\end{align*}
and hence it follows that $I_1(\phi_1,\psi_1)^2=M(\phi_1,\psi_1)^2=M(\phi_1,\psi_1)E(\phi_1,\psi_1).$\\
Thus, we obtain $M(\phi_1,\psi_1)E(\phi_1,\psi_1)>M(u_0,v_0)E(u_0,v_0)$ and complete proof of Proposition \ref{equivalent condition} (1).\\
Next, we will prove that $M(u_0,v_0)K(u_0,v_0)\neq M(\phi_1,\psi_1)K(\phi_1,\psi_1)$ under the assumption of Proposition \ref{equivalent condition} (2). Applying Theorem \ref{Sharp Gagliardo-Nirenberg inequality},
\begin{align*}
M(u_0,v_0)E(u_0,v_0)&=M(u_0,v_0)(K(u_0,v_0)-2P(u_0,v_0))\\
&\geq M(u_0,v_0)\left(K(u_0,v_0)-2C_{GN}M(u_0,v_0)^\frac{1}{4}K(u_0,v_0)^\frac{5}{4}\right)\\
&=M(u_0,v_0)K(u_0,v_0)-2C_{GN}M(u_0,v_0)^\frac{5}{4}K(u_0,v_0)^\frac{5}{4}.
\end{align*}
Here, we define a function $f(x)=x-2C_{GN}x^\frac{5}{4}$. Then, $f'(x)=1-\frac{5}{2}C_{GN}x^\frac{1}{4}$. Solving $f'(x)=0$, we obtain $x_0=0$ and $x_1=(\frac{2}{5C_{GN}})^4=M(\phi_1,\psi_1)K(\phi_1,\psi_1)$. Thus, the graph of $f$ has a local minimum at $x_0$ and a local maximum at $x_1$. Also, $f(x_1)=\frac{1}{5}M(\phi_1,\psi_1)K(\phi_1,\psi_1)=M(\phi_1,\psi_1)E(\phi_1,\psi_1)$. Combining these facts, the assumption of Proposition \ref{equivalent condition} (2) and the result of Proposition \ref{equivalent condition} (1), we have $M(u_0,v_0)K(u_0,v_0)\neq M(\phi_1,\psi_1)K(\phi_1,\psi_1)$. Let $K_\omega^{20,8}(u_0,v_0)<0$. Applying Theorem \ref{Sharp Gagliardo-Nirenberg inequality}, 
\begin{align*}
8K(u_0,v_0)-20C_{GN}M(u_0,v_0)^\frac{1}{4}K(u_0,v_0)^\frac{5}{4}&\leq8K(u_0,v_0)-20P(u_0,v_0)\\
&=K_\omega^{20,8}(u_0,v_0)8K(u_0,v_0)-20P(u_0,v_0)<0,
\end{align*}
i.e.
\begin{align*}
M(u_0,v_0)K(u_0,v_0)&>\left(\frac{2}{5}\cdot\frac{1}{C_{GN}}\right)^4\\
&=\left(\frac{2}{5}\right)^4\cdot\frac{M(\phi_1,\psi_1)K(\phi_1,\psi_1)^5}{P(\phi_1,\psi_1)^4}\\
&=M(\phi_1,\psi_1)K(\phi_1,\psi_1).
\end{align*}
Let $K_\omega^{20,8}(u_0,v_0)\geq0$. Since
\begin{align*}
2\sqrt{\frac{\omega}{2}M(u_0,v_0)\frac{1}{10}K(u_0,v_0)}&\leq\frac{\omega}{2}M(u_0,v_0)+\frac{1}{10}K(u_0,v_0)\\
&\leq\frac{\omega}{2}M(u_0,v_0)+\frac{1}{10}K(u_0,v_0)+\frac{1}{20}K_\omega^{20,8}(u_0,v_0)\\
&=\frac{\omega}{2}M(u_0,v_0)+\frac{1}{2}E(u_0,v_0)\\
&=I_\omega(u_0,v_0)<I_\omega(\phi_\omega,\psi_\omega)=\omega^\frac{1}{2}I_1(\phi_1,\psi_1),
\end{align*}
we have
\[
M(u_0,v_0)K(u_0,v_0)<5I_1(\phi_1,\psi_1)^2=5M(\phi_1,\psi_1)^2=M(\phi_1,\psi_1)K(\phi_1,\psi_1).
\]
Combining contraposition of these results and $M(u,v)K(u,v)\neq M(\phi_1,\psi_1)K(\phi_1,\psi_1)$, we complete the proof of Proposition \ref{equivalent condition} (2).
\end{proof}

\begin{corollary}
Let $(u_0,v_0)\in H^1\!\times\!H^1$ and $(u,v)$ be the solution to (NLS) with a initial data $(u_0,v_0)$. Moreover, we assume that $M(u_0,v_0)E(u_0,v_0)<M(\phi_1,\psi_1)E(\phi_1,\psi_1)$. 
\begin{itemize}
\item[(1).] If $M(u_0,v_0)K(u_0,v_0)<M(\phi_1,\psi_1)K(\phi_1,\psi_1)$, then $M(u,v)K(u,v)<M(\phi_1,\psi_1)K(\phi_1,\psi_1)$ for any $t\in\mathbb{R}$.
\item[(2).] If $M(u_0,v_0)K(u_0,v_0)>M(\phi_1,\psi_1)K(\phi_1,\psi_1)$, then $M(u,v)K(u,v)>M(\phi_1,\psi_1)K(\phi_1,\psi_1)$ for any $t\in (T_\ast,T^\ast)$.
\end{itemize}
\end{corollary}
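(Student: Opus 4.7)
The plan is to reduce both parts to the sign-preservation of $K_\omega^{20,8}$ along the flow, by alternately invoking the equivalences in Proposition \ref{equivalent condition} and the monotonicity-type statement in Lemma \ref{estimates for K}. First I would use conservation of mass and energy (Theorem \ref{Conservation law}) to observe that
\[
I_\omega(u(t),v(t))=\frac{\omega}{2}M(u(t),v(t))+\frac{1}{2}E(u(t),v(t))=I_\omega(u_0,v_0)
\]
for every $t\in(T_\ast,T^\ast)$ and every $\omega>0$. The hypothesis $M(u_0,v_0)E(u_0,v_0)<M(\phi_1,\psi_1)E(\phi_1,\psi_1)$ combined with Proposition \ref{equivalent condition}(1) then produces a single $\omega>0$, depending only on the initial data, for which $I_\omega(u_0,v_0)<I_\omega(\phi_\omega,\psi_\omega)$; since $I_\omega$ is conserved along the flow, this strict inequality persists for all $t$, so the assumption required for applying Proposition \ref{equivalent condition}(2) is available at every time.

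With this $\omega$ fixed, I would translate the hypothesis on $M\cdot K$ at $t=0$ into a sign condition on $K_\omega^{20,8}(u_0,v_0)$ via Proposition \ref{equivalent condition}(2): in case (1) this yields $K_\omega^{20,8}(u_0,v_0)\ge 0$, while in case (2) it gives $K_\omega^{20,8}(u_0,v_0)<0$. Lemma \ref{estimates for K} then propagates the sign of $K_\omega^{20,8}$ along the flow (the proof shows, by continuity and the variational characterization of $\mu_\omega^{20,8}$, that $K_\omega^{20,8}(u(t),v(t))$ cannot vanish nontrivially under $I_\omega(u,v)<I_\omega(\phi_\omega,\psi_\omega)$): in case (1) one gets $K_\omega^{20,8}(u(t),v(t))\ge 0$ and in case (2) one gets $K_\omega^{20,8}(u(t),v(t))<0$ on the maximal interval. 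Converting back through Proposition \ref{equivalent condition}(2), applied at time $t$ with the same $\omega$, yields the desired inequality between $M(u(t),v(t))K(u(t),v(t))$ and $M(\phi_1,\psi_1)K(\phi_1,\psi_1)$.

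The only subtle point, which I regard as the main obstacle, is the borderline possibility $K_\omega^{20,8}(u_0,v_0)=0$ in case (1). By the definition of $\mu_\omega^{20,8}=I_\omega(\phi_\omega,\psi_\omega)$ together with $I_\omega(u_0,v_0)<I_\omega(\phi_\omega,\psi_\omega)$, the only element with $K_\omega^{20,8}=0$ and $I_\omega$ strictly below the threshold is $(0,0)$, so this case is trivial. I would also record that the negative-time statement in (1) follows verbatim from the positive-time argument, since $(\overline{u}(-t),\overline{v}(-t))$ solves (NLS) with data $(\overline{u_0},\overline{v_0})$ having the same $M$, $E$, $K$ and $K_\omega^{20,8}$.
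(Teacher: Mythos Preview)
Your argument is correct, but it takes a more roundabout route than the paper. The paper's proof is a one-line continuity argument that stays entirely inside the analysis already done for Proposition~\ref{equivalent condition}: recall the function $f(x)=x-2C_{GN}x^{5/4}$ introduced there, which satisfies $M(u,v)E(u,v)\ge f\bigl(M(u,v)K(u,v)\bigr)$ and has its maximum value $f(x_1)=M(\phi_1,\psi_1)E(\phi_1,\psi_1)$ at $x_1=M(\phi_1,\psi_1)K(\phi_1,\psi_1)$. Since $M$ and $E$ are conserved, the hypothesis gives $f\bigl(M(u(t),v(t))K(u(t),v(t))\bigr)\le M(u_0,v_0)E(u_0,v_0)<f(x_1)$ for all $t$, so $M(u(t),v(t))K(u(t),v(t))$ never equals $x_1$; continuity of $t\mapsto K(u(t),v(t))$ then pins it to the side of $x_1$ it starts on.

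Your approach instead passes through the sign of $K_\omega^{20,8}$ via Proposition~\ref{equivalent condition}(2), propagates that sign with Lemma~\ref{estimates for K}, and converts back. This is perfectly valid (and your treatment of the borderline $K_\omega^{20,8}(u_0,v_0)=0$ and of negative times is fine), but it invokes more machinery than needed. The paper's direct route is shorter and self-contained within the Gagliardo--Nirenberg analysis; your route has the advantage of making explicit that this corollary is really a restatement of the sign-invariance of $K_\omega^{20,8}$ underlying Theorem~\ref{Global versus blow-up dichotomy}.
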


\begin{proof}
Considering the graph of $f$ of the proof for Proposition \ref{equivalent condition}, this corollary holds.
\end{proof}

\begin{proposition}\label{extend negative energy}
Let $(u,v)\in H^1\!\times\!H^1\setminus\{(0,0)\}$. If $(u,v)$ satisfies $E(u,v)\leq0$, then
\[
I_\omega(u,v)<I_\omega(\phi_\omega,\psi_\omega)\ \ \ \text{and}\ \ \ K_\omega^{20,8}(u,v)<0.
\]
\end{proposition}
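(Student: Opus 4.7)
The plan is to dispatch the two conclusions separately, exploiting the fact that $K_\omega^{20,8}$ does not depend on $\omega$ at all, while the first inequality will be produced by an appropriate choice of $\omega>0$ obtained from Proposition \ref{equivalent condition}. Throughout, I will use the elementary observations that $(u,v)\in H^1\!\times\!H^1\setminus\{(0,0)\}$ forces both $M(u,v)>0$ and $K(u,v)>0$, since any $L^2$ function with vanishing weak gradient must be zero.

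For the sign of $K_\omega^{20,8}$, I would simply write $K_\omega^{20,8}(u,v)=8K(u,v)-20P(u,v)$, which is $\omega$-independent. The hypothesis $E(u,v)=K(u,v)-2P(u,v)\leq 0$ combined with $K(u,v)>0$ gives $P(u,v)\geq\tfrac12 K(u,v)>0$, hence
\[
K_\omega^{20,8}(u,v)\leq 8K(u,v)-10K(u,v)=-2K(u,v)<0
\]
for every $\omega>0$. This accomplishes half the proposition without any choice of parameter.

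For the energy inequality, I would invoke Proposition \ref{equivalent condition} (1), which characterizes the existence of $\omega>0$ with $I_\omega(u,v)<I_\omega(\phi_\omega,\psi_\omega)$ by the scale-invariant condition $M(u,v)E(u,v)<M(\phi_1,\psi_1)E(\phi_1,\psi_1)$. The Pohozaev identities of Proposition \ref{proposition of ground state} (taken at $\omega=1$) yield $K(\phi_1,\psi_1)=5M(\phi_1,\psi_1)$ and $P(\phi_1,\psi_1)=2M(\phi_1,\psi_1)$, so $E(\phi_1,\psi_1)=K(\phi_1,\psi_1)-2P(\phi_1,\psi_1)=M(\phi_1,\psi_1)$ and the right-hand side equals $M(\phi_1,\psi_1)^2>0$. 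On the left, $M(u,v)>0$ and $E(u,v)\leq 0$ give $M(u,v)E(u,v)\leq 0$, so the strict inequality is automatic and Proposition \ref{equivalent condition} (1) produces the required $\omega>0$. Since $K_\omega^{20,8}$ is independent of $\omega$, this same $\omega$ simultaneously witnesses both conclusions.

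There is no serious obstacle here. The only subtlety is the non-degeneracy observation that $(u,v)\neq(0,0)$ in $H^1\!\times\!H^1$ forces $M(u,v)>0$ (needed to make the equivalent condition of Proposition \ref{equivalent condition} (1) strict when $E(u,v)=0$) and $K(u,v)>0$ (which forces $P(u,v)>0$ and makes Step 1 strict); the rest is an immediate reduction to Proposition \ref{equivalent condition} together with the ground-state identities.
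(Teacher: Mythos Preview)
Your proof is correct and follows essentially the same route as the paper: both reduce the first inequality to Proposition \ref{equivalent condition}(1) via $M(u,v)E(u,v)\leq 0<M(\phi_1,\psi_1)E(\phi_1,\psi_1)$, and both obtain $K_\omega^{20,8}(u,v)\leq -2K(u,v)$ from $E(u,v)\leq 0$. The only difference is in how strictness of $K_\omega^{20,8}<0$ is secured: the paper appeals to the variational definition of $\mu_\omega^{20,8}$ (if $K_\omega^{20,8}(u,v)=0$ then $I_\omega(u,v)\geq I_\omega(\phi_\omega,\psi_\omega)$), whereas you argue more directly that $(u,v)\neq(0,0)$ forces $K(u,v)>0$, which is a slight simplification.
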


\begin{proof}
By the assumption $E(u,v)\leq0$,
\[
M(u,v)E(u,v)\leq0<M(\phi_1,\psi_1)E(\phi_1,\psi_1).
\]
Applying Proposition \ref{equivalent condition} (1), it follows that there exists $\omega>0$ such that
\[
I_\omega(u,v)<I_\omega(\phi_\omega,\psi_\omega).
\]
Also, by the assumption $E(u,v)\leq0$, we obtain $K(u,v)\leq2P(u,v)$, and hence
\[
K_\omega^{20,8}(u,v)=8K(u,v)-20P(u,v)\leq-2K(u,v)\leq0.
\]
Here, $K_\omega^{20,8}(u,v)\neq0$ by $I_\omega(u,v)<I_\omega(\phi_\omega,\psi_\omega)$. Therefore, we have $K_\omega^{20,8}(u,v)<0$.
\end{proof}

\begin{corollary}
Let $(u_0,v_0)\in H^1\!\times\!H^1\setminus\{(0,0)\}$ and $(u,v)$ be the solution to (NLS) with a initial data $(u_0,v_0)$. If $(xu_0,xv_0)\in L^2\!\times\!L^2$ and $E(u_0,v_0)\leq0$, then the solution $(u,v)$ blows up.
\end{corollary}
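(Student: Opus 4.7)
The plan is to combine Proposition~\ref{extend negative energy} with the blowing-up part of Theorem~\ref{Global versus blow-up dichotomy}. Since $(u_0,v_0) \neq (0,0)$ and $E(u_0,v_0) \leq 0$, Proposition~\ref{extend negative energy} directly yields the existence of some $\omega > 0$ for which
\[
I_\omega(u_0,v_0) < I_\omega(\phi_\omega,\psi_\omega) \quad \text{and} \quad K_\omega^{20,8}(u_0,v_0) < 0.
\]
This is exactly the hypothesis required to invoke Theorem~\ref{Global versus blow-up dichotomy}~(2).

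Given the additional finite-variance assumption $(xu_0, xv_0) \in L^2 \times L^2$, the theorem gives at once that $(u,v)$ blows up, which is the conclusion sought. No further work is needed beyond checking that the two cited results apply; there is no genuine obstacle. The short chain is: negative (or zero) energy $\Rightarrow$ sub-threshold action with negative $K_\omega^{20,8}$ (by Proposition~\ref{extend negative energy}) $\Rightarrow$ blow-up under finite variance (by the virial argument in Theorem~\ref{Global versus blow-up dichotomy}~(2)). This extends the classical negative-energy blow-up result of Hayashi--Ozawa--Tanaka~\cite{007} by showing it is a special case of the ground-state threshold dichotomy.
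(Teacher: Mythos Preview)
Your proposal is correct and follows exactly the same approach as the paper: the paper's proof simply states that the corollary holds by combining Theorem~\ref{Global versus blow-up dichotomy} and Proposition~\ref{extend negative energy}, which is precisely your argument with more detail filled in.
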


\begin{proof}
Combinig Theorem \ref{Global versus blow-up dichotomy} and Proposition \ref{extend negative energy}, this corollary holds.
\end{proof}

\end{document}